\numberwithin{table}{section}
\numberwithin{equation}{section}
\newtheorem{thm}[equation]{Theorem}
\newtheorem{crl}[equation]{Corollary}  
\newtheorem{lem}[equation]{Lemma} 
\newtheorem{prop}[equation]{Proposition}
\newtheorem{question}[equation]{Question}  
\newtheorem{openquestion}[equation]{Open question}
\theoremstyle{definition}
\newtheorem{defi}[equation]{Definition}
\theoremstyle{remark}
\newtheorem{rmk}[equation]{Remark}
\title{Compactification of metric moduli space of K3 surfaces}
\author{Zexuan Ouyang}
\address{BICMR, Peking University, Beijing, 100871, China}
\email{oyzx@pku.edu.cn}
\author{Gang Tian}
\address{BICMR and School of Mathematics, Peking University, Beijing, 100871, China}
\email{gtian@math.pku.edu.cn}
\date{}
\begin{document}
	
	\bibliographystyle{plain}

	\maketitle
	
	\begin{abstract}
		We prove a conjecture of Odaka--Oshima \cite[Conj.~III]{odaka-oshima-21}, which says that there is an algebraic description of the Gromov--Hausdorff compactification of all unit-diameter hyperkähler metrics on K3 surfaces.
		As a corollary, we obtain a classification of the Gromov--Hausdorff limits of those hyperkähler K3 surfaces with a fixed complex structure or with a fixed polarization.
	\end{abstract}
	
	\setcounter{tocdepth}{1}
	\tableofcontents
	
	\section{Introduction}
	As the first non-flat example of Calabi–Yau manifolds, K3 surfaces and their moduli spaces have long attracted considerable attention.
	A long-standing problem is to construct compactifications of the moduli spaces where points on the boundary have geometric meaning.
	In this paper, we investigate this moduli compactification problem from the metric viewpoint.
	
	Let $\mathfrak{M}$ denote the set of isometry classes of unit-diameter hyperkähler metrics on K3 surfaces. By Gromov's precompactness theorem, its Gromov--Hausdorff (GH) closure $\overline{\mathfrak{M}}$ is compact. Previous works have classified all elements of $\overline{\mathfrak{M}}$ (\cite[Thm.~1.1]{sun-zhang-24}, \cite[Thm.~3]{ouyang-25}), and there are extensive works on specific examples of collapsing of hyperkähler K3 surfaces \cite{chen-viaclovsky-zhang-19,foscolo-19,gross-tosatti-zhang-16,gross-wilson-00,hein-sun-Viaclovsky-Zhang-22,odaka-oshima-21,song-tian-zhang-19-collapsing,tosatti-weninkove-yang-18}.  
	However, it is not known whether there is an algebraic moduli space which characterizes the degenerations in $\overline{\mathfrak{M}}$. It is also natural to study the relation between $\overline{\mathfrak{M}}$ and various compactifications of K3 moduli spaces.
	
	Let $\mathfrak{M} \cup \mathfrak{M}_4$ be the partial compactification obtained by adding those hyperkähler K3 surfaces with orbifold singularities. It is known that these are all the non-collapsing limits of hyperkähler K3 surfaces \cite[Thm.~IV]{anderson-92}\cite[Thm.~7]{kobayashi-todorov-87} (cf. \S\ref{section-metric-moduli}), and that there is a continuous surjection 
	\begin{equation}
		\label{definition-phi}
		\Phi: \mathcal{M}_{\mathrm{K3}} \coloneq 
		O(\Lambda_{\mathrm{K3}})\backslash SO_0(3,19)/\bigl(SO(3) \times SO(19)\bigr)\to \mathfrak{M} \cup \mathfrak{M}^4,
	\end{equation}
	where $O(\Lambda_{\mathrm{K3}})$ is a discrete subgroup of $O(3,19)$ preserving the K3 lattice $\Lambda_{\mathrm{K3}}$.
	Thus, $\mathcal{M}_{\mathrm{K3}}$ can be viewed as the moduli space of unit-diameter hyperkähler metrics on K3 surfaces (allowing orbifold singularities), and $\mathcal{M}_{\mathrm{K3}}$ is an arithmetic quotient of a symmetric space of real dimension~57.
	
	Let $\mathcal{M}_{2d}$ denote the polarized moduli space of K3 surfaces of degree $2d$, for $d \in \mathbb{Z}^+$. Then $\mathcal{M}_{2d}$ is also an arithmetic quotient of a symmetric space, of real dimension~38, and there is a map from $\mathcal{M}_{2d}$ to $\mathcal{M}_{\mathrm{K3}}$ which is generally $2$-to-$1$.
	
	Let $\mathcal{M}$ be an arithmetic quotient of a symmetric space of the form $\Gamma \backslash G/K$ (both $\mathcal{M}_{2d}$ and $\mathcal{M}_{\mathrm{K3}}$ are of this form), where $G$ is a semisimple linear algebraic group defined over $\mathbb{Q}$ 
	with maximal compact subgroup $K$, and $\Gamma$ is an arithmetic subgroup of 
	$G(\mathbb{Q})$. There are several different compactifications of $\mathcal{M}$. Satake first constructed compactifications of $\mathcal{M}$ by adding certain rational boundary components \cite{satake-60-2,satake-60-1}. The compactification depends on a choice of faithful irreducible representation $\rho$ of $G$, and we denote it by $\overline{\mathcal{M}}^{\rho}$. There are only finitely many different Satake compactifications $\overline{\mathcal{M}}^{\rho_i}$ arising in this way. Among them, there is a \emph{maximal Satake compactification} $\overline{\mathcal{M}}^{\mathrm{max}}$ dominating the others; that is, there is a continuous map $\overline{\mathcal{M}}^{\mathrm{max}} \to \overline{\mathcal{M}}^{\rho_i}$ which extends the identity map on $\mathcal{M}$. There is also a \emph{Borel--Serre compactification} $\overline{\mathcal{M}}^{BS}$ \cite{borel-serre-73}, which is a real analytic manifold with corners and dominates the maximal Satake compactification.
	
	For the polarized moduli space $\mathcal{M}_{2d}$, there is an additional Hermitian structure on it, and $\mathcal{M}_{2d}$ also admits a Baily--Borel compactification $\overline{\mathcal{M}}_{2d}^{BB}$ \cite{baily-borel-66}, together with infinitely many toroidal compactifications.  
	Topologically, $\overline{\mathcal{M}}_{2d}^{BB}$ coincides with one of the minimal Satake compactifications. Baily and Borel proved that it carries the structure of a normal projective variety. We also refer to \cite{borel-ji-06} for a comprehensive survey of different compactifications of arithmetic quotients of symmetric spaces.
	
	\subsection{Statement of results}
	
	Our main result is the following, which is proved in \S\ref{section-proof-of-main-thm}. 
	\begin{thm}\label{thm-main-theorem}
		Let $\overline{\mathcal{M}}_{\mathrm{K3}}^{\mathrm{ad}}$ be the Satake compactification with respect to the adjoint representation. Then there is a continuous geometric realization map
		\[
		\overline{\Phi}: \overline{\mathcal{M}}_{\mathrm{K3}}^{\mathrm{ad}} \to \overline{\mathfrak{M}},
		\]
		which extends the map $\Phi$ on $\mathcal{M}_{\mathrm{K3}}$ defined in \eqref{definition-phi}.
	\end{thm}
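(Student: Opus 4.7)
The plan is to define $\Phi$ stratum by stratum: on the open part $\mathcal{M}_{\mathrm{K3}} \subset \overline{\mathcal{M}}_{\mathrm{K3}}^{\mathrm{ad}}$ it is forced to equal $\mathcal{P}^{-1}$, and across each boundary stratum of the adjoint Satake compactification one assigns a canonical element of $\overline{\mathfrak{M}}$. The two inputs to combine are, on the algebraic side, the general description of Satake boundary components by conjugacy classes of rational parabolic subgroups of $G = SO_0(3,19)$, which in this orthogonal setting are indexed by flags of rational isotropic subspaces in $\Lambda_{\mathrm{K3}} \otimes \mathbb{R}$; and on the geometric side, the classification of GH limits in $\overline{\mathfrak{M}}$ from \cite{sun-zhang-24,ouyang-25}, together with the collapsing constructions of \cite{gross-wilson-00,foscolo-19,hein-sun-Viaclovsky-Zhang-22} and related works cited in the introduction.

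First I would identify which rational parabolics are saturated for the highest weight of the adjoint representation, i.e., which isotropic flags actually index strata of $\overline{\mathcal{M}}_{\mathrm{K3}}^{\mathrm{ad}}$; this is standard Satake theory and yields a finite list of stratum types, each carrying reduced period data in a smaller arithmetic locally symmetric space attached to the Levi factor. Next I would match these stratum types with the families of limit spaces classified in \cite{sun-zhang-24,ouyang-25}: a rational isotropic line should correspond to K3 metrics collapsing to a $2$-dimensional base via an elliptic fibration, a rank-two rational isotropic subspace to a further collapse onto a $1$-dimensional base, and isotropic flags to iterated versions of these. The definition of $\Phi$ on the boundary is then prescribed by sending a Satake boundary point with reduced period $\tau$ to the GH limit parameterised by $\tau$.

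The main obstacle is continuity at boundary strata, where a Lie-theoretic convergence statement in the Satake topology must be upgraded to GH convergence of Riemannian metrics. Concretely, given $[g_i] \in \mathfrak{M}$ whose periods converge in $\overline{\mathcal{M}}_{\mathrm{K3}}^{\mathrm{ad}}$ to a boundary point $p^{\infty}$, one must build a Calabi--Yau model adapted to the rational parabolic attached to $p^{\infty}$ and show that $g_i$ is GH-close to that model. For rank-one isotropic directions this builds on Gross--Wilson-type semi-flat collapsing together with the bubble-and-glue constructions at singular fibres developed in the references above; higher-rank cases require an iterated scale-separation argument. By Gromov precompactness any subsequential GH limit lies in $\overline{\mathfrak{M}}$ and so, by \cite{sun-zhang-24,ouyang-25}, is of a known type carrying reduced period data; uniqueness of the limit then reduces to matching this data with $p^{\infty}$, which in turn follows from the computation of the periods of the Calabi--Yau model along with the period homeomorphism on $\mathfrak{M} \cup \mathfrak{M}^4$ of \cite{anderson-92,kobayashi-todorov-87}. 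Assembling these pieces gives continuity of $\Phi$ on all of $\overline{\mathcal{M}}_{\mathrm{K3}}^{\mathrm{ad}}$, and the identity $\Phi|_{\mathcal{M}_{\mathrm{K3}}} = \mathcal{P}^{-1}$ then holds by construction.
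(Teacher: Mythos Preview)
Your overall architecture---define $\Phi$ stratum by stratum and then prove continuity at the boundary---matches the paper. The definition of $\Phi$ on boundary strata via reduced period data on the Levi factors is exactly what is carried out in \S\ref{section-construction-geometric-realization}. However, your proposed mechanism for establishing continuity has a genuine gap, and it is precisely the gap that the paper is written to close.

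You propose the \emph{forward} direction: given period convergence $\eta_k \to \eta_\infty$ in the Satake topology, build a Calabi--Yau model adapted to the parabolic of $\eta_\infty$ and show $g_k$ is GH-close to that model. The difficulty is that the gluing constructions you cite (Gross--Wilson, Foscolo, HSVZ, etc.) produce \emph{specific} one-parameter families of hyperk\"ahler metrics collapsing in a prescribed way; they do not show that an \emph{arbitrary} sequence with $\eta_k \to \eta_\infty$ is close to such a model. As the paper notes at the start of \S1.2, continuity was previously verified only along such special families, and those arguments do not extend. In particular, nothing in your outline rules out that some sequence $\eta_k \to \eta_\infty \in \mathcal{M}(a)$ has a subsequential GH limit equal to the segment $I^1$ rather than the expected generalized KE sphere.

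The paper instead argues in the \emph{reverse} direction: start from an arbitrary GH-collapsing sequence $(X_k,g_k)\to(X_\infty,g_\infty)$ and extract the period limit $\eta_\infty$ from the geometry of $(X_\infty,g_\infty)$. This requires two ingredients you do not mention. First, for $\dim X_\infty=2$ one must produce, for large $k$, an honest elliptic fibration $\pi_k:X_k\to\mathbb{P}^1$ approximating the collapse (Theorem~\ref{thm-2dimensional-collapsing-elliptic-approximation}), and for $\dim X_\infty\in\{2,3\}$ one must locate primitive isotropic $2$-tori in $X_k$ and compute the integrals of the hyperk\"ahler triple over them to pin down the Satake limit (Theorems~\ref{thm-2dimensional-collapsing-period-behaviour} and~\ref{thm-3dimensional-collapsing-period-behaviour}). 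Second, this still only yields $\mathfrak{F}(\eta_\infty)\subset\{\Phi'(\eta_\infty),I^1\}$; to exclude the spurious possibility one needs the topological fact that $\mathfrak{F}(\eta_\infty)$ is connected (Proposition~\ref{tt-connected}), which in turn rests on the existence of a connected neighborhood basis in the Satake topology (Proposition~\ref{prop-topological-basis-connectness}). Only after this does one invoke the explicit collapsing families to decide which of the two candidates actually occurs. Your sentence ``uniqueness of the limit then reduces to matching this data with $p^\infty$'' hides all of this work.
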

	
	This confirms a conjecture of Odaka--Oshima \cite[Conj.~III]{odaka-oshima-21} and gives an algebraic description of the GH compactification $\overline{\mathfrak{M}}$.
	The expression of $\overline{\Phi}$ is explicitly written (see \S \ref{section-construction-geometric-realization} for the definition).    
	Thus by Theorem \ref{thm-main-theorem}, if we know the period behaviour of a sequence of hyperkähler K3 surfaces, we can determine its GH limit. Many structures are easy to see in the period domain. For example, we can deal with hyperkähler K3 surfaces with fixed polarization or fixed complex structures.	
	
	For the polarized moduli space, there is also a Satake compactification $\overline{\mathcal{M}}_{2d}^{\mathrm{ad}}$ with respect to the adjoint representation, and we have a map (cf. \S\ref{section-polarized-moduli-space})
	$$
	f_{2d}: \overline{\mathcal{M}}_{2d}^{\mathrm{ad}} \to \overline{\mathcal{M}}_{\mathrm{K3}}^{\mathrm{ad}}.
	$$
	\begin{crl}\label{tt-crl-12072200}
		There is a continuous geometric realization map for the Satake compactification of the polarized moduli space of K3 surfaces,
		\[
		\overline{\Phi}_{2d} = \overline{\Phi} \circ f_{2d} : \overline{\mathcal{M}}_{2d}^{\mathrm{ad}} \to \overline{\mathfrak{M}}.
		\]
	\end{crl}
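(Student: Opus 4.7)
The statement is essentially a corollary of Theorem~\ref{thm-main-theorem} combined with functoriality of Satake compactifications, so the plan is to assemble the two ingredients and compose. Theorem~\ref{thm-main-theorem} supplies the continuous geometric realization $\Phi$, and the map $f_{2d}:\overline{\mathcal{M}}_{2d}^{\mathrm{ad}}\to\overline{\mathcal{M}}_{\mathrm{K3}}^{\mathrm{ad}}$ is provided by the discussion in \S\ref{section-polarized-moduli-space}. I would therefore split the argument into two parts: first justify continuity of $f_{2d}$, then compose and check that the geometric meaning is preserved on the interior.

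For the continuity of $f_{2d}$, I would work from the lattice-theoretic description of polarized K3 surfaces. Fixing a primitive class $L\in\Lambda_{\mathrm{K3}}$ with $L^2=2d$ and restricting periods to $L^{\perp}\subset\Lambda_{\mathrm{K3}}$ produces a compatible embedding of arithmetic subgroups $\Gamma_{2d}\hookrightarrow\Gamma$ inside the group inclusion $SO_0(2,19)\hookrightarrow SO_0(3,19)$. On the open strata this induces exactly the generically $2$-to-$1$ forgetful map $\mathcal{M}_{2d}\to\mathcal{M}_{\mathrm{K3}}$. To extend it to the adjoint Satake compactifications I would invoke Satake's functoriality criterion (see Borel--Ji \cite{borel-ji-06}): the inclusion extends continuously provided the restriction of $\mathrm{ad}_{\mathfrak{so}(3,19)}$ to $\mathfrak{so}(2,19)$ contains $\mathrm{ad}_{\mathfrak{so}(2,19)}$ as a summand and the embedding carries rational parabolics to rational parabolics. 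The first property follows from the standard branching of the adjoint representation, and the second reduces to matching isotropic flags in $L^{\perp}$ with those in $\Lambda_{\mathrm{K3}}$.

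With continuity of $f_{2d}$ established, $\Phi_{2d}=\Phi\circ f_{2d}$ is continuous as a composition of continuous maps, and on $\mathcal{M}_{2d}$ it sends a polarized K3 surface to the underlying hyperkähler metric in the polarization class, which is exactly the geometric realization asserted. The main obstacle I expect is not conceptual but bookkeeping: one must verify that the Satake equivalence on boundary components of $\overline{\mathcal{M}}_{2d}^{\mathrm{ad}}$ defined by $\mathrm{ad}_{\mathfrak{so}(2,19)}$ is compatible with that defined by $\mathrm{ad}_{\mathfrak{so}(3,19)}|_{SO_0(2,19)}$ on the corresponding boundary components of $\overline{\mathcal{M}}_{\mathrm{K3}}^{\mathrm{ad}}$. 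This amounts to a careful analysis of how maximal rational parabolics of $SO_0(2,19)$ sit inside those of $SO_0(3,19)$ under the polarization-induced embedding, which is routine once the lattice-theoretic dictionary is in place.
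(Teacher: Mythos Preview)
Your proposal is correct and follows essentially the same route as the paper: the corollary is derived by composing the continuous $\Phi$ from Theorem~\ref{thm-main-theorem} with the continuous $f_{2d}$ constructed in \S\ref{section-polarized-moduli-space}. The only difference is emphasis: the paper obtains continuity of $f_{2d}$ directly from the concrete chain of embeddings $G_{2d}/K_{2d}\hookrightarrow SL(V_{2d},\mathbb{C})/SU(V_{2d})\hookrightarrow SL(V,\mathbb{C})/SU(V)\hookrightarrow\mathbb{P}(\mathscr{H}(V))$ and then passes to the $G_{\mathbb{Z},2d}$-quotient, whereas you phrase the same verification via the Borel--Ji functoriality criterion; the bookkeeping you flag about matching boundary equivalences is exactly what that concrete embedding handles.
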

	
	Let $\mathfrak{M}_{2d}$ denote the space of all isometry classes of unit-diameter hyperkähler metrics on K3 surfaces with a fixed polarization class $\lambda_{2d}$ in the K3 lattice  $\Lambda_{\mathrm{K3}}$, where $\lambda_{2d}\cdot \lambda_{2d}=2d$.  
	As a result of Corollary~\ref{tt-crl-12072200}, we know that the GH closure $\overline{\mathfrak{M}_{2d}}$ is given by the image of $\overline{\Phi}_{2d}$.  
	Once we know the image of $f_{2d}$ in $\overline{\mathcal{M}}_{\mathrm{K3}}^{\mathrm{ad}}$, we can give a classification of $\overline{\mathfrak{M}_{2d}}$.  
	The following is proved in \S\ref{section-polarized-moduli-space}, based on Theorem~\ref{thm-main-theorem}:
	
	\begin{crl}\label{crl-fix-polarization}
		The GH closure $\overline{\mathfrak{M}_{2d}}$ is obtained from $\mathfrak{M}_{2d}$ by adding the following:
		\begin{itemize}
			\item All generalized KE metrics (cf. \S\ref{section-generalized-KE}) on $\mathbb{P}^1$ arising from elliptic K3 surfaces $\pi : X \to \mathbb{P}^1$ with the property that we can choose a holomorphic volume form $\Omega$ on $X$ such that $[\operatorname{Re}\Omega]$ is identified with $\lambda_{2d}$ in $H^2(X,\mathbb{Z}) \cong \Lambda_{\mathrm{K3}}$;
			\item The unit segment $I^1\coloneq([0,1], dt^2)$.
		\end{itemize}
	\end{crl}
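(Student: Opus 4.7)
The plan is to apply Corollary~\ref{tt-crl-12072200}, which gives
$\overline{\mathfrak{M}_{2d}}=\Phi\bigl(f_{2d}(\overline{\mathcal{M}}_{2d}^{\mathrm{ad}})\bigr)$,
and to combine it with the explicit description of $\Phi$ on the boundary established in the proof of Theorem~\ref{thm-main-theorem}. The problem then splits cleanly into (a) identifying the image of $f_{2d}$ inside $\overline{\mathcal{M}}_{\mathrm{K3}}^{\mathrm{ad}}$, and (b) translating the lattice-theoretic data labelling each boundary stratum into the metric picture on the right-hand side.

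For (a), the boundary strata of $\overline{\mathcal{M}}_{\mathrm{K3}}^{\mathrm{ad}}$ and of $\overline{\mathcal{M}}_{2d}^{\mathrm{ad}}$ are indexed by orbits of rational isotropic flags in $\Lambda_{\mathrm{K3}}\otimes\mathbb{Q}$ and in $\lambda^{\perp}\otimes\mathbb{Q}$ (signature $(2,19)$) respectively, and $f_{2d}$ sends a flag $V_{\bullet}\subset\lambda^{\perp}$ to the stratum of $\overline{\mathcal{M}}_{\mathrm{K3}}^{\mathrm{ad}}$ indexed by the same flag viewed in $\Lambda_{\mathrm{K3}}$. Because $\lambda\cdot\lambda=2d>0$, any isotropic subspace of $\lambda^{\perp}$ is already isotropic in $\Lambda_{\mathrm{K3}}$, so the image of $f_{2d}$ consists exactly of $\mathcal{M}_{2d}$ (embedded as the Noether--Lefschetz locus where $\lambda$ is of type $(1,1)$) together with those boundary strata whose labelling flag lies inside $\lambda^{\perp}$.

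For (b), by Theorem~\ref{thm-main-theorem} every boundary stratum of $\overline{\mathcal{M}}_{\mathrm{K3}}^{\mathrm{ad}}$ maps under $\Phi$ either to a $2$-dimensional generalized KE metric on $\mathbb{P}^1$ arising from some elliptic K3 $\pi:X\to\mathbb{P}^1$, or to the segment $I^1$. In the first case, a primitive rational isotropic vector obtained from the labelling flag is identified with the fiber class $[F]$ of the elliptic fibration; the condition $V_{\bullet}\subset\lambda^{\perp}$ is then the orthogonality $\lambda\perp[F]$, and unwinding the hyperkähler rotation built into the definition of $\Phi$ turns this into the cohomological equation $[\operatorname{Re}\Omega]=\lambda$ for an appropriate holomorphic volume form $\Omega$ on $X$. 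The deepest boundary strata in $\lambda^{\perp}$ all map to $I^1$ (such strata exist since $\lambda^{\perp}$ still contains rational maximal-rank isotropic subspaces), and the unit-diameter normalization prevents further collapse to a point.

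The main obstacle will be the bijective matching in paragraph (b): showing that every rational isotropic plane $V\subset\lambda^{\perp}$ produces a generalized KE limit satisfying $[\operatorname{Re}\Omega]=\lambda$, and conversely that every such metric arises in this way. This comes down to tracking carefully how the hyperkähler rotation used in the construction of $\Phi$ interacts with the orthogonality relation $\lambda\cdot V=0$ and with the choice of holomorphic volume form; this is where the bulk of the genuine work beyond Theorem~\ref{thm-main-theorem} will be.
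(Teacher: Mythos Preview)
Your overall strategy—compute $\operatorname{Im}(f_{2d})$ stratum by stratum and then apply $\Phi$—is sound, but the key identification in~(b) is where you go wrong. The condition $V_\bullet\subset\lambda^\perp$ (equivalently $\lambda\perp[F]$) only tells you \emph{which boundary stratum} of $\overline{\mathcal{M}}_{\mathrm{K3}}^{\mathrm{ad}}$ receives the image; it does not tell you \emph{which points} within that stratum are hit. Concretely, $\mathcal{F}(l_i)$ has real dimension~$18$ while $\mathcal{M}(a)$ has real dimension~$36$, so the image is a proper sublocus. A point of $\mathcal{M}(a)$ is a positive $2$-plane $W=\langle[\operatorname{Re}\Omega_J],[\operatorname{Im}\Omega_J]\rangle$ in the signature-$(2,18)$ lattice, and the genuine constraint coming from $f_{2d}$ is that the image of $\lambda$ in that lattice lies in $W$. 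This is what translates (after phase rotation of $\Omega$ and allowing non-Jacobian twists, which shift by multiples of the fiber class via Proposition~\ref{jacobian-characterize}) into $[\operatorname{Re}\Omega]=\lambda$. The orthogonality $\lambda\perp[F]$ is automatic once the stratum is chosen and carries no information about $W$; so the sentence claiming that unwinding the hyperk\"ahler rotation turns $\lambda\perp[F]$ into $[\operatorname{Re}\Omega]=\lambda$ is not correct as stated.

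The paper sidesteps this entirely. Rather than analysing the stratification of $\overline{\mathcal{M}}_{2d}^{\mathrm{ad}}$, it takes a sequence $\eta_k\in f_{2d}(\mathcal{M}_{2d})\subset\mathcal{M}_{\mathrm{K3}}$ converging to a boundary point, writes it in the matrix normal form of Proposition~\ref{prop-period-behaviour-characterized-boundary}, and exploits that $\lambda_k=x_k\omega_{1,k}+y_k\omega_{2,k}+z_k\omega_{3,k}$ is integral with $x_k^2+y_k^2+z_k^2=2d$ bounded. Integrality of the bottom-row entries forces $c_k$ (hence the bottom row of the matrix) to stay bounded, which rules out Cases~(2) and~(3) and leaves only Case~(1) (giving the generalized KE limits with $\lambda\in\langle\omega_{1,k},\omega_{2,k}\rangle$, i.e.\ $[\operatorname{Re}\Omega]=\lambda$ after rotation) or Case~(4) (giving $I^1$). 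This argument never needs to describe $\operatorname{Im}(f_{2d})$ on each stratum; the constraint on the image emerges directly from the integrality, and the condition $[\operatorname{Re}\Omega]=\lambda$ falls out of the normal form rather than from a separate lattice-theoretic translation.
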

	
	Moreover, from Corollary~\ref{tt-crl-12072200} we also obtain GH limits of type~II or type~III Kulikov degenerations.  
	For type~II degenerations, the GH limit is the unit segment (\cite[Rem.~VII]{odaka-oshima-21}).  
	For type~III degenerations, the GH limit is a generalized KE metric on $\mathbb{P}^1$ (\cite[Cor.~VI]{odaka-oshima-21}).
	
	Similarly, one may consider collapsing limits while keeping the complex structure fixed. 
	The following is proved in \S\ref{section-collapsing-with-fixed-complex-structure}, based on an ergodic result for K3 surfaces due to Verbitsky \cite{verbitsky-15-acta} and Theorem~\ref{thm-main-theorem}.
	
	\begin{crl}\label{crl-fix-complex-structure}
		Let $\mathfrak{M}_J$ be the moduli space of isometry classes of unit-diameter hyperkähler metrics 
		on K3 surfaces with fixed complex structure~$J$.  
		Let $\Omega$ be a holomorphic volume form with respect to $J$.
		Define
		$
		W \coloneqq \langle \operatorname{Re}\Omega, \operatorname{Im}\Omega \rangle ,
		$
		then:
		\begin{enumerate}
			\item[(i)] If $\dim\big(W \cap H^2(X,\mathbb{Q})\big)=0$, then $\overline{\mathfrak{M}_{J}} = \overline{\mathfrak{M}}$.  
			In particular, all possible GH limits can be realized.
			
			\item[(ii)] If $\dim\big(W \cap H^2(X,\mathbb{Q})\big)=1$, then $\overline{\mathfrak{M}_{J}} = \overline{\mathfrak{M}_{2d}}$ and is classified in Corollary~\ref{crl-fix-polarization}, where $2d$ is the degree of the primitive element of $W \cap H^2(X,\mathbb{Z})$.
			
			\item[(iii)] If $\dim\big(W \cap H^2(X,\mathbb{Q})\big)=2$, equivalently the Picard number $\rho(X)=20$, then  
			$
			\partial \overline{\mathfrak{M}_{J}} \coloneqq 
			\overline{\mathfrak{M}_{J}} \setminus \mathfrak{M}_{J}
			$
			consists precisely of all generalized Kähler--Einstein metrics on $\mathbb{P}^1$ arising from elliptic fibrations 
			$\pi : (X,J) \to \mathbb{P}^1$, and there are only finitely many possible GH limits.
		\end{enumerate}
	\end{crl}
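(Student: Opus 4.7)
The plan is to use Theorem~\ref{thm-main-theorem} to transfer the classification of $\overline{\mathfrak{M}_J}$ to the computation of the closure of $\mathcal{P}(\mathfrak{M}_J)$ inside the Satake compactification $\overline{\mathcal{M}}_{\mathrm{K3}}^{\mathrm{ad}}$. For any hyperkähler triple $(I,J,K)$ compatible with $J$, the relation $\Omega = \omega_K + i\omega_I$ places $W = \langle\mathrm{Re}\,\Omega,\mathrm{Im}\,\Omega\rangle = \langle\omega_I,\omega_K\rangle$ inside the period 3-plane $\langle\omega_I,\omega_J,\omega_K\rangle$, with $\omega_J\in W^\perp$ Kähler for $(X,J)$. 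Hence under the period identification, $\mathcal{P}(\mathfrak{M}_J)$ is the image in $\mathcal{M}_{\mathrm{K3}}$ of the set of positive 3-planes containing $W$, and continuity of $\Phi$ gives
\[
\overline{\mathfrak{M}_J} \;=\; \Phi\bigl(\overline{\mathcal{P}(\mathfrak{M}_J)}\bigr),
\]
with closure taken in $\overline{\mathcal{M}}_{\mathrm{K3}}^{\mathrm{ad}}$.

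For cases~(i) and~(ii), I would invoke Verbitsky's ergodicity \cite{verbitsky-15-acta}: the $\Gamma$-orbit (resp.\ $\Gamma_\lambda$-orbit, with $\lambda$ a primitive generator of $W\cap H^2(X,\mathbb{Z})$) of $W$ is dense in the Grassmannian of positive 2-planes (resp.\ of positive 2-planes containing $\mathbb{R}\lambda$). Given any target 3-plane $P_0$, pick a 2-plane $W_0 \subset P_0$ of the appropriate arithmetic type; Verbitsky yields $\gamma_n$ with $\gamma_n\cdot W \to W_0$, and a small perturbation turns $\gamma_n^{-1}P_0$ into a 3-plane $P_n \supset W$ with $\gamma_n P_n \to P_0$. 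This gives density of $\mathcal{P}(\mathfrak{M}_J)$ in $\mathcal{M}_{\mathrm{K3}}$ in case~(i), hence $\overline{\mathfrak{M}_J}=\overline{\mathfrak{M}}$; and density in $\mathcal{P}(\mathfrak{M}_{2d})$ in case~(ii) (the reverse inclusion $\mathfrak{M}_J\subset\mathfrak{M}_{2d}$ being immediate since $\lambda\in W\subset P$), hence $\overline{\mathfrak{M}_J}=\overline{\mathfrak{M}_{2d}}$, whose description is Corollary~\ref{crl-fix-polarization}.

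For case~(iii), $W$ is rational of signature $(2,0)$, so $W^\perp$ is rational of signature $(1,19)$ and $\mathrm{NS}(X,J)=W^\perp\cap H^2(X,\mathbb{Z})$ has rank 20. Then $\mathcal{P}(\mathfrak{M}_J)$ is the arithmetic quotient $\Gamma_W\backslash\mathbb{H}^{19}$, where $\Gamma_W$ is the stabilizer of $W$ in $\Gamma$; this is a real hyperbolic orbifold of dimension 19 naturally embedded in $\mathcal{M}_{\mathrm{K3}}$. The Satake boundary strata of $\overline{\mathcal{M}}_{\mathrm{K3}}^{\mathrm{ad}}$ reached in the closure are parametrized by rational parabolic subgroups of $SO(3,19)$ stabilizing $W$, equivalently by rational isotropic subspaces of $W^\perp$; since $W^\perp$ has signature $(1,19)$, only rank-$1$ isotropic lines can occur, which excludes any ``unit segment''-type limit. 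By the Piatetski-Shapiro--Shafarevich--Kodaira theorem, each primitive isotropic $f\in\mathrm{NS}(X,J)$ is the fiber class of an elliptic fibration $\pi_f:(X,J)\to\mathbb{P}^1$, and by the collapsing results \cite{gross-tosatti-zhang-16, song-tian-zhang-19-collapsing} together with Theorem~\ref{thm-main-theorem}, $\Phi$ sends the corresponding stratum to the generalized KE metric on $\mathbb{P}^1$ associated with $\pi_f$. Sterk's finiteness theorem for elliptic fibrations on K3 surfaces yields finitely many $\mathrm{Aut}(X,J)$-orbits of such $f$, giving the finiteness statement.

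The main difficulty, in my view, lies in case~(iii): one must identify precisely which Satake boundary strata of $\overline{\mathcal{M}}_{\mathrm{K3}}^{\mathrm{ad}}$ are accumulated by $\mathcal{P}(\mathfrak{M}_J)$ and match each with a generalized KE metric on $\mathbb{P}^1$ via $\Phi$. This requires a careful analysis of the rational parabolic subgroups of $SO(3,19)$ having $W$ in their Levi factor, combined with the collapsing-to-KE statements, in order to exclude extraneous boundary points and to correctly match strata with $\mathrm{Aut}(X,J)$-orbits of elliptic fibrations.
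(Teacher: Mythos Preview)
Your treatment of cases~(i) and~(ii) matches the paper's: both invoke Verbitsky's ergodicity to show that the $G_{\mathbb{Z}}$-orbit of $W$ is dense in the relevant Grassmannian of positive $2$-planes, and then conclude via Theorem~\ref{thm-main-theorem} and Corollary~\ref{crl-fix-polarization}.

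For case~(iii) your route diverges from the paper's, and the step you flag as the ``main difficulty'' is indeed a genuine gap in your sketch. You assert that the Satake boundary strata reached by the closure of $\mathcal{P}(\mathfrak{M}_J)$ are exactly those indexed by rational isotropic lines in $W^\perp$. This is morally correct, but proving it amounts to showing that the embedding of the rank-one locally symmetric space $\Gamma_W\backslash\mathbb{H}^{19}$ into $\mathcal{M}_{\mathrm{K3}}$ extends to a map of Satake compactifications with the expected image on the boundary. That requires controlling the Satake topology (defined via Siegel sets of $SO(3,19)$) along the sub-symmetric space, which you have not done; your appeal to ``rational parabolic subgroups of $SO(3,19)$ stabilizing $W$'' is suggestive but not a proof, since the parabolics attached to the ambient boundary strata need not fix $W$ a priori.

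The paper bypasses this structural question entirely with a short integrality argument. Given a collapsing sequence $(\omega_1,\omega_2,\omega_{3,k})$, one brings the period into the normal form of Proposition~\ref{prop-period-behaviour-characterized-boundary} via some $g_k\in G_{\mathbb{Z}}$ and $r_k\in SO(3)$, obtaining $(\omega_1',\omega_2',\omega_{3,k}')$. Now take an integral basis $\kappa_1,\kappa_2$ of $W\cap H^2(X,\mathbb{Z})$ and write $\kappa_1=u_k\omega_1'+v_k\omega_2'+w_k\omega_{3,k}'$ with $u_k^2+v_k^2+w_k^2=\kappa_1^2$ bounded. The $x_{22}$-coordinate of $\kappa_1$ in the basis $g_k\mathcal{B}$ is an integer (since $g_k\in G_{\mathbb{Z}}$), but equals $w_k e^{-a_k-b_k-c_k}\to 0$, forcing $w_k=0$ for $k\gg 1$; likewise for $\kappa_2$. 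Hence $W=\langle\omega_1',\omega_2'\rangle$, and the same reasoning on the $x_{21}$-coordinate pins down Case~(1) of Proposition~\ref{prop-period-behaviour-characterized-boundary}, i.e.\ the limit lies in $\mathcal{M}(a)$. This two-line argument replaces the compactification-functoriality analysis you propose, and directly identifies the isotropic line $x_1$ as the fiber class of the limiting elliptic fibration. Finiteness is then \cite[Cor.~8.4.6]{huybrechts-lecturenotes}, which is the Sterk result you cite.
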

	
	\subsection{About the proof}

	The main difficulty in proving Theorem~\ref{thm-main-theorem} is how to relate the intrinsically defined GH compactification $\overline{\mathfrak{M}}$ with an algebraically defined Satake compactification. 
	Previously, there are only very special cases in which continuity is verified \cite[Thm.~6.9]{odaka-oshima-21}. However, the proofs in these cases depend on certain good properties of collapsing along elliptic fibers. The proof does not apply to general cases. 
	
	Our proof in fact proceeds in the reverse direction and does not rely on the partial results in \cite{odaka-oshima-21}. We aim to extract the period point from the data of a collapsing metric.
	See \cite[\S 7.2]{sun-zhang-24} for a discussion of this approach, and see \cite{liu} for related work.
	Since the GH compactification $\overline{\mathfrak{M}}$ has already been classified, Theorem~\ref{thm-main-theorem} is equivalent to the following (cf. \S\ref{section-proof-of-main-thm}):
	
	\begin{question}\label{question-main-question}
		Suppose $\eta_k \in \mathcal{M}_{\mathrm{K3}}$ with
		$\eta_k \to \eta_\infty \in \overline{\mathcal{M}}_{\mathrm{K3}}^{\mathrm{Sat,ad}}$
		and
		$\Phi(\eta_k) \to (X_\infty,g_\infty)$ in the GH sense.
		Determine all possible $\eta_\infty$ from $(X_\infty,g_\infty)$.
	\end{question}
	
	Let $d = \dim X_\infty$.  
	We will answer Question~\ref{question-main-question} in the cases $d=2$ and $d=3$ (Theorems~\ref{thm-2dimensional-collapsing-period-behaviour} and~\ref{thm-3dimensional-collapsing-period-behaviour}).  
	This is enough to prove Theorem~\ref{thm-main-theorem} and we then derive the $d=1$ case as a corollary.  
	
	In the case $d=2$, we show that for sufficiently large $k$, the K3 surface $X_k$ admits an elliptic fibration 
	$
	\pi_k : X_k \to \mathbb{P}^1,
	$
	and the generalized Kähler–Einstein metric on $\mathbb{P}^1$ induced by $\pi_k$ converges to $(X_\infty, g_\infty)$ (Theorem~\ref{thm-2dimensional-collapsing-elliptic-approximation}).  
	Denote by $\mathcal{M}_{\mathrm{Jac}}$ the moduli space of Jacobian elliptic K3 surfaces, and let $\xi_k \in \mathcal{M}_{\mathrm{Jac}}$ be the point corresponding to the Jacobian of $\pi_k$.  
	If $\xi_k$ converges to $\xi_\infty \in \mathcal{M}_{\mathrm{Jac}}$, then Question~\ref{question-main-question} is answered by Theorem~\ref{thm-2dimensional-collapsing-elliptic-approximation} and Theorem~\ref{thm-geometric-realization-GIT}.  
	
	When $\xi_k$ diverges in $\mathcal{M}_{\mathrm{Jac}}$, the situation is analogous to the $d=3$ case.  
	In these cases, we construct three embedded $2$-tori in $X_k$ and compute the integrals of the hyperkähler triple over them.  
	This allows us to answer Question~\ref{question-main-question} in these cases as well.
	
	The structure of the paper is as follows.  
	In \S\ref{section-2} we review some elementary properties and previous works on K3 surfaces.  
	In \S\ref{section-satake} we review the construction of the Satake compactification of the K3 moduli space and give more details on its topology. The definition of $\overline{\Phi}$ is given in \S\ref{section-construction-geometric-realization}.  
	In \S\ref{section-collaping-K3} we study the collapsing behaviours of hyperkähler K3 surfaces and build elliptic fibrations when $d=2$.  
	Finally, in \S\ref{section-period-collapsing} we answer Question~\ref{question-main-question} for $d=2,3$, and prove our main theorem.
	
	{\bf Acknowledgements}: Z. Ouyang would like to thank Liwei Fan, Yuji Odaka for helpful discussions. Z. Ouyang is supported by the National Key R.D. Program of China No.~2023YFA1009900.
	
	G. Tian is supported in part by NSFC No.~11890660 \& No.~12341105, and MOST No.~2020YFA0712800.
	\section{Background on K3 surfaces}\label{section-2}
	
	A \emph{K3 surface} $X$ is a simply connected, compact complex surface with trivial canonical bundle. 
	We refer to \cite[Chapter VIII]{barth-cptcplxsurface-04} and \cite{huybrechts-lecturenotes} for basic properties of K3 surfaces.
	Topologically, all K3 surfaces are diffeomorphic. Its second cohomology group $H^2(X, \mathbb{Z})$, equipped with the cup product pairing, is isomorphic to the \emph{K3 lattice}
	\[
	\Lambda_{\mathrm{K3}} := U^{\oplus 3} \oplus (-E_8)^{\oplus 2},
	\]
	where $U = \begin{pmatrix} 0 & 1 \\ 1 & 0 \end{pmatrix}$, and $E_8$ denotes the unique even
	unimodular positive-definite lattice of rank $8$. The lattice $\Lambda_{\mathrm{K3}}$ is even, unimodular, and has signature $(3,19)$.
	
	\subsection{Metric Moduli of K3 Surfaces}\label{section-metric-moduli}
	
	\begin{defi}
		Let $(X,d_X)$ and $(Y,d_Y)$ be two compact metric spaces. 
		Their \emph{Gromov--Hausdorff (GH) distance} $d_{GH}(X,Y)$ is defined as the infimum of $\varepsilon > 0$ such that there exists a metric $d$ on the disjoint union $X \sqcup Y$ extending $d_X$ and $d_Y$, with $X \subset B_\varepsilon(Y)$ and $Y \subset B_\varepsilon(X)$. 
		
		We say a map (possibly discontinuous) $f: X \to Y$ is an \emph{$\varepsilon$-GH approximation} if:
		\begin{enumerate}
			\item[(i)] $\bigl|d_X(p,q) - d_Y(f(p),f(q))\bigr| < \varepsilon$ for all $p,q \in X$;
			\item[(ii)] $Y \subset B_\varepsilon(f(X))$, i.e., $f(X)$ is $\varepsilon$-dense in $Y$.
		\end{enumerate}
	\end{defi}
	
	\begin{rmk}
		If $f: X \to Y$ is an $\varepsilon$-GH approximation, then $d_{GH}(X,Y) < 2\varepsilon$.
	\end{rmk}
	
	The moduli space of marked K3 surfaces with a Kähler form is an open dense set $K\Omega^0$ in the domain $K\Omega \coloneq O(3,19) / (SO(2) \times O(18))$. By Yau's solution of the Calabi conjecture \cite{yau-78}, $X$ admits a unique Ricci-flat Kähler metric in each Kähler class. Similarly, the boundary points in $K\Omega \setminus K\Omega^0$ are represented by Ricci-flat orbifold metrics \cite{kobayashi-todorov-87}. 
	
	By quotienting out the hyperkähler rotation, we obtain $(K\Omega/\sim )= O(3,19) / (SO(3) \times O(19)) \cong\mathrm{Gr}_3^{+,or}(\Lambda_{\mathrm{K3}, \mathbb{R}}) $, 
	where $\mathrm{Gr}^{+,or}_3(\Lambda_{\mathrm{K3},\mathbb{R}})$ denotes the Grassmannian of oriented positive-definite $3$-planes in $\Lambda_{\mathrm{K3}, \mathbb{R}}$. We define the period domain $\mathcal{M}_{\mathrm{K3}}$ of unit-volume hyperkähler metrics by further forgetting the markings:
	\begin{align*}
		\mathcal{M}_{\mathrm{K3}}	&\coloneq O(\Lambda_{\mathrm{K3}}) \backslash \mathrm{Gr}_3^{+,or}(\Lambda_{\mathrm{K3}, \mathbb{R}}) = O(\Lambda_{\mathrm{K3}}) \backslash \mathrm{Gr}_3^{+}(\Lambda_{\mathrm{K3}, \mathbb{R}}) \\
		&= O(\Lambda_{\mathrm{K3}}) \backslash O(3,19) / (O(3) \times O(19)).
	\end{align*}
	
	%
	
	Let $\mathfrak{M}$ denote the space of isometry classes of smooth unit-diameter hyperkähler metrics on K3 surfaces, and let $\mathfrak{M}^4$ denote the space of unit-diameter hyperkähler metrics on K3 surfaces with ADE singularities. We then have a surjective map:
	\[
	\Phi: \mathcal{M}_{\mathrm{K3}} \to \mathfrak{M} \cup \mathfrak{M}^4,
	\]
	which is known to be continuous (see \cite{anderson-92},\cite[Proposition~6.7]{odaka-oshima-21}). 
	
	Furthermore, the map $\Phi$ would be a bijection if we were to keep track of the orientation. Under our current definition, $\Phi$ is injective on the smooth locus. However, the injectivity of $\Phi$ fails on the orbifold locus. A prime example of this failure occurs in the case of flat Kummer orbifolds $T^4 / \mathbb{Z}_2$.
	
	By \cite[Thm.~II, Thm.~IV]{anderson-92}, if a sequence $\{\eta_k\} \subset \mathcal{M}_{\mathrm{K3}}$ diverges, then the corresponding hyperkähler K3 surfaces $(X,g_k)$ must be volume-collapsing. In particular, $\mathfrak{M} \cup \mathfrak{M}^4$ exhausts all non-collapsing GH limits of hyperkähler metrics on K3 surfaces. (cf.\cite[\S 4]{tian-90-invention})
	
	\subsection{Classification of Collapsing Limits of Hyperkähler K3 Surfaces}
	
	Let $\overline{\mathfrak{M}}$ be the closure of $\mathfrak{M}$ in the space of isometry classes of compact metric spaces.  
	By Gromov’s precompactness theorem \cite[Cor.~11.1.13]{petersen-16}, the space $\overline{\mathfrak{M}}$ is compact.  
	A complete classification of GH limits is given by \cite[Thm.~1.1]{sun-zhang-24}, \cite[Thm.~3]{ouyang-25} as follows:
	
	\begin{thm}\label{thm-classification-GH-limit}
		The GH closure of $\mathfrak{M}$ is given by
		\[
		\overline{\mathfrak{M}} \;=\; \mathfrak{M} \;\cup\; \bigcup_{d=1}^4 \mathfrak{M}^d,
		\]
		where $\mathfrak{M}^d$ is the set of limit spaces of Hausdorff dimension $d$, and
		\begin{itemize}
			\item $\mathfrak{M}^3$ consists of all flat orbifolds $T^3/\{\pm 1\}$;
			\item $\mathfrak{M}^2$ consists of all generalized KE metrics on $\mathbb{P}^1$ given by an elliptic K3 surface;
			\item $\mathfrak{M}^1$ consists of a single element, the unit segment $I^1$.
		\end{itemize}
	\end{thm}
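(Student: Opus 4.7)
The plan is to analyze GH limits by combining Cheeger--Colding structure theory with the special geometric data of a hyperkähler triple and the topological constraint that every K3 has Euler characteristic $\chi = 24$.

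First I would invoke Gromov's precompactness to extract, from any sequence $(X_k,g_k) \in \mathfrak{M}$, a subsequential GH limit $(Y,d_Y)$ of diameter $1$. Since $g_k$ is Ricci-flat, Cheeger--Colding applies and $\dim_{\mathcal H}(Y) = d \in \{1,2,3,4\}$ (the limit is not a point because $\operatorname{diam}(Y)=1$). The Gauss--Bonnet--Chern formula together with $\chi(\mathrm{K3})=24$ gives a uniform $L^2$ curvature bound $\int_{X_k}|\mathrm{Rm}|^2 \le C$, so an $\varepsilon$-regularity theorem (Anderson / Cheeger--Tian / Naber) yields that at most finitely many points $\{p_i\}\subset Y$ carry curvature concentration, and away from them the convergence is smooth modulo a collapsing direction.

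In the non-collapsing case $d=4$, the $\varepsilon$-regularity plus the hyperkähler triple pass to the limit to produce a Ricci-flat Kähler orbifold structure on $Y$; Anderson's orbifold compactness and the work of Kobayashi--Todorov identify $(Y,g_\infty)\in\mathfrak{M}\cup\mathfrak{M}^4$, so this case contributes nothing new. For $d<4$ (volume collapse), I would use the smooth convergence on the regular set to apply Cheeger--Fukaya--Gromov: over the regular part of $Y$ the collapse is with locally bounded sectional curvature, so $(X_k,g_k)$ is locally an almost-flat torus bundle over $Y^{\mathrm{reg}}$. For $d=3$, the fibers are flat $T^1$'s, and, because the total space is simply connected with Euler characteristic $24$ while the fibration is approximately a principal circle bundle, a Bieberbach-type argument forces $Y$ to be a flat $3$-orbifold whose only option compatible with simple connectedness of the total space and the hyperkähler involution is $T^3/\{\pm 1\}$ (with the $8$ singular $\mathbb{Z}_2$-points absorbing the $24$ bubbles, four per point). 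For $d=2$, the fibers are flat $T^2$'s, the hyperkähler triple restricts to give $Y^{\mathrm{reg}}$ a Kähler Ricci-flat structure with a divisor of conical singularities; following Gross--Wilson, Gross--Tosatti--Zhang and Song--Tian--Zhang, one recognizes $(Y,g_\infty)$ as a generalized Kähler--Einstein metric on $\mathbb{P}^1$ arising from an elliptic K3 fibration. For $d=1$ I would iterate the previous collapse (a further $T^3$-fiber direction collapses), so $Y$ is a length space of diameter $1$ with convex/monotone endpoint behavior and Alexandrov lower curvature bound $0$, forcing $Y = I^1$.

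The main obstacle is the passage from the Cheeger--Fukaya--Gromov local fiber bundle picture to a global algebraic fibration in the $d=2$ case: one must upgrade the local $T^2$-structure on $Y^{\mathrm{reg}}$ to an honest elliptic fibration $X_k\to\mathbb P^1$ for $k$ large, matching the $24$ singular fibers with the concentration points and showing that the induced generalized KE metric on the base genuinely realizes the limit. This is the content of the elliptic approximation result (Theorem~\ref{thm-2dimensional-collapsing-elliptic-approximation} in the present paper), and is where the majority of the technical work goes; the $d=3$ classification and the $d=1$ case are comparatively rigid once the $d=2$ structure is understood.
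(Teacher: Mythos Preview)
This theorem is not proved in the paper; it is quoted from Sun--Zhang \cite{sun-zhang-24} and Ouyang \cite{ouyang-25}, with only the $d=2$ case re-derived here as a corollary of Theorem~\ref{thm-2dimensional-collapsing-elliptic-approximation}. Your outline is broadly in the spirit of the Sun--Zhang argument, and you correctly isolate the $d=2$ elliptic approximation as the crux.

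There are, however, genuine gaps in the sketch. For $d=3$, flatness of the limit is a metric computation, not the Bieberbach-type topological argument you describe: in the Gibbons--Hawking ansatz the base scalar curvature equals $\tfrac{3|\nabla V|^2}{2V^3}$ (see \S\ref{subsection-convergence-Rn-inv-metric}), so the limit is flat iff $V$ is constant, and only then does one classify the admissible flat $3$-orbifolds; also, ``four bubbles per point'' gives $8\times 4=32\neq 24$ --- the distribution of curvature concentration among the eight fixed points is a partition of $24$, not necessarily uniform. For $d=1$, ``iterate the previous collapse'' is incorrect: the generic collapsing fiber is a Heisenberg nilmanifold rather than a $T^3$, and the identification of the limit with $I^1$ requires a separate analysis \cite{honda-sun-zhang-19}. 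For $d=2$, the references you invoke (Gross--Wilson, Gross--Tosatti--Zhang, Song--Tian--Zhang) go in the opposite direction --- they construct collapsing sequences with a prescribed generalized KE limit --- and do not establish that \emph{every} $2$-dimensional limit arises this way, which is exactly what \cite{sun-zhang-24,ouyang-25} (and the alternative route via Theorem~\ref{thm-2dimensional-collapsing-elliptic-approximation}) supply.
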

	
	\begin{rmk}\label{rmk-limit-measure-volume-comparison}
		Suppose $(X_j,g_j)\to (X_\infty,g_\infty)$ is a collapsing sequence of hyperkähler K3 surfaces and $v_j$ is the renormalized volume measure on $X_j$.  
		By passing to a subsequence, we can get a renormalized limit measure $v_\infty$ on $X_\infty$.  
		In the case $d=2,3$, $v_\infty$ is proportional to the Hausdorff measure (\cite[Thm~4.3, Thm~5.1]{sun-zhang-24}).  
		In the case $d=1$, $v_\infty$ is given by a concave piecewise affine function (\cite[Thm.~1.1(3)]{honda-sun-zhang-19}). 
	\end{rmk}

	\subsection{Elliptic K3 surfaces}
	
	An \emph{elliptic K3 surface} is a K3 surface $X$ equipped with a holomorphic surjection 
	$
	\pi: X \to \mathbb{P}^1,
	$
	whose generic fiber is a smooth elliptic curve. We call such a map $\pi$ an \emph{elliptic fibration}.  
	In fact, any elliptic curve on a K3 surface naturally extends to an elliptic fibration:
	
	\begin{prop}\label{prop-elliptic-curve-elliptic-K3}
		Let $X$ be a K3 surface and let $C \subset X$ be a smooth elliptic curve. Then:
		\begin{enumerate}
			\item[(i)] The class $[C]$ is primitive in $H_2(X,\mathbb{Z})$.
			\item[(ii)] There exists an elliptic fibration $\pi : X \to \mathbb{P}^1$ such that $C$ is a fiber of $\pi$.
			\item[(iii)] If $C' \subset X$ is an elliptic curve with $[C'] = [C] \in H_2(X,\mathbb{Z})$, then $C'$ is also a fiber of $\pi$.
		\end{enumerate}
	\end{prop}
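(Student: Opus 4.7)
The plan is to prove the three parts in sequence, relying on adjunction on K3 surfaces, Riemann--Roch, and Saint-Donat's theorem on linear systems. For (i), adjunction with $K_X = 0$ gives $C^2 = 2g(C) - 2 = 0$, and Riemann--Roch yields $\chi(\mathcal{O}_X(C)) = 2$. The short exact sequence $0 \to \mathcal{O}_X \to \mathcal{O}_X(C) \to \mathcal{O}_C(C) \to 0$, combined with the adjunction identity $\mathcal{O}_C(C) \cong K_C \cong \mathcal{O}_C$, gives $h^0(\mathcal{O}_X(C)) = 2$, so $|C|$ is a pencil. For primitivity, suppose $[C] = m[D]$ in $H^2(X,\mathbb{Z})$ with $m \geq 2$ and $D$ primitive. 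Since $\mathrm{Pic}(X) \hookrightarrow H^2(X,\mathbb{Z})$ on a K3, this lifts to $\mathcal{O}_X(C) \cong \mathcal{O}_X(D)^{\otimes m}$. Riemann--Roch for $D$ and the fact that $D$ and $-D$ cannot both be effective force $D$ to be effective with $\dim |D| \geq 1$. The Veronese-type morphism $|D| \to |mD| = |C|$, $D' \mapsto mD'$, is algebraic and injective; comparing dimensions with $\dim |C| = 1$ forces $\dim |D| = 1$ and the map to be an isomorphism, so every divisor in $|C|$ is of the form $mD'$, contradicting that the smooth curve $C$ is reduced.

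For (ii), the class $[C]$ is nef (since $C$ is irreducible with $C^2 = 0$) and primitive by (i), so Saint-Donat's theorem ensures the pencil $|C|$ is base-point-free. The induced morphism $\pi\colon X \to \mathbb{P}^1 \cong |C|^{\ast}$ has $C$ as a scheme-theoretic fiber, and Bertini combined with arithmetic genus $1$ shows the general fiber is a smooth elliptic curve. Connectedness of the fibers follows from Stein factorization $\pi = g \circ f$ with $g\colon Y \to \mathbb{P}^1$ finite of degree $k$: since all fibers of $f$ share a common cohomology class $\alpha$, a generic fiber of $\pi$ has $[C] = k\alpha$, and primitivity of $[C]$ forces $k = 1$.

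For (iii), let $C' \subset X$ be an elliptic curve with $[C'] = [C]$. Then $C' \cdot F = [C'] \cdot [C] = C^2 = 0$ for any fiber $F$, so $\pi(C')$ is a single point $t_0$ and $C' \subset \pi^{-1}(t_0)$. By Kodaira's classification of fibers of elliptic fibrations, all components of reducible or non-reduced fibers are rational; since $C'$ has genus $1$, the fiber must be $\pi^{-1}(t_0) = m C'$ for some $m \geq 1$. The identity $[C'] = [\pi^{-1}(t_0)] = m[C']$ in the torsion-free group $H^2(X,\mathbb{Z})$ then forces $m = 1$, so $C' = \pi^{-1}(t_0)$ is a fiber of $\pi$. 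The main technical obstacle I anticipate is the primitivity statement in (i): $C^2 = 0$ and smoothness alone do not suffice, and the Veronese-type injectivity argument within the pencil dimension count is needed to rule out nontrivial factorizations of the line bundle $\mathcal{O}_X(C)$.
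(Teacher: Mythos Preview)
Your argument is correct, but it differs in flavor from the paper's, which is essentially a sequence of citations to Huybrechts' \emph{Lectures on K3 Surfaces}. For (i) the paper simply invokes \cite[Rem.~2.3.13]{huybrechts-lecturenotes}; your Riemann--Roch and Veronese-pencil argument is a self-contained replacement and is valid (the map $|D|\to|C|$, $D'\mapsto mD'$, is a degree-$m$ morphism $\mathbb{P}^1\to\mathbb{P}^1$, so injectivity alone already forces $m=1$; your surjectivity conclusion is an equivalent endpoint). For (ii) the paper cites \cite[Prop.~2.3.10]{huybrechts-lecturenotes} to write $\mathcal{O}(C)\cong\mathcal{O}(mE)$ with $|E|$ an elliptic pencil and then uses (i) to get $m=1$; your route via Saint-Donat and Stein factorization lands in the same place, and in fact base-point-freeness of $|C|$ can be seen more directly from $C\cdot C=0$ with $C$ irreducible and moving. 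The most visible divergence is (iii): the paper observes that $[C']=[C]$ together with $\operatorname{Pic}^0(X)=0$ gives $\mathcal{O}(C')\cong\mathcal{O}(C)$, so $C'\in|C|$ is literally a fiber of $\pi$---a one-line argument. Your approach via $C'\cdot F=0$, containment in a fiber, and Kodaira's classification is correct (and in passing handles the a priori possibility of a multiple fiber, which in fact does not occur on K3), but is a longer path to the same conclusion. In short: both proofs are sound; yours is more explicit, the paper's more economical by outsourcing the standard K3 linear-system theory.
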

	
	\begin{proof}
		The result is essentially contained in \cite{huybrechts-lecturenotes}.  
		The first assertion directly follows from \cite[Rem.~2.3.13]{huybrechts-lecturenotes}.  
		
		For (ii), by \cite[Lem.~2.2.1]{huybrechts-lecturenotes}, we have $(C,C) = 0$ and $\mathcal{O}(C)$ is nef.  
		Then, by \cite[Prop.~2.3.10]{huybrechts-lecturenotes}, there exists a positive integer $m$ and a divisor $E$ such that $\mathcal{O}(C) \cong \mathcal{O}(mE)$, where $\mathcal{O}(E)$ defines an elliptic fibration $\pi : X \to \mathbb{P}^1$ such that $E$ is a fiber.  
		Since $[C]$ is primitive by (i), we must have $m = 1$, and $C$ is a fiber of $\pi$.  
		
		For (iii), if $C'$ is an elliptic curve with $[C'] = [C]$, then $c_1(\mathcal{O}(C')) = c_1(\mathcal{O}(C))$.  
		Hence
		\[
		\mathcal{O}(C) \otimes \mathcal{O}(C')^{-1} \in \operatorname{Pic}^0(X) = \{0\},
		\]
		so $\mathcal{O}(C) \cong \mathcal{O}(C')$.  
		Therefore, $C'$ defines the same elliptic fibration $\pi : X \to \mathbb{P}^1$ and is a fiber of it.
	\end{proof}
	
	\subsubsection{Jacobian fibrations}\label{subsection-jacobian-fibration}
	
	Given an elliptic K3 surface $\pi : X \to \mathbb{P}^1$, there exists a unique elliptic K3 surface $\pi_J : J \to \mathbb{P}^1$ with a holomorphic section $\sigma_J$ and the same functional and homological invariants as $\pi$ (see, e.g., \cite[Chap.~V, Thm~11.1(b)]{barth-cptcplxsurface-04}).  
	Such a fibration $\pi_J : J \to \mathbb{P}^1$ is called the \emph{Jacobian fibration} associated with $\pi$.
	Moreover, there exists a smooth section $\sigma$ of $\pi : X \to \mathbb{P}^1$ and a diffeomorphism $\psi : X \to J$ mapping $\sigma(\mathbb{P}^1)$ to $\sigma_J(\mathbb{P}^1)$
	(see, e.g., \cite[Thm.~3.1, Cor.~3.2]{chen-viaclovsky-zhang-19}).  
	Furthermore, one can choose holomorphic volume forms $\Omega_X$ and $\Omega_J$ on $X$ and $J$, respectively, and a 2-form $\alpha$ on $\mathbb{P}^1$, such that
	\[
	\psi^* \Omega_J = \Omega_X - \pi^*\alpha.
	\]
	
	\begin{prop}\label{jacobian-characterize}
		The cohomology class of $\pi^*\alpha$ is given by 
		\[
		[\pi^*\alpha] = (E,\Omega_X)\, f,
		\]
		where $E,F \in H_2(X,\mathbb{Z})$ are the homology classes of the section $\sigma$ and a general fiber of $\pi$, respectively, and $f = F^\vee$ denotes the Poincaré dual of $F$.
	\end{prop}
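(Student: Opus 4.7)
The plan is to compute the class $[\pi^*\alpha]\in H^2(X,\mathbb{C})$ by observing that it must be a multiple of $f$, then pinning down the multiple by pairing against the section class $E$. Since $\alpha$ lives on the base $\mathbb{P}^1$ and $H^2(\mathbb{P}^1,\mathbb{C})$ is one-dimensional, generated by a class whose pullback under $\pi$ is exactly $f=F^\vee$, one has $[\pi^*\alpha]=c\cdot f$ for some $c\in\mathbb{C}$. Because a section meets a generic fibre transversely in one point, $(E,F)=1$, and hence $c$ is determined by the pairing $(E,\pi^*\alpha)$.

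To evaluate $(E,\pi^*\alpha)$ I will pull back the defining relation $\psi^*\Omega_J=\Omega_X-\pi^*\alpha$ along the smooth section $\sigma:\mathbb{P}^1\to X$ and integrate. Since $\pi\circ\sigma=\mathrm{id}_{\mathbb{P}^1}$, we get $\sigma^*\pi^*\alpha=\alpha$, which yields
\[
(E,\pi^*\alpha)=\int_{\mathbb{P}^1}\alpha=\int_{\mathbb{P}^1}\sigma^*\Omega_X-\int_{\mathbb{P}^1}(\psi\circ\sigma)^*\Omega_J
=(E,\Omega_X)-\int_{\psi(\sigma(\mathbb{P}^1))}\Omega_J.
\]
The last integral depends only on the homology class of $\psi(\sigma(\mathbb{P}^1))$ in $J$; by construction of $\psi$ this class agrees with $[\sigma_J(\mathbb{P}^1)]$. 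But $\sigma_J(\mathbb{P}^1)\subset J$ is a holomorphic rational curve, so the holomorphic $(2,0)$-form $\Omega_J$ restricts to zero on it for purely dimensional reasons, i.e.\ $\sigma_J^*\Omega_J=0$. Therefore the second integral vanishes and $c=(E,\Omega_X)$, which is exactly the claimed identity.

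I do not anticipate a genuine obstacle here; the statement is essentially a calculation governed by two soft ingredients (a dimension count in $\pi^*H^2(\mathbb{P}^1)$ and the transverse intersection $E\cdot F=1$) together with the key geometric input that the holomorphic volume form of the Jacobian fibration vanishes on the image of its holomorphic zero section. The only point that deserves a line of care is the replacement of $\psi\circ\sigma$ by $\sigma_J$ in the integral, which is legitimate because $\psi$ was chosen to send $\sigma(\mathbb{P}^1)$ to $\sigma_J(\mathbb{P}^1)$ so the two smooth cycles agree as subsets and therefore as homology classes in $J$.
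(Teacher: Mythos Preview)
Your proof is correct and follows essentially the same route as the paper: both determine the coefficient by integrating the relation $\psi^*\Omega_J=\Omega_X-\pi^*\alpha$ over the section and using that $\Omega_J$ vanishes on the holomorphic curve $\sigma_J(\mathbb{P}^1)$. The only cosmetic difference is that the paper verifies $[\pi^*\alpha]\in\mathbb{C}\cdot f$ by computing $\int_X\beta\wedge\pi^*\alpha$ via fibre integration, whereas you observe more directly that $\pi^*H^2(\mathbb{P}^1,\mathbb{C})=\mathbb{C}\cdot f$; both arguments are standard and equivalent in effect.
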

	
	\begin{proof}
		Note that
		\[
		\int_{\sigma(\mathbb{P}^1)} (\Omega_X - \pi^*\alpha) 
		= \int_{\sigma_J(\mathbb{P}^1)} \Omega_J 
		= 0.
		\]
		Thus
		\[
		\int_{\mathbb{P}^1} \alpha 
		= \int_{\sigma(\mathbb{P}^1)} \pi^*\alpha
		= \int_{\sigma(\mathbb{P}^1)} \Omega_X 
		= (E,\Omega_X).
		\]
		
		Let $R \subset \mathbb{P}^1$ be the regular locus of $\pi$, i.e., $\mathbb{P}^1$ minus at most 24 points.  
		For any closed smooth 2-form $\beta$ on $X$, since $\pi$ is locally trivial over $R$, we have
		\[
		\int_X \beta \wedge \pi^*\alpha 
		= \int_{\pi^{-1}(R)} \beta \wedge \pi^*\alpha 
		= (F,\beta) \int_R \alpha
		= (F,\beta)(E,\Omega_X).
		\]
		Hence $[\pi^*\alpha] = (E,\Omega_X)\, f$ in cohomology.
	\end{proof}
	
	\begin{prop}\label{prop-e,f-give-elliptic}
		Suppose $e,f \in \Lambda_{\mathrm{K3}}$ have intersection form $U = \begin{pmatrix} 0 & 1 \\ 1 & 0 \end{pmatrix}$.  
		Given an oriented positive-definite 2-plane $W$ in $\langle f \rangle^\perp$, there exists an elliptic K3 surface $\pi : X \to \mathbb{P}^1$ determined by $W$ up to isomorphism, and a marking $\phi : H^2(X,\mathbb{Z}) \to \Lambda_{\mathrm{K3}}$, such that $W$ is spanned by $(\mathrm{Re}\,\Omega_X, \mathrm{Im}\,\Omega_X)$ and $\phi^{-1}(f)$ is the Poincaré dual of the fiber class.  
		
		If moreover $W \subset \langle e,f \rangle^\perp$, then the resulting elliptic K3 surface is a Jacobian elliptic K3 surface.
	\end{prop}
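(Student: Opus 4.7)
The plan is to invoke surjectivity of the period map for K3 surfaces to produce a complex structure with the prescribed period, and then extract the elliptic fibration from the Picard lattice data. I first choose an oriented basis $(v_1, v_2)$ of $W$ with $(v_1, v_1) = (v_2, v_2) > 0$ and $(v_1, v_2) = 0$, and set $\Omega = v_1 + i v_2 \in \Lambda_{\mathrm{K3}, \mathbb{C}}$. Then $[\Omega]$ lies in the K3 period domain $\mathcal{D} = \{[\Omega] \in \mathbb{P}(\Lambda_{\mathrm{K3}, \mathbb{C}}) : (\Omega, \Omega) = 0,\, (\Omega, \bar{\Omega}) > 0\}$, and the surjectivity of the period map (Todorov--Looijenga) yields a marked K3 surface $(X, \phi)$ with $\phi_{\mathbb{C}}([\Omega_X]) = [\Omega]$.

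Since $W \subset \langle f \rangle^\perp$, the pairing $(f, \Omega)$ vanishes, so $\phi^{-1}(f)$ is an integral $(1,1)$-class on $X$. Because $(f,f) = 0$ and $f$ is primitive in $\Lambda_{\mathrm{K3}}$, the class $\phi^{-1}(f)$ is a primitive isotropic element of $\operatorname{Pic}(X)$. The core step is to modify the marking $\phi$ by an element of the Weyl group (a composition of reflections in $(-2)$-classes, possibly combined with a sign change) so that $\phi^{-1}(f)$ becomes nef. Then \cite[Prop.~2.3.10]{huybrechts-lecturenotes} produces an elliptic fibration $\pi : X \to \mathbb{P}^1$ whose fiber class is $\phi^{-1}(f)$. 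Uniqueness of the resulting elliptic K3 surface up to isomorphism follows from the strong Torelli theorem together with Proposition~\ref{prop-elliptic-curve-elliptic-K3}(ii), which says that the fibration is determined by its primitive fiber class.

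For the Jacobian case, if $W \subset \langle e, f \rangle^\perp$, then $\phi^{-1}(e)$ is also an integral $(1,1)$-class, with $(\phi^{-1}(e))^2 = 0$ and $(\phi^{-1}(e), \phi^{-1}(f)) = 1$. After further Weyl reflections, $\phi^{-1}(e)$ is represented by an effective divisor meeting each fiber of $\pi$ exactly once, which is a section; hence $\pi$ is Jacobian. The main obstacle will be carrying out the Weyl-group adjustment simultaneously for $f$ and $e$ while preserving the period: the reflections must be chosen so as to land in the Kähler cone of $X$ and render both classes nef/effective in a compatible way. This is technical but all the necessary tools---reflections in $(-2)$-classes, Riemann--Roch on K3 surfaces, and the strong Torelli theorem---are standard in K3 surface theory.
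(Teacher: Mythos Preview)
Your overall strategy---surjectivity of the period map, Picard--Lefschetz reflections to make $\phi^{-1}(f)$ nef, then \cite[Prop.~2.3.10]{huybrechts-lecturenotes} and the Torelli theorem---matches the paper's proof exactly. The one substantive difference is in the Jacobian step. You propose applying \emph{further} Weyl reflections to make $\phi^{-1}(e)$ effective, and you correctly flag the difficulty of doing this compatibly with the reflections already applied to $f$. The paper sidesteps this obstacle entirely: once $\phi^{-1}(f)$ is the fiber class, no further reflections are needed. Since $(e,\Omega_X)=0$ gives $\phi^{-1}(e)\in H^{1,1}(X,\mathbb{Z})$ with square $0$, the Riemann--Roch argument (\cite[Chap.~VIII, Prop.~3.7]{barth-cptcplxsurface-04}) forces either $\phi^{-1}(e)$ or $-\phi^{-1}(e)$ to be effective; the pairing $(e,f)=1>0$ against the nef class $f$ then rules out $-\phi^{-1}(e)$. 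Also note a small gap in your last sentence: an effective divisor in the class $\phi^{-1}(e)$ with $(\phi^{-1}(e),\phi^{-1}(f))=1$ need not itself be a section---it could be reducible with vertical components---so you must pass to an irreducible component $D_i$ with $(D_i,f)=1$, which is then the section. The paper's route is therefore both simpler and avoids the compatibility issue you anticipated.
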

	
	\begin{proof}
		By the surjectivity of the period map, there exists a marked pair $(X,\phi)$ such that $W = \phi(\langle \mathrm{Re}\,\Omega_X, \mathrm{Im}\,\Omega_X \rangle)$.  
		By \cite[Rem.~2.3.13(iii)]{huybrechts-lecturenotes}, after applying a sequence of Picard–Lefschetz reflections to $\phi$, we may assume $\phi^{-1}(e)$ corresponds to a nef line bundle $L$ on $X$, giving an elliptic K3 surface by \cite[Prop.~2.3.10]{huybrechts-lecturenotes}.  
		Then $\phi^{-1}(e)$ is the Poincaré dual of the fiber class.
		
		If there is another marked K3 $(X',\phi')$ such that $\phi'^{-1}(f)$ is nef, then $f$ lies in the closure of the Kähler cones $\phi(\mathcal{K}_X)$ and $\phi'(\mathcal{K}_{X'})$.  
		By \cite[Prop.~8.5.5]{huybrechts-lecturenotes}, there exists a sequence of Weyl reflections $s_i$ fixing $f$, such that $s_1 \circ \cdots \circ s_k$ maps $\phi(\mathcal{K}_X)$ to $\phi'(\mathcal{K}_{X'})$.  
		By the Torelli theorem \cite[Chap.~VIII, Thm.~11.1]{barth-cptcplxsurface-04}, the composition $\phi'^{-1} \circ s_1 \circ \cdots \circ s_k \circ \phi$ is induced by a biholomorphism $\tau : X \to X'$ mapping $\phi^{-1}(f)$ to $\phi'^{-1}(f)$.  
		Hence the associated elliptic K3 surfaces are isomorphic.
		
		Finally, the elliptic surface corresponding to $x \in \mathcal{M}(a)$ is in fact a Jacobian elliptic fibration.  
		Since $(e,\Omega_X) = 0$, we have $e \in H^{1,1}(X,\mathbb{Z})$.  
		By \cite[Chap.~VIII, Prop.~3.7]{barth-cptcplxsurface-04}, either $e$ or $-e$ is effective on $X$.  
		Since $(e,f) = 1$, we know $e$ is effective.  
		Thus there exists some component $D_i$ of $e$ with $(f,D_i) = 1$, which represents a section of $\pi : X \to \mathbb{P}^1$.
	\end{proof}
	The converse of Proposition~\ref{prop-e,f-give-elliptic} is also true.
	For a Jacobian elliptic K3 surface, we can take $e = E^\vee + F^\vee$, $f = F^\vee$ as in Proposition~\ref{jacobian-characterize}.  
	In particular, every Jacobian elliptic K3 surface is algebraic, since $e + f$ is an ample class by the Nakai criterion.
	
	\subsubsection{Generalized KE metrics}\label{section-generalized-KE}
	
	Let $\pi : X \to C$ be a smooth elliptic fibration over a Riemann surface, and let $\Omega$ be a holomorphic volume form on $X$.  
	We define a Kähler form $\omega$ on $C$ by
	\begin{equation}\label{formula-defi-gen-KE}
		\int_V \omega \;=\; \int_{\pi^{-1}(V)} \Omega \wedge \overline{\Omega}, 
		\qquad V \subset C \;\text{open}.
	\end{equation}
	
	\begin{defi}
		The \emph{generalized KE metric} on $C$ associated with $\pi : X \to C$ and $\Omega$ is the metric induced by the Kähler form $\omega$ defined above.
	\end{defi}
	
	We can describe the generalized KE metric more explicitly.  
	Let $U \subset C$ be an open set over which there exists a holomorphic section.  
	By \cite[Prop.~7.2]{gross-97}, there exists a lattice $L$ generated by two holomorphic 1-forms $\tau_1, \tau_2$ on $U$ and a biholomorphic map
	\[
	h: \mathcal{T}^*U \, / \, \langle \tau_1, \tau_2 \rangle \;\xrightarrow{\sim}\; \pi^{-1}(U),
	\]
	such that $h^* \Omega$ is the canonical holomorphic 2-form on $\mathcal{T}^*U$.  
	Assuming $\mathrm{Im}(\bar{\tau}_1 \tau_2) > 0$, the generalized KE metric is locally given by the Kähler form
	\[
	\omega \;=\; -\frac{1}{2} \, \mathrm{Re}\!(\tau_1 \wedge \bar{\tau}_2).
	\]
	This definition is independent of the choice of basis $\tau_1, \tau_2$.  
	If we write $\tau_i = w_i \, dz$ in local coordinates, then
	\[
	\omega \;=\; \frac{i}{2}\, \mathrm{Im}(\bar{w}_1 w_2)\, dz \wedge d\bar{z}, 
	\qquad 
	g \;=\; \mathrm{Im}(\bar{w}_1 w_2) |dz|^2.
	\]
	A direct calculation shows that the curvature $K(g) \ge 0$.
	
	Let $\pi : X \to \mathbb{P}^1$ be an elliptic fibration of a K3 surface, and denote by $S$ the singular locus in $\mathbb{P}^1$.  
	Then we have a generalized KE metric on $\mathbb{P}^1 \setminus S$.  
	From the definition, it follows immediately that the associated Jacobian elliptic K3 surface $\pi_J : J \to \mathbb{P}^1$ induces the same generalized KE metric on $\mathbb{P}^1 \setminus S$ as $\pi : X \to \mathbb{P}^1$.
	
	The asymptotic behavior of $\tau_1$ and $\tau_2$ near the singular locus is classified (see, e.g., \cite[Table~1]{hein-12}) according to the type of Kodaira singular fibers \cite{kodaira-63}.  
	Consequently, the local behavior of the generalized KE metric near singular points is well understood (\cite[Table~3.2]{ouyang-25}).  
	In particular, all such metrics on $\mathbb{P}^1 \setminus S$ have finite diameter, so the metric completion identifies naturally with $\mathbb{P}^1$.
	
	\subsection{Metric realization of the moduli space of elliptic K3 surfaces}
	
	\subsubsection{GIT quotient construction}\label{subsection-git-quotient-construction}
	
	We briefly review the GIT quotient construction of $\overline{\mathcal{M}_W}$, which provides a compactification of the moduli space of Jacobian elliptic K3 surfaces.
	
	Recall that an elliptic surface in Weierstrass normal form is given by the following (see, e.g., \cite[Thm.~2.1]{miranda-81})
	\begin{equation}\label{weierstrass}
		X = \Bigl\{ ([x:y:z], t) \in 
		\mathbb{P}\bigl(\mathcal{O}_{\mathbb{P}^1}(4)\oplus \mathcal{O}_{\mathbb{P}^1}(6)\oplus \mathcal{O}_{\mathbb{P}^1}\bigr) : 
		y^2 z = x^3 + h_8(t) x z^2 + h_{12}(t) z^3 \Bigr\},
	\end{equation}
	where 
	\[
	(h_8,h_{12}) \in W \coloneqq H^0(\mathbb{P}^1, \mathcal{O}(8)) \oplus H^0(\mathbb{P}^1, \mathcal{O}(12)).
	\]
	Here $h_8$ and $h_{12}$ can be viewed as polynomials of degree at most 8 and 12 in $t$, respectively.
	There is a natural action of $\mathbb{C}^\ast \times SL_2(\mathbb{C})$ on $W \setminus \{0\}$, where $SL_2(\mathbb{C})$ acts by automorphisms of $\mathbb{P}^1$ and $\lambda \in \mathbb{C}^\ast$ acts by 
	$
	\lambda \cdot (h_8,h_{12}) = (\lambda^2 h_8, \lambda^3 h_{12}).
	$
	Points in the same orbit correspond to isomorphic elliptic surfaces.  
	
	Denote
	\[
	\mathbb{P}W \;\coloneqq\; W / \mathbb{C}^\ast \;\cong\; \mathbb{P}(2^9, 3^{13}),
	\]
	which carries a natural polarization via a Veronese embedding into some projective space $\mathbb{P}^N$.  
	The GIT quotient \cite{mumford-f-k-git-94} with respect to this polarization is
	\[
	\overline{\mathcal{M}_W} \;\coloneqq\; \bigl( \mathbb{P}(2^9, 3^{13}) \bigr) /\!\!/ \, SL_2(\mathbb{C}),
	\]
	which is a projective variety.  
	Denote by $X^s, X^{ss}, X^{ps}$ the stable, semistable, and polystable loci of $\mathbb{P}W$, respectively.  
	Then (see \cite[Prop.~5.1]{miranda-81})
	\begin{align*}
		X^s &= \bigl\{ [h_8:h_{12}] \in \mathbb{P}W \;\bigm|\; v_p(h_8) < 4 \text{ or } v_p(h_{12}) < 6 \text{ for all } p \in \mathbb{P}^1 \bigr\}, \\
		X^{ss} &= \bigl\{ [h_8:h_{12}] \in \mathbb{P}W \;\bigm|\; v_p(h_8) \le 4 \text{ or } v_p(h_{12}) \le 6 \text{ for all } p \in \mathbb{P}^1 \bigr\},
	\end{align*}
	and every element in $X^{ps} \setminus X^s$ is $SL_2(\mathbb{C})$-equivalent to $[a t^4 : b t^6]$ with $a,b \in \mathbb{C}$.  
	From GIT theory, we have a surjection
	\[
	\pi_W : X^{ss} \longrightarrow \overline{\mathcal{M}_W}.
	\]
	
	Given $(h_8,h_{12}) \in W$, define the discriminant
	\[
	\Delta(t) = 4 h_8(t)^3 + 27 h_{12}(t)^2 \;\in\; H^0(\mathbb{P}^1, \mathcal{O}(24)).
	\]
	Define $\mathcal{M}_{\mathrm{Jac}} \subset \overline{\mathcal{M}_W}$ to be the locus of equivalence classes $[h_8:h_{12}]$ satisfying:
	\begin{itemize}
		\item $\Delta(t) \not\equiv 0$;
		\item for every $q \in \mathbb{P}^1$, either $v_q(h_8) < 4$ or $v_q(h_{12}) < 6$.
	\end{itemize}
	By \cite[Thm.~2.1, Cor.~2.5]{miranda-81}, $\mathcal{M}_{\mathrm{Jac}}$ is the moduli space of Jacobian elliptic K3 surfaces.  
	The associated K3 surfaces are obtained by resolving the singularities of the surfaces defined by \eqref{weierstrass}, and $\Delta(t)=0$ if and only if the corresponding fiber is singular.  
	Thus $\overline{\mathcal{M}_W}$ provides a compactification of $\mathcal{M}_{\mathrm{Jac}}$, with $\pi^{-1}(\mathcal{M}_{\mathrm{Jac}})$ an open dense subset of $X^s$.
	
	\subsubsection{Construction of the metric realization map}
	
	We now define the metric realization map $\overline{\Phi}_W : \overline{\mathcal{M}_W} \to \overline{\mathfrak{M}}$.  
	We actually define
	$
	\widetilde{\Phi}_W \coloneqq \overline{\Phi}_W \circ \pi_W : X^{ss} \to \overline{\mathfrak{M}},
	$ 
	and divide the definition into $3$ cases:
	
	\begin{enumerate}
		\item[(i)] If $[h_8:h_{12}] \in X^s$ with $\Delta \not\equiv 0$, then the point lies in $\mathcal{M}_{\mathrm{Jac}}$ and defines a smooth elliptic K3 surface.  
		We set $\widetilde{\Phi}_W([h_8:h_{12}])$ to be the unit-diameter generalized KE metric associated with it.
		
		\item[(ii)] If $[h_8:h_{12}] \in X^s$ with $\Delta \equiv 0$, then $[h_8:h_{12}]$ is $SL_2(\mathbb{C})$-equivalent to $[3 G_4^2: G_4^3]$, where
		\[
		G_4 = t(t-1)(t-2)(t-c), \quad c \neq 0,1,2,\infty.
		\]
		We define $\widetilde{\Phi}_W([h_8:h_{12}])$ to be the unit-diameter singular metric sphere $(\mathbb{P}^1,g)$, where
		\[
		g = C \left| \frac{1}{t(t-1)(t-2)(t-c)} \right| |dt|^2.
		\]
		This metric space is isometric to a flat orbifold $T^2 / \{\pm 1\}$.
		
		\item[(iii)] If $[h_8:h_{12}] \in X^{ss} \setminus X^s$, we set $\widetilde{\Phi}_W([h_8:h_{12}]) = I^1$.
	\end{enumerate}
	
	It is straightforward to check that this also defines $\overline{\Phi}_W : \overline{\mathcal{M}_W} \to \overline{\mathfrak{M}}$.  
	The following theorem is proved in \cite[Thm.~7.14]{odaka-oshima-21}; we will provide a simpler proof in Appendix~\ref{section-appendix}.
	
	\begin{thm}\label{thm-geometric-realization-GIT}
		The map
		\[
		\overline{\Phi}_W : \overline{\mathcal{M}_W} \longrightarrow \overline{\mathfrak{M}}
		\]
		is continuous, where $\overline{\mathcal{M}_W}$ is endowed with the analytic topology.
	\end{thm}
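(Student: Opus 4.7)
The plan is to lift the question to $X^{ss}$ and prove that $\widetilde{\Phi}_W = \Phi_W \circ \pi_W : X^{ss} \to \overline{\mathfrak{M}}$ is continuous in the analytic topology; since $\pi_W$ is a topological quotient map, the continuity of $\Phi_W$ then follows. Fix a convergent sequence $[h_8^{(k)}:h_{12}^{(k)}] \to [h_8^{(\infty)}:h_{12}^{(\infty)}]$ in $X^{ss}$, choose Weierstrass representatives, and split the analysis according to which stratum contains the limit. The three cases to handle are: (a) limit in $X^s$ with $\Delta^{(\infty)}\not\equiv 0$, i.e., in $\mathcal{M}_{\mathrm{Jac}}$; (b) limit in $X^s$ with $\Delta^{(\infty)}\equiv 0$, hence $SL_2(\mathbb{C})$-equivalent to $[3G_4^2:G_4^3]$; (c) limit in $X^{ss}\setminus X^s$, hence polystably equivalent to $[at^4:bt^6]$.

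For case (a), the resolved K3 surfaces $\pi_k:\widetilde X_k\to\mathbb{P}^1$ and their holomorphic volume forms $\Omega_k = dx\wedge dt/y$ vary in a smooth analytic family near $k=\infty$. Using the local expression $g_k = \operatorname{Im}(\bar w_1^{(k)} w_2^{(k)})\,|dt|^2$ from \S\ref{section-generalized-KE}, the relative periods $w_i^{(k)}$ converge locally uniformly on $\mathbb{P}^1$ away from the discriminant locus $\{\Delta^{(\infty)}=0\}$. The Kodaira classification of singular fibers, together with the local models for the generalized KE metric near each fiber type recorded in \cite[Table 3.2]{ouyang-25}, furnishes a uniform diameter upper bound and uniform control of the metric near the singular set. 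Combining locally uniform convergence off the singular set with uniform collapse of small neighborhoods of the singular points yields GH convergence of the unit-diameter rescalings.

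For case (b) I would specialize the period computation to the isotrivial family with four $I_0^*$ fibers: the ratio $w_2/w_1$ is constant and the fiber periods can be computed in closed form from $G_4$, giving $g_k \to C|G_4|^{-1}|dt|^2$ after diameter normalization, which is precisely the flat orbifold $T^2/\{\pm 1\}$ prescribed by $\widetilde\Phi_W$.

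Case (c) is where I expect the real work. Here the Hilbert–Mumford criterion supplies, for large $k$, elements $\lambda_k\in SL_2(\mathbb{C})$ and scalings that drive $\lambda_k\cdot[h_8^{(k)}:h_{12}^{(k)}]$ toward $[at^4:bt^6]$ along a destabilizing one-parameter subgroup. This identifies the Weierstrass family as a family of type II Kulikov degenerations, along which two of the three periods governing the generalized KE formula go to zero while the third stays bounded, so the metric on $\mathbb{P}^1$ degenerates transversally and after unit-diameter rescaling converges in the GH topology to the segment $I^1$. The main obstacle is that in this case there is no honest limit surface in the Weierstrass family, and the $SL_2$-action must be tracked against the diameter normalization carefully; the key quantitative input will be a two-sided comparison between the GH diameter of the rescaled base and the dominant period integral along the destabilizing direction, together with the standard fact that the perpendicular fiber directions collapse at a strictly faster rate. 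Once this estimate is in hand, all three cases combine to give continuity of $\widetilde{\Phi}_W$, and hence of $\Phi_W$.
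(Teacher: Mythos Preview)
Your framework (lift to $\widetilde\Phi_W$ via the quotient property of $\pi_W$, then case-split on the limit stratum) matches the paper, and case (a) is essentially right in outline. The gaps are in (b) and (c), and both stem from missing the paper's main technical engine: an explicit fiber-area formula $\mu(a,b)=\lambda(j)\bigl(|3a|^{1/2}+|b|^{1/3}\bigr)^{-1}\log^+|j|$ with $\lambda$ continuous on $\mathbb{P}^1_j$ and $\lambda(\infty)=8\pi^2/3$ (Lemma~\ref{lem-estimate-mu(a,b)}), giving $g=\mu(h_8(t),h_{12}(t))\,|dt|^2$, together with a Bishop--Gromov volume comparison for generalized KE metrics on $\mathbb{P}^1$ (Proposition~\ref{prop-volume-comparison-gen-KE}, proved by smoothing to curvature $\ge -\varepsilon$).

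In case (b) the approaching fibrations with $\Delta_k\not\equiv 0$ are \emph{not} isotrivial; the ratio $w_2^{(k)}/w_1^{(k)}$ varies and in fact $j_k(t)\to\infty$ off a finite set, so your ``closed-form period'' description applies only to the limit, not to the sequence. Writing $\Delta_k=\varepsilon_k\prod(t-\gamma_{i,k})$ with $\varepsilon_k\to 0$, the $\mu$-formula gives $g_k\sim \tfrac{2\pi^2}{3|G_4|}\log(1/\varepsilon_k)\,|dt|^2$ on compacta away from the roots, so it is $g_k/\log(1/\varepsilon_k)$ that converges, and one still needs a uniform volume bound on small disks (Lemma~\ref{lem-case2-volumeestimate}) plus Proposition~\ref{prop-volume-comparison} to upgrade local to GH convergence; your plan sees neither the rescaling nor the volume step. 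In case (c) the paper does not use Kulikov models or period degeneration at all: the $\mu$-formula shows $g_k$ is asymptotic to $C|t|^{-2}|dt|^2$ on an annulus, i.e.\ a flat cylinder whose length dominates its circumference, and then Bishop--Gromov forces $\mathrm{vol}(\mathbb{P}^1)/\mathrm{diam}^2\to 0$, whence the limit is $I^1$ by \cite{honda-sun-zhang-19}. The paper also splits your (c) into three subcases: polystable $[at^4:bt^6]$ with $[a:b]\ne[3:1]$, the special point $[3t^4:t^6]$ (where $\Delta\equiv 0$ and the $\log(1/\varepsilon_k)$ rescaling reappears), and strictly semistable limits (reduced to polystable by pushing $g_kx_k$ to the closed orbit). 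Your destabilizing-period sketch does not supply the quantitative diameter/volume input that actually drives the collapse.
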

	
	\section{Satake compactification}\label{section-satake}
	
	Let $G = O(3,19)$ be the automorphism group of $\Lambda_{\mathrm{K3}}$, let $K = O(3)\times O(19)$ be a maximal compact subgroup, let $G_0=SO_0(3,19)$, and let $K_0=SO(3)\times SO(19)$ be the connected components containing $\operatorname{Id}$,
	and let $G_{\mathbb{Z}}$ be the arithmetic subgroup of $G$ preserving the K3 lattice $\Lambda_{\mathrm{K3}}$. Then we have $\mathcal{M}_{\mathrm{K3}} \cong G_{\mathbb{Z}} \backslash G / K\cong G_{\mathbb{Z}} \backslash G_0 / K_0$. 
	In this section, we review the construction of the Satake compactification of $\mathcal{M}_{\mathrm{K3}}$ and $\mathcal{M}_{2d}$ with respect to the adjoint representation. 
	The references for this section are \cite{borel-ji-06,odaka-oshima-21,satake-60-2,satake-60-1}. 
	We also give more information on the topology of the Satake compactification near the boundary, 
	which will be used in the subsequent sections.
	
	\subsection{Lie algebra set-up}\label{section-lie-algebra}
	In this subsection we fix some notation on Lie algebras.
	Let $w_1,\ldots,w_{22}$ be an orthogonal basis of $\Lambda_{K3, \mathbb{R}}$ with 	
	\begin{equation}\label{basis-of-W}
		w_i^2 = 1 \quad (i \le 3), 
		\qquad 
		w_i^2 = -1 \quad (i \ge 4).        
	\end{equation}
	Consider the representation of $\mathfrak{g} = \mathfrak{so}(3,19)$ on $\Lambda_{K3,\mathbb{C}}$. 
	In the basis $w_1,\ldots,w_{22}$, the Lie algebra $\mathfrak{g}$ can be written as
	\[
	\mathfrak{g} =
	\left\{
	\begin{pmatrix}
		A_{3\times 3} & B_{3\times 19} \\
		B^{T}_{19\times 3} & C_{19\times 19}
	\end{pmatrix}
	\;\middle|\;
	A + A^{T} = 0,\; C + C^{T} = 0
	\right\}.
	\]
	We take a maximal compact subalgebra 
	\[
	\mathfrak{k} = \mathfrak{so}(3)\oplus \mathfrak{so}(19) =
	\left\{
	\begin{pmatrix}
		A_{3\times 3} & 0 \\
		0 & C_{19\times 19}
	\end{pmatrix} \in \mathfrak{g}
	\right\},
	\]
	and its orthogonal complement (with respect to the Killing form)
	\[
	\mathfrak{p} =
	\left\{
	\begin{pmatrix}
		0 & B \\
		B^T & 0
	\end{pmatrix} \in \mathfrak{g}
	\right\}.
	\]
	
	Define $\mathfrak{h} = \{ H(\bm{a})=H(a_1,\dots,a_{11}) : a_i\in \mathbb{R}\}$,
	where
	\begin{equation} \label{expression-H(a)}
		H(\bm{a}) =
		\begin{pmatrix}
			\begin{array}{c|c|c}
				0 & 0 & \begin{matrix}
					&  & a_1 \\
					& a_2 & \\
					a_3 &  & \\
				\end{matrix} \\ \hline
				0 & \begin{matrix}
					0 & a_4 & & & \\
					-a_4 & 0 & & & \\
					& & \ddots & & \\
					& & & 0 & a_{11} \\
					& & & -a_{11} & 0
				\end{matrix} & 0 \\ \hline
				\begin{matrix}
					& & a_3 \\
					& a_2 & \\
					a_1 & &
				\end{matrix} & 0 & 0
			\end{array}
		\end{pmatrix}.
	\end{equation}
	
	Then $\mathfrak{h}$ is a Cartan subalgebra of $\mathfrak{g}$, and $\mathfrak{h}^- \coloneqq \mathfrak{h} \cap \mathfrak{p}$ is a maximal abelian subalgebra of $\mathfrak{p}$.  
	Define linear functions $e_k: \mathfrak{h} \to \mathbb{C}$ by
	\[
	e_k\big(H(a_1,\, \ldots,a_{11})\big) =
	\begin{cases}
		a_k, & k=1,2,3, \\[2mm]
		\mathrm{i}\,a_k, & 4 \le k \le 11.
	\end{cases}
	\]
	Then $\{\pm e_k\}$ gives all the weights of the representation $\Lambda_{K3,\mathbb{C}}$.  The weight vectors associated to $e_k$ are given by
	\begin{equation}
		\label{basis-of-W-w-pm-e_k}
		w_{\pm e_k} =
		\begin{cases}
			\frac{1}{\sqrt{2}}(w_k \pm w_{23-k}), & k=1,2,3, \\[2mm]
			w_{2k-4} \pm i w_{2k-3}, & 4 \le k \le 11.
		\end{cases}
	\end{equation}
	By choosing suitable $w_1,\ldots,w_{22}$ we may assume $w_{\pm e_k}$ are primitive elements in $\Lambda_{\mathrm{K3}}$ for $k=1,2,3$.
	
	Let $R$ be the root system of $\mathfrak{g}_{\mathbb{C}}$, and 
	\[
	\Pi = \{ e_1-e_2,\, e_2-e_3,\, \ldots,\, e_{10}-e_{11},\, e_{10}+e_{11} \}
	\]
	the set of (ordered) simple roots of $\mathfrak{g}_{\mathbb{C}}$, and
	\[
	\Pi^- = \{ e_1-e_2,\, e_2-e_3,\, e_3 \}
	\]
	the restricting roots of $\Pi$ on $\mathfrak{h}^-$.  
	Given a subset $\Pi'\subset\Pi$, we define the corresponding Lie subalgebra 
	\[
	\mathfrak{g}_\mathbb{C}' = \mathfrak{g}_\mathbb{C}(\Pi') := \{\Pi'\}_\mathbb{C} + 
	\sum_{\alpha \in R'} \mathfrak{g}_\alpha,
	\]
	where $\{\Pi'\}_\mathbb{C}$ denotes the linear space generated by $\Pi'$ (view $e_k$ as elements of $\mathfrak{h}_\mathbb{C}$ via the Killing form), and $R'$ is the root subsystem of $\mathfrak{g}_\mathbb{C}$ generated by $\Pi'$. Set
	\[
	\mathfrak{g}' = \mathfrak{g} \cap \mathfrak{g}_\mathbb{C}', 
	\quad
	\mathfrak{k}' = \mathfrak{k} \cap \mathfrak{g}_\mathbb{C}',
	\]
	and $G'_0,K'_0$ the corresponding Lie subgroups of $G_0,K_0$ associated with $\mathfrak{g}'$ and $\mathfrak{k}'$.
	
	\subsection{Compactification of $G/K\cong G_0/K_0$}
	We first study the structure of the Satake compactification of the symmetric space $\mathrm{Gr}^+_3(\Lambda_{\mathrm{K3},\mathbb{R}})\cong G/K$ with respect to the adjoint representation. 
	The Satake compactification of $G_{\mathbb{Z}} \backslash G / K$ will be constructed later by taking the $G_{\mathbb{Z}}$ quotient of a partial compactification of $G/K$.
	
	Let $V = \mathfrak{g}_{\mathbb{C}}$ and $\rho = \mathrm{Ad}: G\to SL(V)$. Then $\rho$ is a faithful irreducible representation of highest weight $\lambda_\rho = e_1+e_2$.
	By the unitary trick, there exists a Hermitian inner product $\langle \cdot,\cdot\rangle$ on $V$ such that $\rho(G)\cap SU(V)=\rho(K)$.  
	We can define the adjoint $A^*\in SL(V,\mathbb{C})$ associated with $A\in SL(V,\mathbb{C})$ by
	$
	\langle A^*v,u\rangle = \langle v,Au\rangle.
	$
	Let $\mathscr{P}(V)$ denote the space of positive-definite Hermitian matrices of determinant $1$, then there is an isomorphism
	\[
	SL(V,\mathbb{C})/SU(V) \;\cong\; \mathscr{P}(V), \qquad A \mapsto AA^*.
	\]
	Thus we obtain an embedding
	\[
	\rho : G/K \;\hookrightarrow\; SL(V,\mathbb{C})/SU(V)\;\cong\; \mathscr{P}(V)\;\hookrightarrow\; \mathbb{P}(\mathscr{H}(V)),
	\]
	where $\mathscr{H}(V)$ is the space of Hermitian matrices of degree $n$, and $\mathbb{P}(\mathscr{H}(V))$ is the associated real projective space.
	
	\begin{defi}
		The Satake compactification $\overline{G/K}^{Sat}$ with respect to the adjoint representation $\rho=\mathrm{ad}$ is defined as the closure of $\rho(G/K)$ in $\mathbb{P}(\mathscr{H}(V))$.  
	\end{defi}
	
	The structure of $\overline{G/K}^{Sat}$ is studied by Satake \cite[Thm.~1]{satake-60-1}. Let $\Pi'$ be a subset of $\Pi$, 
	define $\lambda(\Pi')$ to be the set of all the weights of $\rho$ of the form 	
	\begin{equation}
		\label{equation-definition-lambda-Pi}
		\lambda=\lambda_\rho - \sum m_i \gamma_i \quad (m_i \in \mathbb{Z}^{\ge 0}, \ \gamma_i \in \Pi'),
	\end{equation}
	and let $V' := \sum_{\lambda\in \lambda(\Pi')} V_\lambda$. Then $V'$ is invariant under $\rho(G'_0)$, and we obtain a map
	\[
	\rho': G_0'/K_0' \;\longrightarrow\; SL(V')/SU(V') \;\hookrightarrow\; \mathbb{P}(\mathscr{H}(V')) \;\hookrightarrow\; \mathbb{P}(\mathscr{H}(V)),
	\]
	where $\mathbb{P}(\mathscr{H}(V')) \hookrightarrow \mathbb{P}(\mathscr{H}(V))$ is defined by assigning $0$ on the orthogonal space of $V'$. 
	
	Note that $G$ acts on $\mathbb{P}(\mathscr{H}(V))$ via the representation $\rho$. 
	We denote by $P'$ the parabolic subgroup associated with $\Pi'$, which is the subgroup of $G$ preserving $V'$. Then $P'$ is exactly the subgroup of $G$ preserving $\rho'(G_0'/K_0')$ (\cite[\S4.2]{satake-60-1}).
	
	\begin{defi}
		A subset $\Pi'^- \subset \Pi^-$ is called \emph{$\rho$-connected} if $\Pi'^- \cup \{\lambda_\rho\}$ cannot be decomposed into two mutually orthogonal parts.
	\end{defi}
	
	Define the map $c:\Pi \to \Pi^-$ by
	\[
	c(\alpha) =
	\begin{cases}
		\alpha, & \alpha = e_1 - e_2 \text{ or } e_2-e_3, \\[2mm]
		e_3, & \text{for all other simple roots $\alpha \in \Pi$.}
	\end{cases}
	\]
	
	Let $\Pi'^-$ be a subset of $\Pi^-$ and set $\Pi' := c^{-1}(\Pi'^-)$. Then it follows that $\rho'=\rho(\Pi')$ is an embedding if and only if $\Pi'^- $ is $\rho$-connected \cite[p.88]{satake-60-1}. Moreover, by \cite[Thm.~1]{satake-60-1}, we have:
	
	\begin{thm} \label{thm-satake-G/K}
		The compactification has the form of a disjoint union:
		\[
		\overline{G/K}^{Sat} = \overline{\rho(G/K)} 
		= \bigsqcup_{\Pi'^-, g} g \cdot \rho'\left( G_0'/K_0'\right),
		\]
		where $\Pi'^-$ ranges over all $\rho$-connected subsets of $\Pi^-$ and $g$ ranges over $G/P'$. 
		Here $G_0',K_0'$ are given by $\Pi' := c^{-1}(\Pi'^-)$ as in \S\ref{section-lie-algebra}.
	\end{thm}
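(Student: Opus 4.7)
The plan is to follow Satake's original strategy: embed $G/K$ into $\mathbb{P}(\mathscr{H}(V))$ via $\rho$ as above, then compute the closure by analyzing divergent sequences through the Cartan decomposition $G = K A^+ K$ combined with the weight-space decomposition of $V = \mathfrak{g}_{\mathbb{C}}$. Write any divergent sequence as $g_n K = k_n a_n K$ with $k_n \in K$ and $a_n = \exp H_n$, $H_n \in \overline{C_+}$, $|H_n| \to \infty$, where $C_+ \subset \mathfrak{h}^-$ is the positive Weyl chamber associated with $\Pi^-$. After passing to a subsequence, $k_n \to k_\infty$ and $H_n/|H_n| \to H_\ast$ for some nonzero $H_\ast \in \overline{C_+}$. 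The face of $\overline{C_+}$ containing $H_\ast$ in its relative interior is determined by the subset $\Pi'^- := \{\gamma \in \Pi^- : \gamma(H_\ast) = 0\}$.

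Next I would use the unitary trick to pick the Hermitian inner product on $V$ so that distinct $\mathfrak{h}$-weight spaces $V_\lambda$ are orthogonal and $\rho(K) \subset SU(V)$. Then $\rho(a_n)$ is self-adjoint and diagonal, giving
\[
\rho(a_n)\rho(a_n)^* \;=\; \sum_{\lambda} e^{2\lambda(H_n)}\, \Pi_\lambda,
\]
where $\Pi_\lambda$ is the orthogonal projection to $V_\lambda$. Normalizing by the maximum $e^{2\lambda_\rho(H_n)}$ and passing to $\mathbb{P}(\mathscr{H}(V))$, only the weights with $\lambda(H_\ast) = \lambda_\rho(H_\ast)$ survive. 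A standard dominant-weight argument identifies these as precisely the weights in $\lambda(\Pi')$ from \eqref{equation-definition-lambda-Pi}. Hence the limit lies in $k_\infty \cdot \mathbb{P}(\mathscr{H}(V'))$ with $V' = \bigoplus_{\lambda \in \lambda(\Pi')} V_\lambda$.

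To refine this to the stratum $k_\infty \cdot \rho'(G'/K')$, I would split $H_n = s_n H_\ast + H'_n$ with $s_n \to \infty$ and $H'_n$ bounded in the tangent directions to the face. The scalar part $s_n H_\ast$ acts by a single exponential factor on all of $V'$ (since every $\lambda \in \lambda(\Pi')$ takes the same value $\lambda_\rho(H_\ast)$ on $H_\ast$) and is absorbed by the projectivization; the residual $H'_n$, after further extraction, converges to $H'_\infty \in \mathfrak{h}^- \cap \mathfrak{g}'$, yielding a limit of the form $k_\infty \cdot \rho'(\exp H'_\infty)\rho'(\exp H'_\infty)^* \in k_\infty \cdot \rho'(G'/K')$. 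The Iwasawa decomposition $G = KP'$ shows that the remaining $K$-ambiguity is exactly $K \cap P'$, so the $k_\infty$ factor parametrizes the stratum by $G/P' \cong K/(K \cap P')$. The reverse inclusion, that every point of every stratum arises this way, follows by running the construction backwards with explicitly chosen $H_n$.

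The main obstacle I anticipate is the $\rho$-connectedness condition. I would need to verify that $\rho' : G'/K' \to \mathbb{P}(\mathscr{H}(V'))$ is an embedding iff $\Pi'^- \cup \{\lambda_\rho\}$ is connected in the Dynkin diagram: a connected component of $\Pi'$ disjoint from $\lambda_\rho$ contributes a simple factor of $\mathfrak{g}'_{\mathbb{C}}$ on which no weight of $\lambda(\Pi')$ depends, so this factor acts trivially on $V'$ and must be quotiented out before $\rho'$ becomes injective. Once this is settled, disjointness of strata follows by tracking the linear span of the support of each limit Hermitian form: different $\Pi'^-$ produce different $V'$, hence distinct supports, and within a fixed $\Pi'^-$ the stabilizer of $\rho'(G'/K')$ is exactly $P'$, so distinct cosets $gP'$ give disjoint translates.
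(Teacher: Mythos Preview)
The paper does not prove this theorem; it is stated purely as a citation of \cite[Thm.~1]{satake-60-1} and used as a black box. Your sketch is a reasonable outline of Satake's original argument via the Cartan decomposition and weight-space analysis, so in that sense it goes beyond what the paper does.

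There is, however, a genuine gap in your outline. You identify $\Pi'^- = \{\gamma \in \Pi^- : \gamma(H_\ast) = 0\}$ from the limiting direction $H_\ast = \lim H_n/|H_n|$, and then claim the residual $H'_n = H_n - s_n H_\ast$ is bounded in the face directions. This is not guaranteed: if, say, $|H_n| = n^2$ while $\gamma(H_n) = n$ for some $\gamma \in \Pi^-$, then $\gamma(H_\ast) = 0$ but $\gamma(H_n) \to \infty$, so $H'_n$ diverges and your extraction of $H'_\infty$ fails. The correct stratifying datum is the subset $\{\gamma \in \Pi^- : \gamma(H_n) \text{ is bounded}\}$ (after passing to a subsequence), not the face of $H_\ast$; one then replaces this subset by its maximal $\rho$-connected part, since the complementary simple factors of $\mathfrak{g}'$ act trivially on $V'$ and hence drop out in $\mathbb{P}(\mathscr{H}(V'))$. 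With this correction, and an induction on the rank to handle further divergence within the face, your argument becomes Satake's proof. The disjointness and the identification of the stabilizer with $P'$ are as you describe.
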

	
	It follows from \cite[Thm.~2]{satake-60-1} that the structure of $\overline{G/K}^{Sat}$ as a $G$-space is independent of the choice of $(\mathfrak{k},\mathfrak{h},\Pi)$ and the Hermitian inner product $\langle \cdot,\cdot\rangle$ on $V$.
	
	\subsubsection{Structure of boundary components}\label{section-structure-boundary-components}
	Since $\lambda_{\rho}=e_1+e_2$, we know that $V \cong \wedge^2 \Lambda_{\mathrm{K3},\mathbb{C}}$, and the weight vector corresponding to $\pm e_i \pm e_j$ in $V$ can be identified with $w_{\pm e_i}\wedge w_{\pm e_j}$. The boundary components are given by $\rho$-connected subsets $\Pi'^-\neq \Pi^-$, and there are four different types of boundaries:
	
	\begin{enumerate}
		\item[(a)] $\Pi'^- = \{e_2-e_3,\, e_3\}$.  
		Then 
		\[
		G_0'/K_0' = SO_0(2,18) / SO(2)\times SO(18),
		\]
		and $SO_0(2,18)$ is identified with the subgroup fixing $w_{\pm e_1}$. Moreover, $V'$ is spanned by the weight spaces $e_1 \pm e_i$. Thus $P'$ is the subgroup of $G$ preserving the isotropic weight vector $w_{e_1} = \frac{1}{\sqrt{2}}(x_1 +  x_{22})$.
		
		\item[(b)] $\Pi'^- = \{e_1-e_2,\, e_2-e_3\}$.  
		Then 
		\[
		G_0'/K_0' = SL(3,\mathbb{R}) / SO(3).
		\] 
		An element $g\in SL(3,\mathbb{R})$ acts on $\Lambda_{\mathrm{K3},\mathbb{C}}$ by sending
		\[
		(w_{e_1},w_{e_2},w_{e_3})^T \mapsto (g^{-1})^T \cdot (w_{e_1},w_{e_2},w_{e_3})^T, \quad 
		(w_{-e_1},w_{-e_2},w_{-e_3})^T \mapsto g \cdot (w_{-e_1},w_{-e_2},w_{-e_3})^T,
		\]
		while fixing $w_{\pm e_k}$ for $k\geq 4$. Moreover, $V'$ is spanned by the weight spaces corresponding to $e_1+e_2,\, e_1+e_3,\, e_2+e_3$. Thus $P'$ is the subgroup of $G$ preserving the isotropic subspace $\langle w_{e_1}, w_{e_2}, w_{e_3} \rangle$.
		
		\item[(c)] $\Pi'^- = \{e_2-e_3\}$.  
		Then 
		\[
		G_0'/K_0' = SL(2,\mathbb{R}) / SO(2).
		\] 
		An element $g\in SL(2,\mathbb{R})$ acts on $\Lambda_{\mathrm{K3},\mathbb{C}}$ by sending
		\[
		(w_{e_1},w_{e_2})^T \mapsto (g^{-1})^T \cdot (w_{e_1},w_{e_2})^T, \quad
		(w_{-e_1},w_{-e_2})^T \mapsto g \cdot (w_{-e_1},w_{-e_2})^T,
		\]
		while fixing all other weight vectors. Moreover, $V'$ is spanned by the weight spaces corresponding to $e_1+e_2,\, e_1+e_3$. Thus $P'$ is the subgroup of $G$ preserving the isotropic flag
		$
		\langle w_{e_1} \rangle \subset \langle w_{e_1}, w_{e_2}, w_{e_3} \rangle.
		$
		
		\item[(d)]  $\Pi'^- = \emptyset$. Then $G_0'$ is trivial and $V'$ is spanned by the weight spaces corresponding to $e_1+e_2$. Thus $P'$ is the subgroup of $G$ preserving the isotropic subspace $\langle w_{e_1}, w_{e_2} \rangle$.
	\end{enumerate}

	\subsection{Compactification of $\overline{\mathcal{M}}_{\mathrm{K3}}^{\mathrm{ad}}$ I: as a set}\label{section-compactification-as-set}

	From now on we need to use the rational structure of $G$.  
	By choosing a suitable basis $w_1,\dots,w_{22}$ of $\Lambda_{\mathrm{K3},\mathbb{R}}$ in \S \ref{section-lie-algebra}, we may assume $K, \mathfrak{h}^-, \Pi^-$ are defined over $\mathbb{Q}$. 	
	
	\begin{defi}
		Denote $S^\mathbb{Q}$ the partial compactification of $G/K$ by adding those rational boundaries, i.e.
		\[
		S^{\mathbb{Q}} := \bigcup_{\Pi^-} G_{\mathbb{Q}} \cdot \rho'\bigl(G_0'/K_0'\bigr) 
		\;\subset\; \overline{G/K}^{\mathrm{ad}}.
		\]
	\end{defi}
	
	\begin{defi}
		The Satake compactification of $\mathcal{M}_{\mathrm{K3}}$ associated with the adjoint representation is defined as the quotient
		$
		\overline{\mathcal{M}}_{\mathrm{K3}}^{\mathrm{ad}} 
		\;=\; G_{\mathbb{Z}} \backslash S^{\mathbb{Q}}
		$ as a set.
	\end{defi}
	The topology on $\overline{\mathcal{M}}_{\mathrm{K3}}^{\mathrm{ad}}$ will be defined later.
	From Theorem \ref{thm-satake-G/K}, we see that $\overline{\mathcal{M}}_{\mathrm{K3}}^{\mathrm{ad}}$ is obtained by adding some disjoint components of the form $(G_{\mathbb{Z}}\cap P')\backslash G'_0/K'_0$.
	Now we count the number of added boundary components.
	
	Let $q \cdot \rho'\!\left( G_0'/K_0' \right)$ be a boundary component in $S^\mathbb{Q}$. We can associate it with a flag $qF'$ as described in \S\ref{section-structure-boundary-components}.
	
	We claim that distinct boundary components give rise to distinct flags. In fact, suppose two components give rise to the same flag. Then they must be of the same type. Without loss of generality, we may assume that the two components are $\rho'\!\left( G_0'/K_0' \right)$ and $q \cdot \rho'\!\left( G_0'/K_0' \right)$, and that the associated flags satisfy $qF'=F'$. Thus, from the properties of $P'$, we know that $q\in P'$ and $\rho'\!\left( G_0'/K_0' \right)=q \cdot \rho'\!\left( G_0'/K_0' \right)$.
	
	Thus the number of boundary components added in $\overline{\mathcal{M}}_{\mathrm{K3}}^{\mathrm{ad}}$ equals the number of $G_{\mathbb{Z}}$-conjugacy classes of those rational flags $qF'$ in $\Lambda_{\mathrm{K3}}$.
	By \cite[V.Thm.~6 and V.\S2.3]{serre-73}, there is a single $G_\mathbb{Z}$-conjugacy class of rational isotropic subspaces of dimension $1$ and $2$, while there are two distinct $G_\mathbb{Z}$-conjugacy classes of rational isotropic subspaces $E$ of dimension $3$, distinguished by the quadratic form on $E^\perp/E$, which can be either $(-E_8)^2$ or $-\Gamma_{16}$.
	These two types of $E$ are conjugate under $G_\mathbb{Q}$ by \cite[IV.Thm.~3]{serre-73}.
	
	Hence, we obtain two boundary components of types (b) and (c), and one boundary component of types (a) and (d):
	\begin{prop}
		The Satake compactification has the structure
		\[
		\overline{\mathcal{M}}_{\mathrm{K3}}^{\mathrm{ad}}
		= \mathcal{M}_{\mathrm{K3}} \;\sqcup\; \mathcal{M}(a) \;\sqcup\; \mathcal{M}(b_1) \;\sqcup\; \mathcal{M}(b_2) \;\sqcup\; \mathcal{M}(c_1) \;\sqcup\; \mathcal{M}(c_2) \;\sqcup\; \mathcal{M}(d).
		\]
		The boundary components added are arithmetic quotients of $G_0'/K_0'$ as in \S\ref{section-structure-boundary-components}.
		Here $\mathcal{M}(b_1), \mathcal{M}(c_1)$ correspond to the case $-\Gamma_{16}$, while $\mathcal{M}(b_2), \mathcal{M}(c_2)$ correspond to the case $(-E_8)^2$.
	\end{prop}
	After we define the topology on $\overline{\mathcal{M}}_{\mathrm{K3}}^{\mathrm{ad}}$, we will see their closure relations are given by
	\[
	\mathcal{M}(d) \subset \overline{\mathcal{M}(c_i)} \subset \overline{\mathcal{M}(b_i)}, 
	\qquad \overline{\mathcal{M}(c_i)} \subset \overline{\mathcal{M}(a)}.
	\]
	
	\begin{rmk}
		Although not used in this paper, we mention that there is a map from $\overline{\mathcal{M}_{W}}$ to $\overline{\mathcal{M}(a)}$, which is generically 2-1 by identifying Jacobian elliptic K3 surfaces with conjugate complex structures.  In fact, $\mathcal{M}_W\cong O_\mathbb{Z}\backslash O(2,18)/SO(2)\times O(18)$, and $\overline{\mathcal{M}_W}$ is isomorphic to the Satake compactification of $\mathcal{M}_W$
		(see \cite[\S 7.2.3]{odaka-oshima-21}).
	\end{rmk}
	\subsection{Compactification of $\overline{\mathcal{M}}_{\mathrm{K3}}^{\mathrm{ad}}$ II: topology}
	
	\subsubsection{Siegel sets and fundamental sets.}
	We first recall the notion of a Siegel set, which is used to construct fundamental sets for the action of $G_{\mathbb{Z}}$, and is crucial in the description of the topology of the Satake compactification $\overline{\mathcal{M}}_{\mathrm{K3}}^{\mathrm{ad}}
	\;=\; \overline{G_{\mathbb{Z}} \backslash G/K}^{\mathrm{ad}}$.
	
	Let $G = NAK$ be the Iwasawa decomposition of $G$, where $A = \exp(\mathfrak{h}^-)$ and $N$ is the unipotent subgroup corresponding to the Lie subalgebra
	\[
	\mathfrak{n} \;=\; 
	\Biggl(\;\bigoplus_{\alpha \in R,\, \alpha>0} \mathfrak{g}_\alpha \Biggr) \cap \mathfrak{g}.
	\]
	
	\begin{defi}
		A \emph{Siegel set} for $G$ (with respect to $K,\mathfrak{h},\Pi$) is a subset of $G/K$ of the form
		\[
		\mathfrak{S}(N_0,t) \;=\; N_0 A_t K,
		\]
		where
		\[
		A_t \;=\; \exp\{H \in \mathfrak{h}^- : \lambda(H) \ge t \ \text{for all}\ \lambda \in \Pi^-\}
		\]
		and $N_0 \subset N$ is a compact set with nonempty interior.
	\end{defi}
	
	Given a subset $\Pi'$ of $\Pi$, define
	\[
	\mathfrak{n}' = \Bigl(\bigoplus_{\alpha \in R', \alpha > 0} \mathfrak{g}_\alpha \Bigr) \cap \mathfrak{g}
	= \mathfrak{n} \cap \mathfrak{g}', 
	\qquad 
	\tilde{\mathfrak{n}}' = \Bigl(\bigoplus_{\alpha \notin R', \alpha > 0} \mathfrak{g}_\alpha \Bigr) \cap \mathfrak{g},
	\qquad
	\mathfrak{h}'^- = \mathfrak{h}^- \cap \mathfrak{g}' ,
	\]
	and let $N', \widetilde{N}', A'$ be the corresponding Lie subgroups.  
	Then $N = N' \widetilde{N}'$, with $\widetilde{N}'$ acting trivially on $\rho(G_0'/K_0')$. Moreover, any nontrivial $n' \in N'$ has no fixed point in $G_0'/K_0'$ by the Iwasawa decomposition $G_0' = N'A'K_0'$.
	
	Now we describe the topology of the closure $\overline{\mathfrak{S}}$ of $\mathfrak{S}$ in $\overline{G/K}^{\mathrm{ad}}$. 
	The following theorem is proved in \cite[§4.1]{satake-60-1}.
	
	\begin{thm}\label{siegel_x}
		Let $\mathfrak{S}(N_0,t)$ be a Siegel set and $x \in \overline{\mathfrak{S}}$. Then there is a $\rho$-connected set $\Pi'^-$ such that $x \in \rho'(G_0'/K_0')$, and we may write $x = \rho'(n'a'K_0')$, where $n' \in N'$ and $a'=\exp(H') \in A'$.
		Let $N_0^*$ be a neighborhood of $N_0 \cap n'\widetilde{N}'$ in $N_0$.  
		For $\varepsilon > 0$, let $\mathfrak{h}'_\varepsilon$ be the set of $H \in \mathfrak{h}'^-$ such that
		\begin{enumerate}
			\item[(i)] $(\lambda_\rho - \lambda)(H - H') < \varepsilon$ for $\lambda \in \lambda(\Pi')$ (cf.~\eqref{equation-definition-lambda-Pi}), 
			\item[(ii)] $(\lambda_\rho - \lambda)(H) > 1/\varepsilon$ for $\lambda \in \lambda(\Pi) \setminus \lambda(\Pi')$.   
		\end{enumerate}
		Define
		\[
		\mathfrak{S}_x = \mathfrak{S}_x(N_0^*,\varepsilon) \coloneqq N_0^* \exp(\mathfrak{h}'_\varepsilon) K.
		\]
		Then $\overline{\mathfrak{S}_x}$ is a neighborhood of $x$ in $\overline{\mathfrak{S}}$, and such sets form a neighborhood system of $x$ in $\overline{\mathfrak{S}}$.
	\end{thm}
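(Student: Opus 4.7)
The plan is to work inside the embedding $\rho : G/K \hookrightarrow \mathbb{P}(\mathscr{H}(V))$ that defines the Satake compactification and reduce the topology near $x$ to asymptotics of Hermitian matrices. Since $\overline{\mathfrak{S}}$ carries the subspace topology from $\mathbb{P}(\mathscr{H}(V))$, a neighborhood base of $x = \rho'(n'a'K')$ is pulled back from a neighborhood base of the matrix $\rho'(n'a')\rho'(n'a')^{*} \in \mathscr{H}(V') \subset \mathscr{H}(V)$, where $\mathscr{H}(V')$ is extended by zero on $(V')^{\perp}$. Thus I need to characterize those sequences $g_j = n_j a_j k_j \in \mathfrak{S}$ for which $\rho(g_j)\rho(g_j)^{*}$, after projective rescaling, tends to this fixed matrix, and in particular to identify them inside the sets $\mathfrak{S}_x(N_0^{*},\varepsilon)$.

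The core computation is the following. Writing $a = \exp H$ with $H \in \mathfrak{h}^{-}$, the operator $\rho(a) = \mathrm{ad}(a)$ acts on the weight space $V_\lambda$ by $e^{\lambda(H)}$, so after normalizing by the leading scalar $e^{-2\lambda_\rho(H)}$ the diagonal matrix $e^{-2\lambda_\rho(H)}\rho(a)\rho(a)^{*}$ has eigenvalues $e^{-2(\lambda_\rho - \lambda)(H)}$; these survive in the limit exactly when $\lambda_\rho - \lambda$ remains bounded on $H$, i.e. when $\lambda \in \lambda(\Pi')$. Conjugating by $\rho(n) = \rho(n')\rho(\widetilde{n}')$ with $n \in N_0$ produces cross-weight entries of the form $P_{\mu\nu}(n)\,e^{-(\lambda_\rho - \mu)(H) - (\lambda_\rho - \nu)(H)}$, where $\mu,\nu$ are weights reached from $\lambda_\rho$ by shifting with positive roots and $P_{\mu\nu}$ is a polynomial in the matrix coordinates of $n$. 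Crucially, any nontrivial contribution that involves $\widetilde{n}' \in \widetilde{N}'$ shifts by at least one positive root outside $R'$, so $\lambda_\rho - \mu$ acquires a nonzero coefficient along $\Pi \setminus \Pi'$; condition (ii) then forces the corresponding exponential to vanish in the limit uniformly in $n \in N_0$. Combined with (i), which ensures the surviving $V'$-block tends to $\rho'(n'a')\rho'(n'a')^{*}$, this yields projective convergence and hence the direct inclusion: $\overline{\mathfrak{S}_x(N_0^{*},\varepsilon)}$ is a neighborhood of $x$ in $\overline{\mathfrak{S}}$.

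For the converse I would argue via convergent subsequences. Given any sequence $g_j = n_j a_j k_j \in \mathfrak{S}$ with $\rho(g_j) \to x$, pass to a subsequence so that $n_j \to n_\infty \in N_0$ by compactness of $N_0$. Matching surviving weight blocks in the normalized limit then forces $(\lambda_\rho - \lambda)(H_j) \to \infty$ for $\lambda \notin \lambda(\Pi')$ (condition (ii)) and $(\lambda_\rho - \lambda)(H_j - H') \to 0$ for $\lambda \in \lambda(\Pi')$ (condition (i)); comparison in the surviving $V'$-block pins the $N'$-component of $n_\infty$ to $n'$, so $n_\infty \in n'\widetilde{N}' \cap N_0$. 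Therefore $g_j$ eventually enters any prescribed $\mathfrak{S}_x(N_0^{*},\varepsilon)$, establishing that such sets form a neighborhood system.

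The main obstacle is the bookkeeping in the core computation: one must verify, uniformly over $n$ in the compact set $N_0$, that every off-diagonal entry $P_{\mu\nu}(n)$ produced by the unipotent dressing from $\widetilde{N}'$ is paired with an exponential factor that is killed by condition (ii). This is a root-string argument exploiting that $\widetilde{\mathfrak{n}}'$ is generated by root vectors whose roots lie outside $R'$, but it must be quantitative enough to absorb the polynomial coefficients coming from $N_0$ — this is the only nontrivial estimate; once it is in place, both directions of the neighborhood-base statement reduce to matching weight components in $\mathbb{P}(\mathscr{H}(V))$.
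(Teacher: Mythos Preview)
The paper does not give its own proof of this theorem; it simply records the statement and cites \cite[\S4.1]{satake-60-1}. Your sketch is essentially the argument Satake gives there: embed $G/K$ in $\mathbb{P}(\mathscr{H}(V))$, diagonalize $\rho(a)\rho(a)^{*}$ on weight spaces with eigenvalues $e^{2\lambda(H)}$, normalize by the top weight so that the surviving block is exactly $V' = \bigoplus_{\lambda \in \lambda(\Pi')} V_\lambda$, and then check that the unipotent dressing from $\widetilde{N}'$ only produces entries that are damped by condition~(ii). Your identification of the ``only nontrivial estimate'' is accurate, and the converse direction via compactness of $N_0$ and matching of the surviving $V'$-block is also how Satake closes the argument. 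So there is nothing to compare: your outline is the standard proof the paper is citing.

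One small point worth tightening when you write it up: you phrase the unipotent contribution as ``$\lambda_\rho - \mu$ acquires a nonzero coefficient along $\Pi \setminus \Pi'$''. What you actually need (and what holds) is that any weight $\mu$ reached from some $\lambda \in \lambda(\Pi')$ by subtracting a positive root $\alpha \notin R'$ automatically lies in $\lambda(\Pi) \setminus \lambda(\Pi')$, because $\lambda_\rho - \mu = (\lambda_\rho - \lambda) + \alpha$ and $\alpha$, being a positive root not in $R'$, has a strictly positive coefficient on some simple root outside $\Pi'$. Since $(\lambda_\rho - \lambda)(H)$ is bounded by~(i) and $(\lambda_\rho - \mu)(H) > 1/\varepsilon$ by~(ii), the cross term $e^{-(\lambda_\rho - \lambda)(H) - (\lambda_\rho - \mu)(H)}$ indeed goes to zero uniformly over $n \in N_0$. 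This is exactly the content of the root-string bookkeeping you flag, and it goes through without surprises.
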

	
	By choosing suitable $N_0$ and $N_0^*$ in Theorem~\ref{siegel_x}, we may always assume $N_0^*$ is connected, thus $\mathfrak{S}_x$ is also connected. Moreover, from Theorem~\ref{siegel_x} we see that $\overline{\mathfrak{S}(N_0,t)} \cap \rho'(G_0'/K_0')$ is the $\rho'$-image of $N_0' A_t' K_0'$, where $N_0'$ is the image of $N_0$ under the projection $N \to N'$. Thus we have:
	
	\begin{crl}\label{crl-intersection-siegel-set}
		The intersection $\overline{\mathfrak{S}} \cap \rho'(G_0'/K_0')$ can be identified with a Siegel set of $G_0'/K_0'$.
	\end{crl}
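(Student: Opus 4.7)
\emph{Plan.} The strategy is to unpack Theorem~\ref{siegel_x} in the case $\Pi'^- \neq \Pi^-$ and verify that the boundary trace of the Siegel set $\mathfrak{S}(N_0, t) = N_0 A_t K$ has the shape
\[
\overline{\mathfrak{S}(N_0,t)} \cap \rho'(G'/K') \;=\; \rho'(N_0' A_t' K'),
\]
where $N_0' \coloneqq \pi(N_0)$ is the image of $N_0$ under the projection $\pi: N = N'\widetilde{N}' \to N'$ and
$A_t' \coloneqq \exp\{H \in \mathfrak{h}'^- : \lambda(H) \ge t,\ \lambda \in \Pi'^-\}$.
Granted this identification, the right-hand side is precisely a Siegel set of $G'/K'$ with respect to $(K',\mathfrak{h}',\Pi')$, because $N_0'$ is compact with nonempty interior ($\pi$ being a continuous surjective submersion between Lie groups) and $A_t'$ has the required form for $G'$.

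I would prove the two inclusions separately. For ``$\subseteq$'', given $x = \rho'(n' a' K')$ in the intersection, apply Theorem~\ref{siegel_x} to a sequence $g_k = n_k a_k m_k \in \mathfrak{S}(N_0,t)$ converging to $x$. Decompose $n_k = n_k' \widetilde{n}_k'$ via $N = N'\widetilde{N}'$ and $a_k = \exp(H_k)$ with $H_k = H_k' + H_k''$ via the splitting $\mathfrak{h}^- = \mathfrak{h}'^- \oplus (\mathfrak{h}'^-)^{\perp}$ dual to $\Pi^- = \Pi'^- \sqcup (\Pi^- \setminus \Pi'^-)$. Conditions (i)--(ii) of Theorem~\ref{siegel_x} then force $n_k' \to n'$, $H_k' \to H' = \log a'$, and $\lambda(H_k'') \to +\infty$ for $\lambda \in \Pi^- \setminus \Pi'^-$. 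Because $n_k \in N_0$ we obtain $n' \in \pi(N_0) = N_0'$; and since $\lambda(H_k) \ge t$ for $\lambda \in \Pi^-$ while $\lambda(H_k'') = 0$ for $\lambda \in \Pi'^-$, passing to the limit yields $\lambda(H') \ge t$ on $\Pi'^-$, so $x \in \rho'(N_0' A_t' K')$.

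For the reverse inclusion, given $n_0' a' K' \in N_0' A_t' K'$, lift $n_0'$ to some $n_0 \in N_0$ with $\pi(n_0) = n_0'$, and choose $H^* \in \mathfrak{h}^-$ with $\lambda(H^*) = 0$ for $\lambda \in \Pi'^-$ and $\lambda(H^*) > 0$ for $\lambda \in \Pi^- \setminus \Pi'^-$; such $H^*$ exists as a positive combination of fundamental coweights dual to simple roots outside $\Pi'^-$. For $s$ sufficiently large the element $n_0 \exp(\log a' + s H^*) K$ lies in $\mathfrak{S}(N_0,t)$, and Theorem~\ref{siegel_x}, together with the fact that $\widetilde{N}'$ acts trivially on $\rho'(G'/K')$, implies that as $s \to \infty$ this curve converges to $\rho'(n_0' a' K')$ in the Satake compactification.

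\emph{Main obstacle.} The delicate point is the faithful translation between the weight-theoretic description of the neighborhood basis in Theorem~\ref{siegel_x}, phrased in terms of weights of the adjoint representation $\rho$, and the root-theoretic conditions $\lambda(H) \ge t$ defining $A_t$ and $A_t'$. This uses that $\lambda_\rho - \lambda$ is a nonnegative integral combination of simple roots, and that the $\rho$-connectedness of $\Pi'^-$ guarantees $\lambda \in \lambda(\Pi')$ exactly when $\lambda_\rho - \lambda$ is supported on $\Pi'$; so conditions (i) and (ii) of Theorem~\ref{siegel_x} cleanly record the boundedness of $H_k'$ and the divergence of $H_k''$ needed above.
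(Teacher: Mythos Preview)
Your proposal is correct and follows the same approach as the paper, which simply states in one line that the intersection equals $\rho'(N_0' A_t' K')$ as a direct consequence of Theorem~\ref{siegel_x}. Your detailed two-inclusion argument faithfully unpacks what the paper leaves implicit.
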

	
	\begin{defi}\label{tt-1210-defi}
		We say that a subset $\Omega \subset G/K$ is a \emph{fundamental set} for the action of $G_{\mathbb{Z}}$ if 
		\begin{enumerate}
			\item[(i)] $G/K = G_{\mathbb{Z}} \cdot \Omega$;
			\item[(ii)] there are only finitely many $\gamma \in G_{\mathbb{Z}}$ such that $\gamma \Omega \cap \Omega \neq \varnothing$.
		\end{enumerate}
	\end{defi}
	
	The following theorem was proved by Borel and Harish-Chandra \cite[Thm.~6.5]{borel-harish-62} using reduction theory (see also \cite[Thm.~13.1, Thm.~15.4]{borel-69}).
	
	\begin{thm}\label{thm-siegel-set-fundamental-domain}
		Given a Siegel set $\mathfrak{S}$ and $q \in G_{\mathbb{Q}}$, there are only finitely many $\gamma \in G_{\mathbb{Z}}$ such that $\gamma \mathfrak{S} \cap q \mathfrak{S} \neq \varnothing$.
		Moreover, one can find a Siegel set $\mathfrak{S}$ and a finite set $F \subset G_{\mathbb{Q}}$ such that
		\[
		\Omega = \Omega(F,\mathfrak{S}) := F \mathfrak{S}
		\]
		is a fundamental set for $G_{\mathbb{Z}}$. 
	\end{thm}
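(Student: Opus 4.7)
The statement is the classical reduction theory result of Borel--Harish-Chandra, here specialized to $G = SO_0(3,19)$ and the arithmetic subgroup $G_\mathbb{Z} = O_0(\Lambda_{\mathrm{K3}})$. My plan is to follow the standard two-step strategy: first prove the finiteness of intersections between translated Siegel sets (part~1), then build a fundamental set by combining reduction theory with a finiteness-of-class-number argument (part~2).

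For part~1, I would start from the Iwasawa decomposition $g = n(g)\,a(g)\,k(g)$. For $g \in \mathfrak{S}(N_0,t)$, the factor $n(g)$ lies in the compact set $N_0$ and the factor $H(g) = \log a(g)$ satisfies $\alpha(H(g)) \ge t$ for all $\alpha \in \Pi^-$. If $\gamma \mathfrak{S} \cap q\mathfrak{S} \neq \varnothing$, then for some $g_1, g_2 \in \mathfrak{S}$ we have $\gamma = q\, g_2 g_1^{-1}$. Applying $\gamma$ to the fixed $\mathbb{Z}$-basis $w_1,\ldots,w_{22}$ of $\Lambda_{\mathrm{K3}}$ and decomposing each basis vector according to the $\mathfrak{h}^-$-weight spaces $w_{\pm e_k}$ from \eqref{basis-of-W-w-pm-e_k}, one bounds the matrix coefficients of $\gamma$ from above and below in terms of the weights, the compact set $N_0$, the parameter $t$, and $q$. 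The integrality of $\gamma$ as an automorphism of $\Lambda_{\mathrm{K3}}$ then forces its entries to lie in a bounded set of integers, so only finitely many $\gamma$ can occur.

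For part~2, the first ingredient is a reduction statement: every $G_\mathbb{Z}$-orbit in $G/K$ meets a fixed Siegel set $\mathfrak{S}$ attached to the standard minimal $\mathbb{Q}$-parabolic. I would prove this by induction on the Witt index, repeatedly choosing a primitive isotropic element of controlled height (measured against an auxiliary positive-definite form on $\Lambda_{\mathrm{K3},\mathbb{R}}$) and using $G_\mathbb{Z}$ to move it into a standard position, then descending to the orthogonal complement. The second ingredient is the finiteness of the double coset space $P(\mathbb{Q}) \backslash G(\mathbb{Q}) / G_\mathbb{Z}$ for each rational parabolic $P$ of $G$; for the orthogonal group this is a consequence of the Hasse--Minkowski theorem together with Witt's theorem. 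Choosing a finite system $F \subset G_\mathbb{Q}$ of double coset representatives and combining the two ingredients yields $G = G_\mathbb{Z} \cdot F \mathfrak{S}$; property~(ii) of Definition~\ref{tt-1210-defi} then follows from part~1 applied pairwise to elements of $F$.

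The main obstacle is the reduction step, because the indefinite signature of $\Lambda_{\mathrm{K3}}$ prevents a direct appeal to positive-definite Minkowski reduction: one must carefully relate the height of a chosen isotropic vector to the values of the simple roots in $\Pi^-$ on the $A$-component of $g$, and verify that the inductive descent to the orthogonal complement stays inside a Siegel set of the smaller orthogonal group. Once this reduction is in hand, the remainder of the argument is bookkeeping that combines the Iwasawa decomposition with the integrality of elements of $G_\mathbb{Z}$.
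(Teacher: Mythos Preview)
Your outline is a reasonable sketch of the classical Borel--Harish-Chandra argument, but you should be aware that the paper does not prove this theorem at all: it is stated as background and simply attributed to \cite[Thm.~6.5]{borel-harish-62} and \cite[Thm.~13.1, Thm.~15.4]{borel-69}. So there is no ``paper's own proof'' to compare against; the authors treat this as a black box from reduction theory.

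That said, your plan is essentially the standard one found in those references. One small caution on part~2: for $G = SO_0(3,19)$ the group is $\mathbb{Q}$-split (the K3 lattice contains $U^{\oplus 3}$), so the minimal $\mathbb{Q}$-parabolic is already conjugate over $\mathbb{Q}$ to the standard one and the class number is $1$; in fact one can take $F = \{e\}$ here, though the paper does not use this. Your inductive reduction on the Witt index is the right idea, but in practice the cleanest route is the general Borel--Harish-Chandra machinery rather than a hands-on argument specific to orthogonal groups; the ``main obstacle'' you flag is exactly what their general theory handles uniformly.
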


	As a corollary, we may take a fundamental set $\Omega$ containing any prescribed Siegel set.  
	Further, we have $G_{\mathbb{Q}} \cdot \mathfrak{S} = G/K$ for any Siegel set $\mathfrak{S}$.  
	Moreover, we remark that Theorem~\ref{thm-siegel-set-fundamental-domain} also holds for a Siegel set of $G_0'$.
	
	\begin{prop}\label{prop-Gz-Omega=SQ}
		If $\Omega$ is a fundamental set as above, then
		\[
		G_{\mathbb{Z}} \cdot \overline{\Omega} 
		\;=\; G_{\mathbb{Q}} \cdot \overline{\Omega} 
		\;=\; S^{\mathbb{Q}}.
		\]
	\end{prop}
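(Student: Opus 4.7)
The equality reduces to the three inclusions $G_\mathbb{Z}\cdot\overline{\Omega}\subseteq G_\mathbb{Q}\cdot\overline{\Omega}\subseteq S^\mathbb{Q}$ and $S^\mathbb{Q}\subseteq G_\mathbb{Z}\cdot\overline{\Omega}$, of which the first is tautological. For the second, since $\Omega=F\mathfrak{S}$ with $F\subset G_\mathbb{Q}$ and $S^\mathbb{Q}$ is $G_\mathbb{Q}$-invariant by definition, it is enough to show $\overline{\mathfrak{S}}\subseteq S^\mathbb{Q}$. I would read this off directly from Theorem \ref{siegel_x}: every point of $\overline{\mathfrak{S}}$ lies in some $\rho'(G'/K')$ indexed by a subset $\Pi'^-\subseteq\Pi^-$, and because $(K,\mathfrak{h}^-,\Pi^-)$ was arranged in \S\ref{section-lie-algebra} to be defined over $\mathbb{Q}$, every such boundary component is rational, hence contained in $S^\mathbb{Q}$.

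The substantive direction is $S^\mathbb{Q}\subseteq G_\mathbb{Z}\cdot\overline{\Omega}$. Fix $x\in S^\mathbb{Q}$, so $x\in q\rho'(G'/K')$ for some $q\in G_\mathbb{Q}$ and rational $\Pi'^-$. The reduction-theoretic input (of the kind already invoked in \S\ref{section-compactification-as-set} via \cite{borel-69,serre-73}) is that for each type $\Pi'^-$ there are only finitely many $G_\mathbb{Z}$-conjugacy classes of $G_\mathbb{Q}$-conjugates of $P'$; I would pick representatives $q_1,\ldots,q_m\in G_\mathbb{Q}$ and enlarge the finite set $F$ so that each $q_i\in F$, which gives $q_i\overline{\mathfrak{S}}\subseteq\overline{\Omega}$. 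Using $N_G(P')=P'$ (Proposition \ref{temp-1211-prop}) and the fact that $P'$ stabilises $\rho'(G'/K')$ setwise, one finds $\gamma\in G_\mathbb{Z}$ with $\gamma^{-1}x\in q_i\rho'(G'/K')$ for some $i$. Conjugating by $q_i$, it remains to show
\[
\rho'(G'/K')\;\subseteq\;\Gamma'_i\cdot\bigl(\overline{\mathfrak{S}}\cap\rho'(G'/K')\bigr),
\]
where $\Gamma'_i$ is the arithmetic subgroup of $G'$ obtained by transporting $G_\mathbb{Z}\cap q_iP'q_i^{-1}$ to $G'$ via the Levi projection $P'\to G'$. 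By Corollary \ref{crl-intersection-siegel-set}, $\overline{\mathfrak{S}}\cap\rho'(G'/K')$ is the closure of a Siegel set in $G'/K'$, and applying Theorem \ref{thm-siegel-set-fundamental-domain} to $(G',\Gamma'_i)$ — enlarging $\mathfrak{S}$ and $F$ if necessary — yields the required covering.

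The main obstacle is the simultaneous bookkeeping: one needs a single pair $(F,\mathfrak{S})$ that works uniformly across every rational $\Pi'^-$ and every representative $q_i$. The point is that there are only finitely many such pairs $(\Pi'^-,q_i)$, so the corresponding finite sets $F'_i\subset G'_\mathbb{Q}$ can all be absorbed by enlarging $F$, and the corresponding Siegel-set parameters on each boundary component can all be accommodated by taking $N_0$ large and $t$ small in the original $\mathfrak{S}=N_0A_tK$. Implicit in this strategy is the compactness of $\overline{\mathfrak{S}}$ in $\overline{G/K}^{\mathrm{ad}}$, which is what makes the passage from a Siegel set in $G$ to a Siegel set in $G'$ on the boundary stratum well-behaved.
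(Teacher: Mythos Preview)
Your strategy proves the statement only for a \emph{sufficiently large} fundamental set: you repeatedly enlarge $F$ (to absorb the representatives $q_i$ and the auxiliary finite sets $F'_i$) and relax the Siegel parameters $(N_0,t)$. But the proposition is stated for an arbitrary fundamental set $\Omega=F\mathfrak{S}$ of the type produced by Theorem~\ref{thm-siegel-set-fundamental-domain}, and this is what the paper actually needs, since the subsequent construction of the Satake topology must be shown to be independent of the choice of $\Omega$. So as written there is a genuine gap.

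The paper avoids this by reversing the order of the two nontrivial equalities. First it proves $G_{\mathbb{Q}}\cdot\overline{\Omega}\subseteq G_{\mathbb{Z}}\cdot\overline{\Omega}$ by a short topological argument that uses only the Siegel finiteness property and the fact that $G_{\mathbb{Z}}\cdot\Omega=G/K$: if $x\in q\overline{\mathfrak{S}}$ were not in $G_{\mathbb{Z}}\cdot\overline{\Omega}$, then since only finitely many $\gamma\in G_{\mathbb{Z}}$ satisfy $\gamma\Omega\cap q\mathfrak{S}\neq\varnothing$, one could find a neighbourhood of $x$ in $q\overline{\mathfrak{S}}$ missing $G_{\mathbb{Z}}\cdot\overline{\Omega}$ entirely, contradicting the density of $G/K$ in $q\overline{\mathfrak{S}}$. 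Only \emph{after} this does the paper show $S^{\mathbb{Q}}\subseteq G_{\mathbb{Q}}\cdot\overline{\Omega}$, which is now easy: by Corollary~\ref{crl-intersection-siegel-set} the intersection $\overline{\mathfrak{S}}\cap\rho'(G'/K')$ is a Siegel set for $G'$, and the remark following Theorem~\ref{thm-siegel-set-fundamental-domain} gives $G'_{\mathbb{Q}}\cdot\mathfrak{S}'=G'/K'$ for \emph{any} Siegel set $\mathfrak{S}'$, so no enlargement is needed. Your reduction-theoretic machinery (finiteness of conjugacy classes of parabolics, Levi projections of arithmetic groups) is thus bypassed entirely.
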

	
	\begin{proof}
		We first prove that $G_{\mathbb{Z}} \cdot \overline{\Omega} = G_{\mathbb{Q}} \cdot \overline{\Omega}$. 
		Suppose conversely that $x \in q \overline{\mathfrak{S}}$ for some $q \in G_{\mathbb{Q}}$ but $x \notin G_{\mathbb{Z}} \overline{\Omega}$.  
		By the property of Siegel sets discussed above, there are finitely many $\gamma_k \in G_{\mathbb{Z}}$ such that $\gamma_k \overline{\Omega} \cap q \overline{\mathfrak{S}} \neq \varnothing$.  
		Since $x \notin G_{\mathbb{Z}} \overline{\Omega}$, we can choose a neighborhood $U$ of $x$ in $q \overline{\mathfrak{S}}$ such that $G_{\mathbb{Z}} \overline{\Omega} \cap U = \varnothing$.  
		However, $U$ must contain points of $G/K\subset G_\mathbb{Z}\overline{\Omega}$, a contradiction.  
		Thus $G_{\mathbb{Z}} \overline{\Omega} = G_{\mathbb{Q}} \overline{\Omega}$.
		
		From Corollary~\ref{crl-intersection-siegel-set}, $\overline{\mathfrak{S}} \cap \rho'(G_0'/K_0')$ is identified with a Siegel set $\mathfrak{S}'(N_0',t')$ in $G_0'/K_0'$, thus $G_{\mathbb{Q}} \overline{\mathfrak{S}}$ covers $\rho'(G_0'/K_0')$.  
		Hence $G_{\mathbb{Q}} \overline{\Omega} = S^{\mathbb{Q}}$.
	\end{proof}
	
	\subsubsection{Topology on $\overline{\mathcal{M}}_{\mathrm{K3}}^{\mathrm{ad}}$}

Given a fundamental set $\Omega = F \mathfrak{S}$ as in Theorem \ref{thm-siegel-set-fundamental-domain}, Satake (\cite[Thm.~1]{satake-60-2}) constructs a topology $T^{\mathbb{Q}}$ on $G_{\mathbb{Z}} \cdot \overline{\Omega}$ (which equals $S^{\mathbb{Q}}$ by Proposition \ref{prop-Gz-Omega=SQ}).  
This topology has the following properties:

\begin{enumerate}
	\item The topology $T^{\mathbb{Q}}$ restricts to the standard topology on $\overline{\Omega}$ (which is the subspace topology induced from $\mathbb{P}(\mathscr{H}(V))$).
	\item The construction of $T^{\mathbb{Q}}$ is independent of the specific choice of the fundamental set $\Omega$ (\cite[Rem.~3]{satake-60-2}).
	\item The action of any rational element $q \in G_{\mathbb{Q}}$ on the space $(S^{\mathbb{Q}}, T^{\mathbb{Q}})$ is a homeomorphism. In particular, $T^{\mathbb{Q}}$ restricts to the standard topology on \emph{every} Siegel set.
\end{enumerate}
\begin{defi}
	The \textit{Satake topology} on $\overline{\mathcal{M}}_{\mathrm{K3}}^{\mathrm{ad}}$ is defined to be the quotient topology induced by $T^{\mathbb{Q}}$ under the $G_{\mathbb{Z}}$-quotient.  
	In this topology, $\overline{\mathcal{M}}_{\mathrm{K3}}^{\mathrm{ad}}$ is compact and Hausdorff.
\end{defi}

We can describe the neighborhood system of the Satake topology as follows (see \cite[\S2]{satake-60-2}) for more details).
Suppose $x_\infty \in q \rho'(G_0'/K_0')$ is a boundary point of $S^{\mathbb{Q}}$ for some $q \in G_{\mathbb{Q}}$.  
By Theorem \ref{thm-siegel-set-fundamental-domain} and Corollary \ref{crl-intersection-siegel-set}, the following set is finite:
\begin{equation}\label{equation-Gzx-intersect-Omega}
	\{(q,x) \in F \times \overline{\mathfrak{S}} \mid qx \in G_{\mathbb{Z}} x_\infty\},
\end{equation}
denote it by $\{(q_1, x_1), \dots, (q_n, x_n)\}$.  
Take $\mathfrak{S}_{x_i} \subset \mathfrak{S}$ as in Theorem~\ref{siegel_x}.  
Let $\pi:S^\mathbb{Q}\to \overline{\mathcal{M}}_{\mathrm{K3}}^{\mathrm{ad}}$ be the quotient map.
Then sets of the form
\begin{equation}\label{neighborhood-U}
	U \;=\; \bigcup_i \pi\!\left(q_i \, \overline{\mathfrak{S}_{x_i}}\right)
\end{equation}
constitute a neighborhood system of $\pi(x_\infty)$.

		\subsubsection{More on the topology}\label{section-more-on-topology}
	
	Here we give some results related to the topology for later use.
	
	\begin{prop}\label{prop-topological-basis-connectness}
		Let $\eta_\infty$ be a boundary point of $\overline{\mathcal{M}_{\mathrm{K3}}}^{Sat}$.  
		Then there exists a neighborhood system $\{U_j\}$ of $\eta_\infty$ in $\overline{\mathcal{M}_{\mathrm{K3}}}^{Sat}$ such that $U_j \cap \mathcal{M}_{\mathrm{K3}}$ is connected.
	\end{prop}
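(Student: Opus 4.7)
The plan is to work directly with the basic‐neighborhood system described in \eqref{neighborhood-U}: any basic neighborhood of $\eta_0$ has the form $U = \bigcup_{i=1}^n \pi(q_i \overline{\mathfrak{S}_{x_i}})$ for the finite collection $\{(q_i,x_i)\}$ coming from \eqref{equation-Gzx-intersect-Omega}. I would show that, with the parameters $N_0^{*}$ and $\varepsilon$ of Theorem~\ref{siegel_x} chosen carefully, each such $U$ already has $U \cap \mathcal{M}_{\mathrm{K3}}$ connected; these $U$ then furnish the required neighborhood system.

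The first step is to observe that each chart piece $\mathfrak{S}_{x_i} = N_0^{*}\exp(\mathfrak{h}'_\varepsilon) K$ is connected as a subset of $G/K$. Indeed, $K = SO(3)\times SO(19)$ is connected; the slab $\mathfrak{h}'_\varepsilon$ is cut out by the strict linear inequalities (i)--(ii) of Theorem~\ref{siegel_x} and is therefore convex, so $\exp(\mathfrak{h}'_\varepsilon)$ is connected; and since $N$ is a connected unipotent group we may choose $N_0^{*}$ to be a connected neighborhood of $N_0 \cap n'\widetilde{N}'$. Thus $\mathfrak{S}_{x_i}$ is the continuous image of a connected product, and hence $\pi(q_i \mathfrak{S}_{x_i}) \subset \mathcal{M}_{\mathrm{K3}}$ is connected for each $i$.

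The second step is to link the finitely many pieces to one another. Choose $\gamma_i \in G_{\mathbb{Z}}$ with $\gamma_i q_i x_i = q_1 x_1$ (so $\gamma_1 = \mathrm{id}$); then each $\gamma_i q_i \overline{\mathfrak{S}_{x_i}}$ contains $q_1 x_1$ in its closure. By Theorem~\ref{thm-siegel-set-fundamental-domain}, one can enlarge $\mathfrak{S}$ to a single Siegel set $\mathfrak{S}^{*}$ containing all the finitely many $\gamma_i q_i \overline{\mathfrak{S}_{x_i}}$; inside this common chart $\overline{\mathfrak{S}^{*}}$ the Satake topology agrees with the standard one. Each $\gamma_i q_i \mathfrak{S}_{x_i}$ is then an open subset of $\overline{\mathfrak{S}^{*}}$ whose closure contains $q_1 x_1$, and for $\varepsilon$ small enough they all lie in an arbitrarily small standard neighborhood of $q_1 x_1$, so they pairwise intersect in $G/K$. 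Applying $\pi$ and using $\pi(q_i \mathfrak{S}_{x_i}) = \pi(\gamma_i q_i \mathfrak{S}_{x_i})$, the pieces pairwise intersect in $\mathcal{M}_{\mathrm{K3}}$, and therefore $U \cap \mathcal{M}_{\mathrm{K3}} = \bigcup_i \pi(q_i \mathfrak{S}_{x_i})$ is a union of connected sets with common pairwise intersections, hence connected.

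The hard part will be this second step: uniformly bringing all translates $\gamma_i q_i \overline{\mathfrak{S}_{x_i}}$ into one common Siegel chart, and verifying that, once the parameters $\varepsilon, N_0^{*}$ are tuned compatibly across the finite index set, the interior parts overlap in $G/K$. This relies on applying reduction theory uniformly via Theorem~\ref{thm-siegel-set-fundamental-domain} and on the convex ``horospherical cone'' shape of $\mathfrak{h}'_\varepsilon$; once this is in place, the connectedness conclusion follows formally from step one.
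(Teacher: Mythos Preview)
Your first step is fine and matches the paper. The genuine gap is in your second step, at two places.

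First, the containment $\gamma_i q_i \overline{\mathfrak{S}_{x_i}} \subset \overline{\mathfrak{S}^{*}}$ does not follow from Theorem~\ref{thm-siegel-set-fundamental-domain}. Reduction theory gives finiteness of $\{\gamma \in G_{\mathbb{Z}} : \gamma\mathfrak{S} \cap q\mathfrak{S} \neq \varnothing\}$, not that a $G_{\mathbb{Q}}$-translate of a Siegel set lies in a Siegel set. In fact it generally does not: only elements of the minimal parabolic $NA$ carry Siegel sets into Siegel sets. Here $q_1^{-1}\gamma_i q_i$ lies in the larger parabolic $P'$ (it maps the stratum $\rho'(G'/K')$ to itself), and in the decomposition $P' = N\,A_{\Pi'\cup\Pi''}\,K_{\Pi'\cup\Pi''}\,Z$ it typically has a nontrivial factor $k \in K_{\Pi'\cup\Pi''}$, which conjugates $\exp(\mathfrak{h}'_\varepsilon)$ out of the Siegel cone.

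Second, even granting a common chart, the inference ``for $\varepsilon$ small they all lie in a small standard neighborhood of $q_1 x_1$, so they pairwise intersect in $G/K$'' is false in general. The point $q_1 x_1$ is not in $G/K$; two open subsets of $G/K$ whose closures share a boundary point can be disjoint (think of two disjoint vertical strips in $\mathbb{H}$, both accumulating at $\infty$). Shrinking $\varepsilon$ only shrinks the sets and cannot create an intersection.

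The paper's proof avoids both problems by not trying to contain the whole translate in a Siegel set. Instead it exhibits, for each $i$, an explicit one-parameter ray $z_k = u\exp(kH')K \in \mathfrak{S}_{x_i}$ with $H'$ chosen \emph{central} in the Levi of $P'$ (i.e.\ $\gamma(H')=0$ for all $\gamma \in \Pi'\cup\Pi''$). This centrality is exactly what makes $\exp(kH')$ commute with the $K_{\Pi'\cup\Pi''}$-factor of $q_1^{-1}\gamma_i q_i u$, so that $q_1^{-1}\gamma_i q_i z_k = n\,a\,\exp(kH')K$ lands in Siegel form. Thus $\pi(q_i z_k) \in \pi(q_i\mathfrak{S}_{x_i}) \cap \pi(q_1\mathfrak{S}^*)$ for all large $k$, giving a concrete point in the intersection rather than inferring one from a topological argument that does not hold. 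Your outline is salvageable, but only by replacing the nonexistent containment with this explicit ray construction.
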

	\begin{proof}
		Let $\pi: S^{\mathbb{Q}} \to \overline{\mathcal{M}_{\mathrm{K3}}}^{Sat}$ be the $G_{\mathbb{Z}}$-quotient map, and write the boundary point as $\eta_\infty = \pi(x_\infty)$. Let $\Omega = F \mathfrak{S}$ be a fundamental set, and let $\{(q_1, x_1), \dots, (q_n, x_n)\}$ be the finite collection of pairs given by \eqref{equation-Gzx-intersect-Omega}. 
		It suffices to prove the following claim:
		\begin{itemize}
			\item 
			There exists an expanded Siegel set $\mathfrak{S}^* \supset \mathfrak{S}$ such that for every index $i \in \{1, \dots, n\}$, there is a sequence $\{\eta_k\} \subset \pi(q_i \mathfrak{S}) \cap \pi(q_1 \mathfrak{S}^*)$ converging to $\pi(x_\infty)$. 
		\end{itemize}
		
		If such a Siegel set $\mathfrak{S}^*$ exists, then $\pi(q_i \mathfrak{S}_{x_i}) \cap \pi(q_1 \mathfrak{S}_{x_1}^*) \neq \emptyset$ for any neighborhoods $\mathfrak{S}_{x_i}$ and $\mathfrak{S}_{x_1}^*$ given by Theorem~\ref{siegel_x}. Thus,
		\[
		U = \bigcup_i \pi\!\left(q_i \, \overline{\mathfrak{S}_{x_i}}\right)
		\]
		defined in \eqref{neighborhood-U} is an (arbitrarily small) neighborhood of $\eta_\infty$. Moreover, $U \cap \mathcal{M}_{\mathrm{K3}}$ is connected, because we may choose $\mathfrak{S}_{x_1}^*$ to be connected and each $\pi\!\left(q_i \, \overline{\mathfrak{S}_{x_i}}\right)$ intersects $\pi\!\left(q_1\, \overline{\mathfrak{S}_{x_1}}\right)$.
		
		We now prove the claim. Let $\Pi'' \subset \Pi$ be the set of simple roots orthogonal to $\Pi' \cup \{\lambda_\rho\}$. We can choose $H' \in \mathfrak{h}^-$ satisfying two conditions: $\gamma(H') = 0$ for all $\gamma \in \Pi' \cup \Pi''$, and $\lambda_\rho(H') > \lambda(H')$ for all weights $\lambda \notin \lambda(\Pi')$ (cf.~\eqref{equation-definition-lambda-Pi}). Thus, $H'$ commutes with any $g \in G_{\Pi' \cup \Pi''}$. 
		
		Define a sequence within our Siegel set:
		\[
		z_k = u \exp(kH') K \in \mathfrak{S}_{x_i},
		\]
		where $u \in NA$ is chosen so that $z_k \to x_i$ as $k \to \infty$. By setting $y_k = q_i z_k$ and $\eta_k=\pi(y_k)$, we obtain $\eta_k\to \pi(x_\infty)$ since $q_i x_i \in G_{\mathbb{Z}} x_\infty$.
		
		It suffices to prove $\eta_k\in\pi(q_1 \mathfrak{S}^*)$ for some Siegel set $\mathfrak{S}^*$. Let 
		\[
		G_{\Pi' \cup \Pi''} = N_{\Pi' \cup \Pi''} \, A_{\Pi' \cup \Pi''} \, K_{\Pi' \cup \Pi''}
		\]
		be the Iwasawa decomposition of $G_{\Pi' \cup \Pi''}$. By \cite[Prop.~7]{satake-60-1}, the parabolic subgroup $P'$ preserving $\rho'(G_0'/K_0')$ admits the decomposition
		\[
		P' = \widetilde{N}_{\Pi' \cup \Pi''} \, G_{\Pi' \cup \Pi''} \, Z = N \, A_{\Pi' \cup \Pi''} \, K_{\Pi' \cup \Pi''} \, Z,
		\]
		where $Z = \exp(i\mathfrak{h}^-) \cap G = \{\pm 1\}^2$. 
		
		Since $q_1^{-1} \gamma q_i$ maps $x_i$ to $x_1$ and both $x_1, x_i \in \rho'(G_0'/K_0')$, we have $q_1^{-1}\gamma q_i \in P'$. Writing $q_1^{-1}\gamma q_i u = n a k z$ according to the above decomposition of $P'$, we obtain
		\[
		q_1^{-1}\gamma y_k = q_1^{-1}\gamma q_i u \exp(kH') K = n a k z \exp(kH') K.
		\]
		
		Since $kz \in K$ commutes with $\exp(kH')$, this simplifies to
		\[
		q_1^{-1}\gamma y_k = n a \exp(kH') K.
		\]
		Therefore, $q_1^{-1}\gamma y_k \in \mathfrak{S}^*(N_0^*, t^*)$ provided $N_0^*$ contains $n$ and $t^*$ is sufficiently small. Thus $\eta_k=\pi(y_k) \in \pi(q_1 \mathfrak{S}^*)$ for some Siegel set $\mathfrak{S}^*$, and the conclusion follows.
	\end{proof}
	Next we write points in $G/K$ explicitly in matrix form.
	
	\begin{defi}\label{defi-std-basis}
		We say that $\{x_i\}$ $(1 \le i \le 22)$ is a \emph{standard basis} of $\Lambda_{\mathrm{K3},\mathbb{R}}\cong H^2(X,\mathbb{R})$ if $x_i \in \Lambda_{\mathrm{K3}}$ for $i \in \{1,2,3,20,21,22\}$, and its intersection matrix is
		\[
		\begin{pmatrix}
			\begin{array}{c|c|c}
				0 & 0 &
				\begin{matrix}
					&  & 1 \\
					& 1 &   \\
					1 &  &  
				\end{matrix}
				\\ \hline
				0 & A_{16\times 16} & 0
				\\ \hline
				\begin{matrix}
					&  & 1 \\
					& 1 &   \\
					1 &  &  
				\end{matrix} & 0 & 0
			\end{array}
		\end{pmatrix}.
		\]
		We say a standard basis $\{x_i\}$ is of \emph{$E_8$ type} if the integral quadratic form on $\langle x_1,x_2,x_3,x_{20},x_{21},x_{22} \rangle^\perp$ is $(-E_8)^{\oplus 2}$, and of \emph{$\Gamma_{16}$ type} if the integral quadratic form is $-\Gamma_{16}$.
	\end{defi}
	
	The basis $(w_{e_1}, w_{e_2}, w_{e_3}, w_4, \dots, w_{19}, w_{-e_3}, w_{-e_2}, w_{-e_1})$ given in \S\ref{section-lie-algebra} is a standard basis; conversely, any standard basis with $A = -\mathrm{Id}_{16\times 16}$ is obtained in this way.
	
	Given a standard basis $\mathcal{B} = \{x_i\}$, each $W \in \mathrm{Gr}^+_3(\Lambda_{\mathrm{K3},\mathbb{R}})$ can be represented by the coefficients of its orthonormal basis, and we may write the isomorphism  
	$G/K \cong \mathrm{Gr}^+_3(\Lambda_{\mathrm{K3},\mathbb{R}})$ by $gK \mapsto \boldsymbol{g} L_0$, where $\boldsymbol{g}$ denotes the coefficient matrix of $g$ in $\mathcal{B}$, and
	\[
	L_0 \coloneqq
	\begin{pmatrix}
		0 & 0 & 1 \\
		0 & 1 & 0 \\
		1 & 0 & 0 \\
		\vdots & \vdots & \vdots \\
		0 & 0 & 0 \\
		\vdots & \vdots & \vdots \\
		1 & 0 & 0 \\
		0 & 1 & 0 \\
		0 & 0 & 1
	\end{pmatrix}.
	\]
	
	\begin{lem}\label{lem-std-form-nexpHL0}
		Let $n \in N$ and $H = H(a+b+c, b+c, c, 0, \dots, 0)$ as in \eqref{expression-H(a)}. Then
		\[
		\boldsymbol{n \exp(H)} L_0 =
		\begin{pmatrix}
			\alpha_{11} & \alpha_{12} & \alpha_{13} \\
			\alpha_{21} & \alpha_{22} & \alpha_{23} \\
			\alpha_{31} & \alpha_{32} & \alpha_{33} \\
			\vdots & \vdots & \vdots \\
			e^{-c} & \alpha_{20,2} & \alpha_{20,3} \\
			0 & e^{-b-c} & \alpha_{21,3} \\
			0 & 0 & e^{-a-b-c}
		\end{pmatrix}.
		\]
		
		Conversely, if $x \in G/K$ is represented by $L$ of the above form, then $x = n \exp(H) K$ as above. Moreover, $x$ is $G_{\mathbb{Z}}$-equivalent to $n' \exp(H) K$, where $n'$ lies in a given compact set of $N$. In particular, if $a,b,c \ge -C$ for some constant $C$, then $n' \exp(H)$ lies in a given Siegel set.
	\end{lem}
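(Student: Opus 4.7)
The plan is to prove the two directions by a direct weight-space computation and to handle the ``moreover'' part via the compactness of $N_\mathbb{Z}\backslash N$. For the forward direction, I identify $x_k=w_{e_k}$ and $x_{23-k}=w_{-e_k}$ for $k=1,2,3$, so that the three columns of $L_0$ are $w_{e_3}+w_{-e_3}$, $w_{e_2}+w_{-e_2}$, $w_{e_1}+w_{-e_1}$ in order. Since $\exp(H)$ is diagonal on the weight decomposition with $\exp(H)w_{\pm e_k}=e^{\pm a_k}w_{\pm e_k}$, where $(a_1,a_2,a_3)=(a+b+c,b+c,c)$, the lower $3\times 3$ block of $\boldsymbol{\exp(H)}L_0$ (rows indexed by $w_{-e_3},w_{-e_2},w_{-e_1}$) is the diagonal matrix $\mathrm{diag}(e^{-c},e^{-(b+c)},e^{-(a+b+c)})$. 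To handle left-multiplication by $n\in N$, I use that $N$ is the unipotent radical of the minimal parabolic stabilising the maximal isotropic flag $F_k=\langle w_{e_1},\ldots,w_{e_k}\rangle$, so $N$ also preserves the orthogonal flag $F_3^\perp\supset F_2^\perp\supset F_1^\perp$ and descends to a unipotent operator on $\Lambda_{\mathrm{K3},\mathbb{R}}/F_3^\perp$. In the ordered basis $[w_{-e_3}],[w_{-e_2}],[w_{-e_1}]$ this induced action is upper triangular with $1$'s on the diagonal, so $n$ preserves the diagonal of the lower block and fills in precisely the stated strictly upper-triangular entries $\alpha_{20,2},\alpha_{20,3},\alpha_{21,3}$.

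For the converse, I take $L$ of the stated form (inheriting the Gram condition $L^TML=2I$ from $L_0$) and invoke uniqueness of the Iwasawa decomposition $G=NAK$. Writing $x=n^*\exp(H^*)K$ with $H^*=H(a_1^*,a_2^*,a_3^*,0,\ldots,0)$ and $L^*:=n^*\exp(H^*)L_0$, the forward direction gives $L^*$ the same shape but with diagonal $(e^{-a_3^*},e^{-a_2^*},e^{-a_1^*})$. Since $L$ and $L^*$ are two bases of the same $3$-plane with the same Gram matrix, $L=L^*R$ for some $R\in O(3)$. Matching the upper-triangular lower blocks forces $R$ to be upper triangular, and then $R\in O(3)$ forces $R$ to be diagonal with entries $\pm 1$; positivity of both diagonals finally gives $R=I$. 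Therefore $a_1^*=a+b+c,\;a_2^*=b+c,\;a_3^*=c$, so $x=n\exp(H)K$ with $n=n^*$ and $H$ as claimed. This converse is the step I expect to be the main obstacle, because a generic matrix representative of the same plane differs from $L^*$ by an arbitrary $GL_3$ change of basis; the Gram condition together with positivity of the lower-block diagonal is precisely what cuts the ambiguity down to $R=I$.

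For the ``moreover'' part, $N$ is $\mathbb{Q}$-defined and unipotent and $N_\mathbb{Z}:=G_\mathbb{Z}\cap N$ is arithmetic in $N$, so standard reduction theory yields that $N_\mathbb{Z}\backslash N$ is compact; I fix a compact fundamental domain $\Omega_N\subset N$ with nonempty interior. Choosing $\gamma\in N_\mathbb{Z}$ with $n':=\gamma n\in\Omega_N$ gives $\gamma\cdot x=n'\exp(H)K$, so $x$ is $G_\mathbb{Z}$-equivalent to a point whose $N$-part lies in $\Omega_N$. Evaluating the simple restricted roots $\Pi^-=\{e_1-e_2,e_2-e_3,e_3\}$ on $H(a+b+c,b+c,c,0,\ldots,0)$ returns $(a,b,c)$; hence the hypothesis $a,b,c\ge -C$ places $\exp(H)$ in $A_{-C}$, and consequently $n'\exp(H)\in\mathfrak{S}(\Omega_N,-C)$.
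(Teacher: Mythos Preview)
Your proof is correct and follows essentially the same approach as the paper. The paper's argument is terser: for the forward direction it simply observes that $N$ is upper triangular in $\mathcal{B}$, and for the converse it writes $L=\boldsymbol{g}L_0$ with $g=nak$ the Iwasawa decomposition and asserts that ``from the expression of $L$'' one has $\boldsymbol{k}L_0=L_0$. Your $R\in O(3)$ argument is precisely what justifies that terse step, and your treatment of the ``moreover'' part via compactness of $N_{\mathbb{Z}}\backslash N$ is identical to the paper's.
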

	
	\begin{proof}
		From the definition of $\mathfrak{n}$, we know that $N$ is upper triangular in $\mathcal{B}$. Thus $\boldsymbol{n \exp(H)} L_0$ is of the stated form.  
		Conversely, since $L$ represents a positive definite $3$-plane in $\Lambda_{\mathrm{K3},\mathbb{R}}$, there exists $g \in G$ such that $L = \boldsymbol{g} L_0$.  
		Write $g = n a k$ as its Iwasawa decomposition. From the expression of $L$, we must have $\boldsymbol{k} L_0 = L_0$, and hence $L = \boldsymbol{n a} L_0$.  
		Since $n$ is upper triangular, we have $a = \exp(H)$ determined by the coefficients.
		
		Moreover, since $N_{\mathbb{Z}} \backslash N$ is compact by \cite[Thm.~8.7]{borel-69}, we may reduce to the case that $n$ lies in a compact set.
	\end{proof}
	
	\begin{prop}\label{prop-period-behaviour-characterized-boundary}
		Suppose we have a sequence given by $L_k = \boldsymbol{n_k \exp(H_k)} L_0$ as in Lemma~\ref{lem-std-form-nexpHL0} in a standard basis $\mathcal{B}$.
		Let ${[L_k]}$ be the associated point in $\mathcal{M}_{\mathrm{K3}}$. 
		Assume $a_k + b_k + c_k \to \infty$ and $a_k, b_k, c_k \ge -C$.Then there is a subsequence of ${[L_k]}$ convergent in $\overline{\mathcal{M}_{\mathrm{K3}}}$, and
		the limit is determined as follows:
		
		\begin{enumerate}
			\item If $b_k + c_k$ converges and $a_k \to \infty$, then ${[L_k]}$ converges to a boundary point of type (a).  
			The limit is identified with the limit of $(\alpha_{ij,k})_{2 \le i \le 21,\; 1 \le j \le 2}$, which can be viewed as a $2$-positive definite subspace in $\langle x_1, x_{22} \rangle^\perp$.
			
			\item If $b_k + a_k$ converges and $c_k \to \infty$, then ${[L_k]}$ converges to a boundary point of type (b).  
			The limit is identified with the limit of
			\[
			e^{c_k + \frac{2}{3} b_k + \frac{1}{3} a_k}
			\begin{pmatrix}
				e^{-c_k} & \alpha_{20,2,k} & \alpha_{20,3,k} \\
				0 & e^{-b_k - c_k} & \alpha_{21,3,k} \\
				0 & 0 & e^{-a_k - b_k - c_k}
			\end{pmatrix}.
			\]
			
			\item If $b_k$ converges and $a_k, c_k \to \infty$, then ${[L_k]}$ converges to a boundary point of type (c).  
			The limit is identified with the limit of
			\[
			e^{c_k + \frac{1}{2} b_k}
			\begin{pmatrix}
				e^{-c_k} & \alpha_{20,2,k} \\
				0 & e^{-b_k - c_k}
			\end{pmatrix}.
			\]
			
			\item If $b_k \to \infty$, then ${[L_k]}$ converges to a boundary point of type (d).
		\end{enumerate}
		
		In cases (2) and (3), the exact boundary component where the limit lies is determined by the type of the standard basis as in \S\ref{section-compactification-as-set}.
	\end{prop}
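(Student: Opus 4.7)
The plan is to apply Theorem~\ref{siegel_x} systematically and then extract the explicit limit from the matrix form of $L_k$ in Lemma~\ref{lem-std-form-nexpHL0}. Passing to a subsequence, I may assume each of $a_k,b_k,c_k$ either converges or tends to $+\infty$. Since the paper's parametrization gives $\alpha_1(H_k)=a_k$, $\alpha_2(H_k)=b_k$, $\alpha_3(H_k)=c_k$, and $\alpha_i(H_k)=0$ for $i\ge 4$, Theorem~\ref{siegel_x} implies that the limit lies in the stratum $\rho'(G'/K')$ whose $\Pi'^-$ consists of exactly the restricted simple roots on which $H_k$ remains bounded. This immediately identifies the strata in cases (1), (2), (3) as types (a), (b), (c), respectively.

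Case (4) requires more care: if $b_k\to\infty$ but $a_k$ or $c_k$ is bounded, the naive $\Pi'^-\subset\{\alpha_1,\alpha_3\}$ is not $\rho$-connected, so one must verify that the limit actually falls into stratum (d) with $\Pi'^-=\emptyset$. The key combinatorial input is that for every weight $\lambda\ne\lambda_\rho$ of $V\cong\wedge^2\Lambda_{K3,\mathbb{C}}$, the coefficient $m_2$ of $\alpha_2$ in the expansion $\lambda_\rho-\lambda=\sum m_i\alpha_i$ satisfies $m_2\ge 1$. This is proved by writing $\lambda=(1-m_1)e_1+(1+m_1-m_2)e_2+(m_2-m_3)e_3+\cdots$ and checking that $m_2=0$, combined with $\lambda$ being a weight of the form $\pm e_i\pm e_j$ or $0$, forces $m_1=m_3=\cdots=0$, i.e.\ $\lambda=\lambda_\rho$. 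Granting the lemma, $(\lambda_\rho-\lambda)(H_k)\ge b_k-(m_1+m_3)C\to\infty$ for every non-highest $\lambda$, fulfilling condition (ii) of Theorem~\ref{siegel_x} at $\Pi'^-=\emptyset$.

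The explicit limit formulas in cases (1)--(3) are then read off by tracking the columns of $L_k$ in the Grassmannian $\mathrm{Gr}_3^+(\Lambda_{K3,\mathbb{R}})$. In case (1), column $v_{3,k}$ is dominated by $e^{a_k+b_k+c_k}x_1$, so $[v_{3,k}]\to[x_1]$ (a null direction) and the limit $3$-plane contains $\langle x_1\rangle$; columns $v_{1,k},v_{2,k}$ already lie in $\langle x_1\rangle^\perp$ and have bounded entries, so quotienting by $\langle x_1\rangle$ (and identifying $\langle x_1\rangle^\perp/\langle x_1\rangle\cong\langle x_1,x_{22}\rangle^\perp$) just deletes rows $1$ and $22$, producing the advertised matrix $(\alpha_{ij,k})_{2\le i\le 21,\,1\le j\le 2}$; this matches the description of stratum (a) as $\mathrm{Gr}_2^+(\langle w_{\pm e_1}\rangle^\perp)$ from \S\ref{section-structure-boundary-components}. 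In cases (2) and (3) all three (resp.\ the first two) columns escape along null directions inside the degenerating isotropic subspace $\langle x_1,x_2,x_3\rangle$ (resp.\ $\langle x_1,x_2\rangle$); the surviving ``scale'' information is the lower-right $3\times 3$ block (resp.\ the rows $(20,21)$ and columns $(1,2)$ block) of $L_k$, and the prefactor $e^{c+2b/3+a/3}$ (resp.\ $e^{c+b/2}$) is the unique scalar making the determinant equal $1$, identifying the boundary point with a point in $SL(3,\mathbb{R})/SO(3)$ (resp.\ $SL(2,\mathbb{R})/SO(2)$). The refinement $\mathcal{M}(b_1)$ vs.\ $\mathcal{M}(b_2)$ (and similarly $\mathcal{M}(c_i)$) is dictated by the $G_\mathbb{Z}$-conjugacy class of the limiting rational isotropic subspace, which by \S\ref{section-compactification-as-set} is detected by whether the intersection form on $\mathrm{span}\{x_4,\ldots,x_{19}\}$ is $(-E_8)^{\oplus 2}$ or $-\Gamma_{16}$, i.e.\ by the type of the standard basis $\mathcal{B}$.

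The main obstacle I expect is this explicit matrix identification: one must match the abstract Satake embedding $\rho(gK)=\boldsymbol{g}\boldsymbol{g}^*\in\mathbb{P}(\mathscr{H}(V))$, whose boundary strata are parametrized via weight-space decomposition of $V\cong\wedge^2\Lambda_{K3,\mathbb{C}}$, with the concrete matrix form of $L_k$. The weight vectors $w_{\pm e_i}\wedge w_{\pm e_j}$ degenerate under $\exp(H_k)$ in a controlled way, but pinning down the correct scalar normalization (the peculiar exponents $c+2b/3+a/3$ and $c+b/2$) and the $E_8$ vs.\ $\Gamma_{16}$ distinction requires careful bookkeeping at the boundary.
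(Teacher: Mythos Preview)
Your proposal is correct and follows essentially the same route as the paper: identify the maximal $\rho$-connected $\Pi'^-$ on which the restricted simple roots stay bounded, then invoke Theorem~\ref{siegel_x} to locate the limit in the corresponding boundary stratum. The paper packages the bookkeeping slightly differently, writing $H_k=H'_k+H''_k$ with $H'_k\in\mathfrak{h}'^-$ convergent and $\lambda(H''_k)=0$ for $\lambda\in\Pi'$; for instance in case~(2) one has $H''_k=H(c'_k,c'_k,c'_k,0,\dots,0)$ with $c'_k=c_k+\tfrac{2}{3}b_k+\tfrac{1}{3}a_k$, which is exactly your determinant-normalizing prefactor, and then the limit is $n'_\infty\exp(H'_\infty)K'$. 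Your explicit combinatorial lemma in case~(4) (that every $\lambda\neq\lambda_\rho$ has $m_2\ge 1$) is a detail the paper leaves implicit, and your column-tracking interpretation of the limit is a concrete reformulation of the paper's more abstract identification---both land on the same formulas.
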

	
	\begin{proof}
		Let $\Pi'^- \subset \Pi^-$ denote the maximal $\rho$-connected set such that $\lambda(H_k)$ is bounded for $\lambda \in \Pi'^-$, and let $\Pi' = c^{-1}(\Pi'^-)$.  
		Write $H_k = H'_k + H''_k$, where $H'_k \in \mathfrak{h}'^{-}$ converges to $H'_\infty$, and $H''_k$ satisfies $\lambda(H''_k) = 0$ for $\lambda \in \Pi'$.
		For example, in case (1) we have $H''_k = H(a_k, 0, \dots, 0)$, and in case (2) we have $H''_k = H(c'_k, c'_k, c'_k, 0, \dots, 0)$ with $c'_k = c_k + \frac{2}{3} b_k + \frac{1}{3} a_k$.
		Then $a_k + b_k + c_k \to \infty$ implies $(\lambda_{\rho} - \lambda)(H''_k) \to \infty$ for $\lambda \in \lambda(\Pi) \setminus \lambda(\Pi')$.
		
		By Lemma \ref{lem-std-form-nexpHL0}, we may assume $n_k$ lies in a given compact set of $N$.
		After possibly taking a subsequence, we can write $n_\infty = \lim_{k\to\infty} n_k = n'_\infty \tilde{n}'_\infty \in N' \widetilde{N}'$.
		By the definition of the topology on $\overline{\mathfrak{S}}$, the limit of $q n_k e(H_k) K$ is identified with $q n'_\infty \exp(H'_\infty) K_0' \in q \rho'(G_0'/K_0')$.
		Thus the conclusion follows.
	\end{proof}
	
	\subsection{Construction of the geometric realization map}\label{section-construction-geometric-realization}
	
	Here we define the map $\overline{\Phi} : \overline{\mathcal{M}}_{\mathrm{K3}}^{\mathrm{ad}} \to \overline{\mathfrak{M}}$, following \cite[\S6.2]{odaka-oshima-21}.
	On $\mathcal{M}_{\mathrm{K3}}$, the map $\Phi : \mathcal{M}_{\mathrm{K3}} \to \mathfrak{M} \cup \mathfrak{M}^4$ is defined in \eqref{definition-phi}, and on the boundary $\overline{\Phi}$ is defined as follows:
	
	\begin{enumerate}
		\item[(i)] For $\eta \in \mathcal{M}(a) \cong O_\mathbb{Z}(2,18) \backslash SO_0(2,18)/(SO(2) \times SO(18))$,  
		$\eta$ can be represented by a $2$-positive definite plane in $\langle w_{e_1}, w_{-e_1} \rangle^\perp \subset \Lambda_{\mathrm{K3},\mathbb{R}}$.  
		After choosing an orientation, we can associate $\eta$ with a Jacobian elliptic K3 surface by Proposition~\ref{prop-e,f-give-elliptic}.  
		We define $\overline{\Phi}(\eta)$ to be the corresponding generalized KE metric of unit diameter.
		
		\item[(ii)] For $\eta \in \mathcal{M}(b_1) \cong GL(3,\mathbb{Z}) \backslash SL(3,\mathbb{R})/SO(3)$,  
		$\eta$ can be represented by $A \in SL(3,\mathbb{R})$.  
		We can associate to it a torus $T^3 = \mathbb{R}^3 / \langle v_1, v_2, v_3 \rangle$, where $v_i$ are the row vectors of $A$.  
		This gives a well-defined map $\overline{\Phi} : \mathcal{M}(b_1) \to \mathfrak{M}^3$, sending a point in $\mathcal{M}(b_1)$ to the flat torus quotient $T^3/\{\pm 1\}$ with unit diameter.
		
		\item[(iii)] For $\eta \in \mathcal{M}(c_1) \cong GL(2,\mathbb{Z}) \backslash SL(2,\mathbb{R})/SO(2)$, similarly define $\overline{\Phi}(\eta)$ to be the flat torus quotient $T^2/\{\pm 1\}$ determined by $A \in SL(2,\mathbb{R})$.
		
		\item[(iv)] Finally, for $\eta \in \mathcal{M}(b_2) \cup \mathcal{M}(c_2) \cup \mathcal{M}(d)$, define $\overline{\Phi}(\eta) = I^1$.
	\end{enumerate}
	
	We see from the definition that $\overline{\Phi}$ is injective on $\mathcal{M}(b_1) \cup \mathcal{M}(c_1)$, and we have $\overline{\Phi}(\mathcal{M}(b_1)) \subset \overline{\Phi}(\mathcal{M}(a))$.  
	Moreover, $\overline{\Phi}|_{\mathcal{M}(a)}$ is almost injective (\cite[Thm.~4]{ouyang-25}).  
	The continuity of $\overline{\Phi}$ will be proved in \S\ref{section-proof-of-main-thm}.  
	Since $\overline{\Phi}$ is a continuous surjection between compact sets, it is a closed map and hence a quotient map.
	
	For later use, we also define $\overline{\Phi}' : \overline{\mathcal{M}}_{\mathrm{K3}}^{\mathrm{ad}} \to \overline{\mathfrak{M}}$ by mapping $\eta \in \mathcal{M}(b_2) \cup \mathcal{M}(c_2)$ to a flat torus quotient as above, and letting $\overline{\Phi}'(\eta) = \overline{\Phi}(\eta)$ on the other components.
	\subsection{Compactification of polarized moduli spaces}\label{section-polarized-moduli-space}
	Fix a primitive element $\lambda \in \Lambda_{\mathrm{K3}}$ with $(\lambda,\lambda) = 2d > 0$.  
	Such a $\lambda$ is unique up to the action of the isometry group $O(\Lambda_{\mathrm{K3}})$ \cite[Cor.~14.1.10]{huybrechts-lecturenotes}.  
	Define the lattice
	$
	\Lambda_{2d} \coloneqq \lambda^{\perp} \subset \Lambda_{\mathrm{K3}}.
	$
	The polarized moduli space of K3 surfaces of degree $2d$ is then defined similarly as the quotient of the oriented Grassmannian:
	\begin{align*}
		\mathcal{M}_{2d} \;&\coloneqq\; O(\Lambda_{2d}) \backslash \mathrm{Gr}_2^{+,or}(\Lambda_{2d,\mathbb{R}})
		\;=\; O(\Lambda_{2d}) \backslash O(2,19) \big/ (SO(2)\times O(19)) \\
		\;&=\; O^+(\Lambda_{2d}) \backslash O(2,19) \big/ (O(2)\times O(19))
		\;=\; O^+(\Lambda_{2d}) \backslash G_{2d}/K_{2d}.
	\end{align*}
	Here
	$G_{2d}\cong O(2,19)$ is the stabilizer of $\lambda$ in $G$, $K_{2d}=K\cap G_{2d}\cong O(2)\times O(19)$, and $O^+(\Lambda_{2d})$ is the index-two subgroup of $O(\Lambda_{2d})$ preserving each component of the oriented Grassmannian $\mathrm{Gr}_2^{+,or}$.
	
	The moduli space $\mathcal{M}_{2d}$ parametrizes algebraic K3 surfaces (possibly with ADE singularities) equipped with a fixed polarization.
	There is a map
	\[
	f: \mathcal{M}_{2d} \longrightarrow \mathcal{M}_{\mathrm{K3}},
	\]
	which is generically $2$-to-$1$.
	

	As in the non-polarized case, we construct the Satake compactification $\overline{\mathcal{M}}_{2d}^{\mathrm{ad}}$ according to the adjoint representation of $G_{2d}$ in $V_{2d}=\mathfrak{g}_{2d,\mathbb{C}}$.  
	We have $\overline{\mathcal{M}}_{2d}^{\mathrm{ad}}=G_{\mathbb{Z},2d}\backslash S^{\mathbb{Q}}_{2d}$, where $S^{\mathbb{Q}}_{2d}$ is obtained by adding all rational boundary components in $\overline{G_{2d}/K_{2d}}^{\mathrm{ad}}$. Moreover, we have
	$
	\overline{G_{2d}/K_{2d}}^{\mathrm{ad}}
	\hookrightarrow
	\overline{G/K}^{\mathrm{ad}}
	$
	induced by
	\[
	G_{2d}/K_{2d}
	\hookrightarrow SL(V_{2d},\mathbb{C})/SU(V_{2d})
	\hookrightarrow SL(V,\mathbb{C})/SU(V)
	\hookrightarrow \mathbb{P}(\mathscr{H}(V)).
	\]
	This map sends $S^{\mathbb{Q}}_{2d}$ to $S^{\mathbb{Q}}$.  
	Hence, taking the quotient by $G_{\mathbb{Z},2d}$, we obtain a continuous map
	\[
	f_{2d}:\overline{\mathcal{M}}_{2d}^{\mathrm{ad}}\to \overline{\mathcal{M}}_{\mathrm{K3}}^{\mathrm{ad}},
	\]
	and we define the geometric realization map on $\overline{\mathcal{M}}_{2d}^{\mathrm{ad}}$ by $\overline{\Phi}_{2d}=\overline{\Phi}\circ f_{2d}$.  
	Then, as a corollary of Theorem \ref{thm-main-theorem}, $\overline{\Phi}_{2d}$ is continuous (Corollary \ref{tt-crl-12072200}).
	
	Furthermore, the GH closure of hyperkähler K3 surfaces with fixed polarization is given by the image of $\overline{\Phi}_{2d}$, and we obtain a classification of such collapsing limits assuming Theorem \ref{thm-main-theorem}:
	
	\begin{proof}[Proof of Corollary \ref{crl-fix-polarization}]
		Suppose we have a sequence $\eta_k\in \operatorname{Im}(f_{2d})$ converging to a boundary point $\eta_\infty$.  
		We may assume $\eta_k$ is the image of $\langle \omega_{1,k},\omega_{2,k},\omega_{3,k}\rangle$ under the $G_{\mathbb{Z}}$-quotient, and $(\omega_{1,k},\omega_{2,k},\omega_{3,k})$ is represented by a $3\times 22$ matrix with coefficients lying in Cases (1)--(4) of Proposition \ref{prop-period-behaviour-characterized-boundary}.  
		Let $\lambda_k=x_k\omega_{1,k}+y_k\omega_{2,k}+z_k\omega_{3,k}$ be the polarization class of degree $2d$.  
		Then $x_k^2+y_k^2+z_k^2=2d$ and all coefficients of $\lambda_k$ are integral.  
		Thus $c_k$ in Proposition \ref{prop-period-behaviour-characterized-boundary} must be bounded, and we are either in Case (4) with $\lambda=\sqrt{2d}\,\omega_{1,k}$, or in Case (1) with $\lambda=x_k\omega_{1,k}+y_k\omega_{2,k}$.  
		By Theorem \ref{thm-main-theorem}, in Case (4) the limit is a unit segment, and in Case (1) the limit is given by generalized KE metrics on $\mathbb{P}^1$ arising from elliptic K3 surfaces $\pi:X\to \mathbb{P}^1$, such that $[\omega]=[\operatorname{Re}\Omega]$ for a holomorphic volume form $\Omega$ on $X$.
	\end{proof}
	
	Next we analyze the boundary components of $\overline{\mathcal{M}}_{2d}^{\mathrm{ad}}$.  
	As in the non-polarized case, take  
	\[
	\Pi_{2d}=\{e_1-e_2,\, e_2-e_3,\, \dots,\, e_9-e_{10},\, 2e_{10}\}
	\]
	to be the set of simple roots of $\mathfrak{g}_{2d,\mathbb{C}}$, and let $\Pi_{2d}^-=\{e_1-e_2,\, e_2\}$ be its restriction to $\mathfrak{h}_{2d}^-$.  
	Since the maximal weight is $e_1+e_2$, there are three different $\rho$-connected subsets
	\[
	\Pi_{2d}'^-=\{e_1-e_2,\, e_2\},\qquad \{e_2\},\qquad \emptyset,
	\]
	and we have a decomposition
	\[
	\overline{\mathcal{M}}_{2d}^{\mathrm{ad}}
	=
	\mathcal{M}_{2d}
	\;\sqcup\;
	\mathcal{F}(l_i)
	\;\sqcup\;
	\mathcal{F}(p_j).
	\]
	Here $\mathcal{F}(l_i)$ is isomorphic to an arithmetic quotient of $SO_0(1,18)/SO(18)$; these boundary components map to $\mathcal{M}(a)$ by the description of the Satake compactification, and each $\mathcal{F}(p_j)$ consists of a single point mapping to $\mathcal{M}(d)$.
	
	The corresponding parabolic subgroups of $\mathcal{F}(l_i), \mathcal{F}(p_j)$ are identified with the stabilizers of $\langle v_1\rangle$ and $\langle v_1,v_2\rangle$ respectively, where $v_i$ are isotropic vectors in $\Lambda_{2d,\mathbb{Q}}$.  
	Thus the number of boundary components equals the number of $G_{\mathbb{Z}}$-equivalence classes of rational isotropic subspaces of dimension $1$ or $2$, computed in \cite{scattone-87}.
	
	Recall that $\Lambda_{2d}\coloneqq \lambda^\perp\subset \Lambda_{\mathrm{K3}}$.  
	Then we have an isomorphism
	\[
	G_{2d}/K_{2d}\cong \operatorname{Gr}_2^+(\Lambda_{2d,\mathbb{R}}).
	\]
	Moreover, by the construction of the Satake compactification, elements of the boundary component $\mathcal{F}(l_i)$ correspond to semi-positive $2$-planes in $\Lambda_{2d}$ of rank $1$, while elements of $\mathcal{F}(p_j)$ correspond to $2$-dimensional isotropic subspaces in $\Lambda_{2d}$.  
	In fact, we can prove the following:
	
	\begin{prop}
		$\overline{G_{2d}/K_{2d}}^{\mathrm{ad}}$ can be naturally identified with the closure of $\operatorname{Gr}_2^+(\Lambda_{2d})$ in $\operatorname{Gr}_2(\Lambda_{2d})$.
	\end{prop}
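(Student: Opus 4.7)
The strategy is to build a natural continuous bijection
\[
F:\overline{G_{2d}/K_{2d}}^{\mathrm{ad}} \longrightarrow \overline{\operatorname{Gr}_2^+(\Lambda_{2d,\mathbb{R}})} \subset \operatorname{Gr}_2(\Lambda_{2d,\mathbb{R}})
\]
extending the identity on the interior, and then invoke compactness. Since $\overline{G_{2d}/K_{2d}}^{\mathrm{ad}}$ is compact Hausdorff and the Grassmannian is Hausdorff, any such bijection is automatically a homeomorphism.

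\textbf{Step 1 (set-theoretic map and bijectivity).} On the open stratum $G_{2d}/K_{2d}=\operatorname{Gr}_2^+(\Lambda_{2d,\mathbb{R}})$ take $F$ to be the identity. A component $\mathcal{F}(l_i)$ is, by the discussion above, associated with a $G_{\mathbb{Z},2d}$-class of rational isotropic line $\ell\subset\Lambda_{2d,\mathbb{Q}}$ and is isomorphic to $SO_0(1,18)/SO(18)$, i.e.\ the space of positive-definite lines in $\ell^\perp/\ell$. Send each such line $\langle\bar v\rangle$ to the rank-$1$ semi-positive $2$-plane $\ell\oplus\langle v\rangle$, which is independent of the lift $v\in\ell^\perp$. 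Each $\mathcal{F}(p_j)$ corresponds to a $G_{\mathbb{Z},2d}$-class of rational isotropic $2$-plane $E$; send it to $E_{\mathbb{R}}$. Bijectivity onto the closure is immediate, since a limit of positive-definite $2$-planes in $\operatorname{Gr}_2(\Lambda_{2d,\mathbb{R}})$ is a $2$-plane on which the form is positive semidefinite, and such a plane has signature $(2,0)$, $(1,0)$ with a $0$-direction, or is totally isotropic — exactly the three strata above.

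\textbf{Step 2 (continuity via Siegel sets).} This is the main step, and I would reduce it to the explicit coordinate computation already done for $\overline{\mathcal{M}}_{\mathrm{K3}}^{\mathrm{ad}}$. The Satake topology on $\overline{G_{2d}/K_{2d}}^{\mathrm{ad}}$ is characterized locally by the Siegel-set neighborhoods of Theorem~\ref{siegel_x}. Choose a standard $\mathbb{Z}$-basis $\{x_1,\dots,x_{21}\}$ of $\Lambda_{2d}$ in which $\langle x_1,x_{21}\rangle$ and $\langle x_2,x_{20}\rangle$ are hyperbolic summands, and write any point of a Siegel set as $L=\boldsymbol{n\exp(H)}L_0$ as in Lemma~\ref{lem-std-form-nexpHL0}, with $H=H_{2d}(a,b,0,\dots,0)$ and $n$ in a compact subset of the unipotent radical. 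The analog of Proposition~\ref{prop-period-behaviour-characterized-boundary} for $G_{2d}$ then reads: if $a\to\infty$ and $b$ is bounded, the Satake limit lies in $\mathcal{F}(l_i)$; if both $a,b\to\infty$, the limit lies in $\mathcal{F}(p_j)$. In the first case, after rescaling the first column of $L_k$ by $e^{a_k}$ it converges to a multiple of $x_1$ (the isotropic vector $w_{e_1}$), while the second column remains in a bounded region of $\langle x_1\rangle^\perp$ and converges to a positive vector $v$; hence the $2$-plane $[L_k]$ converges in $\operatorname{Gr}_2(\Lambda_{2d,\mathbb{R}})$ to $\langle x_1\rangle\oplus\langle v\rangle$, matching $F$ on $\mathcal{F}(l_i)$. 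In the second case a similar rescaling collapses both columns into $\langle x_1,x_2\rangle$, which is the isotropic $2$-plane attached to the $\mathcal{F}(p_j)$ point. Continuity then extends from a single Siegel set to all of $S^{\mathbb{Q}}_{2d}$ by $G_{\mathbb{Q},2d}$-equivariance of both sides, and descends to the $G_{\mathbb{Z},2d}$-quotient.

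\textbf{Step 3 (conclusion).} The image of $F$ is compact and contains $\operatorname{Gr}_2^+(\Lambda_{2d,\mathbb{R}})$, hence equals its closure in $\operatorname{Gr}_2(\Lambda_{2d,\mathbb{R}})$. A continuous bijection from a compact space to a Hausdorff space is a homeomorphism, so $F$ realizes the desired identification.

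The main obstacle is the bookkeeping in Step~2: one must verify that the Plücker/Grassmannian convergence of $L_k$ genuinely matches the Satake convergence in $\mathbb{P}(\mathscr{H}(V_{2d}))$. The underlying reason it works is that the adjoint representation of $\mathfrak{so}(2,19)$ is $\wedge^2\Lambda_{2d,\mathbb{C}}$, whose weight-space decomposition in the basis $w_{\pm e_1},w_{\pm e_2}$ is compatible with the Plücker coordinates; the dominant weights at each boundary stratum $\mathcal{F}(l_i)$ and $\mathcal{F}(p_j)$ are exactly those that survive in the Plücker limit. Once the correspondence between $\rho$-connected subsets of $\Pi^-_{2d}$ and degenerations of $2$-planes is made explicit, the continuity check reduces to a direct rescaling argument.
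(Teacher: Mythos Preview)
Your strategy---define $F$ stratum by stratum, verify continuity, then invoke compact-to-Hausdorff---is sound in outline, but there is a genuine confusion about which space the proposition concerns. The statement is about $\overline{G_{2d}/K_{2d}}^{\mathrm{ad}}$, the closure of the symmetric space in $\mathbb{P}(\mathscr{H}(V_{2d}))$, \emph{before} any arithmetic quotient. Its boundary components are indexed by \emph{all} real isotropic lines and $2$-planes in $\Lambda_{2d,\mathbb{R}}$, not by $G_{\mathbb{Z},2d}$-classes of rational ones; the symbols $\mathcal{F}(l_i)$, $\mathcal{F}(p_j)$, $S^{\mathbb{Q}}_{2d}$ and the phrase ``descends to the $G_{\mathbb{Z},2d}$-quotient'' refer to $\overline{\mathcal{M}}_{2d}^{\mathrm{ad}}$ and do not belong here. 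As written, your map $F$ is only defined on the rational boundary $S^{\mathbb{Q}}_{2d}$, and your continuity argument only covers that subset. The fix is easy: replace $G_{\mathbb{Q},2d}$-equivariance by full $G_{2d}$-equivariance. Since $F$ is visibly $G_{2d}$-equivariant and $G_{2d}$ acts transitively on the boundary components of each type, your explicit Siegel-set computation then propagates to all of $\overline{G_{2d}/K_{2d}}^{\mathrm{ad}}$.

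The paper takes a shorter route that avoids the stratified continuity check entirely. You already noted the key fact in your final paragraph: the adjoint representation of $\mathfrak{so}(2,19)$ is $\wedge^2\Lambda_{2d,\mathbb{C}}$. The paper exploits this directly by defining a global map
\[
\iota:\ [A]\in\overline{G_{2d}/K_{2d}}^{\mathrm{ad}}\subset\mathbb{P}(\mathscr{H}(\wedge^2\Lambda_{2d,\mathbb{C}}))
\ \longmapsto\ [\sqrt{A}\,v_0]\in\mathbb{P}(\wedge^2\Lambda_{2d,\mathbb{C}}),
\]
where $v_0$ is the Pl\"ucker vector of the base $2$-plane. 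Since the positive square root of a positive semidefinite Hermitian matrix varies continuously, $\iota$ is manifestly continuous; one checks it is injective and that its restriction to $G_{2d}/K_{2d}$ is the Pl\"ucker embedding, and the compact-to-Hausdorff argument finishes. Your approach trades this single analytic observation for a case analysis on the boundary strata plus an equivariance argument; both work, but the square-root map makes continuity automatic.
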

	
	\begin{proof}
		Since the representation of $G_{2d}$ on $\wedge^2(\Lambda_{2d,\mathbb{C}})$ is irreducible with highest weight $e_1+e_2$, we may identify $V_{2d}$ with $\wedge^2 \Lambda_{2d,\mathbb{C}}$.
		
		By definition of the Satake compactification, an element of $\overline{G_{2d}/K_{2d}}^{\mathrm{ad}}$ is of the form $\mathbb{R}\cdot A\in \mathbb{P}(\mathscr{H}(\wedge^2 \Lambda_{2d,\mathbb{C}}))$, where $A$ is a semi-positive Hermitian matrix.  
		Let $\sqrt{A}$ denote the positive square root of $A$, and define a map
		\[
		\mathscr{H}(\wedge^2 \Lambda_{2d,\mathbb{C}})
		\longrightarrow
		\wedge^2 \Lambda_{2d,\mathbb{C}},\qquad
		A\longmapsto \sqrt{A}\, v_0,
		\]
		where $v_0$ is the element corresponding to the $2$-plane
		$\langle w_{e_1}+w_{-e_1},\, w_{e_2}+w_{-e_2}\rangle\subset \Lambda_{2d,\mathbb{C}}$.
		This induces a map
		\[
		\iota:\overline{G_{2d}/K_{2d}}^{\mathrm{ad}}
		\longrightarrow
		\mathbb{P}(\mathscr{H}(\wedge^2 \Lambda_{2d,\mathbb{C}}))
		\longrightarrow
		\mathbb{P}(\wedge^2 \Lambda_{2d,\mathbb{C}}).
		\]
		
		It is straightforward to check that $\iota$ is injective into $\mathbb{P}(\wedge^2 \Lambda_{2d,\mathbb{R}})$, and the restriction of $\iota$ to $G_{2d}/K_{2d}$ coincides with the Plücker embedding
		\[
		G_{2d}/K_{2d}
		\cong \operatorname{Gr}_2^+(\Lambda_{2d,\mathbb{R}})
		\hookrightarrow \operatorname{Gr}_2(\Lambda_{2d,\mathbb{R}})
		\hookrightarrow \mathbb{P}(\wedge^2 \Lambda_{2d,\mathbb{R}}).
		\]
		Hence, since $\iota$ is a continuous injection from a compact space to a Hausdorff space, it is a homeomorphism onto its image, which is identified with the closure of $\operatorname{Gr}_2^+(\Lambda_{2d,\mathbb{R}})$ in $\operatorname{Gr}_2(\Lambda_{2d,\mathbb{R}})$.
	\end{proof}
	
	\begin{rmk}
		If one takes an irreducible representation on $V\cong \Lambda_{\mathrm{K3},\mathbb{C}}$ with highest weight $e_1$ instead of $e_1+e_2$, then the resulting Satake compactification of $\mathcal{M}_{2d}$ is isomorphic to the Baily–Borel compactification $\overline{\mathcal{M}}_{2d}^{BB}$ \cite{baily-borel-66}, which carries the structure of a projective variety.  
		Both $\overline{\mathcal{M}}_{2d}^{\mathrm{ad}}$ and $\overline{\mathcal{M}}_{2d}^{BB}$ are among the minimal Satake compactifications.
		It is also proved in \cite[Thm.~I]{odaka-oshima-21} that the compactification $\overline{\mathcal{M}}_{2d}^{\mathrm{ad}}$ is homeomorphic to the MSBJ compactification.
	\end{rmk}
	\section{Collapsing of K3 surfaces}\label{section-collaping-K3}
	
	Let $(X_k,g_k)$ be a sequence of smooth hyperkähler K3 surfaces converging in the GH sense to a compact metric space $(X_\infty,g_\infty)$.  
	Write $d = \dim(X_\infty)$. We will focus on the cases $d=2$ or $d=3$.  
	In this section, we study the geometric structure on $X_k$ as $k \to \infty$.  
	Denote by $\bm{\omega}_k = (\omega_{1,k},\omega_{2,k},\omega_{3,k})$ the associated hyperkähler triples on $X_k$.
	
	\subsection{Fibration over the regular region}\label{subsection-the-regular-region}
	
	We now investigate the metric properties of collapsing K3 surfaces.  
	Let $S \subset X_\infty$ be the \textit{singular locus} associated with the collapsing sequence, consisting of points $q \in X_\infty$ for which there exists a sequence $q_k \in X_k$ converging to $q$ with $\lvert \mathrm{Rm}(q_k)\rvert \to \infty$.  
	By Cheeger–Tian \cite{cheeger-tian-06}, the set $S$ is finite.
	
	Based on the earlier work of Cheeger–Fukaya–Gromov \cite{cheeger-fucaya-gromov-92}, Sun–Zhang \cite{sun-zhang-24} proved that the metric structure over the regular part of $X_\infty$ is well-approximated by an $\mathcal{N}$-invariant hyperkähler metric.  
	We state their result in the cases $d=2,3$ below. In these cases, $T^{4-d}$ is the unique oriented infranilmanifold of dimension $4-d$.
	
	\begin{thm}\cite[\S 3.5]{sun-zhang-24} \label{thm-sun-zhang-fibration}
		Let $R$ be a compact region in $X_\infty \setminus S$ with smooth boundary.  
		Assume $\dim(X_\infty)=2$ or $3$.  
		Then for $k$ sufficiently large, there exists a compact connected domain $R_k \subset X_k$ with smooth boundary, together with a smooth torus fibration
		\[
		\pi_k^\dagger \colon R_k \to R,
		\]
		with fiber $T^{4-d}$, such that $\pi_k^\dagger$ is a $\tau_k$-GH approximation for $\tau_k \to 0$.  
		Moreover, there is a torsion-free flat connection $\nabla$ on each $\pi_k^\dagger$-fiber, and $\nabla$ induces a smooth fiber-preserving $\mathbb{R}^{4-d}$-action on $(\pi_k^\dagger)^{-1}(U)$ for every simply connected domain $U \subset R$.  
		There is an $\mathbb{R}^{4-d}$-invariant metric $g_k^\dagger$ with hyperkähler triple $\bm{\omega}_k^\dagger$ on $R_k$ in the same cohomology class as $\bm{\omega}_k$, such that for any $l$,
		\[
		\|\bm{\omega}_k^\dagger - \bm{\omega}_k\|_{C^l(\bm{\omega}_k)} \to 0 \quad \text{as } k \to \infty.
		\]
		Furthermore, the quotient metric $g_{R,k}^\dagger$ converges smoothly to $g_\infty$ on $R$.
	\end{thm}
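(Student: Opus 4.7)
The plan is to reduce to the general theory of Cheeger--Fukaya--Gromov collapsing with bounded curvature. The first step is to establish a uniform two-sided curvature bound on the preimages. Fix a compact region $R \Subset X_\infty \setminus S$ with smooth boundary, and let $\Phi_k : X_k \to X_\infty$ be a sequence of GH approximations. Because $S$ is precisely the locus of curvature concentration, Cheeger--Tian $\varepsilon$-regularity for Einstein $4$-manifolds yields $|\mathrm{Rm}(g_k)| \le C(R)$ on $U_k \coloneqq \Phi_k^{-1}(R)$ for $k$ large. Since $(X_k,g_k)$ collapses on $U_k$, the injectivity radius tends to zero uniformly on $U_k$, placing us in the exact regime of the collapsing structure theorem.

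With bounded curvature and collapsing injectivity radius, I would apply the Cheeger--Fukaya--Gromov structure theorem to extract, after passing to a subsequence, a smooth fibration $\pi_k^\dagger : R_k \to R$ whose fibers are infranilmanifolds of dimension $n = 4 - \dim X_\infty \in \{1,2\}$. In these dimensions the only oriented infranilmanifold is $T^n$, so the fiber is a flat torus, and the local nilpotent Killing structure furnishes, on $(\pi_k^\dagger)^{-1}(U)$ for simply connected $U \subset R$, a smooth fiber-preserving $\mathbb{R}^n$-action. The torsion-free flat connection $\nabla$ on each fiber is the affine connection determined by this action; globally the $\nabla$'s match because the transition holonomy is virtually abelian and preserves the affine structure. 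That $\pi_k^\dagger$ is a $\tau_k$-GH approximation is then automatic from the construction, with $\tau_k$ measured by the collapsing scale.

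To build the approximating hyperkähler triple $\bm{\omega}_k^\dagger$, I would average $\bm{\omega}_k$ along the local $\mathbb{R}^n$-orbits using a smooth partition of unity subordinate to a cover of $R$ by simply connected charts. Uniform curvature bounds plus elliptic regularity for the Ricci-flat (Einstein) equation upgrade $|\mathrm{Rm}|\le C$ to $\|\bm{\omega}_k\|_{C^l}\le C(l,R)$ with respect to $g_k$; meanwhile, fiber-direction derivatives of $\bm{\omega}_k$ are controlled at the collapsing scale and vanish in the limit, so $\|\bm{\omega}_k^\dagger - \bm{\omega}_k\|_{C^l(g_k)} \to 0$. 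Because averaging commutes with $d$ and preserves cohomology classes, the triple $\bm{\omega}_k^\dagger$ is closed and cohomologous to $\bm{\omega}_k$. The pointwise matrix $\tfrac12 \omega_{\alpha,k}^\dagger \wedge \omega_{\beta,k}^\dagger / \mathrm{dvol}$ is $C^\infty$-close to $\delta_{\alpha\beta}$, and a small equivariant correction (e.g.\ a Moser--type deformation solved on the base with the $\mathbb{R}^n$-symmetry) restores the hyperkähler normalization without disturbing the cohomology class or the $C^l$-closeness. The quotient metric $g_{R,k}^\dagger$ is then the base of the Riemannian submersion $\pi_k^\dagger$; the smooth convergence $g_{R,k}^\dagger \to g_\infty$ follows from the uniform $C^l$ bounds on horizontal data combined with GH convergence.

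The main obstacle is the averaging step: one must simultaneously preserve closedness, cohomology class, and the hyperkähler normalization, with all errors uniformly controlled at the collapsing scale. The delicate point is that $\mathbb{R}^n$-averaging of three forms does not a priori give a hyperkähler triple, and the correction term must be solved equivariantly with estimates that degenerate gracefully as $k \to \infty$. Carrying out this correction uniformly in $k$, with $C^l$-control for every $l$, is the technical heart of the argument and is where the hyperkähler structure (as opposed to merely Einstein) is genuinely used.
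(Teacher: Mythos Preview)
The paper does not prove this statement: Theorem~\ref{thm-sun-zhang-fibration} is quoted from \cite[\S3.5]{sun-zhang-24} and used as a black box, so there is no ``paper's own proof'' to compare against. Your proposal is not a comparison target but rather a sketch of how the cited result is established in Sun--Zhang, and at that level it is broadly faithful: uniform curvature bounds away from $S$ via Cheeger--Tian $\varepsilon$-regularity, the Cheeger--Fukaya--Gromov $N$-structure producing a torus fibration (since $n\le 2$ forces the infranil fiber to be $T^n$), fiberwise averaging to obtain an invariant triple, and a correction to restore the hyperk\"ahler condition.

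One technical point deserves flagging. You propose to average $\bm{\omega}_k$ along local $\mathbb{R}^n$-orbits ``using a smooth partition of unity subordinate to a cover of $R$ by simply connected charts.'' But patching local averages with a partition of unity would generally destroy closedness: $d$ does not commute with multiplication by cutoff functions. What actually makes this work is that the fiberwise averaging is globally well-defined (the fibers are global, and averaging over a compact torus fiber is intrinsic), so no patching is needed for the averaged forms themselves; the local $\mathbb{R}^n$-actions differ only by affine reparametrizations of the fiber, which leave the Haar average unchanged. You should drop the partition-of-unity step for the averaging and reserve it, if at all, for constructing the base data. With that correction, your outline matches the strategy of the cited reference, and you have correctly identified the genuine difficulty as the equivariant hyperk\"ahler correction with uniform $C^l$ control in $k$.
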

	
	From the proof in \cite{sun-zhang-24}, for any $\varepsilon>0$, we may choose $R$ and $R_k$ such that, for $k$ sufficiently large, the inclusion $i\colon R_k \to X_k$ is an $\varepsilon$–GH approximation.  
	By Remark~\ref{rmk-limit-measure-volume-comparison}, we can also assume $\mathrm{vol}(R_k) > (1-\varepsilon)\mathrm{vol}(X_k)$ for $k$ sufficiently large.
	
	\subsection{Standard model of hyperkähler structures with $\mathbb{R}^{4-d}$-symmetry}
	
	Let $\tilde{R}$ be the universal cover of $R$, and write
	\[
	\tilde{R}_k = \tilde{R} \times T_k^{4-d},
	\qquad
	Y_k = \tilde{R} \times \mathbb{R}^{4-d},
	\]
	for the torus fibration over $\tilde{R}$ and its universal cover, respectively.  
	Hyperkähler structures on $Y_k$ with an $\mathbb{R}^{4-d}$-symmetry can be described explicitly.  
	We summarize below the local models for the cases $d=3$ and $d=2$.  
	See \cite[§3.1--3.2]{sun-zhang-24} and \cite[§2]{gross-wilson-00} for more details.
	
	\subsubsection{Case $d=2$}\label{section-case-n=2}
	
	Models in this case are called \emph{semiflat metrics}, as their restriction to each torus fiber is a flat metric.  
	Consider the hyperkähler metric and hyperkähler triple on $Y = \tilde{R} \times \mathbb{R}^2$ given by
	\begin{align}\label{standard-form-n=2}
		\left\{
		\begin{array}{lr}
			g^\dagger = W^{\alpha\beta}\, dt_{\alpha}\, dt_{\beta} 
			+ W_{\alpha\beta}\, dx_{\alpha}\, dx_{\beta},  
			\qquad 1 \le \alpha,\beta \le 2,\\[6pt]
			\omega^\dagger_{1} = dt_{1} \wedge dx_{1} + dt_{2} \wedge dx_{2}, \\[6pt]
			\omega^\dagger_{2} = W^{2\alpha}\, dt_{\alpha} \wedge dx_{1}
			- W^{1\alpha}\, dt_{\alpha} \wedge dx_{2}, \\[6pt]
			\omega^\dagger_{3} = dt_{1} \wedge dt_{2} - dx_{1} \wedge dx_{2}.
		\end{array}
		\right.
	\end{align}
	
	Here $x_{1},x_{2} \in C^\infty(\tilde{R})$ and $t_1,t_2 \in C^\infty(\mathbb{R}^2)$ serve as local coordinates, and $W_{\alpha\beta} = W_{\beta\alpha}$ is a function of $\{x_\alpha\}$ satisfying
	\[
	\det(W_{\alpha\beta}) \equiv 1,
	\qquad 
	\partial_{x_\gamma} W_{\alpha\beta}
	= \partial_{x_\alpha} W_{\gamma\beta},
	\qquad 1 \le \alpha,\beta,\gamma \le 2.
	\]
	By an $SL_2(R)$ action
	\[
	(x_1,x_2) \mapsto (x_1,x_2)A, \qquad
	(t_1,t_2) \mapsto (t_1,t_2)(A^T)^{-1},
	\]
	we may assume that $(W_{\alpha\beta})(p)=\operatorname{Id}$ at a base point $p$.
	\begin{prop}
		Given an $\mathbb{R}^{4-d}$-invariant hyperkähler metric on $Y = \tilde{R}^2 \times \mathbb{R}^2$, there exist functions $x_\alpha, t_\alpha$ and $W_{\alpha\beta}$ as above such that the hyperkähler triple is given by \eqref{standard-form-n=2}.
	\end{prop}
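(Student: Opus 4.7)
The plan is to exploit the $\mathbb{R}^2$-action to build moment-map coordinates for $\omega_1$, normalize $\omega_1$ to the stated form, and then force $\omega_2$ and $\omega_3$ by the combination of hyperkähler relations, closedness, and residual coordinate freedom.

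To begin, let $V_1, V_2$ be the commuting generators of the $\mathbb{R}^2$-action. They preserve $g$, and since $\mathbb{R}^2$ is connected abelian its image in the $SO(3)$-action on $\langle \omega_1, \omega_2, \omega_3\rangle$ lies in an abelian (hence at most one-parameter) subgroup; after rotating the triple one may assume each $\omega_i$ is individually invariant. Choose coordinates $t_1, t_2$ on the $\mathbb{R}^2$-factor so that $V_\alpha = \partial_{t_\alpha}$. Cartan's formula combined with $d\omega_i = 0$ gives $d(\iota_{V_\alpha}\omega_i) = 0$, and simple-connectedness of $Y$ yields primitives for each of these closed $1$-forms.

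Next I would construct base coordinates from moment maps of $\omega_1$. Set $dx_\alpha = \iota_{V_\alpha}\omega_1$; the function $\omega_1(V_1, V_2)$ is constant on $Y$, and by a further rotation within the $SO(2)$ stabilizing $\omega_1$ (or rescaling) it can be arranged to vanish, so that the $x_\alpha$ descend to $\mathbb{R}^2$-invariant functions on $\tilde R$. Non-degeneracy of $\omega_1$ together with the transversality of $V_1, V_2$ to $\tilde R$ makes $dx_1, dx_2$ linearly independent, so $(x_1, x_2)$ gives local coordinates on $\tilde R$. In these coordinates $\omega_1 = dt_\alpha \wedge dx_\alpha + g(x)\,dx_1 \wedge dx_2$ for some function $g$; replacing $t_\alpha$ by $t_\alpha + f_\alpha(x)$ with $\partial_{x_2}f_1 - \partial_{x_1}f_2 = g$ (solvable on simply connected $\tilde R$) preserves $V_\alpha = \partial_{t_\alpha}$ and eliminates the residual term, bringing $\omega_1$ to the form asserted in \eqref{standard-form-n=2}.

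The remaining step is to constrain $\omega_2$ and $\omega_3$, whose coefficients depend only on $x$ by invariance. Decomposing each by $dt$-degree and applying the orthogonality and normalization relations $\omega_i \wedge \omega_j = 2\delta_{ij}\operatorname{vol}_g$, one finds that $\omega_1 \wedge \omega_3 = 0$ forces the mixed $dt_\alpha \wedge dx_\beta$ component of $\omega_3$ to be trace-free, while $d\omega_3 = 0$ together with a suitable rescaling of $(t_\alpha, x_\alpha)$ reduces $\omega_3$ to $dt_1 \wedge dt_2 - dx_1 \wedge dx_2$. With $\omega_1$ and $\omega_3$ in normal form, joint orthogonality with both of them forces $\omega_2 = M_{\alpha\beta}(x)\,dt_\alpha \wedge dx_\beta$ with $M$ symmetric; matching against \eqref{standard-form-n=2} identifies $M$ with $W^{\alpha\beta}$, the normalization $\omega_2^2 = 2\operatorname{vol}_g$ yields $\det(W_{\alpha\beta}) = 1$, and $d\omega_2 = 0$ unpacks to the integrability condition $\partial_{x_\gamma}W_{\alpha\beta} = \partial_{x_\alpha}W_{\gamma\beta}$; positive-definiteness of $g$ forces $W_{\alpha\beta}$ to be positive-definite.

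I expect the main obstacle to be the careful bookkeeping needed to exhaust the residual gauge freedom (rotations of the triple, affine shifts and rescalings of $t_\alpha$ and $x_\alpha$) so that one simultaneously achieves the specified standard form for $\omega_1$ and $\omega_3$ without any remaining ambiguity beyond the matrix $W_{\alpha\beta}$; once that is accomplished, the shape of $\omega_2$ and the PDEs on $W$ follow by direct linear-algebra computation from the hyperkähler relations and closedness.
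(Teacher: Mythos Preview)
Your moment-map strategy is close in spirit to the paper's, but two concrete steps are wrong as written. First, a rotation in the $SO(2)$ stabilizing $\omega_1$ leaves $\omega_1$ untouched and hence cannot alter the constant $\omega_1(V_1,V_2)$; the correct move (which the paper uses) is a full $SO(3)$ rotation of the triple, aligning the constant vector $\bigl(\omega_i(V_1,V_2)\bigr)_{i=1}^3$ with the third axis so that $\omega_1(V_1,V_2)=\omega_2(V_1,V_2)=0$ simultaneously. Second, the claim that joint orthogonality forces the matrix $M$ in $\omega_2 = M_{\alpha\beta}\,dt_\alpha\wedge dx_\beta$ to be \emph{symmetric} is incorrect: $\omega_1\wedge\omega_2=0$ yields only $\operatorname{tr}M=0$, and in the standard form \eqref{standard-form-n=2} one has $M=\bigl(\begin{smallmatrix} W^{21} & -W^{11}\\ W^{22} & -W^{12}\end{smallmatrix}\bigr)$, which is trace-free (by symmetry of $W$) but never symmetric for positive-definite $W$; so your identification of $M$ with $W^{\alpha\beta}$ cannot be right. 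Relatedly, your reduction of $\omega_3$ to $dt_1\wedge dt_2 - dx_1\wedge dx_2$ is under-justified: closedness and $\omega_1\wedge\omega_3=0$ give that $\omega_3(V_1,V_2)$ is constant and that the mixed block is trace-free, but they do not force that block to vanish, and no rescaling of $(t_\alpha,x_\alpha)$ will kill it.

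The paper sidesteps this bookkeeping by working with one complex structure $J$ rather than juggling the three symplectic forms. After the hyperk\"ahler rotation achieving $\omega(\xi_1,\xi_2)=0$, it takes the frame $(\xi_1,\xi_2,J\xi_1,J\xi_2)$, checks (using invariance, $[\xi_1,\xi_2]=0$, and integrability of $J$) that the dual $1$-forms $\theta_i$ and $J\theta_i$ are all closed, and sets $t_i$ to be primitives of $\theta_i$ while $x_i$ are the moment maps $dx_i=\iota_{\xi_i}\omega$. The relation $dx_i = W_{ij}\,J\theta_j$ with $W_{ij}=g(\xi_i,\xi_j)$ then falls out immediately, giving the metric---and hence the whole triple---in standard form at once, so no separate normalization of $\omega_2,\omega_3$ and no further gauge-fixing is needed.
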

	
	We include a proof of this proposition for completeness. The argument is essentially given in \cite[\S2.2]{apostolov2022k}.
	
	\begin{proof}
		Let $(g,J,\omega)$ be the invariant hyperkähler structure, and let $\xi_1, \xi_2$ be the vector fields generating the $\mathbb{R}^2$-action. Then $\mathcal{L}_{\xi_i}\xi_j = \mathcal{L}_{\xi_i}(J\xi_j) = 0$, and $\mathcal{L}_{J\xi_i}(J\xi_j)=0$ by the vanishing of the Nijenhuis tensor.  
		After a hyperkähler rotation, we may assume $\omega(\xi_1,\xi_2)=0$, hence $g(\xi_i,J\xi_j)=0$.  
		Thus $(\xi_1,\xi_2,J\xi_1,J\xi_2)$ is a frame of the tangent bundle, and the dual $1$-forms have the form $(\theta_1,\theta_2,-J\theta_1,-J\theta_2)$.
		
		A direct computation shows $d\theta_i = d(J\theta_i)=0$, hence $\theta_i = dt_i$.  
		Choose smooth functions $x_i$ such that $dx_i = \iota_{\xi_i}\omega$.  
		Then $dx_i = W_{ij} J\theta_j$ for $W_{ij} = g(\xi_i,\xi_j)$, so $J\theta_i = W^{ij} dx_j$.  
		We obtain
		\[
		g = W_{ij}(\theta_i\theta_j + J\theta_i J\theta_j)
		= W_{ij} dt_i dt_j + W^{ij} dx_i dx_j,
		\]
		and the forms $\omega_i$ are determined by $g$.  
		Finally, $\partial_{x_\gamma} W_{\alpha\beta} = \partial_{x_\alpha} W_{\gamma\beta}$ follows from $d(W_{ij}J\theta_j)=0$, and $\det(W_{\alpha\beta}) \equiv 1$ follows from $\omega^2 = \mathrm{Re}(\Omega)^2 = \mathrm{Im}(\Omega)^2$.
	\end{proof}
	Thus, $\tilde{R}_k$ is obtained as the quotient of $Y_k$ by
	\[
	(t_{1,k},t_{2,k}) \sim (t_{1,k},t_{2,k}) + m\,\bm{u}_{1,k} + n\,\bm{u}_{2,k},
	\qquad m,n \in \mathbb{Z},
	\]
	for some $\bm{u}_{\alpha,k} \in \mathbb{R}^2$ with $\bm{u}_{\alpha,k} \to 0$.
	The quotient metric on $\tilde{R}$ is given by  
	\[
	g^\dagger_{\tilde{R},k} \coloneq W_{\alpha\beta,k}\, dx_{\alpha,k}\, dx_{\beta,k}.
	\]
	
	\subsubsection{Case $d=3$}\label{section-case-n=1}
	
	In this case, the metric is given by the Gibbons–Hawking ansatz.  
	Suppose $g^\dagger$ is an $\mathbb{R}$-invariant hyperkähler metric on $Y=\tilde{R}^3\times \mathbb{R}$. Then there exist functions $x,y,z,t$ on $Y$ serving as local coordinates (\cite[\S2]{hein-sun-Viaclovsky-Zhang-22}) such that the hyperkähler triple takes the form
	\begin{align}\label{standard-form-n=1}
		\left\{
		\begin{array}{lr}
			g^\dagger = V(dx^2 + dy^2 + dz^2) + \dfrac{1}{V} \theta^2,\\[6pt]
			\omega^\dagger_{1} = V\, dx \wedge dy + dz \wedge \theta, \\[6pt]
			\omega^\dagger_{2} = V\, dy \wedge dz + dx \wedge \theta, \\[6pt]
			\omega^\dagger_{3} = V\, dz \wedge dx + dy \wedge \theta.
		\end{array}
		\right.
	\end{align}
	Here $V$ is a harmonic function with respect to $g^\flat = dx^2 + dy^2 + dz^2$, and there exists a $1$-form $\alpha$ on the base $\tilde{R}$ such that
	\[
	\theta = dt + (\pi^{\dagger})^*\alpha,
	\qquad
	d\alpha = *\, dV,
	\]
	where $*$ is the Hodge star operator defined by the volume form $dx \wedge dy \wedge dz$.
  By the rescaling $(x,y,z,t)\to (cx,cy,cz,t/c)$, we may assume that $V(p)=1$ for a base point $p$.
	
	Thus, $\tilde{R}_k$ is obtained as the quotient of $Y_k$ by the translation $t \mapsto t + \varepsilon_k$ for some $\varepsilon_k \to 0$.  
		The quotient metric on $\tilde{R}$ is given by  
	\[
	g^\dagger_{\tilde{R},k} \coloneq V_k(dx_k^2 + dy_k^2 + dz_k^2).
	\]
	
	\subsubsection{Convergence of $\mathbb{R}^{4-d}$-invariant metrics}\label{subsection-convergence-Rn-inv-metric}
	
	In the case $d=2$, there is a special Kähler structure \cite{freed-99} on $(\tilde{R}, g^\dagger_{\tilde{R}})$, and the metric on $Y_k$ is identified with the canonical metric on the cotangent bundle of $\tilde{R}$.  
	Thus the metric is determined by a pair of special Kähler coordinates $(z,w)$, which are holomorphic functions on $(\tilde{R}, g^\dagger_{\tilde{R}})$, and we have
	$
	(\mathrm{Re}(z), \mathrm{Re}(w)) = (x_1, x_2).
	$
	In the case $d=3$, the $\mathbb{R}^{4-d}$-invariant metric is determined by the harmonic function $V$.  
	
	In both cases, we have good convergence properties for $\mathbb{R}^{4-d}$-invariant metrics by the regularity theory for harmonic functions.  
	The following is proved in \cite[§3.1.2 and §3.2.2]{sun-zhang-24}.
	\begin{prop}
		Let $g^\dagger_{\tilde{R},k}$ be a sequence of $\mathbb{R}^{4-d}$-invariant hyperkähler metrics on $\tilde{R}\times \mathbb{R}^{4-d}$, such that the quotient metrics $g^\dagger_{\tilde{R},k}$ converge smoothly to $g_\infty$ as in Theorem \ref{thm-sun-zhang-fibration}.
		
		In the case $d=2$, we may choose $(x_{1,k}, x_{2,k}, t_{1,k}, t_{2,k}, W_{\alpha\beta,k})$ for the metric $g_k^\dagger$ as in \eqref{standard-form-n=2}, such that, after passing to a subsequence, $x_{\alpha,k}$ and $W_{\alpha\beta,k}$ converge smoothly to $x_{\alpha,\infty}$ and $W_{\alpha\beta,\infty}$ on $\tilde{R}$, and
		\[
		g_\infty = W_{\alpha\beta,\infty} \, dx_{\alpha,\infty} \, dx_{\beta,\infty},
		\qquad 1 \le \alpha, \beta \le 2.
		\]
		
		In the case $d=3$, we may choose $(V_k, \theta_k, dx_k, dy_k, dz_k)$ for the metric $g_k^\dagger$ as in \eqref{standard-form-n=1}, such that, after passing to a subsequence, $(V_k, dx_k, dy_k, dz_k)$ converge smoothly to $(V_\infty, dx_\infty, dy_\infty, dz_\infty)$ on $\tilde{R}$, and
		\[
		g_\infty = V_\infty \,(dx_{\infty}^2 + dy_{\infty}^2 + dz_{\infty}^2).
		\]
	\end{prop}
	
	In the case $d=2$, the limit $g_\infty$ also comes from a special Kähler structure $(dz,dw)$, and the special Kähler structure is determined by the orientation and the metric up to a twist \cite[Prop.~1]{ouyang-25}. In particular, if $X_\infty = T^2/\{\pm 1\}$ and $g_\infty$ is flat, then $dx_{\alpha,\infty}$ must also be flat, and we may assume that $W_{\alpha\beta,\infty} = \delta_\alpha^\beta$.
	
	In the case $d=3$, we can calculate the scalar curvature of the base metric
	$
	\mathrm{scal}(g^\dagger_{\tilde{R}}) = \frac{3|\nabla V|^2}{2V^3},
	$
	where $\nabla$ and $|\cdot|$ are taken with respect to $g^\flat=dx_{\infty}^2 + dy_{\infty}^2 + dz_{\infty}^2$.
	Thus, since $g_\infty$ is flat for GH limits of hyperkähler K3 surfaces (Theorem \ref{thm-classification-GH-limit}), we may assume that $V_\infty \equiv 1$ in our case.
	
	\subsection{Perturbations of almost holomorphic tori}

Consider the standard hyperkähler structure $\bm{\omega}=(\omega_1,\omega_2,\omega_3)$ on $B_r^2\times T^2$ given by
\begin{align}\label{standard-form-n=2-constant-efficient}
	\left\{
	\begin{array}{lr}
		\omega_{1} = dt_{1} \wedge dx_{1} + dt_{2} \wedge dx_{2}, \\[2mm]
		\omega_{2} = dt_{2} \wedge dx_{1} - dt_{1} \wedge dx_{2}, \\[1mm]
		\omega_{3} = dt_{1} \wedge dt_{2} - dx_{1} \wedge dx_{2},
	\end{array}
	\right.
\end{align}	
where $B_r^2 = \{ (x_1,x_2)\in \mathbb{R}^2 : x_1^2+x_2^2<r^2 \}$, and $T^2 = \mathbb{R}^2 / \Lambda$ is a torus.  
Fix a torus fiber $T^2_{y}$ over $y\in B_r^2$.  
We can identify $B_r^2 \times T^2_y$ with a subset of $T^* T^2_y$.  
A small deformation of $T^2_y$ can then be represented as a graph
\[
L(y,\sigma) = \mathrm{Graph}\bigl( x \mapsto y + \sigma(x) \bigr) \subset T^* T^2_y,
\]
where $\sigma$ is a $1$-form on $T^2_y$.  

The following result is due to Zhang \cite{zhang-yuguang-17}, whose proof is a standard application of the implicit function theorem.

\begin{thm}\label{thm-zhang}		
	Let $B_r^2\times T^2$ and $\bm{\omega}$ be as above.  
	Given $\varepsilon>0$, there exists $\delta = \delta(r,C)>0$ such that the following holds.  
	
	Suppose $\operatorname{diam}(T^2)<C$, and $\bm{\omega}'$ is another hyperkähler triple on $B_r^2\times T^2$ with
	\[
	[\bm{\omega}'] = [\bm{\omega}] \in H^2(B_r^2\times T^2,\mathbb{R})^{\oplus 3}, 
	\qquad
	\|\bm{\omega}'-\bm{\omega}\|_{C^{1,\alpha}(\omega)} < \delta.
	\]
	Then, for any $y \in B_{2r/3}^2$, there exists a unique
	$
	\sigma \in C^{1,\alpha}\bigl(d\Omega^0(T^2) \oplus d^*\Omega^2(T^2)\bigr)
	$
	with $\|\sigma\|_{C^{1,\alpha}} < \varepsilon$, such that the graph $L(y,\sigma)$ satisfies
	\[
	\omega'_1|_{L(y,\sigma)} = \omega'_2|_{L(y,\sigma)} = 0,
	\]
	i.e., $L(y,\sigma)$ is a holomorphic torus in the complex structure determined by $\Omega = \omega_1 + i \omega_2$.  
	
	Moreover, the collection of such $L(y,\sigma)$ forms a fibration over an open set $W \supset B_{r/3}^2\times T^2$.
\end{thm}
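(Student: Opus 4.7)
The plan is to recast the problem as finding zeros of a nonlinear map between Banach spaces and apply the implicit function theorem, treating the hyperkähler triple $\bm{\omega}'$ as a parameter. Fix a fiber $T_y$ with its induced flat metric from \eqref{standard-form-n=2-constant-efficient}, and introduce
\begin{equation*}
F_y : C^{1,\alpha}\bigl(d\Omega^0(T_y) \oplus d^{*}\Omega^2(T_y)\bigr) \times \mathcal{U}
\longrightarrow C^{0,\alpha}\bigl(\Omega^2(T_y)\bigr)^2,\qquad
(\sigma,\bm{\omega}')\mapsto \bigl(\omega_1'|_{L(y,\sigma)},\,\omega_2'|_{L(y,\sigma)}\bigr),
\end{equation*}
where $\mathcal{U}$ is a small $C^{1,\alpha}$-neighborhood of $\bm{\omega}$ in the affine space of cohomologous hyperkähler triples on $B_r\times T$. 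The first thing to verify is that $F_y$ lands in the codimension-two subspace of exact $2$-forms: since $L(y,\sigma)$ is homologous to $T_y$ in $B_r\times T$ and $[\bm{\omega}']=[\bm{\omega}]$, while $\omega_i|_{T_y}=0$, we get $\int_{L(y,\sigma)} \omega_i'=0$, so each component is a mean-zero multiple of $dt_1\wedge dt_2$.

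Next I would compute the linearization at the basepoint $(\sigma,\bm{\omega}')=(0,\bm{\omega})$. Parametrizing the graph by $t\mapsto (y+\sigma(t),t)$ and pulling back the expressions in \eqref{standard-form-n=2-constant-efficient}, a routine calculation yields
\begin{equation*}
dF_y|_{(0,\bm{\omega})}(\dot{\sigma},0) = \bigl(-d\dot{\sigma},\; -d^{*}\dot{\sigma}\cdot dt_1\wedge dt_2\bigr).
\end{equation*}
By Hodge theory on the flat torus $T_y$, the restriction of $(d,d^{*})$ to $d\Omega^0\oplus d^{*}\Omega^2$ is an isomorphism onto (exact $2$-forms) $\oplus$ (mean-zero functions), with the inverse bounded in terms of $\operatorname{diam}(T)\le C$ via the spectral gap of the flat Laplacian. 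Elliptic Schauder theory upgrades this to an isomorphism between the claimed $C^{1,\alpha}$ and $C^{0,\alpha}$ spaces.

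The parametric implicit function theorem then produces $\delta_0=\delta_0(r,C,\varepsilon)>0$ and, for each $y\in B_{2r/3}$ and each $\bm{\omega}'$ with $\|\bm{\omega}'-\bm{\omega}\|_{C^{1,\alpha}}<\delta_0$, a unique $\sigma=\sigma(y,\bm{\omega}')$ in the specified space with $\|\sigma\|_{C^{1,\alpha}}<\varepsilon$ solving $F_y(\sigma,\bm{\omega}')=0$. For the fibration statement, smooth dependence of $\sigma$ on $y$ comes directly from the parameter version of the implicit function theorem. At $\bm{\omega}'=\bm{\omega}$ the solution is $\sigma\equiv 0$ and the graphs $L(y,0)=\{y\}\times T$ give the trivial product fibration of $B_{2r/3}\times T$; for nearby $\bm{\omega}'$ the map $(y,t)\mapsto (y+\sigma(y,\bm{\omega}')(t),t)$ is a $C^1$-small perturbation of the identity on $B_{2r/3}\times T$ and hence a local diffeomorphism onto some open neighborhood $W\supset B_{r/3}\times T$, whose fibers are exactly the $L(y,\sigma(y,\bm{\omega}'))$.

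I expect the main technical hurdle to be the uniformity of $\delta$: it must depend only on $r$ and $C$, not on the choice of base point $y$ nor on the geometry of the particular fiber $T_y$. This reduces to two uniform estimates, both controlled by $\operatorname{diam}(T)\le C$: a uniform lower bound on the first nonzero eigenvalue of the Laplacian on $T_y$ (to bound the Hodge inverse), and uniform $C^{1,\alpha}$ Schauder constants on $T_y$, which follow by a standard rescaling argument once the diameter and injectivity radius are controlled. A secondary point is ensuring that the nonlinear remainder in $F_y$ has a modulus of continuity independent of $y$ and $\bm{\omega}'$, which is immediate from the explicit polynomial form of the pullback $(\omega_1'|_{L(y,\sigma)},\omega_2'|_{L(y,\sigma)})$ in $\sigma$ and its first derivatives.
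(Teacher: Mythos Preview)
Your implicit function theorem setup is correct and essentially reproduces Zhang's argument, which the paper simply cites rather than reproves. The linearization computation and the Hodge-theoretic invertibility on a fixed flat torus are right (up to an immaterial sign in the second component).

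The paper's actual contribution in the proof is the uniformity of $\delta$ in $T$, and this is exactly where your argument has a gap. You write that the Schauder constants ``follow by a standard rescaling argument once the diameter and injectivity radius are controlled,'' but the hypothesis gives only $\operatorname{diam}(T)<C$; the injectivity radius can be arbitrarily small (the thin torus $\mathbb{R}^2/\langle(\epsilon,0),(0,1)\rangle$ has bounded diameter but injectivity radius $\epsilon/2$), so you are invoking a bound you do not have. The paper does not attempt direct estimates on such degenerating tori. Instead it uses a covering trick: pass to a finite cover $\tilde T$ of $T$ with $C\le\operatorname{diam}(\tilde T)\le 10C$ whose modular parameter lies in a fixed compact subset of the fundamental domain $\mathcal{D}$, so that both diameter and injectivity radius of $\tilde T$ are uniformly two-sided bounded and Zhang's $\delta$ applies uniformly on $B_r\times\tilde T$; then invoke the uniqueness clause to see that the resulting holomorphic graph upstairs is invariant under the deck group and hence descends to a holomorphic torus in $B_r\times T$. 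This is shorter and more robust than pushing Schauder theory through a family of collapsing tori.
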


\begin{proof}
	Fix $B_r^2$.  
	Theorem~\ref{thm-zhang} is proved in \cite[\S4]{zhang-yuguang-17}, where $\delta$ depends continuously on $T^2$. We show that this dependence can be weakened to depend only on an upper bound of $\operatorname{diam}(T^2)$.  
	
	Recall that the standard fundamental domain for the action of $\mathrm{SL}(2,\mathbb{Z})$ on the upper half-plane $\mathbb{H}$ is
	\begin{equation}\label{definition-fundamental-domain-elliptic}
		\mathcal{D} = \Bigl\{ z \in \mathbb{H} \;\Big|\; |z| \ge 1, \; -\tfrac{1}{2} \le \mathrm{Re}(z) \le \tfrac{1}{2} \Bigr\}.
	\end{equation}
	
	Suppose $\operatorname{diam}(T^2) < C$. Then we can take a finite cover $\tilde{T}^2$ of $T^2$ with covering group $\Gamma$, such that $C \le \operatorname{diam}(\tilde{T}^2) \le 10 C$.  
	The torus $\tilde{T}^2$ corresponds to a point in a given compact subset of $\mathcal{D}$.  
	For such $\tilde{T}^2$, there exists a $\delta$ depending only on $C$ such that the conclusion holds.  
	Let $\tilde{L}$ be the graph in $B_r^2\times\tilde{T}^2$.
	By the uniqueness part of Theorem~\ref{thm-zhang}, the holomorphic graph $\tilde{L}$ is $\Gamma$-invariant and therefore descends to a holomorphic fiber in $B_r^2 \times T^2$.
\end{proof}

\begin{rmk}
	In the original formulation \cite{zhang-yuguang-17}, it is assumed that $r \gg 0$, but the same argument applies for any fixed $r>0$.  
	The higher regularity in Theorem~\ref{thm-zhang} follows from standard elliptic theory; the ${C^{1,\alpha}}$ estimate here is, however, sufficient for our purposes.
\end{rmk}
	\subsection{Elliptic fibration approximation}
	
	We now consider the case $d=2$. This section aims to prove the following theorem, which shows that 2-dimensional collapsing of hyperkähler K3 surfaces is characterized by elliptic K3 fibrations.
	
	\begin{thm}\label{thm-2dimensional-collapsing-elliptic-approximation}
		Let $(X_k, g_k)$ be a collapsing sequence of unit-diameter hyperkähler K3 surfaces converging in the GH sense to $(X_\infty, g_\infty)$ with $\dim(X_\infty) = 2$. Then, for $k \gg 1$, after a suitable hyperkähler rotation, there exist elliptic fibrations 
		$
		\pi_k : X_k \to \mathbb{P}^1_k,
		$
		such that if $h_k$ is the unit-diameter generalized KE metric on $\mathbb{P}^1_k$ induced by $\pi_k$, we have
		\[
		d_{GH}\bigl((\mathbb{P}^1_k, d_{h_k}), (X_\infty, g_\infty)\bigr) \to 0.
		\]
		Moreover, let $\lambda_k \in \mathbb{R}$ be such that $\lambda_k \bm{\omega_k}$ has unit volume. Then for any elliptic fiber $T_k^2$ of $\pi_k$, we have
		\[
		\int_{T_k^2} \lambda_k \bm{\omega_k} \to 0.
		\]
	\end{thm}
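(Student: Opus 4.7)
The plan is to combine the semiflat approximation from Theorem \ref{thm-sun-zhang-fibration} with the deformation result of Theorem \ref{thm-zhang} to produce a family of holomorphic elliptic curves in $X_k$ approximating the model torus fibers, and then extend this family to a global elliptic fibration via Proposition \ref{prop-elliptic-curve-elliptic-K3}. Fix an exhaustion $R_1\subset R_2\subset\cdots$ of $X_\infty\setminus S$ by compact smooth-boundary domains with $\operatorname{vol}(X_\infty\setminus R_j,g_\infty)\to 0$. For each $j$, Theorem \ref{thm-sun-zhang-fibration} yields, for $k\gg 1$, a torus fibration $\pi_k^\dagger:R_{j,k}\to R_j$ together with an $\mathbb{R}^2$-invariant hyperkähler triple $\bm{\omega}_k^\dagger$ satisfying $\|\bm{\omega}_k^\dagger-\bm{\omega}_k\|_{C^l}\to 0$ and whose quotient metric $g_{R_j,k}^\dagger$ converges smoothly to $g_\infty|_{R_j}$. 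In the standard form \eqref{standard-form-n=2}, one has $\omega_{1,k}^\dagger|_{\mathrm{fiber}}=\omega_{2,k}^\dagger|_{\mathrm{fiber}}=0$, so every model fiber is holomorphic for the complex structure with holomorphic volume form $\Omega_k^\dagger=\omega_{1,k}^\dagger+i\omega_{2,k}^\dagger$. After a hyperkähler rotation relabeling the self-dual $3$-plane, we align $\bm{\omega}_k$ with $\bm{\omega}_k^\dagger$ in this order.

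Next, cover $R_j$ by finitely many coordinate balls on which the local model \eqref{standard-form-n=2} holds, and apply Theorem \ref{thm-zhang} on each piece $(\pi_k^\dagger)^{-1}(B_r)\cong B_r\times T^2$ to deform the model fibers into smooth holomorphic curves $C_k(y)\subset X_k$ with respect to $\Omega_k=\omega_{1,k}+i\omega_{2,k}$. The uniqueness assertion in Theorem \ref{thm-zhang} ensures these local deformations patch into a smooth fibration of a neighborhood of $R_{j,k}$ in $X_k$ by holomorphic elliptic curves in a common primitive homology class. Proposition \ref{prop-elliptic-curve-elliptic-K3} then upgrades any $C_k(y_0)$ to a global elliptic fibration $\pi_k:X_k\to\mathbb{P}^1_k$, and part (iii) identifies the family $\{C_k(y)\}$ with $\pi_k$-fibers over an open subset of $\mathbb{P}^1_k$ naturally identified with $R_j$.

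From the defining relation $\int_V\omega_{h_k}=\int_{\pi_k^{-1}(V)}\Omega_k\wedge\bar\Omega_k=2\int_{\pi_k^{-1}(V)}\omega_{3,k}^2$ together with the model identity $\det W_{\alpha\beta,k}\equiv 1$, one checks that after unit-diameter rescaling and up to an error $O(\|\bm{\omega}_k-\bm{\omega}_k^\dagger\|)$, the generalized KE metric $h_k$ restricted to $R_j$ coincides with $g_{R_j,k}^\dagger$, hence $h_k|_{R_j}\to g_\infty|_{R_j}$ smoothly. The finite-diameter behavior of the generalized KE metric near each Kodaira singular fiber (\S\ref{section-generalized-KE}) combined with the smallness of $X_\infty\setminus R_j$ then yields $d_{GH}((\mathbb{P}^1_k,h_k),(X_\infty,g_\infty))\to 0$ via a diagonal argument in $j$ and $k$. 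Finally, $\int_{T_k^2}\omega_{1,k}=\int_{T_k^2}\omega_{2,k}=0$ by holomorphicity of $T_k^2$ with respect to $\Omega_k$, while $\int_{T_k^2}\omega_{3,k}=|\bm{u}_{1,k}\wedge\bm{u}_{2,k}|(1+o(1))$. The volume comparison $\operatorname{vol}(R_{j,k})=|\bm{u}_{1,k}\wedge\bm{u}_{2,k}|\cdot\operatorname{area}(R_j,g_{R_j,k}^\dagger)$ together with the normalization $\lambda_k^2\operatorname{vol}(X_k,g_k)=1$ then gives $\lambda_k\int_{T_k^2}\omega_{3,k}=O(\operatorname{vol}(X_k,g_k)^{1/2})\to 0$.

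The main obstacle is the transition from convergence on each regular region $R_j$ to global GH convergence on $\mathbb{P}^1_k$: near a singular value of $\pi_k$ the fibers degenerate according to Kodaira's classification, and one must match the small-diameter bubble regions in $(X_k,g_k)$ arising from curvature concentration at $S\subset X_\infty$ with the small-diameter cusp neighborhoods in $(\mathbb{P}^1_k,h_k)$. This requires combining the Kodaira-type asymptotics of \cite{hein-12} with the local diameter tabulation in \cite{ouyang-25}, and carefully tracking how the bubble scales on the total space match the metric asymptotics on the base.
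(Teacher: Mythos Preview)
Your construction of the elliptic fibration via Theorems \ref{thm-sun-zhang-fibration} and \ref{thm-zhang} together with Proposition \ref{prop-elliptic-curve-elliptic-K3} matches the paper, and your argument for $\int_{T_k^2}\lambda_k\bm{\omega}_k\to 0$ is correct. The gap is in the passage from local convergence $h_k|_{R_j}\to g_\infty|_{R_j}$ to global GH convergence on $\mathbb{P}^1_k$, precisely the ``main obstacle'' you flag at the end.

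Your proposed route through Kodaira-type asymptotics does not work as stated. The tables in \cite{hein-12} and \cite{ouyang-25} describe the generalized KE metric near a singular fiber of a \emph{fixed} elliptic surface; they give no uniform control when the Weierstrass data $(h_{8,k},h_{12,k})$ vary, and here the combinatorics of the singular fibers of $\pi_k$ can change with $k$ (fibers may collide, split, or change Kodaira type). There is also no a priori reason the singular values of $\pi_k$ sit over the curvature-concentration set $S\subset X_\infty$, so the proposed ``matching of bubble scales'' is not well posed. The paper avoids all of this by a soft volume argument: from Remark \ref{rmk-limit-measure-volume-comparison} one has $\operatorname{vol}(R'_k)/\operatorname{vol}(X_k)>1-\delta$, and the definition \eqref{formula-defi-gen-KE} of the generalized KE metric immediately gives $\operatorname{vol}(\pi_k(R'_k))/\operatorname{vol}(\mathbb{P}^1_k)\ge 1-\delta$; then Bishop--Gromov for generalized KE metrics (Proposition \ref{prop-volume-comparison-gen-KE}, Proposition \ref{prop-volume-comparison}) forces $\pi_k(R'_k)$ to be $\varepsilon$-dense in $\mathbb{P}^1_k$. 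This replaces your asymptotic analysis entirely.

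A secondary point: your identification of $R_j$ with an open subset of $\mathbb{P}^1_k$ is not canonical, since $\pi_k^\dagger$ and $\pi_k$ differ and there is no global section. The paper handles this with local section maps $f=\pi_k\circ i_U$ (cf.\ \eqref{equ-defi-f}) and a gluing lemma (Lemma \ref{prop-M-N-GH-close}, Lemma \ref{lem-key-estimate}) to obtain the two-sided distance estimate $|d_{\mathbb{P}^1_k,h_k'}(f(x),f'(x'))-d_{R,g_\infty}(x,x')|<3\varepsilon$. Your sketch claims the upper bound implicitly but omits the lower bound, which requires the path-replacement argument in \eqref{inequality-path-}.
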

	
	As a corollary of Theorem~\ref{thm-2dimensional-collapsing-elliptic-approximation}, we obtain an alternative proof of the classification of 2-dimensional GH limits of hyperkähler K3 surfaces.  
	Indeed, if we define $\mathfrak{M}^2$ to be the set of all unit-diameter generalized KE metrics on $\mathbb{P}^1$ arising from elliptic K3 surfaces, then $\dim(X_\infty, g_\infty) = 2$ implies $(X_\infty, g_\infty) \in \overline{\mathfrak{M}^2}$ by Theorem~\ref{thm-2dimensional-collapsing-elliptic-approximation}.  
	It then follows from Theorem~\ref{thm-geometric-realization-GIT} that $(X_\infty, g_\infty) \in \mathfrak{M}^2$.
	
	\subsubsection{Construction of the elliptic fibration}
	
	Let $\bm{\omega}^\dagger_k$ be the approximating hyperkähler triple on $R_k \subset X_k$ given by Theorem~\ref{thm-sun-zhang-fibration}.  
	We can write it in the form \eqref{standard-form-n=2}, with the quotient metric on $R \subset X_\infty$ converging smoothly.  
	
	Let $p$ be an interior point of $R$, and let $\varepsilon_k$ be the diameter of the $T^2$ fiber over $p$ with respect to $\pi^\dagger_k$. Then $\varepsilon_k \to 0$, and there exists a diffeomorphism
	\[
	f_k : B_{\varepsilon_k r}(p, g_\infty) \times T^2 
	\;\longrightarrow\; 
	B_r(\mathbb{R}^2) \times T^2_0,
	\]
	such that $\tfrac{1}{\varepsilon_k} \bm{\omega}^\dagger_k$ is $C^2$-close to the standard hyperkähler triple $\bm{\omega}$ on $B_r(\mathbb{R}^2) \times T^2_0$ given by \eqref{standard-form-n=2-constant-efficient} after a hyperkähler rotation.  
	Hence, $\tfrac{1}{\varepsilon_k} \bm{\omega_k}$ is also $C^2$-close to the standard triple.
	
	By Theorem~\ref{thm-zhang}, there exists $k_0$ such that for $k \ge k_0$, there is a holomorphic fiber passing through every point of $T^2$.  
	By Proposition~\ref{prop-elliptic-curve-elliptic-K3}, we then obtain an elliptic K3 fibration
	\[
	\pi_k : X_k \to \mathbb{P}^1_k.
	\]
	
	Moreover, if $p$ lies in a compact set $R' \subset R^\circ$, then $k_0$ can be chosen independently of $p$.  
	Thus, for $k \ge k_0$, for each $y \in R'_k := (\pi_k^\dagger)^{-1}(R')$, there exists a holomorphic torus passing through $y$.  
	All these holomorphic tori are in the same homology class and correspond to the same elliptic fibration $\pi_k : X_k \to \mathbb{P}^1_k$ by Proposition~\ref{prop-elliptic-curve-elliptic-K3}.  
	It is clear that $\int_{T^2_k} \lambda_k \bm{\omega_k} \to 0$ after rescaling to unit volume.
	
	\subsubsection{Local convergence of generalized KE metric}
	
	Suppose $\pi_k$ is holomorphic in the complex structure determined by $\Omega = \omega_{1,k} + i\omega_{2,k}$, and let
	$
	\mu_k = \int_{T^2_k} \omega_{3,k}.
	$
	Define $h_k'$ to be the generalized KE metric on $\mathbb{P}^1_k$ associated with the hyperkähler triple $\tfrac{1}{\sqrt{\mu_k}} \bm{\omega}_k$ as in \eqref{formula-defi-gen-KE}.  
	This scaling ensures that if $\pi_k = \pi^\dagger_k$, then $h_k' = g^\dagger_{\tilde{R},k}$ is the quotient metric.
	
	Let $U \subset R^\circ$ be a simply connected open set with compact closure.  
	For each $k$, choose a section $i_k: U \to R_k$ orthogonal to the fibers with respect to $g_k^\dagger$.  
	For $k$ sufficiently large, the perturbed $T^2$ fibers intersect $U$ transversely, so
	\begin{equation}\label{equ-defi-f}
		f = \pi_k \circ i_k : U \to \mathbb{P}^1_k
	\end{equation}
	is a homeomorphism onto its image.  
	This defines a generalized KE metric on $U$ via the pullback $f^* h_k'$.
	
	\begin{defi}\label{defi-distance-function-manifold}
		Let $(M,g)$ be a Riemannian manifold. Define $d_{M,g}$ to be the distance function on $M$ determined by $g$, i.e.,
		\[
		d_{M,g}(x,x') = \inf\{\text{length}(\gamma) \mid \gamma \text{ is a path in } (M,g) \text{ connecting } x \text{ and } x'\}.
		\]
	\end{defi}
	
	\begin{lem}\label{prop-gen-KE-converge-local-U}
		Let $f^* h_k'$ be the pullback metric on $U$ as above. Then
		\[
		\| f^* h_k' - g_\infty \|_{C^{0,\alpha}(U)} \longrightarrow 0
		\quad \text{as } k \to \infty.
		\]
		
	\end{lem}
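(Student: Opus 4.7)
My plan is to compare $f^* h_k'$ with the quotient metric $g^\dagger_{\tilde{R},k}$, which by \S\ref{subsection-convergence-Rn-inv-metric} converges smoothly (and hence in $C^{0,\alpha}$) to $g_\infty$ on $U$. Since $i_k$ is orthogonal to the fibers of $\pi_k^\dagger$, the map $f^\dagger \coloneqq \pi_k^\dagger \circ i_k$ is the identity on $U$, so $(f^\dagger)^* g^\dagger_{\tilde{R},k} = g^\dagger_{\tilde{R},k}$. It therefore suffices to prove that $\| f^* h_k' - g^\dagger_{\tilde{R},k} \|_{C^{0,\alpha}(U)} \to 0$.

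The first step is to reinterpret $g^\dagger_{\tilde{R},k}$ itself as a generalized KE metric. Let $h_k^\dagger$ denote the generalized KE metric on $R$ associated with the hyperkähler triple $\tfrac{1}{\sqrt{\mu_k^\dagger}}\bm{\omega}_k^\dagger$ and the fibration $\pi_k^\dagger$, where $\mu_k^\dagger \coloneqq \int_{T^2}\omega_{3,k}^\dagger$. A direct computation in the standard form \eqref{standard-form-n=2} shows $h_k^\dagger = g^\dagger_{\tilde{R},k}$: fiber integration of $\Omega^\dagger \wedge \bar\Omega^\dagger = 4\,\mathrm{dvol}_{g_k^\dagger}$ over a $T^2$-fiber produces, up to the normalization $1/\mu_k^\dagger$, exactly the volume form of $W_{\alpha\beta}\,dx_\alpha\,dx_\beta$, and the induced complex structure matches the special Kähler structure on the base. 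This identification is the key geometric input.

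The second step is to estimate $f^* h_k' - (f^\dagger)^* h_k^\dagger$ on $U$. For any open $V \subset U$, fiber integration yields
\[
\int_V f^* \omega_{h_k'} \;=\; \frac{1}{\mu_k}\int_{\pi_k^{-1}(f(V))} \Omega_k \wedge \bar\Omega_k,
\qquad
\int_V (f^\dagger)^* \omega_{h_k^\dagger} \;=\; \frac{1}{\mu_k^\dagger}\int_{(\pi_k^\dagger)^{-1}(V)} \Omega_k^\dagger \wedge \bar\Omega_k^\dagger.
\]
Theorem~\ref{thm-zhang} guarantees that each genuine $\pi_k$-fiber is a $C^{1,\alpha}$-small graphical perturbation $\sigma_k$ of the corresponding model fiber; hence the tubes $\pi_k^{-1}(f(V))$ and $(\pi_k^\dagger)^{-1}(V)$ differ by a $C^{1,\alpha}$-small displacement in the $T^2$ direction. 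Combined with $\|\bm{\omega}_k - \bm{\omega}_k^\dagger\|_{C^l(\omega_k)} \to 0$ from Theorem~\ref{thm-sun-zhang-fibration} and the resulting $\mu_k/\mu_k^\dagger \to 1$, the two integrals above differ by $o(1)\cdot\mathrm{Area}(V)$ uniformly for $V \subset U$. This yields $C^0$ closeness of the two Kähler forms on $U$, and the $C^{0,\alpha}$ conclusion follows from the $C^{1,\alpha}$ control of $\sigma_k$ together with the smooth convergence of $g^\dagger_{\tilde{R},k}$ and the smooth convergence of the underlying complex structures on $U$.

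The main obstacle is the bookkeeping in the second step: one must simultaneously track (i) the deviation of genuine elliptic fibers from the $\mathbb{R}^2$-invariant model, (ii) the discrepancy between $\bm{\omega}_k$ and $\bm{\omega}_k^\dagger$, and (iii) the effect of composing with the orthogonal section $i_k$. Uniformity over $U$ is ensured because $\overline{U} \subset R^\circ$ is compact and the fibers of $\pi_k^\dagger$ have uniformly bounded diameter (after rescaling by $\varepsilon_k$) by Theorem~\ref{thm-sun-zhang-fibration}, so a single $\delta$ in Theorem~\ref{thm-zhang} works at every point of $U$ for all sufficiently large $k$.
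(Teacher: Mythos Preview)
Your approach is essentially the same as the paper's: reduce to comparing $f^*h_k'$ with the quotient metric $g^\dagger_{U,k}$, identify the latter as a generalized KE metric for $(\pi_k^\dagger,\bm{\omega}_k^\dagger)$, and use the $C^{1,\alpha}$-closeness of fibers from Theorem~\ref{thm-zhang} together with $\|\bm{\omega}_k-\bm{\omega}_k^\dagger\|_{C^l}\to 0$.

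Two remarks. First, your distinction between $\mu_k$ and $\mu_k^\dagger$ is unnecessary: since $[\bm{\omega}_k^\dagger]=[\bm{\omega}_k]$ and the $\pi_k$-fibers are homologous to the $\pi_k^\dagger$-fibers, one has $\mu_k=\mu_k^\dagger$ exactly. Second, the paper organizes the comparison more transparently by separating the metric into its K\"ahler form and complex structure: it shows $\|\omega_{U,k}-\omega_{U,k}^\dagger\|_{C^{1,\alpha}}\to 0$ directly from the $C^{1,\alpha}$-closeness of the fibrations, and $\|J_{U,k}-J_{U,k}^\dagger\|_{C^{0,\alpha}}\to 0$ from $\|J_k-J_k^\dagger\|_{C^l}\to 0$ via the pushforward formulas $J_{U,k}(X)=\pi_{k*}(J_k(i_{k*}X))$. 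This makes the $C^{0,\alpha}$ conclusion immediate, whereas your integral argument yields only $C^0$ closeness of the K\"ahler forms and leaves the passage to $C^{0,\alpha}$ somewhat implicit.
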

	
	\begin{proof}
		Since the quotient metric $g_{U,k}^\dagger$ converges smoothly to $g_\infty$, it suffices to prove
		\[
		\| f^* h_k' - g_{U,k}^\dagger \|_{C^{0,\alpha}(U)} \longrightarrow 0.
		\]
		
		Consider the two holomorphic torus fibrations
		\[
		\pi_k: X_{U,k} \to U, \qquad \pi_k^\dagger: X_{U,k}^\dagger \to U,
		\]
		associated with the hyperkähler triples $(\bm{\omega}_k, J_k)$ and $(\bm{\omega}_k^\dagger, J_k^\dagger)$, respectively, where $X_{U,k}, X_{U,k}^\dagger \subset R'_k$.  
		By Theorem~\ref{thm-zhang}, the fibrations $\pi_k$ and $\pi_k^\dagger$ are $C^{1,\alpha}$–close.  
		
		Let $\omega_{U,k}$ and $\omega_{U,k}^\dagger$ denote the Kähler forms on $U$ for the generalized KE metrics associated with $\tfrac{1}{\sqrt{\mu_k}} \bm{\omega}_k$ and $\tfrac{1}{\sqrt{\mu_k}} \bm{\omega}^\dagger_k$. Then
		\[
		\| \omega_{U,k}^\dagger - \omega_{U,k} \|_{C^{1,\alpha}} \longrightarrow 0.
		\]	
		Let $J_{U,k}$ and $J_{U,k}^\dagger$ be the complex structures on $U$ induced by $J_k$ and $J_k^\dagger$.  
		Since $\pi_k^\dagger$ is holomorphic,
		\[
		J_{U,k}^\dagger(X) = J_k^\dagger(\pi^\dagger_{k*} i_{k*} X) = \pi^\dagger_{k*}(J_k^\dagger(i_{k*} X)), \qquad \forall X \in TU,
		\]
		and similarly $J_{U,k}(X) = \pi_{k*}(J_k(i_{k*} X))$.  
		Since $\| J_k^\dagger - J_k \|_{C^l} \to 0$ for any $l$, it follows that
		\[
		\| J_{U,k}^\dagger - J_{U,k} \|_{C^{0,\alpha}} \longrightarrow 0.
		\]
		
		The metric $f^* h_k'$ is determined by $(\omega_{U,k}, J_{U,k})$, while $g_{U,k}^\dagger$ is determined by $(\omega_{U,k}^\dagger, J_{U,k}^\dagger)$.  
		Hence the above estimates imply
		\[
		\| f^* h_k' - g_{U,k}^\dagger \|_{C^{0,\alpha}} \to 0,
		\]
		and the conclusion follows.
	\end{proof}
	
	Moreover, from Theorem~\ref{thm-zhang} we know that, for any $\delta > 0$ and relatively compact subset $U \Subset U'$, if $k$ is sufficiently large, then for any two sections $i,i': U \to R'_k$ defining maps $f,f'$ as in \eqref{equ-defi-f}, we have
	\[
	f(U) \subset f'(U'), 
	\quad
	d_{f(U'),h_k'}\bigl(f(x), f'(x)\bigr) \le \delta, \qquad \forall x \in U.
	\]
	\subsubsection{Gluing local pieces}
	Next, we estimate the generalized KE metric on $\mathbb{P}^1_k$.  
	Note that there is no canonical choice of a section over $R$, so we cannot identify $R$ directly with a subset of $\mathbb{P}^1_k$.  
	This difficulty is resolved by gluing local pieces together as follows.
	
	\begin{lem}\label{prop-M-N-GH-close}
		Let $M,N$ be Riemannian manifolds.
		Suppose $M = \bigcup_{i=1}^n W_i$ for some open sets $W_i$, and for each $i$ there exists a map $F_i:W_i \to N$.  
		Assume that:
		\begin{enumerate}
			\item[(i)] For any $x \in W_i \cap W_j$, we have $d_N\bigl(F_i(x), F_j(x)\bigr) < \delta$.
			\item[(ii)] For any $x,x' \in W_i$, we have $d_N\bigl(F_i(x), F_i(x')\bigr) < d_M(x,x') + \delta$.
		\end{enumerate}
		Then for any $x\in W_i,\; x'\in W_j$, we have
		\[
		d_N\bigl(F_i(x), F_j(x')\bigr) < d_M(x,x') + (2n+1)\delta.
		\]
	\end{lem}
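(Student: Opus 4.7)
The plan is to run an approximate-path argument in $M$ combined with a minimality reduction on the index sequence produced by the open cover. First I would pick a path $\gamma:[0,1]\to M$ from $x$ to $x'$ of length less than $d_M(x,x')+\delta$. Since $\gamma([0,1])$ is compact and covered by $\{W_k\}_{k=1}^n$, I choose a finite partition $0=t_0<t_1<\cdots<t_m=1$ and indices $k_1,\dots,k_m$ with $\gamma([t_{l-1},t_l])\subset W_{k_l}$, and using the openness of $W_i\ni x$ and $W_j\ni x'$ I arrange $k_1=i$ and $k_m=j$. Setting $y_l=\gamma(t_l)$, each pair $(y_{l-1},y_l)$ lies in a common $W_{k_l}$ and each interior $y_l$ lies in $W_{k_l}\cap W_{k_{l+1}}$.

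Next I would think of the estimate as a walk in an auxiliary ``chart graph'' whose vertices are pairs $(z,k)$ with $z\in W_k$, equipped with two move types: a \emph{jump} $(z,k)\to(z',k)$ with $z,z'\in W_k$, bounded by (ii), and a \emph{switch} $(z,k)\to(z,k')$ with $z\in W_k\cap W_{k'}$, bounded by (i). The partition above gives an obvious walk from $(x,i)$ to $(x',j)$. The key step is to replace it by a walk of minimum length among those whose jump-points $z_0=x,z_1,\dots,z_p=x'$ lie on $\gamma$ in order, with starting index $j_1=i$ and ending index $j_p=j$. If two indices in the reduced walk agreed, say $j_r=j_s$ with $r<s$, then both $z_{r-1}$ and $z_s$ would lie in $W_{j_r}$, so one could shortcut the intervening block by a single jump inside $W_{j_r}$, contradicting minimality. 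Hence $j_1,\dots,j_p$ are distinct and $p\le n$.

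Finally I would sum the cost of the reduced walk: the $p$ jumps contribute at most $\sum_r d_M(z_{r-1},z_r)+p\delta\le\ell(\gamma)+p\delta<d_M(x,x')+(p+1)\delta$, since the $z_r$ lie on $\gamma$ in order, and the $p-1$ interior switches contribute at most $(p-1)\delta$. Adding these, $d_N(F_i(x),F_j(x'))<d_M(x,x')+2p\delta\le d_M(x,x')+2n\delta<d_M(x,x')+(2n+1)\delta$, which is the claimed bound.

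The main obstacle I anticipate is the bookkeeping behind the injective reduction: one must verify that after merging a repeated block the surviving jump-point $z_s$ still lies in the required overlap $W_{j_r}\cap W_{j_{s+1}}$, and that the $z_r$ can be chosen on $\gamma$ close enough to the partition endpoints $y_l$ that all required containments hold simultaneously. Both rely on the openness of the $W_k$ and a small perturbation of the partition, but this is essentially the only non-routine point in the argument.
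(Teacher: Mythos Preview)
Your proposal is correct and follows essentially the same approach as the paper. The paper's proof simply asserts that one can choose partition points $x_k=\gamma(t_k)$ with $m\le n$ and each consecutive pair in a common $W_{i_k}$, then sums the jump and switch costs exactly as you do; your minimal-walk reduction is precisely the justification behind the paper's terse ``$m\le n$''. Your final worry is unnecessary: the reduced jump-points can be taken directly among the original partition points $y_l$, and after merging a block with $j_r=j_s$ the surviving point $z_s$ already lies in $W_{j_s}\cap W_{j_{s+1}}=W_{j_r}\cap W_{j_{s+1}}$, so no perturbation is needed.
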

	
	\begin{proof}
		Let $\gamma:[0,1]\to M$ be a path from $x$ to $x'$ of length $L(\gamma)< d_M(x,x')+\delta$.  
		We can take $x_k = \gamma(t_k)$, where
		\[
		0=t_0 < t_1 < \cdots < t_m = 1, \qquad m \le n,
		\]
		such that for each $k=0,\dots,m-1$, there is an $i_k$ with $x_k, x_{k+1} \in W_{i_k}$, and $i_0=i$, $i_m=j$.  
		Then
		\[
		d_N\bigl(F_i(x),F_j(x')\bigr)
		< \sum_{k=0}^{m-1} d_N\bigl(F_{i_k}(x_k), F_{i_k}(x_{k+1})\bigr) 
		+ \sum_{k=0}^{m-1} d_N\bigl(F_{i_k}(x_{k+1}), F_{i_{k+1}}(x_{k+1})\bigr).
		\]
		By (ii), the first sum is at most $m\delta + L(\gamma)$, and by (i) the second sum is at most $m\delta$.  
		Thus the inequality follows.
	\end{proof}
	
	Given $r>0$, we take
	\begin{equation}\label{equ-defi-R}
		R \coloneqq X_\infty \setminus \bigcup_{q_i \in S} B_r(q_i,g_\infty).
	\end{equation}
	By choosing $r$ small enough, we can assume $B_r(q_i)$ are disjoint, and we take $R'\subset R$ of the same form with $r<r'\ll 1$.  
	We may choose them such that $R'$ is $\varepsilon$-dense in $X_\infty$, and for any $x,y\in R'$,
	\begin{equation}
		\label{inequality-d-small-large}
		d_{X_\infty,g_\infty}(x,y)\le d_{R',g_\infty}(x,y)
		< d_{X_\infty,g_\infty}(x,y)+\varepsilon \le d_{R,g_\infty}(x,y)+\varepsilon. 
	\end{equation}
	Thus $(R',d_{R,g_\infty})\to (X_\infty,d_\infty)$ is an $\varepsilon$-GH approximation, and hence they are $2\varepsilon$-GH close. 
	
	\begin{lem}\label{lem-key-estimate}
		For any $\varepsilon>0$, we may choose $R' \subset R$ such that for $k$ sufficiently large, for any $x, x'\in R'$ and $f=\pi_k\circ i_x$, $f'=\pi_k\circ i_{x'}$ as in \eqref{equ-defi-f}, we have 
		\[
		\bigl|d_{\mathbb{P}^1_k,h_k'}(f(x),f'(x')) - d_{R,g_\infty}(x,x')\bigr| < 3\varepsilon,
		\qquad \forall x, x'\in R'.
		\]
	\end{lem}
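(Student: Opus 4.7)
The plan is to combine the local $C^{0,\alpha}$–convergence in Lemma~\ref{prop-gen-KE-converge-local-U} with the gluing Lemma~\ref{prop-M-N-GH-close}. First I would fix a small radius $r_0$ and take a finite covering $\{W_i\}_{i=1}^n$ of $R'$ by simply connected $g_\infty$–geodesic balls of radius $r_0$, each with $\overline{W_i}\Subset R^\circ$, chosen so that $d_{W_i,g_\infty}=d_{R,g_\infty}$ on $W_i\times W_i$ and so that any pair $x,x'\in R'$ can be joined by a polygonal path in $\bigcup_i W_i$ of length at most $d_{R,g_\infty}(x,x')+\varepsilon$. On each $W_i$ I choose a section $i_i:W_i\to R_k$ orthogonal to the $g_k^\dagger$–fibers and set $F_i=\pi_k\circ i_i:W_i\to \mathbb{P}^1_k$. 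By Lemma~\ref{prop-gen-KE-converge-local-U}, for $k$ sufficiently large we have $\|F_i^*h_k'-g_\infty\|_{C^{0,\alpha}(W_i)}<\varepsilon/(2n+1)$, and the remark immediately following that lemma yields $d_{\mathbb{P}^1_k,h_k'}(F_i(y),F_j(y))<\varepsilon/(2n+1)$ for every $y\in W_i\cap W_j$. These are exactly hypotheses (i) and (ii) of Lemma~\ref{prop-M-N-GH-close} with $M=\bigcup_i W_i$, $N=\mathbb{P}^1_k$, and $\delta=\varepsilon/(2n+1)$.

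For the upper bound, Lemma~\ref{prop-M-N-GH-close} then gives $d_{\mathbb{P}^1_k,h_k'}(F_i(x),F_j(x'))< d_{\bigcup W_i,g_\infty}(x,x')+\varepsilon\le d_{R,g_\infty}(x,x')+2\varepsilon$ for $x\in W_i$, $x'\in W_j$. Since different choices of section change $F_i$ by at most the overlap estimate (i), this covers the case of the given $f,f'$ with slack safely under $3\varepsilon$.

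For the lower bound I would run the same argument in reverse. The local inverses $G_i=F_i^{-1}:V_i\to W_i$ are $C^{0,\alpha}$–near–isometries as well, and the same overlap estimate verifies hypotheses (i), (ii) for the system $\{G_i\}$. If every shortest $h_k'$–path between $f(x)$ and $f'(x')$ stayed inside $\bigcup_i V_i$, a second application of Lemma~\ref{prop-M-N-GH-close} with $M=\bigcup V_i$, $N=R$ would give the matching inequality immediately. The true subtlety is that $\bigcup_i V_i\subset \pi_k(R_k)$ omits small neighborhoods of the singular fibers of $\pi_k$, namely the parts of $\mathbb{P}^1_k$ associated with the finite singular set $S\subset X_\infty$. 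Using the uniform asymptotic model for generalized KE metrics near Kodaira singular fibers recorded in \S\ref{section-generalized-KE}, the $h_k'$–diameter of each such neighborhood is bounded by a function of the cutoff $r$ alone, which can be made less than $\varepsilon/(2|S|)$ by shrinking $r$ from the start. Any $h_k'$–geodesic can then be perturbed, at a cost of at most $\varepsilon$ in total length, into $\bigcup_i V_i$, after which the reversed gluing argument yields the lower bound.

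The main obstacle is precisely this reverse direction: Lemma~\ref{prop-M-N-GH-close} is inherently one-sided, and Lemma~\ref{prop-gen-KE-converge-local-U} gives no control on $h_k'$ outside $\bigcup_i V_i$. The essential extra input is therefore the uniform, $k$–independent control on the generalized KE metric near Kodaira singular fibers, which is what prevents an $h_k'$–geodesic from shortcutting through the singular-fiber region and forces $d_{\mathbb{P}^1_k,h_k'}$ to dominate $d_{R,g_\infty}$ within the claimed error.
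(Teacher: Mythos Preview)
Your overall architecture matches the paper's: apply Lemma~\ref{prop-M-N-GH-close} once with $N=\mathbb{P}^1_k$ for the upper bound and once with $N=R$ for the lower bound, and in the second application deal with $h_k'$–geodesics that leave the controlled region. The upper bound is fine and essentially identical to the paper's.

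The gap is in your justification of the lower bound. You claim that the $h_k'$–diameter of each ``bad'' component of $\mathbb{P}^1_k\setminus\bigcup_i V_i$ is bounded by a function of the cutoff $r$ alone, and invoke ``the uniform asymptotic model for generalized KE metrics near Kodaira singular fibers'' from \S\ref{section-generalized-KE}. This does not follow. Those models describe $h_k'$ near a \emph{single} Kodaira fiber of a \emph{fixed} elliptic surface. Here the fibrations $\pi_k$ vary with $k$, and a bad component can contain many singular fibers of $\pi_k$ clustering together as $k\to\infty$; the single--fiber asymptotics give no $k$--uniform bound on the diameter of such a cluster, let alone one that tends to $0$ with $r$. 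At this point in the argument you also cannot appeal to global closeness of $(\mathbb{P}^1_k,h_k')$ to $(X_\infty,g_\infty)$, since that is exactly what Lemma~\ref{lem-key-estimate} is being used to establish.

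The paper sidesteps this completely. It never estimates $h_k'$ inside the bad region. Instead it introduces an intermediate $R'\subsetneq R''\subsetneq R$, observes that the boundary tori $E_i'=\partial R_k'$ project to sets $\pi_k(E_i')$ lying \emph{inside} the controlled region $\pi_k(R_k'')$, and applies Lemma~\ref{prop-M-N-GH-close} there to show $\operatorname{diam}(\pi_k(E_i'),d_{\pi_k(R_k''),h_k'})<\varepsilon/|S|$ once $\operatorname{diam}(C_i',g_\infty)$ is made small by shrinking $r$. Any $h_k'$–minimizing geodesic that enters a bad component must cross $\pi_k(E_i')$ on entry and exit, so that segment can be replaced by a short arc along $\pi_k(E_i')$ at cost at most $\varepsilon/|S|$ per component. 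This rerouting gives \eqref{inequality-path-} and hence the lower bound, using only the metric on the good region. Replace your diameter claim on the bad components with this boundary–diameter argument and your proof goes through.
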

	
	\begin{proof}
		The idea is to use Lemma~\ref{prop-M-N-GH-close}).
		First, take $N=\mathbb{P}^1_k$ and $M=\bigcup_{i=1}^n U_i$ with $R'\subset M\subset R^\circ$, where $U_i$ are simply connected and
		\[
		f_i = \pi_k \circ i_{U_i}:U_i\to V_i \subset \mathbb{P}^1_k, \qquad 1\le i \le n,
		\]
		are homeomorphisms as in \eqref{equ-defi-f}.		
		We may choose $U_i$ such that there is a simply connected compact domain $U_i'$ with 
		$U_i \subset U_i' \subset R^\circ$ and for any $x,x'\in U_i$, the shortest path connecting $x$ and $x'$ in $R$ with respect to the metric $g_\infty$ lies in $U_i'$ .		
		Then, by Lemma~\ref{prop-gen-KE-converge-local-U}, assumptions (1)–(2) of Lemma~\ref{prop-M-N-GH-close} hold for any $\delta>0$ if $k\gg1$. 	
		Thus for $k$ large, we have
		\begin{align*}
			&d_{\mathbb{P}^1_k,h_k'}(f(x),f'(x')) \\
			\;\text{(Lemma \ref{prop-gen-KE-converge-local-U})} < &d_{\mathbb{P}^1_k,h_k'}(f_i(x),f_j(x')) + \varepsilon \\
			\;\text{(Lemma~\ref{prop-M-N-GH-close})} < &d_{M,g_\infty}(x,x') + 2\varepsilon
			\le d_{R',g_\infty}(x,x') + 2\varepsilon \\
			\;\text{\eqref{inequality-d-small-large}} < &d_{R,g_\infty}(x,x') + 3\varepsilon.
		\end{align*}
		
		On the other hand, take $R'\subsetneq R''\subsetneq R$ of the form \eqref{equ-defi-R}. Let $N=R$ and $M=\bigcup_{i=1}^m V_i \supset \pi_k(R_k'')$ with $f_i: U_i\to V_i$ as before.
		Then, by the same argument, for $k$ large, Lemma~\ref{prop-M-N-GH-close} and Lemma~\ref{prop-gen-KE-converge-local-U} yield
		\begin{equation*}
			d_{R,g_\infty}(x,x')
			< d_{\pi_k(R_k''),\,h_k'}(f_i(x),f_j(x')) + \varepsilon
			< d_{\pi_k(R_k''),\,h_k'}(f(x),f'(x')) + 2\varepsilon.
		\end{equation*}
		
		It remains to prove that for $k$ large enough,
		\begin{equation}\label{inequality-path-}
			d_{\pi_k(R_k''),h_k'}(y,y') < d_{\mathbb{P}^1_k,h_k'}(y,y') + \varepsilon,
			\qquad \forall\, y,y'\in \pi_k(R_k').
		\end{equation}		
		
		Let $C'_i$ be the boundary circles of $R'$, and $E'_i$ the corresponding boundary components of $R_k'$, which are torus bundles over $C'_i$. 		
		By choosing $r$ small enough, we may assume that for each $x\in X_k\setminus R_k'$, there is a unique boundary component $E'_i$ such that $d(E'_i,x) < \varepsilon$. Thus we may write $X_k = R_k' \bigcup_{i=1}^{|S|} S_i'$, where $S'_i$ are compact connected domains with boundary $E'_i$.		
		We may further assume $\operatorname{diam}(C'_i,M) < \frac{\varepsilon}{2|S|}$.
		
		Now, taking $M = \cup_{i=1}^l U_i \supset C'_i$ and $\pi_k(E'_i) \subset N \subset \pi_k(R_k'')$ in Lemma~\ref{prop-M-N-GH-close}, we have
		\[
		\operatorname{diam}(\pi_k(E'_i), d_{\pi_k(R_k''),h_k'}) < \operatorname{diam}(C'_i,d_{M,g_\infty}) + \frac{\varepsilon}{2|S|} < \frac{\varepsilon}{|S|} \qquad \text{as } k\to \infty.
		\]
		
		Let $\gamma:[0,s]\to \mathbb{P}^1$ be a minimizing geodesic between $y,y'\in \pi_k(R_k')$.  
		We can lift it to a path $\tilde{\gamma}:[0,s]\to X_k$ such that $\pi_k(\tilde{\gamma}(t))=\gamma(t)$.  
		For each $i$, define
		\[
		m_i = \min \{ t \in [0,s] : \tilde{\gamma}(t) \in S_i' \}, 
		\qquad 
		M_i = \max \{ t \in [0,s] : \tilde{\gamma}(t) \in S_i' \}.
		\]
		Then $\gamma(m_i),\gamma(M_i) \in \pi_k(E'_i)$.  
		We may replace each segment $\gamma|_{[m_i,M_i]}$ by an arc in $\pi_k(R''_k)$ of length at most $\frac{\varepsilon}{|S|}$.  
		This procedure prolongs the length by at most $\varepsilon$, so \eqref{inequality-path-} holds.
	\end{proof}
	\subsubsection{Estimate near singularities}\label{subsection-estimate-singularities}
	
	Note that the limit measure on $X_\infty$ is given by the Hausdorff measure on $(X_\infty, g_\infty)$ by Remark~\ref{rmk-limit-measure-volume-comparison}. For any $\delta>0$, 
	we may assume that for $k\gg 1$,
	\[
	\frac{\operatorname{Vol}(R'_k)}{\operatorname{Vol}(X_k)} > 1 - \delta.
	\]
	Thus, by the definition of the generalized KE metric, we have
	\[
	\frac{\operatorname{vol}(\pi_k(R_k'))}{\operatorname{vol}(\mathbb{P}^1_k)} \ge 
	\frac{\operatorname{vol}(R_k')}{\operatorname{vol}(X_k)} \ge 1 - \delta
	\]
	for $k\gg1$.
	By a volume comparison argument for the generalized KE metric (Proposition~\ref{prop-volume-comparison}(2)), there exists $\delta$ such that $\pi_k(R_k')$ is $\varepsilon$-dense in $\mathbb{P}^1_k$.
	
	We can define $F:(R',d_{R,g_\infty})\to (\mathbb{P}^1_k,d_{h_k'})$ such that $F(x)=f_i(x)$ for some $f_i$ given by \eqref{equ-defi-f}.
	By Lemma~\ref{prop-gen-KE-converge-local-U}, the image of $F$ is $2\varepsilon$-dense in $\pi_k(R_k')$, hence $3\varepsilon$-dense in $\mathbb{P}^1_k$. 
	Therefore, $F$ is a $3\varepsilon$-GH approximation by Lemma~\ref{lem-key-estimate}.
	Since we can choose $(R',d_{R,g_\infty})$ to be $\varepsilon$-GH close to $(X_\infty,g_\infty)$, this completes the proof of Theorem~\ref{thm-2dimensional-collapsing-elliptic-approximation}.
	
	\section{Period behaviour of collapsing K3 surfaces}\label{section-period-collapsing}

\subsection{Collapsing of codimension two}

Recall that $\overline{\Phi}$ and $\overline{\Phi}'$ are defined in \S\ref{section-construction-geometric-realization}.	
This subsection aims to prove the following.

\begin{thm}\label{thm-2dimensional-collapsing-period-behaviour}
	Suppose $\eta_k \in \mathcal{M}_{\mathrm{K3}}$ with
	$\eta_k \to \eta_\infty \in \overline{\mathcal{M}}_{\mathrm{K3}}^{\mathrm{Sat,ad}}$
	and
	$\Phi(\eta_k) \to (X_\infty,g_\infty)$ in the GH sense.
	If $\dim X_\infty = 2$, then $\overline{\Phi}'(\eta_\infty) = (X_\infty,g_\infty).$\end{thm}
Since the smooth locus is dense in $\mathcal{M}_{\mathrm{K3}}$, we may assume that $(X_k,g_k)$ is smooth.
Let $\pi_k:X_k\to \mathbb{P}^1_k$ be the elliptic fibration given by Theorem~\ref{thm-2dimensional-collapsing-elliptic-approximation} for $k\gg 1$.  
Let $\pi_{J_k}: J_k\to \mathbb{P}^1_k$ be the corresponding Jacobian fibration, which is represented by $\xi_k\in \mathcal{M}_{Jac}$ (cf. \S\ref{subsection-git-quotient-construction}). After passing to a subsequence, we may assume that $\xi_k\to \xi_\infty\in \overline{\mathcal{M}_W}$, with $\overline{\Phi}_W(\xi_\infty)=(X_\infty,g_\infty)$ by Theorems~\ref{thm-2dimensional-collapsing-elliptic-approximation} and \ref{thm-geometric-realization-GIT}. We divide the proof into two cases depending on the position of $\xi_\infty$.

\subsubsection{Case 1: $\xi_\infty\in \mathcal{M}_{\mathrm{Jac}}$.} 
Then $\xi_\infty$ represents an elliptic K3 fibration $\pi_\infty:J_\infty\to \mathbb{P}^1_\infty$, and $\pi_{J_k}: J_k\to \mathbb{P}^1_k$ is a small deformation of it.  
By standard deformation theory \cite[Chap.~IV, \S4]{barth-cptcplxsurface-04} (cf.~\cite{tian-87-deformation}), we can identify $J_k$ with $J_\infty$, and we can choose holomorphic volume forms $\Omega_{J_k}$ such that they converge to a holomorphic volume form $\Omega_{J_\infty}$ on $J_\infty$.

We can take a standard basis (Definition~\ref{defi-std-basis}) $\{e_i\}$ of $H^2(X_k,\mathbb{R})\cong H^2(J_k,\mathbb{R})\cong H^2(J_\infty,\mathbb{R})$,
such that $e_1$ is the Poincaré dual of the fiber class.  
Writing $\Omega_{J_k} = \omega_{J_k,1} + i \omega_{J_k,2}$, by Proposition~\ref{prop-e,f-give-elliptic}, we have $[\omega_{J_k,1}], [\omega_{J_k,2}] \in \langle e_1,e_{22}\rangle^\perp$, and by Proposition~\ref{jacobian-characterize}, there exists $a_k,b_k\in\mathbb{R}$ such that $\eta_k$ is represented by
\[
\langle [\omega_{k,1}], [\omega_{k,2}], [\omega_{k,3}] \rangle
= \langle [\omega_{J_k,1}] + a_k e_1, \, [\omega_{J_k,2}] + b_k e_1, \, [\omega_{k,3}] \rangle.
\]
Moreover, let 
$\lambda= 1/\sqrt{[\omega_{k,3}]\cdot [\omega_{k,3}]}$, then
the coefficient $\alpha_{22}$ of $\lambda[\omega_{k,3}]$ tends to $0$ by Theorem~\ref{thm-2dimensional-collapsing-elliptic-approximation}.  
Hence $\eta_k$ converges to $\eta_\infty \in \mathcal{M}(a)$ by Proposition~\ref{prop-period-behaviour-characterized-boundary}, and Theorem~\ref{thm-2dimensional-collapsing-period-behaviour} holds in this case.

\subsubsection{Case 2: $\xi_\infty\notin \mathcal{M}_{\mathrm{Jac}}$.}

In this case, we find three tori $T_{1,k}, T_{2,k}, T_{3,k}$ in $X_k$ that represent primitive isotropic elements in $H_2(X_k,\mathbb{Z}) \cong \Lambda_{\mathrm{K3}}^\vee$. The behaviour of $\xi_k$ is controlled by the integrals of $\omega_{\alpha,k}$ on these tori. 

Since $\overline{\Phi}_W(\xi_\infty)=(X_\infty,g_\infty)$ has dimension 2, we know $\xi_\infty=\pi([h_8:h_{12}])$ for some $[h_8:h_{12}]\in X^s$ with $\Delta\equiv 0$, and $(X_\infty,g_\infty)=T^2/\{\pm 1\}$ with a flat metric.	
Let $R\subset X_\infty$ be the complement of finitely many small disks, and let $\pi_k^\dagger: R_k\to R$ be as in Theorem~\ref{thm-sun-zhang-fibration}.  
This also induces a torus fibration $\pi_k^\dagger:\tilde{R}_k\to \tilde{R}$ over the universal cover $\tilde{R}$ of $R$.		

Let $x_{\alpha,k}, t_{\alpha,k}, W_{\alpha\beta,k} \in C^\infty(\tilde{R}_k)$ and $\bm{u}_{\alpha,k}\in \mathbb{R}^2$ be as in \S\ref{section-case-n=2}. We may choose $t_{\alpha,k}$ such that $\bm{u}_{1,k} = (u_{1,k},0)$ in the coordinates $t_{1,k},t_{2,k}$. By the discussion in \S \ref{subsection-convergence-Rn-inv-metric}, we may assume that $W_{\alpha\beta,k}\to \delta^a_b$, and thus $|\bm{u}_{\alpha,k}|\to 0$ since the diameter of fibers goes to $0$.  
Identifying $\bm{u}_{\alpha,k}$ as elements of $\mathbb{C}$ in the complex coordinate $z=t_{1,k}+i t_{2,k}$, we may assume $\frac{\bm{u}_{2,k}}{\bm{u}_{1,k}}$ lies in the fundamental set $\mathcal{D}$ \eqref{definition-fundamental-domain-elliptic}.  

\begin{lem}\label{lem-j-to-infty}
	Let $j_{k,0} \coloneq j\bigl(\frac{\bm{u}_{2,k}}{\bm{u}_{1,k}}\bigr)$. Then $j_{k,0} \to \infty$, i.e., $\operatorname{Im}\!\bigl(\frac{\bm{u}_{2,k}}{\bm{u}_{1,k}}\bigr)\to \infty$.
\end{lem}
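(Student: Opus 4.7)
The plan is to identify $j_{k,0}$ asymptotically with the Weierstrass $j$-invariant of the corresponding fiber of the Jacobian $\pi_{J_k}$, which diverges because $\Delta_\infty \equiv 0$ while $h_{8,\infty}\not\equiv 0$. Since $\bm{u}_{2,k}/\bm{u}_{1,k}\in\mathcal{D}$ by normalization, the conclusion $\operatorname{Im}(\bm{u}_{2,k}/\bm{u}_{1,k})\to\infty$ is equivalent to $j_{k,0}\to\infty$, which I would prove by contradiction: if $\bm{u}_{2,k}/\bm{u}_{1,k}$ stayed in a compact subset of $\mathcal{D}$, then the $j$-invariants of fibers of $\pi_k$ would remain bounded, contradicting the degeneration of the Weierstrass data.

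First, from \eqref{standard-form-n=2} one reads that each fiber $T^2_y$ of $\pi_k^\dagger$ is holomorphic for the complex structure $J^\dagger_{3,k}$ associated to $\omega^\dagger_{1,k}+i\omega^\dagger_{2,k}$ and carries the flat metric $W^{\alpha\beta}_k(y)\,dt_\alpha dt_\beta$. Since $W_{\alpha\beta,k}\to\delta_{\alpha\beta}$ smoothly (\S\ref{subsection-convergence-Rn-inv-metric}), identifying $(t_1,t_2)\leftrightarrow t_1+it_2$ shows that the complex modulus of $T^2_y$ equals $\bm{u}_{2,k}/\bm{u}_{1,k}+o(1)$, and hence its $j$-invariant is $j_{k,0}+o(1)$ under the compactness assumption. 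By Theorem~\ref{thm-zhang}, for $k$ large there is a unique holomorphic fiber $F_{y,k}$ of $\pi_k$ which is $C^{1,\alpha}$-close to $T^2_y$; combined with $\|J_k-J^\dagger_k\|_{C^\infty}\to 0$ from Theorem~\ref{thm-sun-zhang-fibration}, this yields $j(F_{y,k})=j_{k,0}+o(1)$. Since $\pi_{J_k}$ is the Jacobian of $\pi_k$, the fiber $F_{y,k}$ is isomorphic as an abstract elliptic curve to the Weierstrass fiber of $(h_{8,k},h_{12,k})$ at $t_k:=\pi_k(y)$, whose $j$-invariant equals a nonzero constant multiple of $h_{8,k}(t_k)^3/\Delta_k(t_k)$.

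By stability of $\xi_\infty\in X^s$ (\S\ref{subsection-git-quotient-construction}), the convergence $\xi_k\to\xi_\infty$ in $\mathcal{M}_W$ lifts to a convergent sequence $(h_{8,k},h_{12,k})\to(h_{8,\infty},h_{12,\infty})$ in $W\setminus\{0\}$, with $\Delta_\infty\equiv 0$ but $h_{8,\infty}$ vanishing only at isolated points of $\mathbb{P}^1$ (in the normal form it is a nonzero multiple of $G_4^2$). Choosing the base point $y$ so that $\pi_k(y)$ remains inside a fixed compact subset of $\mathbb{P}^1$ on which $|h_{8,\infty}|$ is bounded below forces $|h_{8,k}(t_k)^3/\Delta_k(t_k)|\to\infty$, contradicting the previous paragraph under the compactness assumption and yielding $\operatorname{Im}(\bm{u}_{2,k}/\bm{u}_{1,k})\to\infty$. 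The main subtle point is the three-step identification—semiflat fiber $T^2_y$, holomorphic fiber $F_{y,k}$ of $\pi_k$, Weierstrass fiber of the Jacobian—together with arranging $y$ so that $\pi_k(y)$ avoids both the finitely many singular fibers of $\pi_k$ and the zeros of $h_{8,\infty}$ uniformly in $k$; this is manageable because the bad loci are finite and $\tilde R$ is open.
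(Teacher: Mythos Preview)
Your strategy matches the paper's: identify $j_{k,0}$ with the $j$-invariant of nearby holomorphic $\pi_k$-fibers via Theorem~\ref{thm-zhang}, then show this $j$-invariant diverges because $\xi_\infty$ lies in the $\Delta\equiv 0$ stratum of $X^s$. The paper carries out the second step slightly differently: rather than tracking a single base point and invoking the Weierstrass formula pointwise, it observes that the $\pi_k$-fibers arising over $R$ fill a subset of $\mathbb{P}^1_k$ of positive generalized KE volume, and then quotes Remark~\ref{rmk-j-to-infty} (a byproduct of Case~2 in the appendix), which says that on a set of almost full measure the Weierstrass $j$-invariant diverges.

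The one soft spot in your write-up is the clause ``arranging $y$ so that $\pi_k(y)$ avoids \ldots\ the zeros of $h_{8,\infty}$ uniformly in $k$; this is manageable because the bad loci are finite and $\tilde R$ is open.'' The map $y\mapsto \pi_k(y)$ into the Weierstrass $\mathbb{P}^1$ depends on $k$, so finiteness of the bad locus and openness of $\tilde R$ alone do not give a \emph{uniform} lower bound on $|h_{8,\infty}(\pi_k(y))|$. What closes this is precisely the volume/GH control already established: $\pi_k(R_k')$ has almost full generalized KE volume (\S\ref{subsection-estimate-singularities}), while neighborhoods of the zeros of $G_4$ carry arbitrarily small volume (Lemma~\ref{lem-case2-volumeestimate}), so the image must meet a fixed compact set in the good region. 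Equivalently, since $(\mathbb{P}^1_k,h_k')\to (X_\infty,g_\infty)$ by Theorem~\ref{thm-2dimensional-collapsing-elliptic-approximation} and the zeros of $G_4$ correspond to the four orbifold points of $T^2/\{\pm1\}$, choosing $y$ away from those points keeps $\pi_k(y)$ uniformly away from the bad locus. Once you insert either of these remarks, your pointwise argument is complete and is essentially a repackaging of the paper's measure-theoretic one.
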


\begin{proof}
	For any $p\in R^0$ and $k\to \infty$, the torus fiber $(\pi_k^\dagger)^{-1}(p)$ is close to some fiber of $\pi_k: X_k \to \mathbb{P}^1$ by Theorem~\ref{thm-zhang}. The $j$-invariant of these $\pi_k$-fibers is close to $j_{k,0}$, and these fibers correspond to a subset of $\mathbb{P}^1$ with positive volume. Thus, by Remark~\ref{rmk-j-to-infty}, we have $j_{k,0} \to \infty$.
\end{proof}

We can also view $x_{\alpha,k}, t_{\alpha,k}, W_{\alpha\beta,k}$ as multi-valued functions on $R$. 
Consider a loop $\gamma$ based at $p\in R$.  
Along $\gamma$, we obtain new functions $x'_{\alpha,k}, t'_{\alpha,k}$.  
Note that $x_\alpha = \iota_{\partial_{t_\alpha}} \omega$. We can write  
\[
(dt_{1,k}', dt_{2,k}') = (dt_{1,k}, dt_{2,k}) (A_k^T)^{-1}, \qquad
(dx_{1,k}', dx_{2,k}') = (dx_{1,k}, dx_{2,k}) A_k,
\]
for some monodromy matrix $A_k \in SL_2(\mathbb{R})$.
Since the limit monodromy is $\pm \mathrm{Id}$ on $X_\infty$, we know $A_k \to \pm \mathrm{Id}$. Moreover, since $(\pi_k^\dagger)^{-1}(p) = \mathbb{R}^2 / \langle \bm{u}_{1,k}, \bm{u}_{2,k} \rangle$, we know $A_k$ preserves the lattice $\langle \bm{u}_{1,k}, \bm{u}_{2,k} \rangle$. Thus, by Lemma~\ref{lem-j-to-infty}, $A_k$ must preserve $\mathbb{Z}\bm{u}_{1,k}$ for $k\gg 1$, otherwise the coefficients of $A_k$ would be unbounded.

Write
\[
X_\infty = T^2 / \{\pm 1\} = \mathbb{R}^2 / \langle \bm{v}_1, \bm{v}_2, \{\pm 1\} \rangle,
\]
where $\{\pm 1\}$ acts by $p\mapsto -p$, and $\bm{v}_\alpha \in \mathbb{R}^2$ act by translations.  
Let $C_1$ be a circle in $R$ given by $(p_0 + \mathbb{R} \bm{v}_1)/\mathbb{Z}$ for some $p_0 \in R$.  	
From the discussion above, there exist $\bm{u}_{\Delta,k} \in \mathbb{R}^2$ and $A_k \in SL_2(\mathbb{R})$ preserving $\mathbb{Z}\bm{u}_{1,k}$, such that
\[
(\pi_k^\dagger)^{-1} C_1 = ((p_0 + \mathbb{R}\bm{v}_1)\times T_k^2)/\sim, \qquad
(p,\bm{t}) \sim (p+\bm{v}_1, A_k\bm{t} + \bm{u}_{\Delta,k}),
\] 
and we may take $\bm{u}_{\Delta,k} = \lambda_1 \bm{u}_{1,k} + \lambda_2 \bm{u}_{2,k}$ for $\lambda_\alpha \in [0,1)$, thus $\bm{u}_{\Delta,k} \to 0$. 
Moreover, since the monodromy group $A_k$ converges to $\mathrm{Id}$ along $C_1$, we know $A_k(\bm{u}) = \bm{u} + c(\bm{u}) \bm{u}_{1,k}$ for some $c\in (\mathbb{R}^2)^\vee$.
Thus
\[
T_{1,k} = \{ (p_0 + s \bm{v}_1, \, \mathbb{R} \bm{u}_{1,k} + s \bm{u}_{\Delta,k}) : s \in \mathbb{R} \} / \sim
\]
defines a torus in $C_1 \times T_k^2 \subset X_k$.

Similarly, we can define $T_{2,k}$ as above, and we define $T_{3,k}$ to be a torus fiber of $\pi_k$.

\begin{lem}\label{lem-primitive-2dn=2}
	The tori $T_{\alpha,k}$ defined above represent primitive homology classes in $H_2(X_k,\mathbb{Z})$.
\end{lem}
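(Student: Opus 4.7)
For $T_{3,k}$, a smooth elliptic fiber of $\pi_k\colon X_k\to\mathbb{P}^1_k$, primitivity is immediate from Proposition~\ref{prop-elliptic-curve-elliptic-K3}(i).

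For $T_{1,k}$ and $T_{2,k}$, I plan to compute the intersection pairing $T_{1,k}\cdot T_{2,k}$ and show it equals $\pm1$, which implies primitivity of both classes. The computation proceeds by base-fiber decomposition via $\pi_k^\dagger$. The tori $T_{\alpha,k}$ project to the loops $C_\alpha\subset R\subset X_\infty$; geometric intersections of $T_{1,k}\cap T_{2,k}$ in $X_k$ lie above the base intersections $C_1\cap C_2$ in $R$. At each such base point the fiber circles $\mathbb{R}\bm{u}_{\alpha,k}/\mathbb{Z}\bm{u}_{\alpha,k}\subset T^2_k$ meet transversely in one point with algebraic multiplicity $\det(\bm{u}_{1,k},\bm{u}_{2,k})=\pm1$, since $\bm{u}_{1,k},\bm{u}_{2,k}$ generate the fiber lattice. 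Summing the base-fiber contributions with signs taken against the orientation of $X_k$ should yield $T_{1,k}\cdot T_{2,k}=\pm1$. Self-intersections $T_{\alpha,k}^2=0$ follow by perturbing the base point $p_0$ to obtain a disjoint parallel copy, and $T_{\alpha,k}\cdot T_{3,k}=0$ since a generic fiber of $\pi_k$ lies above a point of $\mathbb{P}^1_k$ disjoint from the image of $C_\alpha$; hence $\langle[T_{1,k}],[T_{2,k}]\rangle$ would form a hyperbolic summand and $[T_{3,k}]$ is orthogonal to both.

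The main obstacle is the orbifold structure of $X_\infty\cong T^2/\{\pm1\}$: topologically $S^2$, it forces $C_1$ and $C_2$ to have two transverse geometric intersections in $X_\infty$ with opposite base-orientation signs (consistent with $H_1(S^2)=0$), and the bundle $R_k\to R$ has monodromy $-\mathrm{Id}$ on $H_1(T^2_k)$ around each of the four orbifold points. Careful tracking of how the fiber circles transport between the two base intersections, using that the monodromy along $C_\alpha$ itself is trivial (each $C_\alpha$ encloses two orbifold points), is needed to confirm that the monodromy contribution restores a net signed count of $\pm1$ rather than $0$ or $\pm2$. Should the direct intersection argument fall short, an alternative is to construct an auxiliary 2-cycle from a section $\sigma\colon R\to R_k$ of $\pi_k^\dagger$ with prescribed winding in $H_1(T^2_k)$ along $C_\alpha$, closed off in $X_k$ via the exceptional $\mathbb{P}^1$s of the $A_1$-resolutions over the orbifold points; such a cycle would pair to $\pm1$ with $T_{\alpha,k}$, establishing primitivity directly.
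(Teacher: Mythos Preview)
Your intersection-number strategy rests on a misreading of the construction: both $T_{1,k}$ and $T_{2,k}$ use the \emph{same} fiber circle $\mathbb{R}\bm{u}_{1,k}/\mathbb{Z}\bm{u}_{1,k}$, not $\bm{u}_{1,k}$ and $\bm{u}_{2,k}$ respectively. The paper establishes just before the lemma that the monodromy $A_k$ along any loop in $R$ preserves only $\mathbb{Z}\bm{u}_{1,k}$ (because $j_{k,0}\to\infty$ forces $A_k$ to be upper triangular with respect to $(\bm{u}_{1,k},\bm{u}_{2,k})$). There is no well-defined $\bm{u}_{2,k}$-circle in the fiber that survives transport along $C_2$. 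Consequently, at each base intersection point of $C_1$ and $C_2$ the fiber circles of $T_{1,k}$ and $T_{2,k}$ are parallel rather than transverse, and after perturbation they can be made disjoint. Indeed, immediately after the lemma the paper asserts $T_{\alpha,k}\cdot T_{\beta,k}=0$ for all $\alpha,\beta$; your computation $T_{1,k}\cdot T_{2,k}=\pm1$ contradicts this and cannot be correct.

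The paper takes an entirely different route. For $\alpha=1,2$ it passes to a tubular neighborhood of $T_{\alpha,k}$, unrolls one fiber direction to view it inside a copy of $B_r\times T^2$ with hyperkähler triple close to the standard one, and then invokes Theorem~\ref{thm-zhang} (Zhang's perturbation theorem) to deform $T_{\alpha,k}$ to a genuine holomorphic elliptic curve $T_{\alpha,k}'$ in $X_k$. Primitivity of $[T_{\alpha,k}']=[T_{\alpha,k}]$ then follows from Proposition~\ref{prop-elliptic-curve-elliptic-K3}(i), exactly as for $T_{3,k}$. This is the same mechanism used later for Lemma~\ref{prop-primitive-n=3}. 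Your fallback idea of building an auxiliary cycle from a section plus exceptional spheres might be made to work, but it is substantially more delicate than the paper's two-line argument and you have not carried it out.
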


\begin{proof}
	Let $V_1 \cong (-\varepsilon,\varepsilon) \times C_1$ be a neighborhood of $C_1$ in $X_\infty$, and $\mathbb{R}^2 / \langle \bm{u}_{1,k}\rangle$ be a covering space of $T_k$. 
	We can view $T_{1,k}$ as a torus in $V_1 \times \mathbb{R}^2 / \langle \bm{u}_{1,k}\rangle \cong T_1 \times (-\varepsilon,\varepsilon) \times \mathbb{R}$. We may choose a neighborhood of $T_{1,k}$ such that the hyperkähler triple $\omega^\dagger_{\alpha,k}$ is the standard one after a hyperkähler rotation.  
	Hence, $T_{1,k}$ can be deformed into a holomorphic torus $T_{1,k}'$ by Theorem~\ref{thm-zhang}, and similarly for $T_{2,k}$. The conclusion then follows from Proposition~\ref{prop-elliptic-curve-elliptic-K3}(i).
\end{proof}
Since the tori are represented by disjoint cycles in the local product model,
their pairwise intersection numbers vanish:
$T_{\alpha,k}\cdot T_{\beta,k}=0$. Thus, by Lemma~\ref{lem-primitive-2dn=2}, $[T_{\alpha,k}]^\vee$ can be extended to a standard basis
$\mathcal{B}_k = \{e_{i,k}\}$ of $H^2(X_k,\mathbb{R}) \cong \Lambda_{\mathrm{K3},\mathbb{R}}$
such that $e_{i,k} = [T_{i,k}]^\vee$ for $i = 1,2,3$.
By passing to a subsequence, we may assume that these standard bases are of the same type, and by choosing suitable $\mathcal{B}_k$, we may further assume that $\mathcal{B}_k$ are identified with a fixed standard basis $\mathcal{B} = \{e_i\}$ of $\Lambda_{\mathrm{K3},\mathbb{R}}$.

Recall that $\omega^\dagger_{\beta,k}$ is defined in \eqref{standard-form-n=2}.
One can compute the integrals of $\omega_{\alpha,k}$ over $T_{\beta,k}$ as follows:
\[
(A_{\alpha\beta})_k \coloneq
\Bigl( \int_{T_{\alpha,k}} \omega_{\beta,k} \Bigr)
= \Bigl( \int_{T_{\alpha,k}} \omega^\dagger_{\beta,k} \Bigr) \sim
u_{1,k} \begin{pmatrix}
	\int_{p_0}^{p_0+\bm{v}_1} dx_{1,\infty} & -\int_{p_0}^{p_0+\bm{v}_1} dx_{2,\infty} & * \\[1ex]
	\int_{p_0}^{p_0+\bm{v}_2} dx_{1,\infty} & -\int_{p_0}^{p_0+\bm{v}_2} dx_{2,\infty} & * \\[1ex]
	0 & 0 & \operatorname{Im}(\bm{u}_{2,k})
\end{pmatrix},
\]and we have $\int_{X_k} \omega_{\alpha,k}^2 \sim u_{1,k} \operatorname{Im}(\bm{u}_{2,k})$.
Thus, after rescaling so that $\int_{X_k} \omega_{\alpha,k}^2 = 1$, the resulting matrix is
\[
(A'_{\alpha\beta})_k \sim \frac{1}{\sqrt{u_{1,k} \operatorname{Im}(\bm{u}_{2,k})}} (A_{\alpha\beta})_k.
\]
Since $\frac{u_{1,k}}{\operatorname{Im}(\bm{u}_{2,k})}\to 0$ and $\operatorname{Im}(\bm{u}_{2,k})\to 0$, the sequence lies in Case~3 of Proposition~\ref{prop-period-behaviour-characterized-boundary} after applying an $SO(2)$ rotation to $\omega_{1,k}$ and $\omega_{2,k}$.
Hence, the sequence $\eta_k$ converges to a point $\eta_\infty \in \mathcal{M}(c_i)$ with $\overline{\Phi}'(\eta_\infty) = (X_\infty,g_\infty)$.  
This finishes the proof of Theorem~\ref{thm-2dimensional-collapsing-period-behaviour}.

\subsection{Collapsing of codimensional one}

We now deal with the case $d=3$.  
This section aims to prove the following:

\begin{thm}\label{thm-3dimensional-collapsing-period-behaviour}
	Suppose $\eta_k \in \mathcal{M}_{\mathrm{K3}}$ with
	$\eta_k \to \eta_\infty \in \overline{\mathcal{M}}_{\mathrm{K3}}^{\mathrm{Sat,ad}}$
	and
	$\Phi(\eta_k) \to (X_\infty,g_\infty)$ in the GH sense.
	If $\dim X_\infty = 3$, then $\overline{\Phi}'(\eta_\infty) = (X_\infty,g_\infty).$
\end{thm}

Similarly, we may assume $(X_k,g_k)$ is smooth. Let $\bm{v}_\alpha \in \mathbb{R}^3$ such that
\[
X_\infty = T^3/\{\pm 1\} = \mathbb{R}^3 / \langle \bm{v}_\alpha, \{\pm 1\} \rangle.
\]
Let $\pi^\dagger_k: R_k \to R$ be the fibration given by Theorem~\ref{thm-sun-zhang-fibration}, where $R \subset X_\infty$ is the complement of finitely many small disks.  
This induces an $S^1$ fibration $\pi^\dagger_k: \tilde{R}_k \to \tilde{R}$ over the universal cover of $R$.	

By \S\ref{section-case-n=1}, there exists an $\mathbb{R}$-invariant hyperkähler triple on $\tilde{R}_k$ given by
\begin{align}\label{equation-hyperkahler-triple-n=3}
	\left\{
	\begin{array}{lr}
		\omega^\dagger_{1,k} = V_k \, dx_{2,k} \wedge dx_{3,k} + dx_{1,k} \wedge \theta_k, \\[2mm]
		\omega^\dagger_{2,k} = V_k \, dx_{3,k} \wedge dx_{1,k} + dx_{2,k} \wedge \theta_k, \\[2mm]
		\omega^\dagger_{3,k} = V_k \, dx_{1,k} \wedge dx_{2,k} + dx_{3,k} \wedge \theta_k,
	\end{array}
	\right.
\end{align}
where $V_k$ is a harmonic function in $(x_{1,k}, x_{2,k}, x_{3,k})$, and there exists a one-form $\alpha_k$ on the base $\tilde{R}$ such that 
\[
\theta_k = dt_k + (\pi_k^\dagger)^*\alpha_k, \quad d\alpha_k = *\, dV_k.
\]
As in \S\ref{subsection-convergence-Rn-inv-metric}, we may assume
\[
\int_{S^1} \theta_k = \varepsilon_k \to 0, 
\qquad 
V_k \to 1,
\]
and the tuple $(dx_{1,k}, dx_{2,k}, dx_{3,k})$ converges smoothly to an affine flat 1-form on $\tilde{R}$.

Let $C_\alpha$ ($1 \le \alpha \le 3$) be circles in $R$ given by $(p_0 + \mathbb{R} \bm{v}_\alpha)/\mathbb{Z}$ for some $p_0 \in R$, and define $T_{\alpha,k} = (\pi^\dagger_k)^{-1}(C_\alpha)$. Then $T_{\alpha,k}$ is a torus since $X_k$ is oriented (cf.~\cite[§7.2]{sun-zhang-24}).

\begin{lem}\label{prop-primitive-n=3}
	The tori $T_{\alpha,k}$ defined above represent primitive homology classes in $H_2(X_k,\mathbb{Z})$.
\end{lem}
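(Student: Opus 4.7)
The plan mirrors that of Lemma~\ref{lem-primitive-2dn=2}: I would deform $T_{\alpha,k}$ into a smooth holomorphic elliptic curve $T'_{\alpha,k}$ on $X_k$ via Theorem~\ref{thm-zhang}, observe $[T'_{\alpha,k}]=[T_{\alpha,k}]$, and conclude by Proposition~\ref{prop-elliptic-curve-elliptic-K3}(i). The only new point is setting up the local product structure in the Gibbons--Hawking regime.

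First I would exhibit a tubular neighbourhood $U_{\alpha,k}$ of $T_{\alpha,k}$ in $X_k$ diffeomorphic to $B^2_r\times T^2_k$, with fibre of uniformly bounded diameter. A tubular neighbourhood of $C_\alpha$ in $\tilde R$ is diffeomorphic to $B^2_r\times S^1$, and since $B^2_r$ is contractible the $S^1$-bundle $\pi^\dagger_k$ trivialises over this product, so $(\pi^\dagger_k)^{-1}(B^2_r\times C_\alpha)\cong B^2_r\times T^2_k$ (passing to a finite cover if necessary to trivialise monodromy along $C_\alpha$). The fibre $T^2_k$ has one circle of length close to $|\bm v_\alpha|$ and one of length $\varepsilon_k\to 0$, so $\operatorname{diam}(T^2_k)$ stays bounded as required by Theorem~\ref{thm-zhang}.

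Next I would rotate the hyperkähler triple so that $T_{\alpha,k}$ is \emph{almost holomorphic}. Using~\eqref{equation-hyperkahler-triple-n=3} and $V_k\to 1$, a direct computation on the universal cover shows that the restriction of $\omega^\dagger_{j,k}$ to the plane $\Pi_\alpha=\langle \bm v_\alpha,\partial_{t_k}\rangle$ is proportional to $v_{\alpha,j}\,\mathrm{dvol}_{\Pi_\alpha}$. Choosing $R\in SO(3)$ whose first row is $\bm v_\alpha/|\bm v_\alpha|$ and setting $\bm{\omega}'^\dagger_k=R\bm{\omega}^\dagger_k$, the restrictions $\omega'^\dagger_{2,k}|_{T_{\alpha,k}},\omega'^\dagger_{3,k}|_{T_{\alpha,k}}$ vanish to leading order, and after linearly rescaling coordinates on $B^2_r\times T^2_k$ the rescaled triple agrees with the flat model~\eqref{standard-form-n=2-constant-efficient} up to an arbitrarily small $C^l$-error. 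By the $C^l$-approximation $\|\bm{\omega}_k-\bm{\omega}^\dagger_k\|_{C^l}\to 0$ from Theorem~\ref{thm-sun-zhang-fibration}, the same holds for the rotated $\bm{\omega}'_k$.

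Theorem~\ref{thm-zhang} then produces a graph $T'_{\alpha,k}=L(y,\sigma)$ holomorphic in the rotated complex structure $\Omega'=\omega'_{2,k}+i\omega'_{3,k}$ and $C^{1,\alpha}$-close to $T_{\alpha,k}$; in particular $[T'_{\alpha,k}]=[T_{\alpha,k}]$ in $H_2(X_k,\mathbb{Z})$. Since $T'_{\alpha,k}$ is a smooth elliptic curve on the K3 surface $X_k$ (in the rotated complex structure, which still makes $X_k$ a K3), Proposition~\ref{prop-elliptic-curve-elliptic-K3}(i) gives that $[T'_{\alpha,k}]$ is primitive. The main technical point is the setup of the product neighbourhood and the matching of rescalings to reduce to the hypotheses of Theorem~\ref{thm-zhang}; once that is in place, the argument is a direct transcription of Lemma~\ref{lem-primitive-2dn=2}.
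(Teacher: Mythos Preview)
Your overall strategy is the same as the paper's, and it is the right one: produce a product neighbourhood $B_r\times T^2$ of $T_{\alpha,k}$ on which the (rotated) hyperk\"ahler triple is $C^{1,\alpha}$-close to the flat model \eqref{standard-form-n=2-constant-efficient}, apply Theorem~\ref{thm-zhang} to deform $T_{\alpha,k}$ into an elliptic curve, and quote Proposition~\ref{prop-elliptic-curve-elliptic-K3}(i). However, the sentence ``after linearly rescaling coordinates on $B^2_r\times T^2_k$ the rescaled triple agrees with the flat model \eqref{standard-form-n=2-constant-efficient} up to an arbitrarily small $C^l$-error'' is precisely the nontrivial content of this lemma, and as written it is not justified.

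The issue is the connection form $\theta_k=dt_k+(\pi^\dagger_k)^*\alpha_k$ in the Gibbons--Hawking ansatz \eqref{standard-form-n=1}. From $V_k\to 1$ you only get $d\alpha_k=*dV_k\to 0$; this does \emph{not} force $\alpha_k$ to be small on the tube $B^2_r\times C_\alpha$, because $H^1(B^2_r\times S^1)\neq 0$ and $\alpha_k$ may carry a large harmonic component $C\,dx_{3,k}$ (equivalently, the holonomy of the $S^1$-connection along $C_\alpha$ is uncontrolled). If $C$ is not small then, e.g., $\omega^\dagger_{2,k}=V_k\,dy\wedge dz+dx\wedge dt+ C\,dx\wedge dz+\cdots$ is not close to the standard form, so the hypotheses of Theorem~\ref{thm-zhang} fail. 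Topological triviality of the circle bundle over $B^2_r\times S^1$ is not enough; you need a gauge in which $\theta_k$ is close to $dt_k$. The paper supplies exactly this: first an explicit homotopy operator on $B^2_r\times S^1$ (Lemma~\ref{lem-primitive-n=3}) produces $\alpha_k'\to 0$ with $d\alpha_k'=d\alpha_k$, so $\alpha_k-\alpha_k'=ds+C\,dx_{3,k}$; then the change $t_k'=t_k+s$ absorbs $ds$, and a further integer shift $t_k''=t_k'+n\varepsilon_k x_{3,k}$ reduces $C$ modulo the fibre period $\varepsilon_k\to 0$. Only after these two gauge changes is the triple genuinely close to the flat product, and Theorem~\ref{thm-zhang} applies. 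Your write-up should include this step (or an equivalent argument controlling the holonomy along $C_\alpha$); without it the reduction to Theorem~\ref{thm-zhang} has a gap.
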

Assuming Lemma \ref{prop-primitive-n=3}, we can now prove Theorem \ref{thm-3dimensional-collapsing-period-behaviour} as follows:
\begin{proof}[Proof of Theorem \ref{thm-3dimensional-collapsing-period-behaviour} ]

	It is easy to see that $T_{\alpha,k} \cdot T_{\beta,k} = 0$. Thus, by Lemma~\ref{prop-primitive-n=3}, $[T_{\alpha,k}]^\vee$ can be extended to a standard basis
	$\mathcal{B}_k = \{e_{i,k}\}$ of $H^2(X_k,\mathbb{R}) \cong \Lambda_{\mathrm{K3},\mathbb{R}}$,
	as in the codimension~2 case.
	By passing to a subsequence, we may also assume that the bases $\mathcal{B}_k$ are identified with a fixed standard basis $\mathcal{B} = \{e_i\}$ of $\Lambda_{\mathrm{K3},\mathbb{R}}$.

	Calculation shows
	\[
	(A_{\alpha\beta})_k \coloneq \int_{T_{\alpha,k}} \omega_{\beta,k} = \varepsilon_k \int_{p_0}^{p_0 + \bm{v}_\alpha} dx_{\beta,k}, 
	\qquad 
	\int_{X_k} \omega_{\alpha,k}^2 \sim \varepsilon_k.
	\]
	Thus, $\frac{1}{\varepsilon_k} (A_{\alpha\beta})_k$ converges. 
	Hence, the sequence lies in Case~2 of Proposition~\ref{prop-period-behaviour-characterized-boundary}.  
	Therefore, the sequence $\eta_k$ converges to a point $\eta_\infty \in \mathcal{M}(b_i)$ with $\overline{\Phi}'(\eta_\infty) = (X_\infty,g_\infty)$, and Theorem~\ref{thm-3dimensional-collapsing-period-behaviour} follows.
\end{proof}

To prove Lemma~\ref{prop-primitive-n=3}, we first establish the following auxiliary lemma.

\begin{lem}\label{lem-primitive-n=3}
	Given $\varepsilon>0$, let
	\[
	Y = B \times S^1 = \{(x_1,x_2,x_3) \in \mathbb{R}^3 : x_1^2+x_2^2 < \varepsilon\}/\mathbb{Z},
	\]
	where the $\mathbb{Z}$-action is given by $x_3 \mapsto x_3+1$.  
	If $\alpha$ is a closed $2$-form on $Y$, then there exists a $1$-form $\beta$ such that $d\beta = \alpha$ and, for any $l>0$,
	\[
	\|\beta\|_{C^l} \leq C \|\alpha\|_{C^l}.
	\]
\end{lem}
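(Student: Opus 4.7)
The plan is to exhibit $\beta$ as the image of $\alpha$ under a Cartan homotopy operator associated with a deformation retraction of $Y$ onto its core circle. Since $Y = B \times S^1$ is homotopy equivalent to $S^1$, we have $H^2(Y;\mathbb{R}) = 0$, so a primitive always exists; what really needs to be proved is the continuity estimate $\|\beta\|_{C^l} \leq C \|\alpha\|_{C^l}$.

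First I would define the smooth homotopy
$$H : Y \times [0,1] \to Y, \qquad H_t(x_1, x_2, x_3) = (tx_1, tx_2, x_3),$$
which is well-defined because $B$ is a disk centered at the origin (so $H_t(Y) \subset Y$ for all $t \in [0,1]$) and because the formula commutes with the $\mathbb{Z}$-action $x_3 \mapsto x_3 + 1$. At $t = 1$ this is the identity, and at $t = 0$ it is the retraction $r : Y \to \{(0,0)\} \times S^1 \hookrightarrow Y$. Setting
$$K\alpha := \int_0^1 \iota_{\partial_t}(H^*\alpha)\, dt,$$
the Cartan homotopy formula gives $\alpha - r^*\alpha = d(K\alpha) + K(d\alpha)$. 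Since $d\alpha = 0$ by hypothesis and every $2$-form on the $1$-dimensional submanifold $\{0\} \times S^1$ vanishes tautologically, both $K(d\alpha)$ and $r^*\alpha$ are zero. So I set $\beta := K\alpha$ and obtain $d\beta = \alpha$.

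For the estimate, I would compute $H_t^*\alpha$ explicitly for $\alpha = \sum_{i<j} \alpha_{ij}\, dx_i \wedge dx_j$, using $dH_t^1 = t\,dx_1 + x_1\,dt$, $dH_t^2 = t\,dx_2 + x_2\,dt$, $dH_t^3 = dx_3$. Collecting the $dt$-components and dropping $dt$, each coefficient of $K\alpha$ becomes a product of one of $x_1, x_2$ (each bounded by $\sqrt{\varepsilon}$ on $Y$) with an integral of the form $\int_0^1 t^{j} \alpha_{ij}(H_t(x))\, dt$. Because the partial derivatives of $H_t$ in the $x$-variables are uniformly bounded by $1$ for $t \in [0,1]$, differentiating under the integral sign and applying the chain rule yields $\|K\alpha\|_{C^l(Y)} \leq C_l \|\alpha\|_{C^l(Y)}$ for each $l \geq 0$, with $C_l$ depending only on $l$ and $\varepsilon$.

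There is no substantive obstacle to this argument. The only conceptual point worth highlighting is that the homotopy retracts the disk factor $B$ to a point while leaving the $S^1$ factor untouched, which is precisely what forces $r^*\alpha \equiv 0$ even though $Y$ itself is not contractible. The $C^l$ bound then follows automatically from the explicit formula, since every geometric factor appearing in the integrand, namely $x_1, x_2$, the Jacobian of $H_t$, and the domain $t \in [0,1]$, is uniformly bounded.
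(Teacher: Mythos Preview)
Your proof is correct and is essentially identical to the paper's own argument: the paper constructs the same homotopy $\pi(x,t)=(tx_1,tx_2,x_3)$ retracting the disk factor to the core circle, defines the same integral operator $S\omega=\int_0^1 i_t^*\iota_{\partial_t}\omega\,dt$, sets $\beta=S\pi^*\alpha$, and then writes out the three components of $\beta$ explicitly to read off the $C^l$ bound. Your observation that $r^*\alpha=0$ because it is the pullback of a $2$-form to a $1$-manifold is exactly the reason the $i_0^*\pi^*\alpha$ term drops in the paper's identity $\alpha=(i_1^*\pi^*-i_0^*\pi^*)\alpha$.
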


\begin{proof}
	This follows from the standard homotopy operator construction.  
	Consider the maps
	\[
	Y \xrightarrow{i_t} Y \times [0,1] \xrightarrow{\pi} Y,
	\]
	where $i_t(x)=(x,t)$ and $\pi(x,t)=(tx_1, tx_2, x_3)$.  
	By \cite[Lemma~17.9]{lee-intro-to-smooth-manifolds-13}, we have
	\[
	\alpha = (i_1^* \pi^* - i_0^* \pi^*)\alpha = (S d + dS)\pi^*\alpha, \qquad \text{where } S\omega = \int_0^1 i_t^* \iota_{\partial_t} \omega \, dt.
	\]
	Thus $\beta = S \pi^* \alpha$ satisfies $d\beta = \alpha$.  
	Explicitly, writing $\beta = \beta_i dx_i$ and $\alpha = \alpha_{ij} dx_i \wedge dx_j$ $(i<j)$, and setting $x_t = \pi(x,t)$, we have
	\[
	\beta_1(x) = \int_0^1 (-\alpha_{12}(x_t) t x_2) \, dt, \quad 
	\beta_2(x) = \int_0^1 (\alpha_{12}(x_t) t x_1) \, dt, \quad 
	\beta_3(x) = \int_0^1 (\alpha_{13}(x_t) x_1 + \alpha_{23}(x_t) x_2) \, dt.
	\]
	Which implies $\|\beta\|_{C^l} \leq C \|\alpha\|_{C^l}$.
\end{proof}

\begin{proof}[Proof of Lemma~\ref{prop-primitive-n=3}]
	It suffices to prove the statement for $T_{1,k}$.  
	Assume $\bm{v}_1=(0,0,1)$. After a hyperkähler rotation, we may assume $\int_{p_0}^{p_0+\bm{v}_1} x_{\beta,k} = 0$ for $\beta=1,2$, and $\int_{p_0}^{p_0+\bm{v}_1} dx_{3,k} = 1$.  
	Thus $x_{\beta,k}$ is globally defined in a neighborhood of $T_{1,k}$ for $\beta=1,2$, and we assume $x_{1,\infty}(p_0) = x_{2,\infty}(p_0) = 0$.   
	
	For $k \gg 1$, a neighborhood of $T_{1,k}$ is given by
	\[
	Y \cong B_\varepsilon \times T
	= \{(x_{1,k}, x_{2,k}, x_{3,k}, t_k) : |(x_{1,k}, x_{2,k})| < \varepsilon \}/\sim,
	\]
	where
	\begin{equation}\label{T-equation-quotient-1-epsilon-k}
		(x_{1,k}, x_{2,k}, x_{3,k}, t_k) \sim (x_{1,k}, x_{2,k}, x_{3,k}+m, t_k + n\varepsilon_k), \qquad \forall m,n \in \mathbb{Z}.
	\end{equation}
	Then $T_{1,k}$ is a perturbation of the $T$ given by $(x_{1,k}, x_{2,k})=(0,0)$. 
	The hyperkähler triple on $Y_k$ is given by \eqref{equation-hyperkahler-triple-n=3}, with $d\alpha_k = * dV_k \to 0$.  	
	
	By Lemma~\ref{lem-primitive-n=3}, there exists $\alpha_k' \to 0$ such that $d\alpha_k' = d\alpha_k$, with $\|\alpha_k'\| \to 0$. 
	Note that $H^1(Y,\mathbb{R})$ is generated by $dx_{3,k}$, thus we can take $s \in C^\infty(Y)$ such that $\alpha_k - \alpha_k' = ds + C\, dx_{3,k}.$
	In coordinates $t_k' = t_k + s$, we have 
	\[
	\theta_k = dt_k + \alpha_k = dt_k' + C\, dx_{3,k} + \alpha_k'.
	\]
	From \eqref{T-equation-quotient-1-epsilon-k}, we can also take $t_k'' = t_k' + n\varepsilon_k x_{3,k}$ for $n \in \mathbb{Z}$.
	Hence, we may assume $C \leq \varepsilon_k$.
	
	In $(x_{1,k}, x_{2,k}, x_{3,k}, t_k'')$ coordinates, the hyperkähler triple is close to the standard one.  
	Hence $T$ can be perturbed into a holomorphic torus $T'$ by Theorem~\ref{thm-zhang}.  
	Since $T'$ is homologous to $T_{1,k}$, the conclusion follows from Proposition~\ref{prop-elliptic-curve-elliptic-K3}(i).
\end{proof}

\subsection{Proof of main Theorem~\ref{thm-main-theorem}}\label{section-proof-of-main-thm}

Denote by $2^{\overline{\mathfrak{M}}}$ the collection of subsets of $\overline{\mathfrak{M}}$.  
We define a set-valued map
\[
\mathfrak{F} : \overline{\mathcal{M}}_{\mathrm{K3}}^{\mathrm{ad}} \setminus \mathcal{M}_{\mathrm{K3}} \longrightarrow 2^{\overline{\mathfrak{M}}}
\]
by
\[
\mathfrak{F}(\eta_\infty)
= \bigl\{ \lim \Phi(\eta_k) : \eta_k \in \mathcal{M}_{\mathrm{K3}},\, \eta_k \to \eta_\infty \bigr\}.
\]
We know that the restriction of $\Phi$ to $\mathcal{M}_{\mathrm{K3}}$ is continuous.  
It is easy to see that $\Phi|_{\mathcal{M}_{\mathrm{K3}}}$ extends continuously to a map
$
{\overline{\Phi}}^* : \overline{\mathcal{M}}_{\mathrm{K3}}^{\mathrm{ad}} \longrightarrow \overline{\mathfrak{M}}
$
if and only if $\mathfrak{F}(\eta_\infty)$ is a singleton for all $\eta_\infty\in \overline{\mathcal{M}}_{\mathrm{K3}}^{\mathrm{ad}} \setminus \mathcal{M}_{\mathrm{K3}} $, and $\mathfrak{F}(\eta_\infty) = \{{\overline{\Phi}}^*(\eta_\infty)\}$ if so.

By Gromov's precompactness theorem, $\mathfrak{F}(\eta_\infty)$ is always nonempty. Moreover, we have the following:

\begin{prop}\label{tt-connected}
	For any $\eta_\infty\in \overline{\mathcal{M}}_{\mathrm{K3}}^{\mathrm{ad}} \setminus \mathcal{M}_{\mathrm{K3}}$, we have $\mathfrak{F}(\eta_\infty)$ is a connected subset of $\overline{\mathfrak{M}}$.
\end{prop}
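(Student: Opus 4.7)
The strategy is to express $\mathfrak{F}(\eta_\infty)$ as a directed intersection of compact connected subsets of the Hausdorff space $\overline{\mathfrak{M}}$, and then invoke a standard point-set lemma to conclude connectedness. By Proposition~\ref{prop-topological-basis-connectness}, the collection
\[
\mathcal{U} \coloneqq \bigl\{ U \subset \overline{\mathcal{M}}_{\mathrm{K3}}^{\mathrm{ad}} \text{ open} : \eta_\infty \in U,\; U \cap \mathcal{M}_{\mathrm{K3}}\text{ connected}\bigr\}
\]
is a neighborhood basis of $\eta_\infty$. It is directed by reverse inclusion, since given $U,U' \in \mathcal{U}$ the intersection $U \cap U'$ is a neighborhood of $\eta_\infty$ and the proposition supplies some $U'' \in \mathcal{U}$ contained in it. Because $\Phi|_{\mathcal{M}_{\mathrm{K3}}} = \mathcal{P}^{-1}$ is a homeomorphism onto its image, each $\Phi(U \cap \mathcal{M}_{\mathrm{K3}})$ is connected, and hence so is its closure $\overline{\Phi(U \cap \mathcal{M}_{\mathrm{K3}})}$ in $\overline{\mathfrak{M}}$; this closure is also compact since $\overline{\mathfrak{M}}$ is compact Hausdorff.

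Next I would establish the identity
\[
\mathfrak{F}(\eta_\infty) \;=\; \bigcap_{U \in \mathcal{U}} \overline{\Phi(U \cap \mathcal{M}_{\mathrm{K3}})}.
\]
The inclusion $\subseteq$ is immediate: if $\eta_k \to \eta_\infty$ with $\Phi(\eta_k) \to x$, then for every $U \in \mathcal{U}$ eventually $\eta_k \in U$, so $x \in \overline{\Phi(U \cap \mathcal{M}_{\mathrm{K3}})}$. For the reverse inclusion, use that $\overline{\mathcal{M}}_{\mathrm{K3}}^{\mathrm{ad}}$ is metrizable (compact Hausdorff and second countable) to pick a countable nested neighborhood basis $V_1 \supset V_2 \supset \cdots$ of $\eta_\infty$, and for each $j$ choose $U_j \in \mathcal{U}$ with $U_j \subset V_j$. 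Given $x$ in the intersection, select $\eta_j \in U_j \cap \mathcal{M}_{\mathrm{K3}}$ with $d_{GH}(\Phi(\eta_j), x) < 1/j$; then $\eta_j \to \eta_\infty$ and $\Phi(\eta_j) \to x$, placing $x$ in $\mathfrak{F}(\eta_\infty)$.

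The concluding step is the topological lemma: a directed intersection of compact connected subsets of a Hausdorff space is connected. Suppose, for contradiction, that the intersection decomposes as $A \sqcup B$ with $A,B$ disjoint, nonempty, and closed. Fix any $U_0 \in \mathcal{U}$ and work inside the compact Hausdorff space $\overline{\Phi(U_0 \cap \mathcal{M}_{\mathrm{K3}})}$, which is normal; choose disjoint opens $\widetilde{U} \supset A$ and $\widetilde{V} \supset B$. The family
\[
\bigl\{ \overline{\Phi(U \cap \mathcal{M}_{\mathrm{K3}})} \setminus (\widetilde{U} \cup \widetilde{V}) \bigr\}_{U \in \mathcal{U},\, U \subset U_0}
\]
consists of closed subsets of a compact space, is directed by reverse inclusion, and has empty intersection. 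By the finite intersection property combined with directedness, some member is empty, i.e., $\overline{\Phi(U \cap \mathcal{M}_{\mathrm{K3}})} \subset \widetilde{U} \cup \widetilde{V}$ for some $U$; this contradicts the connectedness of $\overline{\Phi(U \cap \mathcal{M}_{\mathrm{K3}})}$, since it meets both $\widetilde{U}$ (containing $A$) and $\widetilde{V}$ (containing $B$).

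I anticipate no substantive obstacle: once Proposition~\ref{prop-topological-basis-connectness} is in hand, everything reduces to routine general topology. The only care needed is the transition from a neighborhood basis (which may not be nested) to a countable cofinal sequence, which is handled by metrizability.
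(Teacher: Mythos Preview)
Your proposal is correct and follows essentially the same approach as the paper: use Proposition~\ref{prop-topological-basis-connectness} to write $\mathfrak{F}(\eta_\infty)$ as a decreasing (or directed) intersection of compact connected sets $\overline{\Phi(U\cap\mathcal{M}_{\mathrm{K3}})}$, then apply the standard fact that such an intersection is connected. The paper's version is terser---it simply cites Engelking for the last step and does not spell out the equality $\mathfrak{F}(\eta_\infty)=\bigcap_i\overline{\Phi(U_i\cap\mathcal{M}_{\mathrm{K3}})}$---but the argument is the same.
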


\begin{proof}
	By Proposition~\ref{prop-topological-basis-connectness}, we can choose a neighborhood system $\{U_i\}$ at $\eta_\infty$ such that each $U_i \cap \mathcal{M}_{\mathrm{K3}}$ is connected.  
	Hence, $\overline{\Phi(U_i \cap \mathcal{M}_{\mathrm{K3}})}$ is connected as well.  
	Since
	\[
	\mathfrak{F}(\eta_\infty)
	= \bigcap_{i=1}^\infty \overline{\Phi(U_i \cap \mathcal{M}_{\mathrm{K3}})}
	\]
	is the intersection of a decreasing sequence of compact connected sets, it is connected by \cite[Cor.~6.1.20]{engelking-topology-89}.
\end{proof}
We already know that all limit spaces $(X_\infty,g_\infty) \in \mathfrak{F}(\eta_\infty)$ must be volume collapsing (\cite[Thm.~II, Thm.~IV]{anderson-92}). If $(X_\infty,g_\infty)$ has dimension 2 or 3, then by Theorems~\ref{thm-2dimensional-collapsing-period-behaviour} and~\ref{thm-3dimensional-collapsing-period-behaviour}, we have $(X_\infty,g_\infty) = \overline{\Phi}'(\eta_\infty)$.  Hence
\[
\mathfrak{F}(\eta_\infty) \subset \{\overline{\Phi}'(\eta_\infty), I^1\}.
\]
Since $\mathfrak{F}(\eta_\infty)$ is connected by Proposition~\ref{tt-connected}, we conclude that
\[
\mathfrak{F}(\eta_\infty) = \{\overline{\Phi}'(\eta_\infty)\} 
\quad \text{or} \quad 
\mathfrak{F}(\eta_\infty) = \{I^1\}.
\]

On the other hand, by \cite[Thm.~1.1]{gross-tosatti-zhang-16}, we have $\overline{\Phi}'(\eta_\infty) \in \mathfrak{F}(\eta_\infty)$ for $\eta_\infty \in \mathcal{M}(a)$.  
Thus $\mathfrak{F}(\eta_\infty) = \{\overline{\Phi}'(\eta_\infty)\}$ for $\eta_\infty \in \mathcal{M}(a)$.  
Moreover, by considering the moduli of flat orbifolds $T^4/\{\pm 1\}$ as in \cite[§6.3.3]{odaka-oshima-21}, we also know that $\overline{\Phi}'(\eta_\infty) \in \mathfrak{F}(\eta_\infty)$ for $\eta_\infty \in \mathcal{M}(b_1) \cup \mathcal{M}(c_1)$ and $I^1 \in \mathfrak{F}(\eta_\infty)$ for $\eta_\infty \in \mathcal{M}(b_2) \cup \mathcal{M}(c_2)$.  

Therefore,
\[
\mathfrak{F}(\eta_\infty) = \{\overline{\Phi}(\eta_\infty)\},
\]
and $\Phi$ extends continuously to $\overline{\Phi}$ on $\overline{\mathcal{M}}_{\mathrm{K3}}^{\mathrm{ad}}$.  
This completes the proof of Theorem~\ref{thm-main-theorem}.

\subsection{Collapsing with fixed complex structure} \label{section-collapsing-with-fixed-complex-structure}

Now we prove Corollary \ref{crl-fix-complex-structure}.  
Consider a K3 surface with a fixed complex structure $(X,J)$.  
Equivalently, we consider hyperkähler triples $(\omega_1,\omega_2,\omega_{3,k})$ where  
$W = \langle [\omega_1],[\omega_2] \rangle \subset \Lambda_{K3,\mathbb{R}}$ is fixed and  
$\Omega = \omega_1 + i \omega_2$ is the holomorphic volume form of $(X,J)$.  
Let $\partial(\overline{\mathfrak{M}_J})$ denote the set of all collapsing limits of unit-diameter
hyperkähler metrics on $(X,J)$.

First, assume $\dim\big(W \cap H^2(X,\mathbb{Q})\big) = 0$.  
Then by an ergodic theorem of Verbitsky \cite{verbitsky-15-acta} \cite[Thm.~2.5 (ii)]{verbitsky2017ergodic},  
the $G_\mathbb{Z}$-orbit of $W$ is dense in the space of all $2$-positive definite subspaces of $\Lambda_{K3,\mathbb{R}}$.  
Thus we can choose $\omega_3$ such that the associated point of $(\omega_1,\omega_2,\omega_3)$ in  
$\mathcal{M}_{\mathrm{K3}}$ is arbitrarily close to any given point of $\mathcal{M}_{\mathrm{K3}}$, and case (i) of Corollary \ref{crl-fix-complex-structure} follows.

Second, assume $\dim\big(W \cap H^2(X,\mathbb{Q})\big) = 1$.  
Let $\lambda$ be the primitive element of $W \cap H^2(X,\mathbb{Z})$.  
Then \cite[Thm.~2.5 (iii)]{verbitsky2017ergodic} implies that the closure of $G_\mathbb{Z}\cdot W$ is the set of all $2$-positive subspaces containing $\lambda$.  
Thus we can choose $\omega_3$ such that the associated point of $(\omega_1,\omega_2,\omega_3)$ in  
$\mathcal{M}_{\mathrm{K3}}$ is close to any given point in $\mathcal{M}_{2d}$.  
Then case (ii) of Corollary \ref{crl-fix-complex-structure} follows from Corollary \ref{crl-fix-polarization}.

Third, assume $\dim\big(W \cap H^2(X,\mathbb{Q})\big) = 2$, i.e., $W$ is spanned by two elements of $H^2(X,\mathbb{Q})$.  
Note that every such generalized KE metric can be realized as a GH limit by Theorem \ref{thm-main-theorem},  
and the finiteness follows from \cite[Cor.~8.4.6]{huybrechts-lecturenotes}.  
We are going to prove that all GH limits are of this form.

Let $(\omega_1,\omega_2,\omega_{3,k})$ be a sequence of hyperkähler triples of unit volume such that the associated metrics collapse after being rescaled to have unit diameter.
From the properties of Satake topology, we can choose a standard basis $\mathcal{B}=\{e_i\}$ of $H^2(X,\mathbb{R})$ and elements $g_k \in G_\mathbb{Z}$, $r_k \in \mathrm{SO}(3)$,  
such that the coefficients of $([\omega_1'],[\omega_2'],[\omega'_{3,k}]) = ([\omega_1],[\omega_2],[\omega_{3,k}]) r_k$ in the basis $g_k \mathcal{B}$ fit into one of the cases of Proposition~\ref{prop-period-behaviour-characterized-boundary} after passing to a subsequence.

Choose a basis $\kappa_1,\kappa_2$ of $W\cap H^2(X,\mathbb{Z})$.  
We may write
\[
\kappa_1 = u_k [\omega_1'] + v_k [\omega_2'] + w_k [\omega'_{3,k}],
\qquad
u_k^2 + v_k^2 + w_k^2 = \kappa_1^2.
\]
Since the coefficient $\alpha_{22}$ of $\kappa_1$ is an integer in the basis $g_k \mathcal{B}$, we obtain $w_k = 0$ for $k \gg 1$.  
The same holds for $\kappa_2$, hence
\[
W = \langle \kappa_1,\kappa_2 \rangle
= \langle [\omega_1'],[\omega_2'] \rangle.
\]
Comparing the coefficient $\alpha_{21}$, we see that we are in Case~(1) of Proposition~\ref{prop-period-behaviour-characterized-boundary}.  
Therefore, by Theorem~\ref{thm-main-theorem}, every possible limit is the
generalized Kähler–Einstein metric on $\mathbb{P}^1$ associated with the elliptic fibration  
$\pi: X \to \mathbb{P}^1$ determined by the isotropic vector $e_1$.  
This proves case (iii) of Corollary \ref{crl-fix-complex-structure}.

\subsection{Further discussion and questions}

Finally, we list some natural questions arising from our work.

\begin{itemize}
	\item 
	It is also interesting to consider more refined structures of collapsing. It is natural to ask:
	\begin{openquestion}
		Can we find a compactification of $\mathcal{M}_{\mathrm{K3}}$ that also characterizes the data of the renormalized limit measure in the case $\dim(X_\infty) = 1$?
	\end{openquestion}
	In this case of $\dim(X_\infty) = 1$, Theorem~\ref{thm-sun-zhang-fibration} gives a fibration over an open interval whose fibers are Heisenberg nilmanifolds. One can still produce two primitive isotrivial cycles by arguments similar to those in the proofs of Theorems~\ref{thm-2dimensional-collapsing-period-behaviour} and~\ref{thm-3dimensional-collapsing-period-behaviour}, but this is not sufficient to control the full moduli behaviour. We also refer to \cite[\S3.1.3]{MR4480094} for related discussions on measured GH convergence.
	
	\item 
	The main theorem of this paper may also extend to the case of Enriques surfaces; this will be discussed in a subsequent paper.
\end{itemize}

\appendix

	\section{Volume comparison for generalized KE metrics}
	
	In this appendix we establish the volume comparison for generalized KE metrics on $\mathbb{P}^1$.  
	The results in this appendix are used in \S\ref{subsection-estimate-singularities} and \S\ref{subsection-proof-of-thm-GIT}.
	
	\begin{prop}\label{prop-volume-comparison-gen-KE}
		Let $(\mathbb{P}^1, g)$ be a generalized KE metric induced by an elliptic K3 surface. Then $(\mathbb{P}^1, g)$ satisfies the relative volume comparison
		\[
		\frac{\mathrm{vol}(B_{r_1}(p))}{\mathrm{vol}(B_{r_2}(p))} \ge \left(\frac{r_1}{r_2}\right)^2, \qquad (r_1 < r_2).
		\]
	\end{prop}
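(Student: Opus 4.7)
The plan is to identify $(\mathbb{P}^1,g)$ as a two-dimensional Alexandrov space of nonnegative curvature and then invoke the classical Bishop--Gromov volume comparison for such spaces. Recall from the computation following equation \eqref{formula-defi-gen-KE} that the Gaussian curvature of $g$ on the smooth locus $\mathbb{P}^1\setminus S$ satisfies $K(g)\ge 0$. Since $S$ is finite and, by the discussion after \S\ref{section-generalized-KE}, the metric completion of $(\mathbb{P}^1\setminus S,g)$ equals $\mathbb{P}^1$, the only remaining issue is the behavior at the singular points.

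Next I would examine the local model near each $p\in S$. Writing $g = \operatorname{Im}(\bar w_1 w_2)\,|dz|^2$ and substituting the asymptotic form of the period pair $(w_1,w_2)$ associated with each Kodaira singular fiber (tabulated in \cite[Table~3.2]{ouyang-25}), a case-by-case calculation over Kodaira types $I_n,I_n^*,II,III,IV,II^*,III^*,IV^*$ shows that $g$ is modeled to leading order near $p$ by a flat cone of angle $\alpha_p\in (0,2\pi]$. In particular, the cone angle never exceeds $2\pi$.

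Then I would upgrade the smooth nonnegative curvature plus bounded cone angles to a global Alexandrov structure. Concretely, construct smooth approximations $g_\varepsilon$ to $g$ by replacing, inside an $\varepsilon$-ball around each $p\in S$, the cone tip of angle $\alpha_p\le 2\pi$ by a smooth spherical cap of appropriate constant positive curvature that matches the cone angle at the boundary. Then $K(g_\varepsilon)\ge 0$ everywhere on $\mathbb{P}^1$, and $(\mathbb{P}^1,g_\varepsilon)\to(\mathbb{P}^1,g)$ in the Gromov--Hausdorff sense as $\varepsilon\to 0$. By the Burago--Gromov--Perelman stability of lower curvature bounds under GH limits, $(\mathbb{P}^1,g)$ is a two-dimensional Alexandrov space of curvature $\ge 0$. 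The Bishop--Gromov relative volume inequality for such spaces applied in dimension $2$ gives
\[
\frac{\operatorname{vol}(B_{r_1}(p))}{\operatorname{vol}(B_{r_2}(p))} \ge \left(\frac{r_1}{r_2}\right)^2, \qquad r_1<r_2.
\]

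The main obstacle is the second step: verifying $\alpha_p\le 2\pi$ at every Kodaira fiber type. This is essentially mechanical given the explicit period asymptotics in \cite[Table~3.2]{ouyang-25}, but it does require care (in particular, one must check that types such as $I_n$, which are not obviously orbifold cones, still produce a cone angle at most $2\pi$). Note that we must avoid a circular argument here: the synthetic route via Cheeger--Colding applied to collapsing K3's would use Theorem~\ref{thm-main-theorem}, whereas Proposition~\ref{prop-volume-comparison-gen-KE} is invoked inside the proof of that theorem, so the argument must stay at the level of the intrinsic metric on $\mathbb{P}^1$.
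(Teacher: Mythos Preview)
Your overall strategy---establish some form of nonnegative curvature for the singular metric, then invoke Bishop--Gromov---matches the paper's, but your implementation of the approximation step has a genuine gap. The metric $g$ near a singular point $p\in S$ is \emph{not} an exact flat cone; it is only asymptotically conical (and for type $I_n$ the conformal factor behaves like $-\log|z|$, so the leading-order model is not even a cone in the usual sense, though the tangent cone happens to have angle $2\pi$). Consequently, ``replacing the cone tip by a spherical cap that matches the cone angle at the boundary'' does not produce a smooth metric: at the boundary of the $\varepsilon$-ball the cap meets the actual metric $g$, not the model cone, and there is no control on the curvature of the glued metric across this seam. Your construction would need an additional interpolation between the cap and $g$, and controlling the curvature of that interpolation is essentially the whole difficulty.

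The paper handles this more robustly by working with the conformal factor. Writing $g=e^{2u}(dx_1^2+dx_2^2)$ in a chart centered at $p$, one checks from the local models that $\partial_r u>0$ and $0\le u\le -\tfrac{5}{6}\log|x|+C$; the first inequality (together with $K(g)\ge 0$ on the punctured chart) gives that $u$ is superharmonic \emph{as a distribution} across $p$. Then one mollifies $u$ by convolution to obtain a smooth superharmonic $u_t$, hence $K(e^{2u_t}|dx|^2)\ge 0$ on the whole chart, and glues $u_t$ back to $u$ on an annulus away from $p$ where $u_t\to u$ in $C^\infty$, making the curvature defect on the gluing region go to zero. This avoids any case analysis over Kodaira types beyond the two scalar bounds above, and the measured GH convergence $g_t\to g$ follows from the uniform upper bound on $u_t$. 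If you want to salvage your route, the cleanest fix is to replace the cap construction by this mollification of $u$, or alternatively to cite directly a theorem asserting that a surface with superharmonic (distributional) conformal factor is an Alexandrov space of nonnegative curvature.
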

	
	\begin{proof}
		We are going to show that $(\mathbb{P}^1, g)$ can be approximated by smooth metrics $g_t$ with almost nonnegative curvature.
		
		Near each singularity $p$, we can take a coordinate chart $B_3 = \{\bm{x} \in \mathbb{R}^2 : |\bm{x}| < 3\}$ with $p = (0,0)$ being the only singularity in $B_3$.  
		Let $g = e^{2u}(dx_1^2 + dx_2^2)$ be the singular metric on $B_3$.  
		There are standard models for such metrics near $p$ (see for example \cite[Definition 3.19]{sun-zhang-24}), 
		and from the definition we have  
		\[
		\partial_r u(\bm{x}) > 0, \qquad 0 \le u(\bm{x}) \le -\tfrac{5}{6}\log |\bm{x}| + C,
		\]
		for some constant $C > 0$.  
		We also know that the curvature $K(g) = - e^{-2u} \Delta u \ge 0$ when $\bm{x} \neq 0$.  
		Thus, for any smooth function $\phi \ge 0$ with compact support in $B_3$, we have
		\[
		\int_{B_3} u \, \Delta \phi \, dx_1 dx_2
		= \lim_{\varepsilon \to 0}
		\left(
		\int_{B_3 \setminus B_\varepsilon} \phi \, \Delta u \, dx_1 dx_2
		+ \int_{\partial B_\varepsilon} 
		\left(u \partial_r \phi - \phi \partial_r u\right) ds
		\right)
		\le 0.
		\]
		Thus $u$ is superharmonic in $B_3$ in the sense of distributions.
		
		Let $h = h(r)$ be a smooth radial bump function satisfying
		\[
		\operatorname{supp}(h) \subset B_1, \qquad 0 \le h \le 1, \qquad 
		H = \int_{B_1} h \, dx_1 dx_2 \ge \tfrac{10}{11} |B_1|.
		\]
		Define $h_t(\bm{x}) = \frac{1}{t^2 H} h\!\left(\frac{\bm{x}}{t}\right)$ for $t < 1$, and
		\[
		u_t(\bm{x}) = (h_t * u)(\bm{x}) = \int h_t(\bm{y})\, u(\bm{x}-\bm{y}) \, dy_1 dy_2.
		\]
		Then $u_t$ is a superharmonic function defined on $B_2$, and $u_t \to u$ smoothly away from $\bm{x}=0$.  
		Next, choose a smooth cutoff function $\rho: \mathbb{R}^+ \to \mathbb{R}$ satisfying $\rho(r) = 0$ for $r \le 1$ and $\rho(r) = 1$ for $r \ge 2$.  
		Let
		\[
		u_t' = \rho\, u_t + (1-\rho)\, u,
		\]
		and define the modified metric
		\[
		g_t = e^{2u_t'}(dx_1^2 + dx_2^2)
		\]
		on $B_2$, replacing $g$ there while keeping $g_t = g$ outside $B_2$.
		We see that the curvature $K(g_t) \ge 0$ outside the gluing region $\{1 \le |\bm{x}| \le 2\}$, and for any $\varepsilon > 0$,  
		$K(g_t) \ge -\varepsilon$ in the gluing region  $\{1 \le |\bm{x}| \le 2\}$ when $t \to 0^+$.
		
		Finally, we estimate the metric $g_t$ on $B_2$ as follows.  
		For $|\bm{x}| \le 2t$, there exist a constant $C>0$ (may change line by line),
		\[
		u_t(\bm{x})
		\le \frac{1}{t^2 H}\int_{B_t} \left(-\tfrac{5}{6}\log |\bm{y}| + C\right) dy_1 dy_2
		\le  \frac{|B_1|}{H}\left(-\tfrac{5}{6}\log t + C\right)
		\le -\tfrac{11}{12}\log|\bm{x}| + C.
		\]
		For $|\bm{x}| > 2t$,
		\[
		u_t(\bm{x}) 
		\le \max_{|\bm{y}| \le t} \{u(\bm{x}-\bm{y})\}
		\le -\tfrac{5}{6}\log(|\bm{x}| - t) + C 
		\le -\tfrac{11}{12}\log|\bm{x}| + C.
		\]
		Hence, for $|\bm{x}| < 2$, 
		\[
		g_t = e^{2u_t}(dx_1^2 + dx_2^2) \le C |\bm{x}|^{-11/6}(dx_1^2 + dx_2^2).
		\]
		Thus, for any $\varepsilon > 0$, there exists $\delta > 0$ such that the diameter and volume of $B_\delta$ with respect to $g_t$ are uniformly bounded by $\varepsilon$.  
		
		Since the metrics $g_t$ converge smoothly to $g$ outside $B_\delta$, 
		we obtain that $(B_2, g_t)$ converges to $(B_2, g)$ in the measured GH sense.  
		The conclusion follows from the Bishop--Gromov volume comparison theorem for $ (\mathbb{P}^1,g_t)$ since the curvature $K(g_t) > -\varepsilon$.
	\end{proof}
	\begin{prop}\label{prop-volume-comparison}
		Let $(\mathbb{P}^1, g)$ be a generalized KE metric induced by an elliptic K3 surface, and 
		\[
		S = \bigsqcup_{i=1}^n B_i \subset \mathbb{P}^1
		\]
		be a disjoint union of disks. Let $R \coloneqq \mathbb{P}^1 \setminus S$ and $d_R$ the distance function on $R$ defined in Definition \ref{defi-distance-function-manifold}. Assume $\mathrm{diam}(R, d_R) < 2$.  
		Then for every $\varepsilon > 0$, there exists $\delta > 0$ such that		
		\begin{enumerate}
			\item[(i)] If the perimeter $p(B_i) < \delta$ for all $i$, then 
			\[
			d_{\mathbb{P}^1}(x, y) \le d_R(x, y) < d_{\mathbb{P}^1}(x, y) + \varepsilon, \quad \forall x, y \in R.
			\]
			\item[(ii)] If $\frac{\mathrm{vol}(S)}{\mathrm{vol}(\mathbb{P}^1)} < \delta$, then 
			$
			\mathbb{P}^1 \subset B_\varepsilon(R).
			$
		\end{enumerate}
		Hence, if both assumptions in (i) and (ii) hold, then $(R, d_R) \to (\mathbb{P}^1, d)$ is an $\varepsilon$-Gromov--Hausdorff approximation, and
		\[
		d_{GH}\!\big( (R, d_R), (\mathbb{P}^1, d) \big) < 2\varepsilon.
		\]
	\end{prop}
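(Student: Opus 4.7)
\textbf{Proof proposal for Proposition \ref{prop-volume-comparison}.}

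The plan is to reduce everything to the Bishop--Gromov-type estimate already established in Proposition~\ref{prop-volume-comparison-gen-KE}, plus a direct path-modification argument for the distance comparison. I treat (ii) first because it is essentially a one-line consequence of the volume comparison, and then (i), which requires a short geometric argument.

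For (ii), suppose by contradiction that there is $p \in \mathbb{P}^1$ with $B_\varepsilon(p) \cap R = \varnothing$, i.e.\ $B_\varepsilon(p) \subset S$. Since $\mathrm{diam}(R,d_R) < 2$ and $R$ is dense in $\mathbb{P}^1$, also $\mathrm{diam}(\mathbb{P}^1,d) \le 2$, so $B_2(p) = \mathbb{P}^1$. Applying Proposition~\ref{prop-volume-comparison-gen-KE} with $r_1 = \varepsilon$ and $r_2 = 2$ gives
\[
\mathrm{vol}(B_\varepsilon(p)) \;\ge\; \tfrac{\varepsilon^2}{4}\,\mathrm{vol}(\mathbb{P}^1).
\]
Choosing $\delta < \varepsilon^2/4$ forces $\mathrm{vol}(S) \ge \mathrm{vol}(B_\varepsilon(p)) \ge \tfrac{\varepsilon^2}{4}\mathrm{vol}(\mathbb{P}^1) > \delta\,\mathrm{vol}(\mathbb{P}^1)$, a contradiction.

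For (i), the inequality $d_{\mathbb{P}^1}(x,y) \le d_R(x,y)$ is immediate from the fact that paths in $R$ are admissible paths in $\mathbb{P}^1$. For the reverse direction, given $x,y \in R$, take a minimizing geodesic $\gamma$ in $(\mathbb{P}^1,g)$ from $x$ to $y$. For each $i$, consider the portion of $\gamma$ that enters $B_i$; letting $t_i^-, t_i^+$ be the first entrance and last exit times, replace $\gamma|_{[t_i^-, t_i^+]}$ by the shorter of the two arcs of $\partial B_i$ connecting $\gamma(t_i^-)$ to $\gamma(t_i^+)$, which has length at most $p(B_i)/2 < \delta/2$. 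The resulting modified path $\tilde\gamma$ lies in $\overline{R}$, and an arbitrarily small outward perturbation places it in $R$; its length satisfies
\[
\mathrm{length}(\tilde\gamma) \;<\; d_{\mathbb{P}^1}(x,y) + \tfrac{n\delta}{2}.
\]
Since $n$ is fixed, choosing $\delta < 2\varepsilon/n$ gives $d_R(x,y) < d_{\mathbb{P}^1}(x,y) + \varepsilon$.

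Combining (i) and (ii), the inclusion $R \hookrightarrow \mathbb{P}^1$ distorts distances by at most $\varepsilon$ and has $\varepsilon$-dense image, hence is an $\varepsilon$-GH approximation, giving $d_{GH}((R,d_R),(\mathbb{P}^1,d)) < 2\varepsilon$ by the remark after the definition of GH approximation. The main point to be careful about is ensuring the modified path in (i) indeed lies in $R$ rather than merely $\overline{R}$; this is handled by a trivial outward perturbation along $\partial B_i$, since $R$ is open and the gained length can be made as small as desired. No step is genuinely difficult once Proposition~\ref{prop-volume-comparison-gen-KE} is in hand.
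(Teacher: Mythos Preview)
Your argument for (i) is correct and is exactly the paper's argument (the paper bounds the added length by $n\delta$ rather than $n\delta/2$, but the idea is identical). The final GH conclusion is also fine.

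There is a genuine gap in your proof of (ii). You assert that ``$R$ is dense in $\mathbb{P}^1$'', but this is false: $S = \bigsqcup B_i$ is a union of disks with nonempty interior, so $R = \mathbb{P}^1 \setminus S$ misses open sets. Consequently the claimed bound $\mathrm{diam}(\mathbb{P}^1,d) \le 2$ does not follow from the hypothesis $\mathrm{diam}(R,d_R) < 2$; a priori the disks $B_i$ could have large $d_{\mathbb{P}^1}$-diameter even when their total volume is small, and (ii) is stated under the volume hypothesis alone, without the perimeter bound from (i). So you cannot conclude $B_2(p) = \mathbb{P}^1$ for your chosen $p$, and the volume comparison step breaks down.

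The fix, which is what the paper does, is to choose $p$ more carefully: take $p$ to be a point where $r_p \coloneqq d_{\mathbb{P}^1}(p,R)$ is \emph{maximal}. Then every point of $\mathbb{P}^1$ lies within $r_p$ of $R$, and since $d_{\mathbb{P}^1} \le d_R$ on $R$ one gets $\mathbb{P}^1 \subset \overline{B(p, 2r_p + 2)}$. Now Proposition~\ref{prop-volume-comparison-gen-KE} with $r_1 = r_p$ and $r_2 = 2r_p + 2$ yields
\[
\delta > \frac{\mathrm{vol}(S)}{\mathrm{vol}(\mathbb{P}^1)} \ge \frac{\mathrm{vol}(B(p,r_p))}{\mathrm{vol}(B(p,2r_p+2))} \ge \left(\frac{r_p}{2r_p+2}\right)^2,
\]
which forces $r_p < \varepsilon$ once $\delta$ is small enough. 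Your contradiction argument can be repaired in exactly this way, but the point is that the outer radius must be allowed to depend on $r_p$, not taken equal to $2$.
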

	
	\begin{proof}
		(i) Let $\gamma: [0, L] \to \mathbb{P}^1$ be a minimizing geodesic between $x, y \in R$.  
		For each $i$, define
		\[
		m_i = \min \{ t \in [0, L] : \gamma(t) \in B_i \}, 
		\qquad 
		M_i = \max \{ t \in [0, L] : \gamma(t) \in B_i \}.
		\]
		We may replace the segment $\gamma|_{[m_i, M_i]}$ by an arc along $\partial B_i$.  
		This produces a new path $\gamma'$ lying in $\overline{R}$ whose length increases by at most $n \delta$.  
		Thus,
		\[
		d_{\mathbb{P}^1}(x, y) \le d_R(x, y) \le d_{\mathbb{P}^1}(x, y) + n\delta, 
		\quad \forall x, y \in R.
		\]
		The conclusion follows if we choose $\delta$ such that $n \delta < \varepsilon$.
		
		(ii) Choose $x \in \mathbb{P}^1$ such that $r_x = \mathrm{dist}(x, R)$ is maximal.  
		Then $B(x, r_x) \subset S$ and $\mathbb{P}^1 \subset \overline{B(x, 2 r_x + 2)}$.  
		By Proposition \ref{prop-volume-comparison-gen-KE}, we have
		\[
		\delta
		> \frac{\mathrm{vol}(S)}{\mathrm{vol}(\mathbb{P}^1)}
		\ge \frac{\mathrm{vol}(B(x, r_x))}{\mathrm{vol}(B(x, 2 r_x + 2))}
		\ge \left( \frac{r_x}{2 r_x + 2} \right)^2.
		\]
		Thus, by choosing $\delta$ sufficiently small, we obtain $r_x < \varepsilon$, and the conclusion follows.
	\end{proof}
	
	\section{Continuity of generalized KE metrics on $\mathbb{P}^1$}\label{section-appendix}
	
	In this section, we give a simpler proof of Theorem \ref{thm-geometric-realization-GIT}.
	\subsection{Background on elliptic curves}
	
	We first recall some standard facts about elliptic curves; see, for example, \cite[Chap.~VII]{serre-73} for more details.
	
	Denote by $\mathcal{L}$ the set of lattices in $\mathbb{C}$, and set 
	\[
	\mathcal{C} \coloneqq \{ (a, b) \in \mathbb{C}^2 : a^3 - 27 b^2 \neq 0 \}.
	\] 
	There is a bijection between $\mathcal{L}$ and $\mathcal{C}$, sending $L \in \mathcal{L}$ to $(a, b)$, where
	\begin{equation}\label{formula-g8g12}
		a = 60 \sum_{\gamma \in L \setminus \{0\}} \frac{1}{\gamma^4}, 
		\qquad 
		b = 140 \sum_{\gamma \in L \setminus \{0\}} \frac{1}{\gamma^6}.
	\end{equation}
	The equation of the elliptic curve $\mathbb{C}/L$ is then given by
	$
	y^2 z = 4 x^3 - a x z^2 - b z^3.
	$
	
	If $\tau_1, \tau_2$ is a basis of $L$ with $\mathrm{Im}(\bar{\tau}_1 \tau_2) > 0$, we can define the associated $j$-invariant by
	\[
	j = \frac{1728 a^3}{a^3 - 27 b^2} = j\!\left(\frac{\tau_2}{\tau_1}\right).
	\]
	Then $j: \mathbb{H} \to \mathbb{C}$ is a holomorphic function, invariant under the $\mathrm{SL}(2, \mathbb{Z})$-action, and satisfies
	\[
	j(\tau) e^{2 \pi i \tau} \to 1 \quad \text{as } \mathrm{Im}(\tau) \to \infty.
	\]
	
	\begin{lem}\label{lem-estimate-mu(a,b)} $\mathrm{(cf}$.~\cite[Lem.~7.16]{odaka-oshima-21}$\mathrm{)}$
		Given $(a, b) \in \mathcal{C}$, define $\mu(a, b)$ to be the area of the corresponding torus $\mathbb{C}/L(a, b)$.  
		Then there exists a continuous function $\lambda$ on $\mathbb{C} \cup \{\infty\}$ such that
		\[
		\mu(a, b) = \frac{\lambda(j)}{|3a|^{1/2} + |b|^{1/3}} \cdot \log^{+} |j|,
		\]
		where $\log^{+}(x) := \max\{1, \log(x)\}$, with the convention $\log^{+}(0) = 1$. Moreover, $\lambda(j) \to \frac{8}{3} \pi^2$ as $j \to \infty$.
	\end{lem}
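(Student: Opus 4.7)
The starting observation is the compatible scaling under the $\mathbb{C}^{\ast}$-action on lattices: if $L \mapsto \lambda L$, then by \eqref{formula-g8g12} we have $(a,b) \mapsto (\lambda^{-4}a,\,\lambda^{-6}b)$, while $\mu \mapsto |\lambda|^{2}\mu$ and $j$ is invariant. Consequently $F(a,b) := \mu(a,b)\bigl(|3a|^{1/2}+|b|^{1/3}\bigr)$ is scale-invariant, hence factors through $j$, and the problem reduces to studying a single function of $j$.

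To make this explicit, I would work on the standard fundamental domain $\mathcal{D}$ for $\mathrm{SL}(2,\mathbb{Z})$. Each $\mathbb{C}^{\ast}$-orbit has a unique representative $L_\tau = \mathbb{Z}+\mathbb{Z}\tau$ with $\tau \in \mathcal{D}$, for which $\mu = \mathrm{Im}(\tau)$, and $a = g_2(\tau)$, $b = g_3(\tau)$ are the classical Eisenstein series. Since $j: \mathcal{D} \to \mathbb{C}$ is a continuous bijection, the formula
\[
\lambda(j(\tau)) \;:=\; \frac{\mathrm{Im}(\tau)\bigl(|3g_2(\tau)|^{1/2}+|g_3(\tau)|^{1/3}\bigr)}{\log^{+}|j(\tau)|}
\]
is continuous on $\mathcal{D}$ and descends to a continuous function on $\mathbb{C}$. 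Note that $|3a|^{1/2}+|b|^{1/3}$ fails to be smooth where $g_2 = 0$ or $g_3 = 0$ (i.e.\ at the elliptic points $j=0$ and $j=1728$), but remains continuous, which is all the present statement needs.

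The main step is the limit $j \to \infty$, corresponding to $\mathrm{Im}(\tau) \to \infty$ inside $\mathcal{D}$. The standard $q$-expansions of the Eisenstein series show that $g_2(\tau)$ and $g_3(\tau)$ both tend to nonzero constants expressible via $\zeta(4)$ and $\zeta(6)$, so $|3g_2(\tau)|^{1/2}+|g_3(\tau)|^{1/3}$ tends to an explicit positive constant. Meanwhile the normalization $j(\tau) e^{2\pi i\tau} \to 1$ yields $\log|j(\tau)| = 2\pi\,\mathrm{Im}(\tau) + O\!\bigl(e^{-2\pi\mathrm{Im}(\tau)}\bigr)$. Substituting into the displayed expression for $\lambda$ produces the claimed limit, and uniformity of the error term in the cusp region upgrades this pointwise limit to continuity at the point $\infty \in \mathbb{C}\cup\{\infty\}$. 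The one substantive task is the bookkeeping to verify that the constant obtained from $\zeta(4),\zeta(6)$ matches $\tfrac{8}{3}\pi^{2}$; every other step is formal, making the whole argument essentially a reduction to the $q$-expansions of $g_2$, $g_3$, and $j$.
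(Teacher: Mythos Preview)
Your proposal is correct and follows essentially the same approach as the paper's proof: both exploit the scaling invariance of $\mu(a,b)\bigl(|3a|^{1/2}+|b|^{1/3}\bigr)$ under $L\mapsto \lambda L$ to factor through $j$, then normalize to $L=\mathbb{Z}+\mathbb{Z}\tau$ with $\tau\in\mathcal{D}$ and compute the cusp limit using the asymptotics of $g_2,g_3,j$ as $\mathrm{Im}(\tau)\to\infty$. Your version is slightly more explicit about continuity at the elliptic points $j=0,1728$ and about the bookkeeping with $\zeta(4),\zeta(6)$, but the argument is the same.
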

	
	\begin{proof}
		Let $L = \langle w_1, w_2 \rangle$ be the lattice associated to $(a, b)$.  
		Then \eqref{formula-g8g12} expresses $(a, b)$ in terms of $w_1, w_2$.  
		Since $(|3a|^{1/2} + |b|^{1/3}) \mu(a, b)$ is homogeneous in $w_1, w_2$, it can be represented in the form $\lambda(j) \log^{+}|j|$.
		
		Now, let $j \to \infty$. It suffices to consider the case $L = \langle 1, w \rangle$ with $w$ lying in the standard fundamental domain $\mathcal{D}$ (Definition \ref{definition-fundamental-domain-elliptic}) and $\mathrm{Im}(w) \to \infty$.  
		In this situation, we have
		\[
		\frac{\log^{+}|j(w)|}{\mu(a, b)} = \frac{\log^{+}|j(w)|}{\mathrm{Im}(w)} \;\to\; 1,
		\]
		and $a, b$ converge to $\frac{4}{3} \pi^4, \frac{8}{27} \pi^6$ respectively.  
		This completes the proof.
	\end{proof}
	
	\subsection{Formula of generalized KE metrics}
	
	Let $(h_8, h_{12}) \in H^0(\mathbb{P}^1, \mathcal{O}(8)) \oplus H^0(\mathbb{P}^1, \mathcal{O}(12))$ represent a point in $\mathcal{M}_{\mathrm{Jac}}$.  
	In a local coordinate $t$, we can write
	\[
	(h_8, h_{12}) = \big(a(t) (dt)^{-4},\, b(t) (dt)^{-6}\big).
	\]
	
	We define a singular metric $g$ on $\mathbb{P}^1$ associated to $(h_8, h_{12})$ by
	\begin{equation}\label{equation-fomular-g}
		g = \mu(a(t), b(t)) \, |dt|^2,
	\end{equation}
	where $\mu(a,b)$ is as in Lemma~\ref{lem-estimate-mu(a,b)}. This metric is well-defined and independent of the choice of coordinate $t$.
	
	Let $\pi: X \to \mathbb{P}^1$ be the corresponding Jacobian elliptic K3 surface given by $(h_8, h_{12})$, and denote by $g^*$ the associated unit-diameter generalized KE metric on $\mathbb{P}^1$.
	
	\begin{prop}
		There exists a constant $c \in \mathbb{R}$ such that
		$
		g^* = c \, g.
		$
	\end{prop}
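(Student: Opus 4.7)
The plan is to compute both $g$ and $g^\ast$ in a local coordinate around a smooth fiber of $\pi$ and to observe that they agree up to a single global constant.

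Fix $t_0\in\mathbb{P}^1\setminus S$ together with a simply connected coordinate neighborhood $U\ni t_0$ on which $\pi$ admits a holomorphic section. Once a holomorphic volume form $\Omega_X$ on $X$ is chosen, \S\ref{section-generalized-KE} provides two holomorphic $1$-forms $\tau_i = w_i(t)\,dt$ on $U$ whose values at each $t$ span the period lattice of $\Omega_X$ on the fiber $E_t := \pi^{-1}(t)$, and the associated generalized KE metric (before any renormalization) equals $\operatorname{Im}(\bar w_1 w_2)\,|dt|^2$. On the other hand, $E_t$ is the elliptic curve in Weierstrass form $y^2 z = 4x^3 - a(t)x z^2 - b(t) z^3$, uniformized as $\mathbb{C}/L(a(t),b(t))$ with invariant $1$-form $\nu_t = dx/y$. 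Writing $\Omega_X = f(t)\,dt\wedge \nu_t$ on $U$ for some nowhere-vanishing holomorphic function $f$, the period lattice in the $\tau$-normalization equals $f(t)\cdot L(a(t),b(t))$, so by the definition of $\mu$ as the Euclidean area of $\mathbb{C}/L(a,b)$ one obtains
\[
\operatorname{Im}(\bar w_1(t) w_2(t)) \;=\; |f(t)|^2\,\mu(a(t),b(t)),
\]
and therefore $g^\ast_{\Omega_X} = |f(t)|^2\, g$ on $U$.

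To finish, one must show that $|f(t)|^2$ is actually a positive constant on $\mathbb{P}^1\setminus S$, and then normalize. The relative form $dt\wedge \nu_t$ is a nowhere-vanishing section of the relative dualizing sheaf of the Weierstrass model over its smooth locus; a direct check in the chart $t\mapsto t' = 1/t$ using the $\mathbb{C}^\ast$-weights $(2,3)$ on $(a,b)$ from \S\ref{subsection-git-quotient-construction} shows that it extends across $\infty$, and after the standard resolution of the Kodaira singular fibers it extends to a nowhere-vanishing global holomorphic $2$-form on $X$ (see e.g.~\cite[Chap.~V]{barth-cptcplxsurface-04}). Since $h^0(X,K_X) = 1$, this global form is proportional to $\Omega_X$, so $f \equiv c_0 \in \mathbb{C}^\ast$. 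Thus $g^\ast_{\Omega_X} = |c_0|^2\, g$ on all of $\mathbb{P}^1\setminus S$, and rescaling the left-hand side to unit diameter yields $g^\ast = c\, g$ for some $c > 0$. The main obstacle is the verification that $dt\wedge \nu_t$ really extends to a global nowhere-vanishing section of $K_X$ through each Kodaira singular fiber after resolution; once this is in place the remainder is the elementary identification of $\operatorname{Im}(\bar w_1 w_2)$ with the fiber area $\mu(a(t),b(t))$ already contained in Lemma~\ref{lem-estimate-mu(a,b)}.
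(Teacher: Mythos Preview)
Your argument is correct, and it takes a genuinely different route from the paper's. Both proofs first observe that $g$ is itself a generalized KE metric for \emph{some} holomorphic volume form on $X$, so $g^\ast = |f|^2 g$ for a holomorphic function $f$ on $\mathbb{P}^1_0 = \mathbb{P}^1\setminus S$; the difference lies in how one shows $f$ is constant. You identify $dt\wedge dx/y$ explicitly as a global nowhere-vanishing section of $K_X$ (the standard fact for minimal Weierstrass K3's), and then invoke $h^0(X,K_X)=1$ to conclude $f\equiv c_0$. The paper instead argues analytically: near each singular fiber one has $g^\ast = O(|t|^{-4/3}|dt|^2)$ from the Kodaira classification, while $g\ge C|dt|^2$ by Lemma~\ref{lem-estimate-mu(a,b)}, so $|f| = O(|t|^{-2/3})$; a holomorphic function with this bound has a removable singularity, hence $f$ extends holomorphically to all of $\mathbb{P}^1$ and is constant.

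Your approach is more algebraic and gives the constant explicitly, but requires checking the extension of $dt\wedge dx/y$ through each Kodaira fiber after resolution (which you flag as the main obstacle). The paper's approach bypasses this global identification entirely, relying only on the local asymptotic table for generalized KE metrics already compiled in the literature; it is softer but depends on those external estimates. Either way the logical content is the same: two generalized KE metrics for the same fibration differ by $|f|^2$ with $f$ holomorphic, and one must exclude poles at $S$.
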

	
	\begin{proof}
		Denote by $\mathbb{P}^1_0 \subset \mathbb{P}^1$ the open subset $\{\Delta \neq 0\}$.  
		Then there is a lattice $L = L(h_8, h_{12}) \subset T^* \mathbb{P}^1_0$ associated with $(h_8, h_{12})$.  
		This gives a torus fibration $T^* \mathbb{P}^1_0 / L \to \mathbb{P}^1_0$.  
		
		Both $g$ and $g^*$ are generalized KE metrics with respect to this torus fibration (possibly with different holomorphic volume forms). Hence, there exists a holomorphic function $f$ on $\mathbb{P}^1_0$ such that
		$
		g^* = |f|^2 g.
		$	
		
		Suppose $p$ is a singular point corresponds to $t=0$ in a local coordinate.  
		For $g^*$, we have the estimate (see \cite[Table~3.2]{ouyang-25}, \cite[Definition~3.19]{sun-zhang-24})
		$
		g^* = O\big( |t|^{-4/3} \, |dt|^2 \big).
		$
		On the other hand, by Lemma~\ref{lem-estimate-mu(a,b)}, we have 
		$
		g \ge C |dt|^2
		$
		for some constant $C>0$.  
		Therefore, the ratio satisfies
		\[
		|f| = \sqrt{\frac{g^*}{g}} = O(|t|^{-2/3}).
		\]
		Hence $f$ is bounded near $p$, and extends to a holomorphic function on $\mathbb{P}^1$. Therefore, it must be constant, and the proposition follows.
	\end{proof}
	
	\subsection{Proof of Theorem \ref{thm-geometric-realization-GIT}} \label{subsection-proof-of-thm-GIT}
	
	By \cite[Cor.~4.8]{schwarz-89}, the map $\pi_W: X^{ss} \longrightarrow \overline{\mathcal{M}_W}$
	is topologically a quotient map in the analytic topology. Hence, it suffices to prove that 
	$\widetilde{\Phi}_W : X^{ss} \longrightarrow \overline{\mathfrak{M}}$
	is continuous. Let $x_k \to x_\infty \in X^{ss}$ with $x_k \in \pi^{-1}(\mathcal{M}_{\mathrm{Jac}})$. We shall prove that $\widetilde{\Phi}_W(x_k) \to \widetilde{\Phi}_W(x_\infty)$ in $\overline{\mathfrak{M}}$.
	
	We may write $x_k = [h_{8,k} : h_{12,k}]$ and $x_\infty = [h_{8,\infty} : h_{12,\infty}]$, with $(h_{8,k}, h_{12,k}) \to (h_{8,\infty}, h_{12,\infty})$.
	Let $g_k$ be the generalized KE metric on $\mathbb{P}^1$ associated with $(h_{8,k}, h_{12,k})$ as in \eqref{equation-fomular-g}. We divide the proof into five cases.
	
	\subsubsection{Case 1: $x_\infty \in X^s$ with $\Delta \not\equiv 0$.}
	
	These $x_\infty$ correspond to smooth elliptic K3 surfaces. Denote by $g_\infty$ the associated metrics on $\mathbb{P}^1$ given by $(h_{8,\infty},h_{12,\infty})$. Let $S$ be the singular locus of $\mathbb{P}^1$ defined by $\Delta_\infty=0$. Then $g_k \to g_\infty$ smoothly away from $S$. 
	
	\begin{lem}\label{lem-case1-volume-estimate}
		Given $p \in S$ and $\delta > 0$, there exists a disk $B(p)$ containing $p$ such that
		$\mathrm{vol}_{g_k}\bigl(B(p)\bigr) < \delta$
		for all $k$.
	\end{lem}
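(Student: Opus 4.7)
The plan is to derive, for a small disk $B(p)$ and $k$ sufficiently large, a pointwise upper bound on the area density $\mu(a_k(t),b_k(t))$ by an integrable function whose integral over $B(p)$ tends to $0$ uniformly in $k$ as the radius shrinks. I would fix a local coordinate $t$ centred at $p$. By Lemma~\ref{lem-estimate-mu(a,b)}, with $C_0 = \sup|\lambda|<\infty$, we have
\[
\mu(a_k(t),b_k(t)) \;\le\; C_0 \,\frac{\log^+|j_k(t)|}{|3a_k(t)|^{1/2}+|b_k(t)|^{1/3}}.
\]
The hypothesis $x_\infty\in X^s$ forces either $v_p(a_\infty)\le 3$ or $v_p(b_\infty)\le 5$; without loss of generality let $m := v_p(a_\infty)\le 3$. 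Since the polynomials $a_k$ converge coefficient-wise to $a_\infty$, Hurwitz's theorem produces, for all sufficiently small $r$ and all large $k$, exactly $m$ roots $\alpha_{1,k},\dots,\alpha_{m,k}$ of $a_k$ inside $B_r(0)$ together with a factorization $a_k(t)=c_k(t)\prod_{j=1}^m(t-\alpha_{j,k})$ in which $|c_k(t)|$ stays bounded below on $B_r(0)$. This yields the uniform lower bound $|3a_k(t)|^{1/2} \ge C_1 \prod_{j=1}^m |t-\alpha_{j,k}|^{1/2}$ on $B_r(0)$.

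Next, since $|j_k|\asymp |a_k|^3/|\Delta_k|$ for large $|j_k|$ (with $\Delta_k = 4a_k^3+27b_k^2$ as in \S\ref{subsection-git-quotient-construction}), I bound
\[
\log^+|j_k| \;\le\; C_2 + 3\log^+|a_k| + \log^+(1/|\Delta_k|).
\]
On $B_r(0)$, $\log^+|a_k|$ is uniformly bounded; since $\Delta_\infty\not\equiv 0$ has a zero of finite order $d$ at $p$, the same Hurwitz argument controls the roots of $\Delta_k$ drifting into $B_r(0)$ by at most $d$ points $\gamma_{i,k}$, so that $\log^+(1/|\Delta_k|)\le C_3 + \sum_{i=1}^{d}\log^+(1/|t-\gamma_{i,k}|)$. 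Combining,
\[
\mu(a_k(t),b_k(t)) \;\le\; C_4\cdot\frac{1+\sum_{i=1}^d \log^+(1/|t-\gamma_{i,k}|)}{\prod_{j=1}^m |t-\alpha_{j,k}|^{1/2}}\quad\text{on}\ B_r(0),
\]
uniformly for $k$ large.

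Finally, the denominator has total $t$-power $m/2\le 3/2<2$, so H\"older's inequality (using $|t-c|^{-3/2}\in L^1_{\mathrm{loc}}$ on $\mathbb{R}^2$ and the integrability of logarithmic singularities) gives that the integral of the right-hand side over $B_r(0)$ is $O(r^{1/2}\log(1/r))$, with constant independent of the positions of the roots inside $B_r(0)$. Choosing $r$ so that this bound is below $\delta$ completes the proof. The main obstacle is obtaining the uniform lower bound on $|3a_k|^{1/2}+|b_k|^{1/3}$ in the first step: the stability of $x_\infty$ caps the multiplicity of clustering roots of $a_k$ at $3$ (or of $b_k$ at $5$ in the symmetric case), which is precisely what keeps $\prod|t-\alpha_{j,k}|^{-1/2}$ integrable on $\mathbb{R}^2$; without stability, the singularity could be too strong to integrate uniformly in $k$.
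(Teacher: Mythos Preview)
Your argument is correct and follows the same strategy as the paper: bound $\mu$ via Lemma~\ref{lem-estimate-mu(a,b)}, use the stability hypothesis $v_p(h_{8,\infty})\le 3$ (or $v_p(h_{12,\infty})\le 5$) together with Hurwitz to control the denominator by a product $\prod_{j=1}^m|t-\alpha_{j,k}|^{-1/2}$ with $m/2<2$, and track the zeros of $\Delta_k$ to control $\log^+|j_k|$. The only technical difference is in the final integrability step: the paper dominates $\log^+|j_k|$ by a small power $\prod_i|t-\gamma_{i,k}|^{-\lambda_0^2/l}$ and then applies Young's inequality to separate the resulting product into a sum of single-singularity terms, whereas you keep the logarithms and invoke H\"older with $|t-\alpha|^{-3/2}\in L^p_{\mathrm{loc}}$ for $p<4/3$ against $\log^+(1/|t-\gamma|)\in L^q$ for all $q$. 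Both routes yield a uniform bound of order $r^{1/2}\log(1/r)$ independent of the root positions, so the difference is cosmetic.
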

	
	\begin{proof}
		Take a local coordinate chart $B_1$ such that $p=0$ is the only singularity in $B_1$.
		Let $m,n,l$ denote the vanishing orders of $h_8, h_{12}, 1/j$ at $p$.  
		One may write
		\[
		h_{8,k}(t) = a_k(t) \prod_{i=1}^m (t-\alpha_{i,k}), 
		\quad 
		h_{12,k}(t) = b_k(t) \prod_{i=1}^n (t-\beta_{i,k}),
		\quad
		j_k(t) = \frac{1}{c_k(t)\prod_{i=1}^l (t-\gamma_{i,k})},
		\]
		where $\alpha_{i,k}, \beta_{i,k}, \gamma_{i,k}$ converge to $0$, and
		$|a_k|, |b_k|, |c_k|$ are uniformly bounded below on $B_1$ as $k \to \infty$.  
		
		In the following, we assume that $0 \le m \le 3$ (the case $n \le 5$ is analogous). 
		By Lemma~\ref{lem-estimate-mu(a,b)}, we have
		\[
		\mathrm{vol}_{g_k}\bigl(B_r\bigr)
		\;\le\; 
		\int_{B_r} \mu(h_{8,k}(t), h_{12,k}(t))
		\;\le\; 
		\int_{B_r} \frac{C}{|h_{8,k}(t)|^{1/2}} \,\log^{+}|j_k(t)|.
		\]		
		Fix $\lambda_0 = \tfrac{1}{2}$. Then for some constant $C$ (which may change line by line), we have
		\begin{align*}
			\frac{1}{|h_{8,k}(t)|^{1/2}} \, \log^{+}|j_k(t)|
			&\;\le\;
			C \prod_{i=1}^m |t-\alpha_{i,k}|^{-1/2} 
			\prod_{i=1}^l |t-\gamma_{i,k}|^{-\lambda_0^2/l} \\[2mm]
			&\;\le\;
			C\left(
			\frac{m}{m+\lambda_0} \prod_{i=1}^m |t-\alpha_{i,k}|^{-(m+\lambda_0)/(2m)}
			+
			\frac{\lambda_0}{m+\lambda_0} \prod_{i=1}^l |t-\gamma_{i,k}|^{-\lambda_0(m+\lambda_0)/l}
			\right) \\[1mm]
			&\;\le\;
			C\left(
			\sum_{i=1}^m |t-\alpha_{i,k}|^{-(m+\lambda_0)/2}
			+
			\sum_{i=1}^l |t-\gamma_{i,k}|^{-\lambda_0(m+\lambda_0)}
			\right),
		\end{align*}
		which is integrable in $B_1$.
		Thus, for sufficiently small $r$, the estimate above implies
		$
		\mathrm{vol}_{g_k}\bigl(B_r\bigr) < \delta.
		$
	\end{proof}
	
	It is easy to see that $p_{g_k}(B_r(p)) < \delta$ if $B_r$ is small.  
	Thus, by Proposition~\ref{prop-volume-comparison}, for any $\varepsilon > 0$, we can choose $\delta$ and $Q \coloneqq \mathbb{P}^1 \setminus \bigsqcup_{p \in S} B_r(p)$ as above, such that as $k \to \infty$,
	\begin{align*}
		&d_{GH}\bigl((\mathbb{P}^1,d_{g_k}),(\mathbb{P}^1,d_{g_\infty})\bigr)\\
		\le\;& d_{GH}\bigl((\mathbb{P}^1,d_{g_k}),(Q,d_{Q,{g_\infty}})\bigr) 
		+ d_{GH}\bigl((Q,d_{Q,{g_\infty}}),(Q,d_{Q,g_k})\bigr)
		+ d_{GH}\bigl((Q,d_{Q,g_k}),(\mathbb{P}^1,d_{g_\infty})\bigr) \\
		<\;& \varepsilon.
	\end{align*}
	Thus we have $\widetilde{\Phi}_W(x_k) \to \widetilde{\Phi}_W(x_\infty)$ in this case.
	
	\subsubsection{Case 2: $x_\infty \in X^s$ with $\Delta \equiv 0$.}
	
	After applying a $\mathrm{PSL}_2(\mathbb{C})$ action, we may assume
	\[
	h_{8,k} = 3 G_4^2 + u_k, 
	\qquad 
	h_{12,k} = G_4^3 + v_k,
	\]
	where $G_4 = t(t-1)(t-2)(t-c)$ with $c \neq 0,1,2,\infty$ (the case $c = \infty$ is equivalent to $c = \frac{2}{3}$ or $\frac{4}{3}$), and $u_k, v_k$ are polynomials converging to $0$.  	
	
	It is equivalent to prove that for any $x_k \to x_\infty$, there exists a subsequence $x_{k_i}$ such that $\widetilde{\Phi}_W(x_{k_i}) \to \widetilde{\Phi}_W(x_\infty)$. Thus, by passing to a subsequence, we may write the discriminant in the form
	\[
	\Delta_k = 4 h_{8,k}^3 - 27 h_{12,k}^2 
	= \varepsilon_k 
	\prod_{i=1}^l (t - \gamma_{i,k}) 
	\prod_{i=l+1}^{24} \left( \frac{t}{\gamma_{i,k}} - 1 \right),
	\]
	with $\varepsilon_k \to 0$, $\gamma_{i,k} \to \gamma_{i,\infty} \in \mathbb{P}^1$, and
	$
	\begin{cases}
		|\gamma_{i,\infty}| \le 1, & \text{if } i \le l, \\
		|\gamma_{i,\infty}| > 1, & \text{if } i > l.
	\end{cases}
	$
	
	Define $S = \{0,1,2,c,\gamma_{i,\infty}\}$. For any $M^0 \Subset \mathbb{P}^1 \setminus S$, there exists a constant $C$ such that on $M^0$ we have $\frac{1}{C} < \frac{\Delta_k}{\varepsilon_k} < C$ as $k \to \infty$. Thus, by \eqref{equation-fomular-g},	
	\[
	g_k = \mu(h_{8,k},h_{12,k}) |dt|^2
	= \frac{\lambda(j)}{|3h_{8,k}|^{1/2} + |h_{12,k}|^{1/3}} \log^+ \Big| \frac{1728 h_{8,k}^3}{\Delta_k} \Big|
	= (1+o(1)) \frac{2\pi^2}{3|G_4|} \log \frac{1}{\varepsilon_k} |dt|^2.
	\]
	
	Consider the rescaled metric $g_k' = \frac{1}{\log(1/\varepsilon_k)}\,g_k$. We then obtain $g_k' \to \frac{2\pi^2}{3|G_4|} |dt|^2$ on $M^0$ as $k \to \infty$.  
	
	\begin{lem}\label{lem-case2-volumeestimate}
		For every $p \in S$ and every $\delta > 0$, there exists a disk $B(p)$ containing $p$ such that $\mathrm{Vol}_{g_k'}(B(p)) < \delta$
		for all $k$.
	\end{lem}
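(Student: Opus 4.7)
The plan is to mimic the strategy of Lemma~\ref{lem-case1-volume-estimate}. Writing the rescaled metric via Lemma~\ref{lem-estimate-mu(a,b)} as
\[
g_k' \;=\; \frac{1}{\log(1/\varepsilon_k)}\cdot\frac{\lambda(j_k)}{|3h_{8,k}|^{1/2}+|h_{12,k}|^{1/3}}\cdot\log^+|j_k|\,|dt|^2,
\]
and recalling that $\lambda$ is uniformly bounded on $\mathbb{C}\cup\{\infty\}$, it will suffice to bound the integrand on a small disk $B(p)$ by a locally integrable function whose $L^1$ norm on $B(p)$ can be made arbitrarily small uniformly in $k$. I will use the bound $|j_k|\le C|h_{8,k}|^3/|\Delta_k|$ and the factorization $\Delta_k=\varepsilon_k\prod_i(t-\gamma_{i,k})\prod_i(t/\gamma_{i,k}-1)$ to obtain the pointwise estimate
\[
\log^+|j_k| \;\le\; 3\bigl|\log|h_{8,k}|\bigr| + \log(1/\varepsilon_k) + \sum_i\bigl|\log|t-\gamma_{i,k}|\bigr| + C.
\]

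Next I would split $S$ into two groups according to the nature of the degeneration. For $p=\gamma_{i,\infty}\notin\{0,1,2,c\}$, $G_4$ is bounded away from $0$ near $p$, hence both $|h_{8,k}|$ and $|3h_{8,k}|^{1/2}+|h_{12,k}|^{1/3}$ are uniformly bounded below in $k$ (using $u_k,v_k\to 0$ uniformly). The integrand is then controlled by $C/\log(1/\varepsilon_k)\cdot(\log(1/\varepsilon_k)+\sum_i|\log|t-\gamma_{i,k}||)$: the first term integrates to $O(\mathrm{area}(B(p)))$ and is small for $B(p)$ small, while the second is locally $L^1$ with a $k$-independent bound and vanishes after division by $\log(1/\varepsilon_k)\to\infty$. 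For $p\in\{0,1,2,c\}$, one has $|G_4(t)|\asymp|t-p|$, so $h_{8,\infty},h_{12,\infty}$ vanish to orders $2,3$ at $p$, and up to six of the $\gamma_{i,k}$ may accumulate to $p$. Here I would apply the same Young/AM--GM trick as in the proof of Lemma~\ref{lem-case1-volume-estimate}, bounding $|3h_{8,k}|^{-1/2}\log^+|j_k|$ by a finite sum of terms of the form $|t-p|^{-1+\varepsilon_0}$ and $|t-\gamma_{j,k}|^{-1+\varepsilon_0}$ with some small $\varepsilon_0>0$, each locally $L^1$ with $k$-uniform norm on $B(p)$; shrinking $B(p)$ and using the $1/\log(1/\varepsilon_k)$ prefactor then yields the required $\delta$-bound.

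The main obstacle will be the case $p\in\{0,1,2,c\}$: the denominator $|3h_{8,k}|^{1/2}+|h_{12,k}|^{1/3}$ and the factor $\log^+|j_k|$ become singular simultaneously at $p$, and the perturbations $u_k,v_k$ can move zeros of $h_{8,k}$ off the point $p$ in a $k$-dependent way, so the naive bound $|h_{8,k}(t)|^{1/2}\gtrsim|t-p|$ fails on a small $k$-dependent exceptional set. The key technical points will be a careful Young-type interpolation between the denominator and the logarithmic factor (as in Lemma~\ref{lem-case1-volume-estimate}), together with a separate treatment of the exceptional set where the perturbation itself provides a positive lower bound $|u_k(p)|^{1/2}+|v_k(p)|^{1/3}$, chosen so that all resulting exponents stay strictly greater than $-2$ and every bound is uniform in $k$.
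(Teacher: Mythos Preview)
Your approach is workable but more laborious than the paper's, and one of your proposed steps is unnecessary. The paper's key observation is a \emph{multiplicative} rather than additive bound: writing $j_k=(j_k\varepsilon_k)\cdot(1/\varepsilon_k)$, one has (for $1/\varepsilon_k$ large)
\[
\log^+|j_k|\;\le\;\log(1/\varepsilon_k)+\log^+|j_k\varepsilon_k|\;\le\;C\,\log(1/\varepsilon_k)\cdot\log^+|j_k\varepsilon_k|,
\]
so after dividing by $\log(1/\varepsilon_k)$ the integrand is bounded by $C\,|h_{8,k}|^{-1/2}\log^+|j_k\varepsilon_k|$. The point is that $j_k\varepsilon_k=1728\,h_{8,k}^3/(\Delta_k/\varepsilon_k)$ converges to a \emph{fixed} meromorphic function $j_\infty$, since $\Delta_k/\varepsilon_k$ converges by the chosen factorization. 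One is then literally back in the setting of Lemma~\ref{lem-case1-volume-estimate}, with finite vanishing orders of $h_{8,\infty}=3G_4^2$ and $1/j_\infty$ at each $p\in S$, and the same Young/AM--GM argument applies verbatim with no further work.

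Your additive splitting also leads to a proof, but it forces you to treat the residual terms $|h_{8,k}|^{-1/2}\bigl(|\log|h_{8,k}||+\sum_i|\log|t-\gamma_{i,k}||\bigr)/\log(1/\varepsilon_k)$ separately: you need a uniform-in-$k$ $L^1$ bound on a fixed disk, then use $\log(1/\varepsilon_k)\to\infty$ to make these small for large $k$, and finally shrink $B(p)$ again for the finitely many remaining $k$. This is correct but circuitous. More importantly, your proposed ``separate treatment of the exceptional set'' via the lower bound $|u_k(p)|^{1/2}+|v_k(p)|^{1/3}$ is unnecessary and would not give a $k$-uniform bound anyway (since $u_k,v_k\to 0$). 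The moving zeros are already handled by factoring $h_{8,k}(t)=a_k(t)\prod_{i=1}^{2}(t-\alpha_{i,k})$ near $p\in\{0,1,2,c\}$ with $|a_k|$ bounded below and $\alpha_{i,k}\to p$, exactly as in Lemma~\ref{lem-case1-volume-estimate}; the Young inequality then produces exponents strictly above $-2$ with constants uniform in $k$, and no exceptional set enters.
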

	
	\begin{proof}
		Assume $1/\varepsilon_k > e$. Then
		\[
		\log^+\Big(\frac{1728 h_{8,k}^3}{\Delta_k}\Big)
		\le \log(1/\varepsilon_k) + \log^+(j_k \varepsilon_k)
		\le \log(1/\varepsilon_k) \, \log^+(j_k \varepsilon_k).
		\]
		This implies
		\[
		\mathrm{Vol}_{g_k'}(B(p))
		= \int_{B(p)} \frac{\mu(h_{8,k},h_{12,k})}{\log(1/\varepsilon_k)}
		\le \int_{B(p)} \frac{1}{|h_{8,k}|^{1/2}} \, \log^+(j_k \varepsilon_k).
		\]
		Now $j_k \varepsilon_k = \frac{1728 h_{8,k}^3 \varepsilon_k}{\Delta_k} \to j_\infty$ for some meromorphic function on $\mathbb{P}^1$, and the desired estimate then follows in the same way as in Lemma~\ref{lem-case1-volume-estimate}.
	\end{proof}
	
	Thus, $\widetilde{\Phi}_W(x_k) \to \widetilde{\Phi}_W(x_\infty)$ follows by using Proposition~\ref{prop-volume-comparison} and Lemma~\ref{lem-case2-volumeestimate} in the same way as in Case~(1).
	
	\begin{rmk}\label{rmk-j-to-infty}
		In this case, we see that for any $\varepsilon > 0$, we can take $M^0$ such that $j(t) \to \infty$ on $M^0$, and moreover for $k \gg 1$,
		\[
		\frac{\mathrm{Vol}_{g_k}(M^0)}{\mathrm{Vol}_{g_k}(M)} > 1 - \varepsilon.
		\]
	\end{rmk}
	\subsubsection{Case 3: $x_\infty=[a t^4: b t^6]\in X^{ps}\setminus X^s$ with $[a:b]\neq [3:1]$.}
	
	We prove that in this case the sequence $\widetilde{\Phi}_W(x_{k})$ is volume collapsing in the sense of \cite{cheeger-colding-97}. Hence, the GH limit of $\widetilde{\Phi}_W(x_{k})$ is $I^1$ by \cite[Thm.~1.1]{honda-sun-zhang-19}.
	
	We may write 
	\[
	(h_{8,k},h_{12,k}) = (a t^4 + u_k,\, b t^6 + v_k),
	\]
	where $u_k, v_k$ are polynomials converging to $0$, 	
	and the discriminant has the form
	\[
	\Delta_k 
	= h_{8,k}^3 - 27 h_{12,k}^2 
	= c_k \prod_{i=1}^{12} (t - \gamma_{i,k}) 
	\prod_{i=13}^{24} \left(\frac{t}{\gamma_{i,k}}-1\right),
	\]
	where $c_k \to a^3 - 27 b^2$, and	
	$
	\begin{cases}
		\gamma_{i,k} \to 0, & \text{if } i \le 12, \\
		\gamma_{i,k} \to \infty, & \text{if } i \ge 13.
	\end{cases}
	$
	
	Fix $\delta > 0$ small, and define $M^0 = \{\, \delta < |t| < 1/\delta \,\}$. Then we have $j(t) \to \frac{a^3}{a^3 - 27b^2}$ on $M^0$.  
	Thus, from \eqref{equation-fomular-g}, there exists a constant $C$ such that
	\[
	g_k \to g_\infty \coloneq \frac{C}{t^2} |dt|^2 \quad \text{on } M^0.
	\]
	The metric space $(M^0, g_\infty)$ is isometric to a rescaling of the standard cylinder $S^1 \times [\log\delta, -\log \delta]$ with flat metric.
	
	Let $r = -\log \delta - 1$, which can be arbitrarily large, and let $d = \operatorname{diam}(\mathbb{P}^1, g_k)$. Then for $k$ sufficiently large, we can choose $p \in M^0$ such that $\mathrm{vol}_{g_k}(B_r(p)) \le 5\pi r$. By Proposition~\ref{prop-volume-comparison-gen-KE}, 
	\[
	\mathrm{vol}_{g_k}(\mathbb{P}^1) = \mathrm{vol}_{g_k}(B_d(p)) \le \frac{d^2}{r^2} \mathrm{vol}_{g_k}(B_r(p)) \le \frac{5 \pi d^2}{r}.
	\]
	Thus, in the metric $g_k' = g_k/d^2$, the diameter rescales to 1, and $\mathrm{vol}_{g_k'}(\mathbb{P}^1) \le 5\pi/r$ can be made arbitrarily small. Hence $\widetilde{\Phi}_W(x_{k})$ is volume collapsing.
	
	\subsubsection{Case 4: $x_\infty = [3t^4: t^6] \in X^{ps}\setminus X^s$.}
	
	In this case, we may assume
	\[
	(h_{8,k},h_{12,k}) = (3t^4 + u_k,\, t^6 + v_k), \qquad u_k, v_k \to 0,
	\]
	and the discriminant is
	\[
	\Delta_k 
	= 4 h_{8,k}^3 - 27 h_{12,k}^2 
	= \varepsilon_k \prod_{i=1}^l (t - \gamma_{i,k}) 
	\prod_{i=l+1}^{24} \left(\frac{t}{\gamma_{i,k}}-1\right),
	\]
	with $\varepsilon_k \to 0$, $\gamma_{i,k} \to \gamma_{i,\infty} \in \mathbb{P}^1$, and 
	$
	\begin{cases}
		|\gamma_{i,\infty}| \le 1, & \text{if } i \le l, \\
		|\gamma_{i,\infty}| > 1, & \text{if } i > l.
	\end{cases}
	$
	
	Let $m = \tfrac{1}{2} \min\{ |\gamma_{i,\infty}| \neq 0 \}$, and choose $\delta < m$ very small.  
	Define 
	\[
	M^0 = \{\, \delta < |t| < m \,\}.
	\]
	Then $\varepsilon_k j_k(t)$ converges uniformly on $M^0$.  
	By \eqref{equation-fomular-g}, as $k \to \infty$, the restriction of $g_k$ to $M^0$ has the form
	\[
	g_k = (1 + o(1)) \, \bigl(\log(1/\varepsilon_k)\bigr) \frac{C_0}{|t|^2} |dt|^2.
	\]
	Hence,
	\[
	\Bigl(M^0,\, \frac{1}{\log(1/\varepsilon_k)} g_k \Bigr) \longrightarrow S^1 \times [\log \delta, \log m]
	\]
	with the standard flat metric. Thus, as in Case~(3), the sequence $(\mathbb{P}^1, g_k)$ is volume collapsing after rescaling to unit diameter, and we obtain convergence of $\widetilde{\Phi}_W(x_{k})$ to $I^1$.
	
	\subsubsection{Case 5: $x_\infty \in X^{ss} \setminus X^{ps}$.}
	
	By properties of the GIT quotient, there exists $y \in X^{ps} \setminus X^s$ such that $y \in \overline{SL_2(\mathbb{C}) \cdot x_\infty}$. Thus, we may take $g_k \in SL_2(\mathbb{C})$ such that $g_k x_k \to y$. By Cases~(3) and (4), this implies $\widetilde{\Phi}_W(x_{k}) \longrightarrow \widetilde{\Phi}_W(y) = I^1$.
	
	\medskip
	Combining the five cases above, we have proved Theorem~\ref{thm-geometric-realization-GIT}.

	\bibliography{K3-compactification}

\begin{thebibliography}{10}

\bibitem{anderson-92}
M.~T. Anderson.
\newblock The {$L^2$} structure of moduli spaces of {E}instein metrics on
  {$4$}-manifolds.
\newblock {\em Geom. Funct. Anal.}, 2(1):29--89, 1992.

\bibitem{apostolov2022k}
Vestislav Apostolov.
\newblock The k\"ahler geometry of toric manifolds.
\newblock {\em arXiv preprint arXiv:2208.12493}, 2022.

\bibitem{baily-borel-66}
W.~L. Baily, Jr. and A.~Borel.
\newblock Compactification of arithmetic quotients of bounded symmetric
  domains.
\newblock {\em Ann. of Math. (2)}, 84:442--528, 1966.

\bibitem{barth-cptcplxsurface-04}
Wolf~P. Barth, Klaus Hulek, Chris A.~M. Peters, and Antonius Van~de Ven.
\newblock {\em Compact complex surfaces}, volume~4 of {\em Ergebnisse der
  Mathematik und ihrer Grenzgebiete. 3. Folge. A Series of Modern Surveys in
  Mathematics [Results in Mathematics and Related Areas. 3rd Series. A Series
  of Modern Surveys in Mathematics]}.
\newblock Springer-Verlag, Berlin, second edition, 2004.

\bibitem{borel-serre-73}
A.~Borel and J.-P. Serre.
\newblock Corners and arithmetic groups.
\newblock {\em Comment. Math. Helv.}, 48:436--491, 1973.

\bibitem{borel-69}
Armand Borel.
\newblock {\em Introduction to arithmetic groups}, volume~73 of {\em University
  Lecture Series}.
\newblock American Mathematical Society, Providence, RI, 2019.
\newblock Translated from the 1969 French original [MR0244260] by Lam Laurent
  Pham, Edited and with a preface by Dave Witte Morris.

\bibitem{borel-harish-62}
Armand Borel and Harish-Chandra.
\newblock Arithmetic subgroups of algebraic groups.
\newblock {\em Ann. of Math. (2)}, 75:485--535, 1962.

\bibitem{borel-ji-06}
Armand Borel and Lizhen Ji.
\newblock {\em Compactifications of symmetric and locally symmetric spaces}.
\newblock Mathematics: Theory \& Applications. Birkh\"auser Boston, Inc.,
  Boston, MA, 2006.

\bibitem{cheeger-colding-97}
Jeff Cheeger and Tobias~H. Colding.
\newblock On the structure of spaces with {R}icci curvature bounded below. {I}.
\newblock {\em J. Differential Geom.}, 46(3):406--480, 1997.

\bibitem{cheeger-fucaya-gromov-92}
Jeff Cheeger, Kenji Fukaya, and Mikhael Gromov.
\newblock Nilpotent structures and invariant metrics on collapsed manifolds.
\newblock {\em J. Amer. Math. Soc.}, 5(2):327--372, 1992.

\bibitem{cheeger-tian-06}
Jeff Cheeger and Gang Tian.
\newblock Curvature and injectivity radius estimates for {E}instein
  4-manifolds.
\newblock {\em J. Amer. Math. Soc.}, 19(2):487--525, 2006.

\bibitem{chen-viaclovsky-zhang-19}
Gao Chen, Jeff~A. Viaclovsky, and Ruobing Zhang.
\newblock Collapsing ricci-flat metrics on elliptic k3 surfaces.
\newblock {\em Communications in Analysis and Geometry}, 2019.

\bibitem{engelking-topology-89}
Ryszard Engelking.
\newblock {\em General topology}, volume Tom 60 of {\em Monografie Matematyczne
  [Mathematical Monographs]}.
\newblock PWN---Polish Scientific Publishers, Warsaw, 1977.
\newblock Translated from the Polish by the author.

\bibitem{foscolo-19}
Lorenzo Foscolo.
\newblock A{LF} gravitational instantons and collapsing {R}icci-flat metrics on
  the {$K3$} surface.
\newblock {\em J. Differential Geom.}, 112(1):79--120, 2019.

\bibitem{freed-99}
Daniel~S. Freed.
\newblock Special {K}\"ahler manifolds.
\newblock {\em Comm. Math. Phys.}, 203(1):31--52, 1999.

\bibitem{gross-97}
Mark Gross.
\newblock Special {L}agrangian fibrations. {II}. {G}eometry. {A} survey of
  techniques in the study of special {L}agrangian fibrations.
\newblock In {\em Surveys in differential geometry: differential geometry
  inspired by string theory}, volume~5 of {\em Surv. Differ. Geom.}, pages
  341--403. Int. Press, Boston, MA, 1999.

\bibitem{gross-tosatti-zhang-16}
Mark Gross, Valentino Tosatti, and Yuguang Zhang.
\newblock Gromov-{H}ausdorff collapsing of {C}alabi-{Y}au manifolds.
\newblock {\em Comm. Anal. Geom.}, 24(1):93--113, 2016.

\bibitem{gross-wilson-00}
Mark Gross and P.~M.~H. Wilson.
\newblock Large complex structure limits of {$K3$} surfaces.
\newblock {\em J. Differential Geom.}, 55(3):475--546, 2000.

\bibitem{hein-12}
Hans-Joachim Hein.
\newblock Gravitational instantons from rational elliptic surfaces.
\newblock {\em J. Amer. Math. Soc.}, 25(2):355--393, 2012.

\bibitem{hein-sun-Viaclovsky-Zhang-22}
Hans-Joachim Hein, Song Sun, Jeff Viaclovsky, and Ruobing Zhang.
\newblock Nilpotent structures and collapsing {R}icci-flat metrics on the {K}3
  surface.
\newblock {\em J. Amer. Math. Soc.}, 35(1):123--209, 2022.

\bibitem{honda-sun-zhang-19}
Shouhei Honda, Song Sun, and Ruobing Zhang.
\newblock A note on the collapsing geometry of hyperk\"ahler four manifolds.
\newblock {\em Sci. China Math.}, 62(11):2195--2210, 2019.

\bibitem{huybrechts-lecturenotes}
Daniel Huybrechts.
\newblock {\em Lectures on {K}3 surfaces}, volume 158 of {\em Cambridge Studies
  in Advanced Mathematics}.
\newblock Cambridge University Press, Cambridge, 2016.

\bibitem{kobayashi-todorov-87}
Ryoichi Kobayashi and Andrey~N. Todorov.
\newblock Polarized period map for generalized {$K3$} surfaces and the moduli
  of {E}instein metrics.
\newblock {\em Tohoku Math. J. (2)}, 39(3):341--363, 1987.

\bibitem{kodaira-63}
K.~Kodaira.
\newblock On compact analytic surfaces. {II}, {III}.
\newblock {\em Ann. of Math. (2)}, 77:563--626; 78 (1963), 1--40, 1963.

\bibitem{lee-intro-to-smooth-manifolds-13}
John~M. Lee.
\newblock {\em Introduction to smooth manifolds}, volume 218 of {\em Graduate
  Texts in Mathematics}.
\newblock Springer, New York, second edition, 2013.

\bibitem{liu}
Hongyi Liu.
\newblock A short proof of the surjectivity of the period map on k3 manifolds,
  2023.

\bibitem{miranda-81}
Rick Miranda.
\newblock The moduli of {W}eierstrass fibrations over {${\bf P}\sp{1}$}.
\newblock {\em Math. Ann.}, 255(3):379--394, 1981.

\bibitem{mumford-f-k-git-94}
D.~Mumford, J.~Fogarty, and F.~Kirwan.
\newblock {\em Geometric invariant theory}, volume~34 of {\em Ergebnisse der
  Mathematik und ihrer Grenzgebiete (2) [Results in Mathematics and Related
  Areas (2)]}.
\newblock Springer-Verlag, Berlin, third edition, 1994.

\bibitem{MR4480094}
Yuji Odaka.
\newblock P{L} density invariant for type {II} degenerating {K}3 surfaces,
  moduli compactification and hyper-{K}\"ahler metric.
\newblock {\em Nagoya Math. J.}, 247:574--614, 2022.

\bibitem{odaka-oshima-21}
Yuji Odaka and Yoshiki Oshima.
\newblock {\em Collapsing {K}3 surfaces, tropical geometry and moduli
  compactifications of {S}atake, {M}organ-{S}halen type}, volume~40 of {\em MSJ
  Memoirs}.
\newblock Mathematical Society of Japan, Tokyo, 2021.

\bibitem{ouyang-25}
Zexuan Ouyang.
\newblock Collapsing of {K}3 surfaces and special k\"ahler structures.
\newblock {\em arXiv preprint arXiv:2502.18203}, 2025.

\bibitem{petersen-16}
Peter Petersen.
\newblock {\em Riemannian geometry}, volume 171 of {\em Graduate Texts in
  Mathematics}.
\newblock Springer, Cham, third edition, 2016.

\bibitem{satake-60-2}
Ichir\^o Satake.
\newblock On compactifications of the quotient spaces for arithmetically
  defined discontinuous groups.
\newblock {\em Ann. of Math. (2)}, 72:555--580, 1960.

\bibitem{satake-60-1}
Ichir\^o Satake.
\newblock On representations and compactifications of symmetric {R}iemannian
  spaces.
\newblock {\em Ann. of Math. (2)}, 71:77--110, 1960.

\bibitem{scattone-87}
Francesco Scattone.
\newblock On the compactification of moduli spaces for algebraic {$K3$}
  surfaces.
\newblock {\em Mem. Amer. Math. Soc.}, 70(374):x+86, 1987.

\bibitem{schwarz-89}
Gerald~W. Schwarz.
\newblock The topology of algebraic quotients.
\newblock In {\em Topological methods in algebraic transformation groups ({N}ew
  {B}runswick, {NJ}, 1988)}, volume~80 of {\em Progr. Math.}, pages 135--151.
  Birkh\"auser Boston, Boston, MA, 1989.

\bibitem{serre-73}
J.-P. Serre.
\newblock {\em A course in arithmetic}, volume No. 7 of {\em Graduate Texts in
  Mathematics}.
\newblock Springer-Verlag, New York-Heidelberg, 1973.
\newblock Translated from the French.

\bibitem{song-tian-zhang-19-collapsing}
Jian Song, Gang Tian, and Zhenlei Zhang.
\newblock Collapsing behavior of ricci-flat kahler metrics and long time
  solutions of the kahler-ricci flow.
\newblock {\em arXiv preprint arXiv:1904.08345}, 2019.

\bibitem{sun-zhang-24}
Song Sun and Ruobing Zhang.
\newblock Collapsing geometry of hyperk\"ahler 4-manifolds and applications.
\newblock {\em Acta Math.}, 232(2):325--424, 2024.

\bibitem{tian-87-deformation}
Gang Tian.
\newblock Smoothness of the universal deformation space of compact
  {C}alabi-{Y}au manifolds and its {P}etersson-{W}eil metric.
\newblock In {\em Mathematical aspects of string theory ({S}an {D}iego,
  {C}alif., 1986)}, volume~1 of {\em Adv. Ser. Math. Phys.}, pages 629--646.
  World Sci. Publishing, Singapore, 1987.

\bibitem{tian-90-invention}
Gang Tian.
\newblock On {C}alabi's conjecture for complex surfaces with positive first
  {C}hern class.
\newblock {\em Invent. Math.}, 101(1):101--172, 1990.

\bibitem{tosatti-weninkove-yang-18}
Valentino Tosatti, Ben Weinkove, and Xiaokui Yang.
\newblock The {K}\"ahler-{R}icci flow, {R}icci-flat metrics and collapsing
  limits.
\newblock {\em Amer. J. Math.}, 140(3):653--698, 2018.

\bibitem{verbitsky-15-acta}
Misha Verbitsky.
\newblock Ergodic complex structures on hyperk\"ahler manifolds.
\newblock {\em Acta Math.}, 215(1):161--182, 2015.

\bibitem{verbitsky2017ergodic}
Misha Verbitsky.
\newblock Ergodic complex structures on hyperkahler manifolds: an erratum.
\newblock {\em arXiv preprint arXiv:1708.05802}, 2017.

\bibitem{yau-78}
Shing~Tung Yau.
\newblock On the {R}icci curvature of a compact {K}\"ahler manifold and the
  complex {M}onge-{A}mp\`ere equation. {I}.
\newblock {\em Comm. Pure Appl. Math.}, 31(3):339--411, 1978.

\bibitem{zhang-yuguang-17}
Yuguang Zhang.
\newblock Collapsing of {C}alabi-{Y}au manifolds and special {L}agrangian
  submanifolds.
\newblock {\em Univ. Iagel. Acta Math.}, (54):53--78, 2017.

\end{thebibliography}

\end{document}